\title{Eisenstein-Kronecker classes, integrality of critical values of Hecke  $L$-functions and $p$-adic interpolation}
\author{Guido Kings}
\address{Fakult\"at f\"ur Mathematik \\
Universit\"at Regensburg\\
93040 Regensburg\\
Germany}
\author{Johannes Sprang}
\address{
Fakultät für Mathematik\\
Universität Duisburg-Essen\\
45127 Essen\\
Germany}
\thanks{This research was supported by the DFG grant: SFB 1085 Higher invariants}
\theoremstyle{plain}
    \newtheorem{theorem}{Theorem}[section]
\newtheorem*{theorem*}{Theorem}
    \newtheorem{lemma}[theorem]{Lemma}
    \newtheorem{proposition}[theorem]{Proposition}
    \newtheorem{corollary}[theorem]{Corollary}
    \newtheorem*{corollary*}{Corollary}
    \newtheorem{notation}[theorem]{Notation}
\theoremstyle{remark}
    \newtheorem{remark}[theorem]{Remark}
\theoremstyle{definition}
    \newtheorem{definition}[theorem]{Definition}
\DeclareMathOperator{\TSym}{TSym}
\DeclareMathOperator{\Sym}{Sym}
\DeclareMathOperator{\Hom}{Hom}
\DeclareMathOperator{\sHom}{\underline{\Hom}}
\DeclareMathOperator{\End}{End}
\DeclareMathOperator{\Ext}{Ext}
\DeclareMathOperator{\sExt}{\underline{\Ext}}
\DeclareMathOperator{\Spec}{Spec}
\DeclareMathOperator{\GL}{GL}
\DeclareMathOperator{\pr}{pr}
\DeclareMathOperator{\mom}{mom}
\newcommand{\norm}{N}
\DeclareMathOperator{\Gal}{Gal}
\DeclareMathOperator{\Map}{Map}
\newcommand{\one}[1]{\mathbf{1}^{(#1)}}
\newcommand{\bfone}{\mathbf{1}}
\DeclareMathOperator{\Lie}{Lie}
\DeclareMathOperator{\Res}{Res}
\DeclareMathOperator{\Aut}{Aut}
\DeclareMathOperator{\Inf}{Inf}
\DeclareMathOperator{\vol}{vol}
\newcommand{\et}{\text{\textup{\'et}}}
\newcommand{\ord}{\mathrm{ord}}
\newcommand{\res}{\mathrm{res}}
\newcommand{\cor}{\mathrm{cor}}
\newcommand{\sD}{\mathscr{D}}
\newcommand{\sF}{\mathscr{F}}
\newcommand{\sG}{\mathscr{G}}
\newcommand{\sH}{\mathscr{H}}
\newcommand{\sI}{\mathscr{I}}
\newcommand{\sJ}{\mathscr{J}}
\newcommand{\sL}{\mathscr{L}}
\newcommand{\sO}{\mathscr{O}}
\newcommand{\sP}{\mathscr{P}}
\newcommand{\cA}{\mathcal{A}}
\newcommand{\cB}{\mathcal{B}}
\newcommand{\cC}{\mathcal{C}}
\newcommand{\cD}{\mathcal{D}}
\newcommand{\cE}{\mathcal{E}}
\newcommand{\cF}{\mathcal{F}}
\newcommand{\cG}{\mathcal{G}}
\newcommand{\cH}{\mathcal{H}}
\newcommand{\cI}{\mathcal{I}}
\newcommand{\cL}{\mathcal{L}}
\newcommand{\cO}{\mathcal{O}}
\newcommand{\cP}{\mathcal{P}}
\newcommand{\cR}{\mathcal{R}}
\newcommand{\cS}{\mathcal{S}}
\newcommand{\cT}{\mathcal{T}}
\newcommand{\cU}{\mathcal{U}}
\newcommand{\cX}{\mathcal{X}}
\newcommand{\fra}{\mathfrak{a}}
\newcommand{\frb}{\mathfrak{b}}
\newcommand{\frc}{\mathfrak{c}}
\newcommand{\frd}{\mathfrak{d}}
\newcommand{\frf}{\mathfrak{f}}
\newcommand{\frmm}{\mathfrak{m}}
\newcommand{\frp}{\mathfrak{p}}
\newcommand{\CC}{\mathbb{C}}
\newcommand{\QQ}{\mathbb{Q}}
\newcommand{\RR}{\mathbb{R}}
\newcommand{\ZZ}{\mathbb{Z}}
\newcommand{\NN}{\mathbb{N}}
\newcommand{\Q}{\mathbb{Q}}
\newcommand{\N}{\mathbb{N}}
\newcommand{\Z}{\mathbb{Z}}
\newcommand{\R}{\mathbb{R}}
\newcommand{\C}{\mathbb{C}}
\newcommand{\Qp}{{\QQ_p}}
\newcommand{\Zp}{{\ZZ_p}}
\newcommand{\Qbar}{\overline{\Q}}
\newcommand{\Gm}{\mathbb{G}_m}
\newcommand{\T}{\mathbb{T}}
\newcommand{\HCI}{\operatorname{HChar}}
\newcommand{\CI}{\operatorname{Crit}}
\newcommand{\Eis}{\mathrm{EK}}
\newcommand{\dR}{\mathrm{dR}}
\newcommand{\prolim}{\varprojlim}
\newcommand{\indlim}{\varinjlim}
\newcommand{\isom}{\cong}
\newcommand{\id}{\mathrm{id}}
\renewcommand{\Im}{\mathrm{Im }}
\newcommand{\Log}[1]{{\sL}og^{(#1)}}
\newcommand{\sLog}{{\sL}og}
\newcommand{\fin}{\operatorname{fin}}
\newcommand{\ul}[1]{\underline{#1}}
\newcommand{\ol}[1]{\overline{#1}}
\DeclareMathOperator{\wP}{\widehat{\sP}}
\newcommand{\zbar}{\overline{z}}
\newcommand{\lambdabar}{\overline{\lambda}}
\newcommand{\dashsum}{\sideset{}{'}{\sum}}
\newcommand{\EK}{EK}
\DeclareMathOperator{\tors}{{tors}}
\DeclareMathOperator{\free}{{free}}
\def\endpiece{xxx}
\def\makeAlphabet[#1]{\expandafter\makeA#1,xxx,}
\def\makealphabet[#1]{\expandafter\makea#1,xxx,}
\def\makeA#1,{\def\temp{#1}\ifx\temp\endpiece\else%
\mkbb{#1}\mkfrak{#1}\mkbf{#1}\mkcal{#1}\mkscr{#1}\expandafter\makeA\fi}%
\def\makea#1,{\def\temp{#1}\ifx\temp\endpiece\else\mkfrak{#1}\mkbf{#1}\expandafter\makea\fi}%
\def\mkbb#1{\expandafter\def\csname bb#1\endcsname{\mathbb{#1}}}%
\def\mkfrak#1{\expandafter\def\csname fr#1\endcsname{\mathfrak{#1}}}
\def\mkbf#1{\expandafter\def\csname b#1\endcsname{\mathbf{#1}}}
\def\mkcal#1{\expandafter\def\csname c#1\endcsname{\mathcal{#1}}}
\def\mkscr#1{\expandafter\def\csname s#1\endcsname{\mathscr{#1}}}
\def\makeop[#1]{\xmakeop#1,xxx,}
\def\mkop#1{\expandafter\def\csname #1\endcsname{{\mathrm{#1}}}} %
\def\xmakeop#1,{\def\temp{#1}\ifx\temp\endpiece\else\mkop{#1}\expandafter\xmakeop\fi}%
\newcommand{\Hnorm}[1]{\left\lVert #1  \right\rVert_H^2}
\newcommand{\Gmf}{\widehat{\mathbb{G}}_m}
\newcommand{\OCp}{\sO_{\CC_p}}
\newcommand{\Cp}{{\CC_p}}
\newcommand{\Meas}{\mathrm{Meas}}
\newcommand{\sws}{\mathfrak{s}}
\newcommand{\rms}{\mathrm{s}}
\newcommand{\rmt}{\mathrm{t}}
\newcommand{\swt}{\mathfrak{t}}
\newcommand{\Local}{\mathrm{Local}}
\newcommand{\Chr}{\widehat{\mathrm{Char}}}
   \def\MR#1{}
\let\Im\relax
\DeclareMathOperator{\Im}{Im} 
\let\Re\relax
\DeclareMathOperator{\Re}{Re}
\numberwithin{equation}{subsection}
\begin{document}

\begin{abstract}
We show that for an arbitrary totally complex number field $L$ the (regularized) critical $L$-values of algebraic Hecke characters of $L$ divided by certain periods are algebraic integers. This relies on a new construction of an equivariant coherent cohomology class with values in the completion of the Poincar\'e bundle on an abelian scheme $\cA$. From this we obtain a cohomology class for the automorphism group of a CM abelian scheme $\cA$ with values in some canonical bundles, which can be explicitly calculated in terms of Eisenstein-Kronecker series. As a further consequence, using an infinitesimal trivialization of the Poincar\'e bundle, we construct a $p$-adic measure interpolating the critical $L$-values in the ordinary case. This generalizes previous results for CM fields by Damerell, Shimura and Katz and settles the algebraicity and $p$-adic interpolation in the remaining open cases of critical values of Hecke $L$-functions.
\end{abstract}

\setcounter{tocdepth}{1}
\maketitle

\tableofcontents
\section*{Introduction} 

\subsection*{Critical values of Hecke $L$-functions} Let $\chi$ be an algebraic Hecke character of a number field $L$  and $L(\chi,s)=\sum_{ \fra}\frac{\chi(\fra)}{\norm(\fra)^{s}}$ its $L$-function as defined by Hecke, where the sum runs over all integral ideals of $L$ coprime to the conductor of $\chi$. 
It is a  classical problem to investigate the special values of $L(\chi,s)$ at integers $s=k$.
As $L(\chi,k)=L(\chi\cdot\norm_{L/\Q}^{-k},0)$ one can concentrate on the case $s=0$. 

Assume now that $s=0$ is critical for $\chi$, which means essentially that the $\Gamma$-factors occurring in the functional equation of $L(\chi,s)$ have a finite value at $s=0$. It is known that this can happen only if $L$ is totally real or if $L$ contains a CM field $K$ (recall that a CM field is a totally imaginary quadratic extension of a totally real field). 

In the totally real case one knows, thanks to work of Euler, Klingen and Siegel \cite{Siegel-Fourier}, that  the $L(\chi,0)$ are algebraic numbers. This result was later refined by Barsky, Cassou-Nogues, Deligne and Ribet who showed that certain regularized values $L(\chi,0)$ are algebraic integers and they could construct a $p$-adic $L$-function. 

In the case where $L=K$ is a CM field, work of  Damerell \cite{Damerell} (for $K/\Q$ imaginary quadratic) and of Shimura \cite{Shimura} in general, showed that 
$L(\chi,0)$ divided by certain periods
are algebraic numbers. Later Katz \cite{Katz-CM} showed that a certain regularization of these values are algebraic integers and he constructed a $p$-adic $L$-function in the ordinary case. Blasius \cite{Blasius} finally proved  Deligne's conjecture about critical $L$-values for CM fields $K$. For extensions of CM fields, i.e. $L\neq K$, Colmez \cite{Colmez-algebraic} could show algebraicity of special $L$-values for certain extensions $L$ of imaginary quadratic fields $K$. Harder and Schappacher announced in 
\cite{Harder-Schappacher} (see also \cite{Harder}) 
an approach to the Deligne conjecture for Hecke $L$-series relying on Harder's Eisenstein cohomology, but so far full results were only published for quadratic extensions of CM fields (\cite{Harder-GL2}). 

The general case where $L$ is an arbitrary extension of degree $n:=[L:K]$ of an arbitrary CM field $K$ was still open. Here an algebraic Hecke character has the form $\chi=\varrho(\chi_0\circ\norm_{L/K})$ where $\chi_0$ is an algebraic Hecke character of $K$ and $\varrho$ is a Hecke character of $L$ of finite order. 

%

\subsection*{The main results} In this paper we show for critical algebraic Hecke characters $\chi$ of an arbitrary finite extension $L$ of an arbitrary CM field $K$ the 
integrality of the (regularized) $L$-values $L(\chi,0)$ divided by some periods and construct a $p$-adic $L$-function interpolating these values in the ordinary case.  This
generalizes the results of Shimura \cite{Shimura} and Katz \cite{Katz-CM} from CM fields to arbitrary totally complex fields. 

For the purpose of this introduction we give a slightly weaker statement of our first main result as follows:
%

\begin{theorem*}[Integrality of critical values, see Theorem \ref{thm:special-values}]
Let $\chi$ be a critical algebraic Hecke character of $L$ (see Section \ref{section:cm-types}). Let $L(\chi,0)$ be the value of its $L$-function at $0$ and $\frc$ be an integral ideal in $L$ coprime to $\frf$.    Then there is a number field $k$ with ring of integers $\sO_k$ such that 
\begin{equation*}\label{eq:intro-L-value-1}
\frac{(\chi(\frc)\norm\frc-1)L(\chi,0)}{\Omega^{\chi}}\in \sO_k[\frac{1}{\frf N(\frc)d_L}] 
\end{equation*}
where $\Omega^{\chi}$ is an explicit product of powers of $2\pi i$ and certain periods of abelian varieties with CM by $L$ over $\sO_k[\frac{1}{\frf N(\frc)d_L}]$ attached to  $\chi$.
\end{theorem*}
Our main theorem is more precise about the periods, the ring $\sO_k$ and we have a similar result for the trivial conductor $\frf=\sO_L$ for which we refer to 
Theorem \ref{thm:special-values}.

We would like to point out that our approach to this theorem is different from that of Katz even in the case of a CM field $L=K$. Furthermore, we get all critical values of the $L$-functions at the same time whereas Katz has to use the functional equation. 

The algebraicity  statement of the above theorem is the generalization of a formula of Shimura which was suggested by Katz in \cite{Katz-CM}. We prove this generalized Shimura formula in Corollary \ref{cor:Katz-formula}.  For Hecke characters of certain extensions of imaginary quadratic fields, already Colmez has shown rationality statements for $L(\chi,0)$ in \cite{Colmez-algebraic}. For arbitrary extensions of imaginary quadratic fields Bergeron-Charollois-Garcia (see \cite{BCG} and \cite{BCG2}) proved that the critical values of $L$-functions attached to Hecke characters can be expressed as polynomials in Eisenstein--Kronecker series evaluated at torsion points on elliptic curves. Thus they obtain similar integrality results in this special case.


Kufner \cite{Kufner} has recently been able to deduce the full Deligne conjecture from our main result, so that the Deligne conjecture for CM motives of rank one arising in the cohomology of abelian varieties is now completely settled. Conjecturally, these are all CM motives of rank one. 
 
At the time of writing of this paper, Kufner's result  was not yet available. But we can prove with our methods easily (we thank Blasius for pointing this out) a weak form of Deligne's conjecture \cite{Deligne-conj}:
\begin{corollary*}[Weak Deligne conjecture, see Corollary \ref{cor:deligne-conjecture}]
One has
\begin{equation*}
\frac{L(\chi,0)}{c^{+} R_{L/\Q}M(\chi)}\in\Qbar^{\times},
\end{equation*}
where ${c^{+} R_{L/\Q}M(\chi)}$ is the period of the motive 
$R_{L/\Q}M(\chi)$ defined by Deligne in \cite{Deligne-conj}.
\end{corollary*}

We will use our integrality results for special values of $L$-functions to study  the $p$-adic interpolation of these $L$-values. Our method gives a a direct geometric construction of $p$-adic measures for  ordinary primes using a trivialization of the Poincar\'e bundle. We  obtain the following generalization of the $p$-adic $L$-function constructed by Katz \cite{Katz-CM}, which we give here for the sake of the introduction in a slightly imprecise version:
\begin{theorem*}[$p$-adic interpolation, see Theorem \ref{thm_p-adic-interpolation}] Let $\Sigma$ be a CM type of $L$, which is is ordinary for the prime number $p$ (see Section \ref{sec:p-adic-geometric-setup}).	For every fractional ideal $\frf$ there exists a $p$-adic measure $\mu_{\frf}$ on $\Gal(L(p^\infty\frf)/L)$ with the following interpolation property: For every critical algebraic Hecke character $\chi$ of attached CM type $\Sigma$ and  conductor dividing $p^\infty\frf$, we have
	\begin{equation*}
		\frac{1}{\Omega^{\chi}_p}\int_{\Gal(L(p^\infty\frf)/L)} \chi(g)d\mu_{\frf}(g)=\{\mbox{explicit local factors}\}\frac{L(\chi,0)}{\Omega^{\chi}}
	\end{equation*}
for an explicit $p$-adic period $\Omega_p^{\chi}$ which depends only on the infinity-type of $\chi$.
\end{theorem*}

Note that for certain extensions of imaginary quadratic fields, such an interpolation was proved by Colmez-Schneps \cite{Colmez-Schneps} for the $L$-values of Hecke characters treated in \cite{Colmez-algebraic}.
 
The main ingredient in the proofs of these theorems above is an equivariant coherent cohomology class on an abelian scheme with values in $\wP$, the completion of the Poincar\'e bundle $\sP$ along $\cA\times_{\cS}e^{\vee}(\cS)\subset \cA\times_{\cS}\cA^{\vee}$. 
Let $d=\dim_\cS \cA$ be the relative dimension and 
$\Gamma\subset \Aut_{\cS}(\cA)$  a subgroup of automorphisms of $\cA$ over $\cS$. Then for a finite subscheme $\cD\subset \cA$ consisting of torsion sections and a $\Gamma$-invariant function $f$ on $\cD$ we construct a class
\begin{equation*}
\EK_{\Gamma}(f)\in H^{d-1}(\cA\smallsetminus\cD,\Gamma;\wP\otimes \Omega^{d}_{\cA/\cS} ),
\end{equation*}
which we call the \emph{equivariant coherent Eisenstein-Kronecker class}. 
It is absolutely essential for our applications to special values of Hecke $L$-functions to  have classes in equivariant cohomology. We use the case of an abelian scheme with CM by the ring of integers $\sO_L$ in the number field $L$ and where  $\Gamma\subset\sO_L^{\times}$ is a subgroup of finite index, but there are many other interesting cases. For example for an abelian scheme $\cA/\cS$ one can consider the $n$-fold product $\cA^{n}$ of $\cA$ over $\cS$, which has an action of $\GL_n(\Z)$, or if $\cA$ has already CM by $\sO_L$, by $\GL_n(\sO_L)$.  We point out that for these groups no arithmetic moduli space exists, but the following theorem contains a construction of group  cohomology classes with values in sections of certain algebraic bundles associated to $\cA$.

\begin{theorem*}[Eisenstein-Kronecker class, see Section \ref{subsection:Eisenstein-Kronecker-class}]
Let $\cA$ be an abelian scheme over $\cR:=\Spec R$ of relative dimension $d$ and $\Gamma\subset \Aut_{\cR}(\cA)$. Then for any integers $a,b\ge 0$ there is an \emph{Eisenstein-Kronecker class}
\begin{equation*}
\Eis_{\Gamma}^{b,a}(f,x)\in H^{d-1}(\Gamma,H^{0}(\cR,\TSym^{a}(\omega_{\cA/\cR})\otimes\TSym^{b}(\sH)\otimes \omega^{d}_{\cA/\cR} )),
\end{equation*}
depending on a $\Gamma$-invariant function $f$ on $\cD$ and a torsion section $x\in (\cA\smallsetminus \cD)(\cR)$. Here $\omega_{\cA/\cR}:=e^{*}\Omega_{\cA/\cR}$, $\sH\isom H^{1}_{dR}(\cA^{\vee}/\cR)$ and $\TSym$ denotes the tensor symmetric power algebra.
\end{theorem*}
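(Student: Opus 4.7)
The plan is to derive $\Eis_\Gamma^{b,a}(f,x)$ from the equivariant coherent Eisenstein-Kronecker class $\EK_\Gamma(f) \in H^{d-1}(\cA\setminus\cD,\Gamma;\wP\otimes\Omega^d_{\cA/\cR})$ of the preceding theorem by a ``double moment map'' which extracts Taylor coefficients of bidegree $(b,a)$: order $b$ in the $\cA^\vee$-direction (encoded in the formal completion $\wP$) and order $a$ in the $\cA$-direction at the torsion section $x$, while tracking the $\Gamma$-equivariance throughout.

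For the factor $\TSym^a(\omega_{\cA/\cR})\otimes \omega^d_{\cA/\cR}$, I would restrict $\EK_\Gamma(f)$ to a formal neighborhood of $x\colon \cR\hookrightarrow \cA\setminus\cD$. Translation-invariance of differentials trivializes $\Omega^d_{\cA/\cR}$ as the pullback of $\omega^d_{\cA/\cR}$, and the completion of $\cO_\cA$ along $x$ is canonically $\widehat{\Sym}(\omega_{\cA/\cR})$ via translation by $x$. Taking the graded piece of order $a$ of this formal expansion produces the $\TSym^a(\omega_{\cA/\cR})$-factor, while evaluation at $x$ produces the top-form $\omega^d_{\cA/\cR}$-factor.

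For the factor $\TSym^b(\sH)$, the bare completion $\wP$ along $\cA\times e^\vee(\cR)$ only delivers coefficients in $\TSym^b(\omega_{\cA^\vee/\cR})$. To promote these to $\TSym^b(\sH)$ with $\sH\cong H^1_{dR}(\cA^\vee/\cR)$, I would exploit the canonical integrable connection on $\sP$, or equivalently lift $\sP$ along the universal vector extension $\cE_{\cA^\vee}\to\cA^\vee$, whose Lie algebra is precisely $\sH$. The rigidification of the Poincar\'e bundle along the zero section of $\cE_{\cA^\vee}$ identifies its formal completion with $\widehat{\Sym}(\omega_{\cE_{\cA^\vee}/\cR})$, and the order $b$ piece, combined with the self-duality $\sH\cong\sH^\vee$ coming from the Weil pairing, produces the $\TSym^b(\sH)$-factor.

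Combining the two moment operations with the $\Gamma$-equivariance inherited from $\EK_\Gamma(f)$ yields the desired class in $H^{d-1}(\Gamma, H^0(\cR, \TSym^a(\omega_{\cA/\cR})\otimes \TSym^b(\sH)\otimes \omega^d_{\cA/\cR}))$. The hard part will be the precise verification that the enhancement from $\omega_{\cA^\vee/\cR}$-valued to $\sH$-valued coefficients through the universal vector extension is canonical and strictly $\Gamma$-equivariant, and in handling the case where $x$ is not itself $\Gamma$-fixed, which I would treat either by replacing $x$ with its $\Gamma$-orbit and applying Shapiro's lemma, or by restricting to the stabilizer of $x$ and inducing back up before performing the jet extraction.
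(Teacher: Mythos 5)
Your overall architecture is the right one --- restrict the coherent class $\EK_\Gamma(f)$ near $x$, extract an order-$a$ coefficient in the $\cA$-direction and an order-$b$ coefficient in the $\cA^{\vee}$-direction, and pass to the universal vector extension to upgrade $\omega_{\cA^{\vee}/\cR}$-coefficients to $\sH$-coefficients --- but the extraction of the $\TSym^{a}(\omega_{\cA/\cR})$-factor as you describe it is not well defined. The degree-$a$ jet of a section of $\wP\otimes\Omega^{d}_{\cA/\cR}$ at $x$ lives in $x^{*}(\wP\otimes\Omega^{d}_{\cA/\cR}\otimes\cO_{\cA}/\sI_x^{a+1})$, whose jet filtration has $\TSym^{a}(\omega_{\cA/\cR})\otimes x^{*}\wP\otimes\omega^{d}_{\cA/\cR}$ only as a subquotient; there is no canonical projection onto it, and $\wP$ is not canonically trivialized on a formal neighbourhood of $x$ (the splitting principle only identifies the fibre $x^{*}\wP$, and the formal completion of $\cO_{\cA}$ at $x$ is not canonically a symmetric algebra over a general base). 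The paper resolves both points at once by first mapping the class into $\widehat{\sP^{\natural}}$-coefficients on $\cA\times_{\cS}\cA^{\natural}$, which carries a canonical integrable connection $\nabla$, then applying $\nabla^{a}$ to land in $\TSym^{a}(\Omega^{1}_{\cA/\cS})\otimes\widehat{\sP^{\natural}}\otimes\Omega^{d}_{\cA/\cS}$, and only then pulling back along $x$ and composing with the moment map $_\varrho\mom_x^{b}:x^{*}\widehat{\sP^{\natural}}\to\TSym^{b}(\sH)$. So the connection supplied by the universal vector extension is what makes the $a$-th derivative canonical; it is not merely a device for the $b$-part.

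Two smaller points. First, the $\TSym^{b}(\sH)$-coefficients require no self-duality of $\sH$: the co-Lie algebra of $\cA^{\natural}$ is already $\omega_{\cA^{\natural}/\cS}\isom\sH$, so the completion of $\sP^{\natural}$ along $\cA\times e^{\natural}(\cS)$ has associated graded $\Sym(\sH)$ directly and the comultiplication (moment map) lands in $\TSym^{b}(\sH)$ without invoking the Weil pairing --- which would in any case require a polarization to give $\sH\isom\sH^{\vee}$. Second, your worry about non-$\Gamma$-fixed $x$ does not arise: the construction in the paper assumes $x$ is fixed by $\Gamma$ (as it is in the intended application, where $\Gamma=\sO_{\frf}^{\times}$ fixes the $\frf$-torsion section), so no Shapiro-type induction is needed.
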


Equivariant cohomology classes in connection with Eisenstein series have received a lot of attention lately, not only in the work of Bergeron-Charollois-Garcia \cite{BCG} already mentioned, but also for example in the work of Graf \cite{Graf}, of Sharifi-Venkatesh  \cite{Sharifi-Venkatesh}, in the work of Bannai-Hagihara-Yamada-Yamamoto \cite{Bannai-et-al}, in the work of Fl\'orez-Karabulut-Wong \cite{FKW}, and in the work \cite{BKL-topological-polylog}.
The related Shintani-cocycles were studied in the work of Charollois-Dasgupta \cite{Charollois-Dasgupta}.

Besides constructing such a group cohomology class, it is also important to be able to compute an explicit representative. This is accomplished in Section \ref{section:explicit-computation} of the paper. Using computations of Levin \cite{levin} adapted to our equivariant case one can show that $\Eis_{\Gamma}^{b,a}(f,x)$ can be represented in terms of  generalized Eisenstein-Kronecker series, which was the reason for its name.  Integrating these series over the classifying space of $\Gamma$ then leads to partial $L$-values of $L(\chi,0)$ by a standard computation.

\subsection*{Overview of the approach to the theorems}
We would like now to discuss our approach to the critical values of Hecke $L$-functions and why the strategy of Shimura or Katz has no straightforward generalization. Both, Shimura and Katz rely on real analytic Eisenstein series for the Hilbert modular group associated to the totally real subfield $F$ inside the CM field $K$ and the fact that their values at CM points are related to an $L$-value of a Hecke character. To see that these are algebraic numbers, they start by showing that the holomorphic Eisenstein series are algebraic using the $q$-expansion principle. The real analytic Eisenstein series are then derivatives by the Maa\ss-Shimura operators of the holomorphic ones. Katz had the important insight, that  the Maa\ss-Shimura operators arise from the Gau\ss-Manin connection and the Hodge-splitting.  The algebraicity of these operators is then guaranteed by the algebraicity of the Hodge-splitting at CM points. 

This approach of Shimura and Katz breaks down for arbitrary extensions $L$ of $K$ of degree $n$ as the natural habitat of the Eisenstein series in question is the locally symmetric space associated to the arithmetic group $\GL_n(\sO_K)$, which is not hermitian and hence can not be the $\C$-valued points of an algebraic variety. 

To handle this difficulty, we introduce several new viewpoints and concepts. First, it turns out that one should not work  on the base space of an abelian scheme, but on the abelian scheme with CM itself. That such an approach is possible was the pioneering insight by Bannai and Kobayashi \cite{Bannai-Kobayashi} who showed that the results of Damerell and the $p$-adic interpolation of $L$-values by Katz for elliptic curves $\cE$ with CM can be already achieved without any use of modular forms. In fact, our work started by trying to understand their approach conceptually and to generalize it to higher dimensional abelian varieties. Working on a single abelian variety means that $q$-expansions to check algebraicity of Eisenstein series are no longer available. It is an important feature of our work that the algebraicity is built into our construction of the Eisenstein-Kronecker classes. 
More precisely, we construct an equivariant coherent, hence algebraic, cohomology class, whose base extension to $\C$ can be described by currents in terms of generalized Eisenstein-Kronecker series. This turns the approach of Shimura and Katz upside down.  A second problem is that abelian schemes with CM by $L$, where $L$ is a proper extension of a CM field $K$, is only linear with respect to the maximal totally real field inside $K$.   We solve this by working systematically with the abelian scheme and its dual, which gives new insights into the periods occurring in the formulas for the special values of Hecke $L$-functions. For example this inspired the new construction of Blasius' reflex motive by Kufner in \cite{Kufner}.

Another main insight of this paper is that one should no longer work with sections of algebraic bundles as in the work of Shimura and Katz or of  Bannai-Kobayashi, but with equivariant coherent cohomology classes. The equivariant approach in our paper has its source in the papers \cite{BKL-topological-polylog} and \cite{Graf}. An equivariant motivic version of the polylog was advertised earlier by the first named author. Graf showed in fact, that one can recover the (algebraic) Eisenstein series used by Shimura and Katz with this method. This was for us the first indication that such an equivariant cohomological approach to the $L$-values should work.

For the $p$-adic interpolation we rely on the ideas of the second named author in \cite{Sprang_EisensteinKronecker}, which demonstrate that the differential operators used by Shimura and Katz can be realized using the canonical connection on the Poincar\'e bundle on the abelian variety times the universal vector extension of its dual. This gives a conceptual explanation of the ideas of Bannai-Kobayashi \cite{Bannai-Kobayashi}. Underlying the approach in \cite{Sprang_EisensteinKronecker} is the fact that the completion of the Poincar\'e bundle with connection is an incarnation of the de Rham logarithm sheaf, which was first shown in the  thesis of Scheider \cite{Scheider}. With this insight the paper \cite{Sprang_EisensteinKronecker} gives a new geometric construction of the $p$-adic Eisenstein measures of Katz in the elliptic case.

\subsection*{Outline of the paper} In the first section we study the action of the units $\sO_L^{\times}$ of $L$ on the algebraic de Rham cohomology of an abelian scheme with CM by $\sO_L$. 

In the second section we construct the equivariant coherent Eisenstein-Kronecker class, which has values in the completion of the Poincar\'e bundle. We first develop the necessary properties of the completion of the Poincar\'e bundle. This Eisenstein-Kronecker class is still insufficient for our purposes as it would lead only to holomorphic Eisenstein series. To remedy this and to get real-analytic Eisenstein series, we extend the class to the completion of the Poincar\'e bundle on the universal vector extension, where it acquires an integrable connection. This connection allows us to construct derivatives of the Eisenstein-Kronecker class. 

The third section is the technical heart of the paper. Here the equivariant Eisenstein-Kronecker class is explicitly computed in terms of generalized Eisenstein-Kronecker series. The computation of this class is facilitated by the relation of our class with the polylogarithm on abelian schemes, which allows to follow the strategy developed by Levin \cite{levin} to compute this class. 

The fourth section contains a proof of the integrality statement of the Hecke $L$-function cited above. Here we also fix our complex periods. It turns out that one should use the periods of $\cA$ and of $\cA^{\vee}$ at the same time. The relation with the work of Katz and others in the CM case is also explained there.

The final section is devoted to the $p$-adic interpolation of the critical $L$-values by a $p$-adic measure. The method used here relies on \cite{Sprang_EisensteinKronecker}, with some streamlining from \cite{Katz-another-look} and is also inspired by and follows partly \cite{Katz-CM}.

\subsection*{Acknowledgements} The attentive reader easily realizes our  debt to the work of Katz \cite{Katz-CM}, Bannai-Kobayashi \cite{Bannai-Kobayashi} and Beilinson-Levin \cite{BeilinsonLevin}. We thank Don Blasius for pointing out an inaccuracy in an earlier version of this paper and for many very useful remarks, which led to several improvements. We thank Yuhao Zhang  for a list of misprints and Han-Ung Kufner for many comments and discussions. We are especially grateful to the referees of this paper, whose extremely careful reading allowed to catch  inaccuracies and several misprints and helped a lot to improve the readability of this paper.  Further, we gratefully acknowledge the support through the SFB Higher Invariants in Regensburg during the work on this paper.

After the results of this work were completed, we were kindly informed by Bergeron that he together with Charollois and Garcia  obtained also integrality results for critical $L$-values of Hecke $L$-functions for extensions of an imaginary quadratic field, but with a completely different method (see \cite{BCG} and \cite{BCG2}). The first named author thanks Bergeron for an invitation to Paris and interesting discussions with him and Charollois about the different approaches used. 
\section{Abelian schemes with CM}

In this section we fix our notation concerning number fields and CM types, discuss abelian schemes with CM, prove some decomposition results and describe our set up. 
\subsection{CM types}\label{section:cm-types} We fix some notations concerning number fields and CM types. Let $L$ be an algebraic number field. We denote by $\sO_L$ its ring of integers and by $d_L$ its discriminant. We fix the algebraic closure $\Qbar\subset \C$ of $\Q$ in $\C$ and let $J_L:=\Hom_\Q(L,\Qbar)$ the set of embeddings of $L$ into $\Qbar$.

For each subset $\Xi\subset J_L$ we denote by $I_\Xi$ the free abelian group $\Z[\Xi]$ generated by $\Xi$. For $\mu=\sum_{ \xi\in \Xi}\mu(\xi)\xi\in I_\Xi$ we let 
\begin{equation*}
|\mu|:=\sum_{\xi\in \Xi}\mu(\xi)\in\Z
\end{equation*}
and we denote by $I_\Xi^{+}$ the $\mu=\sum_{ \xi\in \Xi}\mu(\xi)\xi\in I_\Xi$ with all $\mu(\xi)\ge 0$. 
In the case $\Xi=J_L$, we write
\begin{equation*}
I_L:=I_{J_L}=\Z[J_L]
\end{equation*}
for the free abelian group generated by $J_L$. 
We denote by
\begin{equation*}
\T_L:=\Res_{\sO_L/\Z}\Gm
\end{equation*}
the restriction of $\Gm$ to $\Spec \Z$. Let $L^{\Gal}$ be a Galois closure of $L$ with ring of integers $\sO_{L^{\Gal}}$. In the following, let $R$ be a $\sO_{L^{\Gal}}[1/d_L]$-algebra, where $d_L$ is the discriminant of $L$. The base change of $\T_L$ to $\cR:=\Spec R$
\begin{equation}\label{eq:split-torus}
\T_{L,R}:=\T_L\times_{\Spec\Z}\cR
\end{equation} 
is a split torus with $\T_{L,R}(R)=(\sO_{L}\otimes_{\Z}R)^{\times}$ and  character group $I_L$. More precisely, for such $R$, $\sO_{L}\otimes_\Z R$ is a semi-simple algebra and one has an isomorphism
\begin{align*}
\sO_L\otimes_\Z R\isom \bigoplus_{\sigma\in J_L}R&&  \ell\otimes 1\mapsto(\sigma(\ell))_{\sigma\in J_L}.
\end{align*}
Let $L/K$ be an extension of number fields. Then the norm defines a homomorphism
\begin{equation*}
\norm_{L/K}:\T_{L,R}\to \T_{K,R}
\end{equation*}
which induces a map
$\norm_{L/K}^{*}:I_K\to I_L$ by 
$\norm_{L/K}^{*}(\mu)(\sigma)=\mu(\sigma{\mid_K})$.
We say that a character $\mu\in I_L$ is \emph{lifted} from $I_K$, if it is of the form $\mu=\norm_{L/K}^{*}(\mu_0)$ with $\mu_0\in I_K$.

An algebraic action of $\sO_L^{\times}$ on an $R$-module $M$ is the same as an action of $\T_{L,R}$ on the quasi-coherent sheaf $\widetilde{M}$ on $\cR=\Spec R$, which amounts to a comodule structure under the group ring $R[I_L]$. The following is well-known and easy to check:
\begin{theorem}[Decomposition {\cite[Expos\'e I, Prop. 4.7.3]{SGA3}}]\label{thm:decomposition} Let $M$ be an algebraic $\T_{L,R}$-module as above, then there is an isomorphism
\begin{equation*}
M\isom \bigoplus_{\mu\in I_L}M(\mu)
\end{equation*}
where $\T_{L,R}$ acts on $M(\mu)$ via the character $\mu:\T_{L,R}\to \Gm$.
The set of $\mu\in I_L$ such that $M(\mu)\neq 0$ will be called the \emph{type of $M$}. 
\end{theorem}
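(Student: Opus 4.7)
The plan is to exploit the standard equivalence between algebraic representations of a diagonalizable group and comodules over its Hopf algebra of functions, which in our setting becomes simply a grading by the character group. Since $R$ is an $\sO_{L^{\Gal}}[1/d_L]$-algebra, the torus $\T_{L,R}$ is split with character lattice $I_L$, and its coordinate ring is the group algebra $R[I_L]=\bigoplus_{\mu\in I_L}R\cdot e_\mu$, with comultiplication $\Delta(e_\mu)=e_\mu\otimes e_\mu$ and counit $\varepsilon(e_\mu)=1$.

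An algebraic action of $\T_{L,R}$ on $\widetilde{M}$ is the same as an $R[I_L]$-comodule structure $\rho\colon M\to M\otimes_R R[I_L]$. For each $m\in M$, the element $\rho(m)$ has a unique finite expansion
\begin{equation*}
\rho(m)=\sum_{\mu\in I_L}\rho_\mu(m)\otimes e_\mu,
\end{equation*}
which defines $R$-linear maps $\rho_\mu\colon M\to M$, almost all of which vanish on any given $m$. First I would check, using coassociativity $(\id\otimes \Delta)\circ\rho=(\rho\otimes \id)\circ\rho$ together with $\Delta(e_\mu)=e_\mu\otimes e_\mu$, that $\rho_\mu\circ\rho_\nu=\delta_{\mu,\nu}\,\rho_\mu$; these are thus pairwise orthogonal idempotents. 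Second, the counit axiom $(\id\otimes\varepsilon)\circ\rho=\id_M$ translates into $\sum_\mu \rho_\mu=\id_M$, the sum being locally finite on $M$. Setting $M(\mu):=\rho_\mu(M)=\{m\in M:\rho(m)=m\otimes e_\mu\}$, the idempotent decomposition $\id_M=\sum_\mu\rho_\mu$ yields the desired direct sum $M=\bigoplus_{\mu\in I_L}M(\mu)$, and by construction $\T_{L,R}$ acts on $M(\mu)$ via the character $\mu$.

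The type is then read off as $\{\mu\in I_L:M(\mu)\neq 0\}$. There is no real obstacle: the only minor point to be careful about is the local finiteness of the sum $\rho(m)=\sum_\mu\rho_\mu(m)\otimes e_\mu$, which is automatic since $R[I_L]$ is a free $R$-module with basis $\{e_\mu\}$ so every element of $M\otimes_R R[I_L]$ has only finitely many nonzero components. The statement is in any case entirely formal and is the content of the cited reference \cite[Exp.~I, Prop.~4.7.3]{SGA3}.
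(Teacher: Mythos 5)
Your argument is correct and complete: it is exactly the standard comodule decomposition for a split diagonalizable group scheme, which is what the paper defers to by citing \cite[Exp.~I, Prop.~4.7.3]{SGA3} without giving a proof. The verification of orthogonal idempotency via coassociativity, the counit axiom giving $\sum_\mu\rho_\mu=\id_M$, and the remark on local finiteness are all as they should be.
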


Suppose that $R$ is contained in a number field $k$ (torsion free suffices). Then one has 
$\sO_L^{\times}\subset (\sO_L\otimes_\ZZ R)^{\times}=\T_{L,R}(R)$.
We are interested in the characters of $\T_{L,R}$ which are trivial
on a subgroup of finite index $\Gamma\subset \sO_L^{\times}$.
These are the possible infinity types of algebraic Hecke characters and were determined by Weil (see \cite{Serre-l-adic} for an elaboration). Recall that a CM field $K$ is a totally imaginary quadratic extension of a totally real field $F$. Depending on whether $L$ contains a CM field, there are two cases: 

1) $L$ does not contain a CM field. Then the only characters $\mu:\T_{L,R}\to \Gm$ trivial on some $\Gamma$ are the powers of the $\norm_{L/\Q}:\T_{L,R}\to \mathbb{G}_{m,R}$, i.e. $\mu$ is of the form
\begin{equation*}
\mu=w\sum_{ \sigma\in J_L}\sigma,\quad (\text{for some }w\in \ZZ).
\end{equation*}

2) $L$ does contain a CM field. Let $K$ be the maximal one. Then a character $\mu$ which is trivial on $\Gamma$ has to be lifted,  $\mu=\norm_{L/K}^{*}(\mu_0)$, from a character  $\mu_0\in I_K$, which satisfies the following condition: There exists an integer $w\in \Z$, such that
\begin{equation}\label{eq:alpha-beta-condition}
\mu_0(\overline{\sigma}_0)+\mu_0(\sigma_0)=w\mbox{ for all }\sigma_0\in \Sigma_K.
\end{equation}

Suppose now that $L$ contains a CM field $K$, hence $L$ is totally complex.
In this case we let 
\begin{align*}
n:=[L:K]&& 2g:=[K:\Q]&& \mbox{ and }&& 2d:=2gn=[L:\Q].
\end{align*}
Complex conjugation on $\Qbar\subset \C$ induces an involution 
$\sigma\mapsto \ol{\sigma}$ on $J_L$ and hence on $I_L$.
Recall that a CM type $\Sigma_K\subset J_K$ for a CM field $K$ is a subset of the embeddings of $K$ into $\Qbar$ such that for $\ol{\Sigma}_K:=\{\ol{\sigma}\mid\sigma\in\Sigma_K\}$ one has
\begin{align*}
\Sigma_K\cup \ol{\Sigma}_K=J_K&&\mbox{and}&& \Sigma_K\cap \ol{\Sigma}_K=\emptyset.
\end{align*} 
Given a CM type $\Sigma_K$ of $K$ one defines the CM type $\Sigma$ of $L$ lifted from $\Sigma_K$ to be
\begin{align}\label{eq:CM-type}
\Sigma:=\{\sigma\in J_L\mid \sigma\mid_K\in \Sigma_K\}&&
\mbox{and}&&
\ol{\Sigma}:=\{\sigma\in J_L\mid \sigma\mid_K\in \ol{\Sigma}_K\}.
\end{align}
\nomenclature{$\Sigma$}{\nomrefpage}
Complex conjugation defines bijections $\Sigma_K\isom \ol{\Sigma}_K$ and $\Sigma\isom \ol{\Sigma}$.
If we write ``$\Sigma$ is a CM type of $L$'' we always mean that it is lifted from some CM type of a sub CM field $K\subset L$. 

The following definition is crucial for the whole paper. 
\begin{definition}\label{def:critical} Let $L$\nomenclature{$L$}{\nomrefpage} be a number field, which contains the (maximal) CM field $K$ \nomenclature{$K$}{\nomrefpage}.
\begin{enumerate}
\item A character $\mu\in I_L$ is said to be of \emph{(algebraic) Hecke character type}, if there exists a character $\mu_0\in I_K$ and an integer $w\in \Z$ such that
\begin{align*}
\mu=\norm_{L/K}^{*}(\mu_0)
\end{align*}
and such that $\mu_0(\overline{\sigma}_0)+\mu_0(\sigma_0)=w$ for all $\sigma_0\in \Sigma_K$.
Let $\Gamma\subset \sO_L^{\times}$ be a subgroup of finite index. We write 
\begin{equation*}
\HCI_L(\Gamma):=\{\mu \in I_L\mid \mu\mbox{ of Hecke character type such that }\mu(\Gamma)=1\}
\end{equation*}
\nomenclature{$\HCI_L(\Gamma)$}{\nomrefpage}%
for the characters of (algebraic) Hecke character type, which are trivial on $\Gamma$. Then all characters of (algebraic) Hecke character type are $\bigcup_{\Gamma}\HCI_L(\Gamma)$, where $\Gamma$ runs over all subgroups of finite index in $\sO_L^{\times}$.
\item We call a $\mu\in \HCI_L(\Gamma)$  \emph{critical}, if 
\begin{align*}
	\Sigma:=\{\sigma \in J_L \mid \mu(\sigma)<0\}
\end{align*}
is a CM type of $L$ (i.e. it is lifted from a CM type $\Sigma_0$ of $K$). In this case, we call $\Sigma$ the \emph{attached CM type} of $\mu$. We write
\begin{equation*}
\CI_L(\Gamma):=\{\mu \in \HCI_L(\Gamma)\mid \mu\mbox{ of critical type}\}
\end{equation*}
for the set of critical Hecke character types.
\nomenclature{$\CI_L(\Gamma)$}{\nomrefpage}%
\end{enumerate}
\end{definition}

\begin{notation}\label{not:I-Sigma}
Let $L$ be a number field, which contains the (maximal) CM field $K$ and let $\mu \in \CI_L(\Gamma)$ with attached CM type $\Sigma$. By the definition of the CM type, we have $\mu(\sigma)\leq -1$ for $\sigma\in \Sigma$ and $\mu(\ol{\sigma})\geq 0$ for $\ol{\sigma}\in \ol{\Sigma}$. This suggests to introduce the following notation: The CM type $\Sigma$ of $\mu$ gives a decomposition
\begin{align*}
I_L=I_{\ol{\Sigma}}\oplus I_{{\Sigma}}
\end{align*}
(note the order of $\ol{\Sigma}$ and $\Sigma$), and we let $\ul{1}\in I_\Sigma$ be the element defined by $\ul{1}(\sigma)=1$ for all $\sigma\in \Sigma$.\nomenclature{$\alpha$}{\nomrefpage}\nomenclature{$\beta$}{\nomrefpage} With this notation, we can write $\mu$ uniquely in either of the following forms:
\begin{align*}
	\mu =\beta-\alpha, \quad \text{ with } \beta \in I^+_{\ol{\Sigma}}, \alpha-\ul{1} \in I^+_{\Sigma},
\end{align*}
or
\begin{align*}
	\mu =\beta-\alpha-\ul{1}, \quad \text{ with } \beta \in I^+_{\ol{\Sigma}}, \alpha \in I^+_{\Sigma}.
\end{align*}
\end{notation}

\begin{remark}
The $\mu$ of (algebraic) Hecke character type are the possible infinity types of algebraic Hecke characters and the critical ones have a critical $L$-value at $0$ (see Section \ref{section-special-values} below). Of course, the set $\HCI_L(\Gamma)$ can also be described by the Serre group $S_\frmm$, but for our purposes the above description is sufficient.
\end{remark}
We can now determine the $\Gamma\subset \sO_L^{\times}$ invariants of an algebraic $\T_{L,R}$-module $M$.
\begin{proposition}\label{prop:decomposition}
Let $M$ be an algebraic $\T_{L,R}$-module and suppose that $R$ is contained in an algebraic number field $k$. Let $\Gamma\subset \sO_L^{\times}$ be a subgroup of finite index. Then the $\Gamma$-invariants of $M$ are
\begin{equation*}
M^{\Gamma}=\bigoplus_{\mu\in \HCI_L(\Gamma)}M(\mu).
\end{equation*}
\end{proposition}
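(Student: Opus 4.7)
The plan is to combine the isotypic decomposition of Theorem~\ref{thm:decomposition} with a direct analysis of the scalar action of $\Gamma$ on each component.

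First I would invoke Theorem~\ref{thm:decomposition} to write $M = \bigoplus_{\mu \in I_L} M(\mu)$ as an $R$-module, where $\T_{L,R}$ acts on the summand $M(\mu)$ through $\mu$. Since the functor of $\Gamma$-invariants is additive and hence commutes with arbitrary direct sums, I obtain $M^\Gamma = \bigoplus_\mu M(\mu)^\Gamma$, reducing the problem to identifying $M(\mu)^\Gamma$ for a single character $\mu$.

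Next I would exploit the chain of inclusions $\Gamma \subset \sO_L^\times \subset (\sO_L \otimes_\Z R)^\times = \T_{L,R}(R)$, which implies that each $\gamma \in \Gamma$ acts on $M(\mu)$ by the scalar $\mu(\gamma) \in R^\times$. When $\mu|_\Gamma = 1$, this action is trivial and $M(\mu)^\Gamma = M(\mu)$; moreover, by the Weil characterization of characters of $\T_{L,R}$ trivial on finite-index subgroups of $\sO_L^\times$ recalled in the discussion immediately preceding the proposition, the condition $\mu|_\Gamma = 1$ is precisely the condition $\mu \in \HCI_L(\Gamma)$. This yields the inclusion $\bigoplus_{\mu \in \HCI_L(\Gamma)} M(\mu) \subseteq M^\Gamma$.

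The reverse inclusion, which I expect to be the main obstacle, requires showing $M(\mu)^\Gamma = 0$ whenever $\mu|_\Gamma \neq 1$. Picking $\gamma_0 \in \Gamma$ with $\mu(\gamma_0) \neq 1$, any $m \in M(\mu)^\Gamma$ satisfies $(\mu(\gamma_0) - 1) m = 0$. Since $R \subset k$ is a domain, the scalar $\mu(\gamma_0) - 1 \in R$ is nonzero; the delicate point is that a priori it could still be a zero divisor on $M(\mu)$. To conclude $m = 0$ one uses that $R$ is an $\sO_{L^{\Gal}}[1/d_L]$-algebra, so that roots of unity occurring in $\mu(\Gamma) \subset R^\times$ contribute elements to the ideal $J = (\mu(\gamma) - 1 : \gamma \in \Gamma) \subset R$ whose norms divide powers of $d_L$ and are therefore inverted in $R$; this forces $J$ to meet $R^\times$, annihilating $m$. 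Combining the two inclusions yields the claimed decomposition $M^\Gamma = \bigoplus_{\mu \in \HCI_L(\Gamma)} M(\mu)$.
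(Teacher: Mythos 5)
Your first two paragraphs follow the paper's proof exactly: decompose $M=\bigoplus_{\mu}M(\mu)$ via Theorem \ref{thm:decomposition}, note that invariants respect this decomposition, observe that $\gamma\in\Gamma$ acts on $M(\mu)$ by the scalar $\mu(\gamma)\in R^{\times}$, and use Weil's characterization (recalled before the proposition) to identify $\{\mu:\mu(\Gamma)=1\}$ with $\HCI_L(\Gamma)$. Up to that point the argument is correct and is the paper's argument.

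The third paragraph is where the proposal goes wrong. You correctly isolate what is needed for the reverse inclusion — that $(\mu(\gamma_0)-1)m=0$ with $\mu(\gamma_0)\neq 1$ forces $m=0$ — but the mechanism you propose does not establish it. The ideal $J=(\mu(\gamma)-1:\gamma\in\Gamma)$ is generated by $\mu(\gamma_i)-1$ for generators $\gamma_i$ of $\Gamma$ (use $\mu(\gamma\gamma')-1=\mu(\gamma)(\mu(\gamma')-1)+(\mu(\gamma)-1)$), and for $\Gamma$ of finite index in $\sO_L^{\times}$ these generators are, apart from at most finitely many torsion elements, units of infinite order; for such a $\gamma_i$ the element $\mu(\gamma_i)-1$ is merely a nonzero element of $R$ whose norm has no reason to divide a power of $d_L$. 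Your norm computation only applies to the root-of-unity part of $\mu(\Gamma)$, which may well be trivial (indeed the paper later takes $\Gamma$ torsion free), so $J$ need not meet $R^{\times}$, and if $M(\mu)$ is allowed to have $R$-torsion (e.g.\ $M(\mu)=R/J$ with $J$ proper) the conclusion $M(\mu)^{\Gamma}=0$ actually fails. The resolution is not an ideal-theoretic trick but torsion-freeness: $R$ is a domain (being contained in a number field), and the modules to which the proposition is applied ($\omega_{\cA/\cR}$, $\sH$ and their tensor constructions) are locally free, hence torsion free over $R$, so multiplication by the nonzero element $\mu(\gamma_0)-1$ is injective on $M(\mu)$ and $m=0$ follows at once. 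The paper's one-line proof silently relies on this; you were right to sense that a justification is needed here, but the one you supply does not work.
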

\begin{proof}
On $M(\mu)$ the group $\Gamma\subset \sO_L^{\times}$ acts via the character $\mu:\sO_L^{\times}\to R^{\times}$. This action is trivial if and only if $\mu\in \HCI_L(\Gamma)$ is of Hecke character type.
\end{proof}
We end this section introducing a convenient notation, which will be used throughout the text.  
\begin{definition}\label{def:TSym-convention}Let $M$ be an algebraic $\T_{L,R}$-module and $ \Xi\subset J_L$. Then we write 
\begin{equation*}
M(\Xi):=\bigoplus_{\sigma\in \Xi}M(\sigma)
\end{equation*}
and for $\alpha=\sum_{\sigma\in \Xi}\alpha(\sigma)\sigma\in I_\Xi^{+}$ a positive element  we define
\begin{equation*}
\TSym_R^{\alpha}(M(\Xi)):=\bigotimes_{\sigma\in \Xi}\TSym^{\alpha(\sigma)}_R(M(\sigma)).
\end{equation*}
We will, by slight abuse of notation, use this definition  also for a module of the form 
$\bigoplus_{\sigma\in \Xi}M(-\sigma)$.
Further, for an element $m\in M(\Xi)$, we denote by $m^{[\alpha]}\in \TSym_R^{\alpha}(M(\Xi))$ the element 
\begin{equation*}
m^{[\alpha]}=\bigotimes_{\sigma\in \Xi}m(\sigma)^{[\alpha(\sigma)]}
\end{equation*}
where $m(\sigma)^{[\alpha(\sigma)]}=m(\sigma)\otimes\cdots\otimes m(\sigma)$ $\alpha(\sigma)$-times. 
\end{definition}
Here $m(\sigma)\mapsto m(\sigma)^{[k]}$ is the divided power structure in $\TSym_R(M(\sigma))$ and one has $m(\sigma)^{k}=k!m(\sigma)^{[k]}$, where the left hand side is the $k$-th power calculated in $\TSym_RM(\sigma)$.
With the above convention one has the formula
\begin{equation*}
\TSym_R^{k}(M(\Xi))\isom \bigoplus_{\stackrel{\alpha\in I_\Xi^{+}}{|\alpha|=k}}\TSym_R^{\alpha}(M(\Xi))
\end{equation*}
\subsection{Abelian schemes with CM} We fix some notations concerning abelian schemes. 

Let $\cS$ be a (noetherian) scheme and $\pi:\cA\to \cS$\nomenclature{$\pi:\cA\to \cS$}{\nomrefpage} an abelian scheme of relative dimension $d$. We write $e:\cS\to \cA$ for its unit section. 
We denote by $\Lie(\cA/\cS)$ the relative Lie algebra and let 
\begin{equation*}
\omega_{\cA/\cS}:=e^{*}\Omega^{1}_{\cA/\cS}\isom \pi_*\Omega^{1}_{\cA/\cS}
\end{equation*}
be the sheaf of translation invariant differential forms. 
We
define for $i\geq 0$
\begin{align}\label{eq:omega-i-def}
\omega_{\cA/\cS}^{i}:=\Lambda^{i}\omega_{\cA/\cS}&&\omega_{\cA/\cS}^{-i}:=\sHom_{\sO_\cS}(\omega_{\cA/\cS}^{i},\sO_\cS)\isom\Lambda^{i}\Lie(\cA/\cS).
\end{align}
\nomenclature{$\omega_{\cA/\cS}^{i}$}{\nomrefpage}
Let $\pi^{\vee}:\cA^{\vee}\to \cS$\nomenclature{$\pi^{\vee}:\cA^{\vee}\to \cS$}{\nomrefpage} be the dual abelian scheme and  write  $e^{\vee}:\cS\to \cA^{\vee}$ for its unit section.
Then we have also the objects $\Lie(\cA^{\vee}/\cS)$ and $\omega_{\cA^{\vee}/\cS}^{i}$.
Denote by $H^{1}_{dR}(\cA/\cS)$ the first relative de Rham cohomology of $\cA/\cS$.  We notice that there is a canonical pairing
\begin{equation*}
H^{1}_{dR}(\cA/\cS)\times H^{1}_{dR}(\cA^{\vee}/\cS)\to \sO_{\cS}.
\end{equation*}
\begin{definition}\label{def:sH}
We let 
\begin{equation*}
\sH:=\sH_\cA:=\sHom_{\sO_\cS}(H^{1}_{dR}(\cA/\cS),\sO_{\cS})\isom H^{1}_{dR}(\cA^{\vee}/\cS),
\end{equation*}
\nomenclature{$\sH$}{\nomrefpage}%
which sits in the short exact sequence
\begin{equation*}
0\to \omega_{\cA^{\vee}/\cS}\to \sH\to \Lie(\cA/\cS)\to 0.
\end{equation*}
\end{definition}
The isomorphism $R^{1}\pi_*\sO_{\cA}\isom \Lie(\cA^{\vee}/\cS)$ induces  $R^{d}\pi_*\sO_{\cA}\isom \Lambda^{d}\Lie(\cA^{\vee}/\cS)$, which by Grothendieck duality gives rise to an isomorphism
\begin{equation}\label{eq:omega-equality}
\omega_{\cA^{\vee}/\cS}^{d}\isom \pi_*\Omega_{\cA/\cS}^{d}\isom \omega_{\cA/\cS}^{d}.
\end{equation}
\begin{definition}
Let $L$ be a totally complex number field with $[L:\Q]=2d$.  An abelian scheme $\cA/\cS$ of relative dimension $d$ has complex multiplication (CM) by $\sO_L$, if there exists an injection
\begin{equation*}
\iota:\sO_L\to \End_\cS(\cA)
\end{equation*}
such that $\Lie(\cA/\cS)$ is a projective $\sO_L\otimes_{\Z}\sO_\cS$-module.
In particular, $\cA/\cS$ has an action of $\sO_L^{\times}$. If $\cS=\Spec R$ with $R$ a $\sO_{L^{\Gal}}[1/d_L]$-algebra as in Section \ref{section:cm-types},   $\Lie(\cA/\cS)$, $\omega_{\cA^{\vee}/\cS}$ and $\sH$ are all  algebraic $\T_{L,R}$-modules.
\end{definition}

We recall the Serre construction. Let $T$ be a ring (not necessarily commutative) finite free over $\Z$ and $\cA/\cS$ an abelian scheme with an injection $T\to \End_{\cS}(\cA)$ and $\fra$ be a projective finitely presented right $T$-module. Then we define as usual the abelian scheme  $\cA(\fra):= \fra\otimes_{T}\cA$ (Serre construction), which for every $\cS$-scheme $\cS'$ satisfies
\begin{equation}
\cA(\fra)(\cS')=(\fra\otimes_{T}\cA)(\cS')=\fra\otimes_{T}\cA(\cS').
\end{equation}
\nomenclature{$\cA(\fra)$}{\nomrefpage}
Notice that $\Lie(\cA(\fra)/\cS)\isom \fra\otimes_{T}\Lie(\cA/\cS)$.  
If $\cA$ has CM by $\sO_L$ and $\fra\subset \frb$ are fractional ideals, then 
the homomorphism $\cA(\fra)\to \cA(\frb)$ is an isogeny of degree $[\frb:\fra]$. 
\begin{definition}\label{def:c-isogeny}
Let $\frc\subset L$ be an integral ideal, then the isogeny induced by $\sO_L\subset \frc^{-1}$ is denoted by
\begin{equation*}
[\frc]:\cA\to \cA(\frc^{-1})=\frc^{-1}\otimes_{\sO_L}\cA
\end{equation*}
and we write $\cA[\frc]$ for the kernel of $[\frc]$.
In particular, $\deg[\frc]=\norm\frc$.
\end{definition}
\begin{remark}
In this paper we consider abelian schemes $\cA$ with CM by $\sO_L$ together with the action of a subgroup $\Gamma\subset \sO_L^{\times}$ of finite index. Let $K\subset L$ be a CM subfield and $\cA_0$ be an abelian scheme with CM by $\sO_K$. Then one can form 
\begin{equation*}
\sO_L\otimes_{\sO_K}\cA_0
\end{equation*}
which has an action $\GL_{\sO_K}(\sO_L)$. The Eisenstein-Kronecker class which we construct in this paper actually is equivariant for this bigger group, but we will not use this fact for our main results. 
\end{remark}
%
%
%

\subsection{Decomposition results and set up} Let us write $L^{\Gal}$ for the Galois closure of $L$.  Let $R$ be an $\sO_{L^{\Gal}}[1/d_L]$-algebra and $\cR:=\Spec R$.
We study for  an abelian scheme $\cA/\cR$ with CM by $\sO_L$
the decomposition of the sheaves $\Lie(\cA/\cR)$, $\omega_{\cA^{\vee}/\cR}$ and $\sH$ under 
the $\T_{L,R}$-action. 

We start with a general remark. For  an $\sO_L\otimes_\Z R$-module $M$ one has a canonical isomorphism
\begin{equation*}
\Hom_R(M,R)\isom \Hom_{\sO_L\otimes_\Z R}(M,\Hom_{\Z}(\sO_L,\Z)\otimes R),
\end{equation*}
which induces an $\sO_L\otimes_\Z R$-linear pairing 
\begin{equation}\label{eq:R-O-pairing}
M\times \Hom_R(M,R)\to \Hom_{\Z}(\sO_L,\Z)\otimes R\isom \frd^{-1}_L\otimes R
\end{equation}
where $\frd_L^{-1}$ is the inverse different.
As the discriminant $d_L$ of $L$ is invertible in $R$, the trace map induces an isomorphism $\Hom_{\Z}(\sO_L,\Z)\otimes R\isom\sO_L\otimes R$, which identifies $\Hom_R(M,R)$ with the $\sO_L\otimes R$-dual of $M$. We will use this identification without further comment.
\begin{proposition}\label{prop:CM-decompostion} Assume that $R\subset\C$ and that $\cA/\cR$ has CM by $\sO_L$. Then there exists a (lifted) CM type $\Sigma$ of $L$, called the \emph{CM type of $\cA$}, such that
\begin{align*}
	\Lie(\cA/\cR)\cong \bigoplus_{\sigma\in\Sigma} \Lie(\cA/\cR)(\sigma)&&\mbox{and}&&
	\omega_{\cA^\vee/\cR}\cong \bigoplus_{\ol{\sigma}\in\ol{\Sigma}} \omega_{\cA^\vee/\cR}(\overline{\sigma}).
\end{align*}
One has a decomposition of $\T_{L,R}$-modules 
\begin{equation*}
\sH\isom \sH(\ol{\Sigma})\oplus \sH(\Sigma)
\end{equation*} 
using the convention in Definition \ref{def:TSym-convention}.
In particular, one has a splitting of the Hodge filtration
\begin{equation*}
0\to \omega_{\cA^{\vee}/\cR}\to\sH\to \Lie(\cA/\cR)\to 0.
\end{equation*}
The $L\otimes\C$-module $H_1(\cA(\C),\C)$ splits into 
\begin{equation*}
H_1(\cA(\C),\C)\isom \omega_{\cA^{\vee}/\C}\oplus \ol{\omega_{\cA^{\vee}/\C}}.
\end{equation*}
Further, one has $\sH(\ol{\Sigma})\otimes_R\C\isom \omega_{\cA^{\vee}/\CC}$ and 
$\sH(\Sigma)\otimes_R\C\isom \Lie(\cA/\C)$ so that $\ol{\omega_{\cA^{\vee}/\CC}}\isom \Lie(\cA/\C)$.
\end{proposition}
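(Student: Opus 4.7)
My strategy would be to exploit the splitting $\sO_L \otimes_\Z R \cong \bigoplus_{\sigma \in J_L} R$ (valid since $R$ is an $\sO_{L^{\Gal}}[1/d_L]$-algebra) together with Theorem \ref{thm:decomposition} to decompose each of $\Lie(\cA/\cR)$, $\omega_{\cA^{\vee}/\cR}$, and $\sH$ into $\T_{L,R}$-isotypic components. Because the $\T_{L,R}$-action on these sheaves factors through the $\sO_L \otimes R$-structure coming from the CM hypothesis, only the weight-one characters $\sigma \in J_L$ contribute, giving decompositions $M = \bigoplus_{\sigma \in J_L} M(\sigma)$ in which each nonzero summand is a line bundle on $\cR$.

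To identify the set $\Sigma := \{\sigma \in J_L : \Lie(\cA/\cR)(\sigma) \neq 0\}$ as a CM type, I would pass to a complex embedding $R \hookrightarrow \C$. The exponential uniformisation $\Lie(\cA(\C)) \to \cA(\C)$ yields an $\sO_L \otimes \C$-equivariant decomposition
\begin{equation*}
H_1(\cA(\C),\C) \;=\; \Lie(\cA/\C) \,\oplus\, \overline{\Lie(\cA/\C)}
\end{equation*}
in which complex conjugation exchanges $\sigma$- and $\ol{\sigma}$-eigenspaces. A $\Q$-dimension count ($\dim_\Q H_1 = 2d = [L:\Q]$) forces $H_1(\cA(\C),\Q)$ to be a free $L$-module of rank one, so $H_1(\cA(\C),\C) \cong \bigoplus_{\sigma \in J_L} \C_\sigma$, and the Hodge summands yield a disjoint bipartition of $J_L$, giving $\Sigma \sqcup \ol{\Sigma} = J_L$. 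The same calculation produces the final statement $H_1(\cA(\C),\C) \cong \omega_{\cA^{\vee}/\C} \oplus \overline{\omega_{\cA^{\vee}/\C}}$ together with $\omega_{\cA^{\vee}/\C} \cong \overline{\Lie(\cA/\C)}$, via the analytic presentation $\cA^{\vee}(\C) = \overline{V}^{*}/\Lambda^{\vee}$ of the dual complex torus (with $V = \Lie(\cA/\C)$).

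For $\sH$, a rank count (rank $2d$ over $R$, matching the rank-$2d$ ring $\sO_L \otimes R = \prod_{\sigma} R$) reduces, after checking on fibres, to $\sH$ being locally free of rank one over $\sO_L \otimes R$, so each $\sH(\sigma)$ is a line bundle. The $\sO_L$-equivariant Hodge filtration
\begin{equation*}
0 \to \omega_{\cA^{\vee}/\cR} \to \sH \to \Lie(\cA/\cR) \to 0
\end{equation*}
has quotient supported on $\Sigma$, so the inclusion $\omega_{\cA^{\vee}/\cR} \hookrightarrow \sH$ must factor through $\sH(\ol{\Sigma})$, and a rank comparison forces equality $\omega_{\cA^{\vee}/\cR} \cong \sH(\ol{\Sigma})$; this produces the canonical direct-sum splitting $\sH = \sH(\ol{\Sigma}) \oplus \sH(\Sigma)$ of the Hodge filtration. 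The main obstacle is the assertion that $\Sigma$ is \emph{lifted} from a CM subfield of $L$: this is not formal from the linear-algebraic analysis above, since non-lifted CM types of $L$ do exist satisfying $\Sigma \sqcup \ol{\Sigma} = J_L$, and it must be read as a consequence of the paper's convention, tacitly built into the CM hypothesis, that ``CM type of $L$'' always means a lifted one.
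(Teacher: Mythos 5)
Your argument is essentially the paper's: base change to $\C$, identify $H_1(\cA(\C),\Q)$ with $L$ so that every embedding occurs exactly once in $\sH\otimes_R\C$, let the Hodge decomposition single out $\Sigma$, and then read off the type of $\Lie(\cA/\cR)$ and the splitting from the Hodge filtration sequence; the extra rank counts and the dual-torus presentation are harmless elaborations. One correction to your closing remark: that $\Sigma$ is lifted from a CM subfield is not a matter of convention but a theorem of Shimura--Taniyama type — it follows from the polarizability of the abelian variety $\cA_{\C}$, since an arbitrary bipartition $\Sigma\sqcup\ol{\Sigma}=J_L$ in general produces only a non-algebraizable complex torus — a point which the paper's own proof likewise leaves implicit.
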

\begin{proof}
By the decomposition Theorem \ref{thm:decomposition} it remains to determine the types of $\Lie(\cA/\cR)$, $\omega_{\cA^{\vee}/\cR}$ and $\sH$.  This can be checked after base change to $\CC$. For $\sH\otimes_R\CC$ we get
	  \[
	  	\sH\otimes_R\CC\cong H_1(\cA(\C),\C)\isom L\otimes_\QQ \CC\cong \bigoplus_{\sigma\in\Hom_\QQ(L,\CC)}\CC,
	  \]
	  so every embedding appears exactly once in the rational representation. This determines the type of $\sH$. By the Hodge decomposition one has $H_1(\cA(\C),\C)\isom \omega_{\cA^{\vee}/\CC}\oplus \ol{\omega_{\cA^{\vee}/\CC}}$. This decomposition determines a CM type $\Sigma$ on $L$, defined such that ${\omega_{\cA^{\vee}/\CC}}$ has type $\ol{\Sigma}$. The sequence of $\T_{L,R}$-modules
\begin{equation*}
0\to \omega_{\cA^{\vee}/\cR}\to\sH_\cA\to \Lie(\cA/\cR)\to 0,
\end{equation*}
determines the type of $\Lie(\cA/\cR)$.
\end{proof}
We deduce from this proposition the decomposition of $\omega_{\cA/\cR}$ which is the $R$-dual of $\Lie(\cA/\cR)$ and where $\T_{L,R}$ acts via the inverse. Thus one obtains:
\begin{corollary}\label{cor:decomposition-sH}
	The $\T_{L,R}$-module $\omega_{\cA/\cR}$ splits into
	\[
		\omega_{\cA/\cR}\cong \bigoplus_{\sigma\in\Sigma} \omega_{\cA/\cR}(-\sigma),
	\]
	where $\gamma\in \Gamma=\sO_L^{\times}$ acts on $\omega_{\cA/\cR}(-\sigma)$ by multiplication with $\sigma(\gamma)^{-1}$.	Moreover, the Hodge filtration gives rise to a canonical isomorphism of $\T_{L,R}$-modules
\begin{equation*}
\omega_{\cA^{\vee}/\cR}^{d}\isom \omega_{\cA/\cR}^{d}\otimes \Lambda^{2d}\sH
\end{equation*}
where $\T_{L,R}$ acts via the norm on $\Lambda^{2d}\sH$. With the canonical isomorphism $\Lambda^{2d}\sH\isom \sO_{\cR}$ this induces an isomorphism $\omega_{\cA^{\vee}/\cR}^{d}\isom \omega_{\cA/\cR}^{d}$ of $\Gamma$-modules.
\end{corollary}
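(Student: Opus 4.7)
The plan is to derive both assertions from Proposition \ref{prop:CM-decompostion} by character bookkeeping on algebraic $\T_{L,R}$-modules; the only genuinely nontrivial input beyond that is a sign fact for the norm character on units of a totally complex field.

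For the first assertion I would simply exploit the remark made just before the corollary: under the pairing \eqref{eq:R-O-pairing}, combined with the trace isomorphism $\frd_L^{-1}\otimes R\isom \sO_L\otimes R$ available because $d_L$ is invertible in $R$, the sheaf $\omega_{\cA/\cR}$ is the $R$-dual of $\Lie(\cA/\cR)$ and the torus acts contragrediently. Proposition \ref{prop:CM-decompostion} gives $\Lie(\cA/\cR)=\bigoplus_{\sigma\in\Sigma}\Lie(\cA/\cR)(\sigma)$, and inverting characters immediately yields $\omega_{\cA/\cR}\isom \bigoplus_{\sigma\in\Sigma}\omega_{\cA/\cR}(-\sigma)$, with $\gamma\in\Gamma$ acting on the $(-\sigma)$-component by $\sigma(\gamma)^{-1}$. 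This part is purely formal.

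For the isomorphism on top wedge powers I would apply $\Lambda^{2d}$ to the $\T_{L,R}$-equivariant Hodge exact sequence $0\to\omega_{\cA^{\vee}/\cR}\to\sH\to\Lie(\cA/\cR)\to 0$. Since the outer terms are locally free of rank $d$, there is a canonical $\T_{L,R}$-equivariant isomorphism $\Lambda^{2d}\sH\isom \omega_{\cA^{\vee}/\cR}^{d}\otimes\Lambda^{d}\Lie(\cA/\cR)$. Identifying $\Lambda^{d}\Lie(\cA/\cR)\isom (\omega_{\cA/\cR}^{d})^{\vee}$ via contragredient duality and tensoring with $\omega_{\cA/\cR}^{d}$, this rearranges to the desired $\omega_{\cA^{\vee}/\cR}^{d}\isom \omega_{\cA/\cR}^{d}\otimes\Lambda^{2d}\sH$. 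The character of $\Lambda^{2d}\sH$ is read off from the splitting $\sH=\sH(\overline{\Sigma})\oplus\sH(\Sigma)$ of Proposition \ref{prop:CM-decompostion}: each of the $2d$ isotypic lines contributes its embedding as weight, and the sum is $\sum_{\tau\in J_L}\tau=\norm_{L/\Q}$.

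Finally, the canonical trivialization $\Lambda^{2d}\sH\isom \sO_{\cR}$, obtained from cup product together with the trace isomorphism $H^{2d}_{\mathrm{dR}}(\cA^{\vee}/\cR)\isom \sO_{\cR}$, is only $R$-linear a priori; it becomes $\T_{L,R}(R)$-equivariant precisely after restricting to the subgroup on which the norm character is trivial. Hence the only substantive thing to check is that $\norm_{L/\Q}(\gamma)=1$ for $\gamma\in\Gamma\subset\sO_L^{\times}$. This is where totally complex-ness of $L$ enters: embeddings pair up as $(\sigma,\overline{\sigma})$, so $\norm_{L/\Q}(\gamma)=\prod_{\sigma\in\Sigma}\sigma(\gamma)\overline{\sigma(\gamma)}=\prod_{\sigma\in\Sigma}|\sigma(\gamma)|^{2}>0$, and being a unit of $\Z$ it must equal $+1$. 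This is the only mildly delicate point; the rest is routine bookkeeping once Proposition \ref{prop:CM-decompostion} is in hand, so I expect no real obstacle.
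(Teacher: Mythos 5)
Your proposal is correct and follows essentially the same route as the paper: dualize the $\Lie$-decomposition of Proposition \ref{prop:CM-decompostion} for the first claim, take top exterior powers of the $\T_{L,R}$-equivariant Hodge filtration for the second, and observe that $\Gamma$ acts on $\Lambda^{2d}\sH$ through $\norm_{L/\Q}$, which is trivial on $\sO_L^{\times}$ because $L$ is totally complex (every archimedean norm factor is $|\sigma(\gamma)|^{2}>0$, so the unit norm is $+1$). The paper's proof is just a terser version of exactly this argument.
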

\begin{proof}
By the Proposition \ref{prop:CM-decompostion} the Hodge filtration is an exact sequence of $\T_{L,R}$-modules and the $\Gamma$-action on $\Lambda^{2d}\sH$ is via the norm $\norm:\sO_L^{\times}\to \Z^{\times}$, which is trivial as $L$ is totally complex.
\end{proof}
\begin{corollary}\label{cor:pairing} Define $\C^{\Sigma}:=\prod_{\sigma\in \Sigma}\C\isom L\otimes_\Q\C(\Sigma)$ and similarly $\C^{\overline{\Sigma}}$.
The period pairing of $L\otimes\C$-modules
\begin{equation*}
\langle \cdot,\cdot\rangle_{\cA}:H^{1}_{dR}(\cA/\C)\times H_1(\cA(\C),\C)\to L\otimes \C
\end{equation*}
induces 
\begin{equation*}
\langle \cdot,\cdot\rangle_\cA:\omega_{\cA/\C}\times H_1(\cA(\C),\Z)\to \C^{\Sigma}.
\end{equation*}
Using the canonical $\sO_L$-linear pairing 
\begin{equation*}
    \langle\cdot,\cdot\rangle_{dR,\cA}:H^{1}_{\dR}(\cA/\C)\times H^{1}_{\dR}(\cA^{\vee}/\C)\to L\otimes\C
\end{equation*}
and using the Hodge decomposition one gets a non-degenerate paring 
\begin{equation*}
\langle\cdot,\cdot\rangle_{\dR,\cA}:\ol{\omega_{\cA/\C}}\times \omega_{\cA^{\vee}/\C}\to \C^{\ol{\Sigma}}.
\end{equation*}
\end{corollary}
The next decomposition will come up in our construction of coherent Eisenstein-Kronecker classes. 
\begin{corollary}\label{cor:Gamma-invariants} 
Let $\Gamma\subseteq \sO_L^\times$ be a subgroup of finite index and $\ul{1}\in I_\Sigma$ the element which is $1$ at each $\sigma\in\Sigma$ defined in Notation \ref{not:I-Sigma}. For $\alpha\in I_{\Sigma}^+$ and $\beta\in I_{\ol{\Sigma}}^+$, we have 
\begin{multline*}
	\left(\TSym^{\alpha}(\omega_{\cA/\cR})\otimes_R \TSym^{\beta}(\omega_{\cA^{\vee}/\cR})\otimes_R\omega_{\cA/\cR}^{d}\right)^{\Gamma}=\\\begin{cases} \TSym^{\alpha}(\omega_{\cA/\cR})\otimes_R \TSym^{\beta}(\omega_{\cA^{\vee}/\cR})\otimes_R\omega_{\cA/\cR}^{d}& \beta-\alpha-\ul{1}\in \CI_L(\Gamma),\\
	0 & \text{otherwise}.
	  \end{cases}
\end{multline*}
\end{corollary}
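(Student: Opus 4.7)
First, I would determine the $\T_{L,R}$-weight of each tensor factor. By Corollary~\ref{cor:decomposition-sH}, $\omega_{\cA/\cR}\cong\bigoplus_{\sigma\in\Sigma}\omega_{\cA/\cR}(-\sigma)$ with each summand locally free of rank $1$ over $R$; together with Definition~\ref{def:TSym-convention} this shows that $\TSym^\alpha(\omega_{\cA/\cR})$ is a pure weight space of $\T_{L,R}$-character $-\alpha$. Similarly, by Proposition~\ref{prop:CM-decompostion} the module $\TSym^\beta(\omega_{\cA^\vee/\cR})$ is a pure weight space of character $\beta$. Since $|\Sigma|=d$ and the graded pieces of $\omega_{\cA/\cR}$ are of rank one, the top exterior power yields $\omega_{\cA/\cR}^d \cong \bigotimes_{\sigma\in\Sigma}\omega_{\cA/\cR}(-\sigma)$, of pure weight $-\underline{1}$. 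Hence $M:=\TSym^\alpha(\omega_{\cA/\cR})\otimes_R\TSym^\beta(\omega_{\cA^\vee/\cR})\otimes_R\omega_{\cA/\cR}^d$ equals $M(\mu)$ in the notation of Theorem~\ref{thm:decomposition}, where $\mu:=\beta-\alpha-\underline{1}\in I_L$.

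Next, I would apply the proposition immediately preceding Definition~\ref{def:TSym-convention}, which identifies $M^\Gamma$ with $\bigoplus_{\nu\in\HCI_L(\Gamma)}M(\nu)$. Since $M$ is concentrated in the single weight $\mu=\beta-\alpha-\underline{1}$, it follows that $M^\Gamma$ equals $M$ when $\mu\in\HCI_L(\Gamma)$ and is zero otherwise.

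Finally, I would verify that on the locus of $\mu$'s arising this way, $\HCI_L(\Gamma)$ and $\CI_L(\Gamma)$ impose the same condition. Writing $\mu = \beta - (\alpha+\underline{1})$ in the canonical decomposition $I_L = I_{\ol{\Sigma}}\oplus I_\Sigma$ of Definition~\ref{def:critical}, the criticality inequalities read $\beta(\ol{\sigma})\ge 0$ and $(\alpha+\underline{1})(\sigma)-1 = \alpha(\sigma)\ge 0$ for all $\sigma\in\Sigma$; both are automatic since the divided-power convention of Definition~\ref{def:TSym-convention} forces $\alpha\in I_\Sigma^+$ and $\beta\in I_{\ol{\Sigma}}^+$. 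Thus for these $\mu$ the two conditions coincide, yielding the dichotomy in the statement. The only real obstacle is notational: one must track the sign conventions (in particular that $\omega_{\cA/\cR}$ has weights $-\sigma$ rather than $\sigma$, since it is dual to $\Lie(\cA/\cR)$) and recognise that the twist by $\omega_{\cA/\cR}^d$, which contributes the $-\underline{1}$, is precisely what turns the automatic positivity of $\alpha,\beta$ into the critical-type inequalities.
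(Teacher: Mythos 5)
Your proposal is correct and follows essentially the same route as the paper: both identify the tensor product as a pure $\T_{L,R}$-weight space of weight $\beta-\alpha-\ul{1}$ (the paper by writing out the explicit scalar $\prod_{\sigma\in\Sigma}\sigma(\gamma)^{-\alpha(\sigma)-1}\prod_{\ol{\sigma}\in\ol{\Sigma}}\ol{\sigma}(\gamma)^{\beta(\ol{\sigma})}$), reduce the invariance to membership in $\HCI_L(\Gamma)$, and observe that the automatic positivity of $\alpha$ and $\beta$ forces criticality. The sign bookkeeping and the role of the twist by $\omega_{\cA/\cR}^{d}$ are handled exactly as in the paper's argument.
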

\begin{proof} 
The action of $\gamma$ on $\TSym^{\alpha}(\omega_{\cA/\cR})\otimes_R \TSym^{\beta}(\omega_{\cA^{\vee}/\cR)})\otimes_R\omega_{\cA/\cR}^{d}$ is by multiplication with
\[
	\prod_{\sigma\in\Sigma}\sigma(\gamma)^{-\alpha(\sigma)-1}\prod_{\overline{\sigma}\in\overline{\Sigma}} \overline{\sigma}(\gamma)^{\beta(\overline{\sigma})}
\]
which is equal to $1$ if and only if $\mu=\beta-\alpha-\ul{1}\in \HCI_L(\Gamma)$ is of Hecke character type with respect to $\Gamma$. As $\Sigma=\{\sigma\in J_L\mid \mu(\sigma)<0\}$ is a CM type, one sees that every $\mu\in \HCI_L(\Gamma)$ of the form $\mu=\beta-\alpha-\ul{1}$ has to be critical. 
\end{proof}

We want to consider abelian schemes $\cA/\cR$ with CM by $\sO_L$ and with fixed $(R\otimes_{\Z}\sO_L)(\Sigma)$-bases 
of $\omega_{\cA/\cR}$ and of $\omega_{\cA^{\vee}/\cR}$. 
\begin{definition}
Let $\cA/\cR$ be an abelian scheme with CM by $\sO_L$ of type $\Sigma$. A basis $\omega(\cA)$  of 
$\omega_{\cA/\cR}$ (resp. $\omega(\cA^{\vee})$ of $\omega_{\cA^{\vee}/\cR}$) is a collection of global sections
\begin{align*}
\omega(\cA)(-\sigma)\in H^{0}(\cR, \omega_{\cA/\cR}(-\sigma))&&(\mbox{resp. }
\omega(\cA^{\vee})(\ol{\sigma})\in H^{0}(\cR, \omega_{\cA^{\vee}/\cR}(\ol{\sigma})))
\end{align*}
\nomenclature{$\omega(\cA)$}{\nomrefpage}
for $\sigma\in \Sigma$, which generate $\omega_{\cA/\cR}$ (resp. $\omega_{\cA^{\vee}/\cR})$
as $R\otimes_\Z \sO_L$-module.
\end{definition}
We note that a basis $\omega(\cA)$ of $\omega_{\cA/\cR}$ is the same as an isomorphism
\begin{equation}\label{eq:omega-as-map}
\omega(\cA):\Lie(\cA/\cR)\isom (\sO_L\otimes_{\ZZ} R)(\Sigma)
\end{equation}
using the convention in Definition \ref{def:TSym-convention}. The same remark applies, of course, to $\omega(\cA^{\vee})$.
\begin{remark}
The $\omega_{\cA/\cR}(-\sigma)$ and the $\omega_{\cA^{\vee}/\cR}(\ol{\sigma})$ are locally free $R$-modules of rank one. Hence in general there exists a basis only if one either shrinks $\cR=\Spec R$ or one performs a base change to a bigger ring. Both possibilities influence the main  integrality result of this paper.
\end{remark}
As $\sH\isom \omega_{\cA^{\vee}/\cR}\oplus \Lie(\cA/\cR)$, to specify a pair of bases $(\omega(\cA),\omega(\cA^{\vee}))$ is equivalent to specify a basis of $\sH$. Consider the exact sequence
\begin{equation*}
0\to H_1(\cA(\C),\Z)\to \Lie(\cA/\C)\to \cA(\C)\to 0.
\end{equation*}
Then  $\omega(\cA)$ induces an isomorphism 
\begin{equation}\label{eq:C-Sigma}
\omega(\cA):\Lie(\cA/\C)\isom (L\otimes \C)(\Sigma)=\C^{\Sigma}
\end{equation}
and the image of $H_1(\cA(\C),\Z)$ in $\C^{\Sigma}$ is the \emph{period lattice} $\Lambda$ of $\cA$ and one gets
\begin{equation}\label{eq:uniformization}
\cA(\C)\isom \C^{\Sigma}/\Lambda.
\end{equation}

\begin{definition}\label{def:psi-torsion}
Let $\cA/\cS$ be an abelian scheme and  $\varphi\colon\cA\to \cB$ be an isogeny. A section $x\colon\cS\to \ker\varphi$ is called a $\varphi$-torsion section.
\end{definition}
\begin{notation}\label{notation-for-CM-ab}Let $R$ be a $\sO_{L^{Gal}}[1/d_L]$-algebra with a fixed embedding $\iota_{\infty}:R\subset \C$. Let $\cR:=\Spec R$ and $\cA/\cR$ be an abelian scheme with CM by $\sO_L$. We will write  
\begin{equation*}
(\cA/\cR,\omega(\cA),\omega(\cA^{\vee}),x)
\end{equation*} 
to denote that we have fixed bases of $\omega_{\cA/\cR}$ and $\omega_{\cA^{\vee}/\cR}$ and that $x:\cR\to \cA[\frf]$ is an $[\frf]$-torsion section for an integral ideal $\frf\subset \sO_L$ and the isogeny
$[\frf]:\cA\to \cA(\frf^{-1})$. We write
\begin{equation*}
R[1/\frf]:=\bigcap_{\{\frmm\mid\frmm\cap \sO_L\nmid \frf \}}R_\frmm
\end{equation*}
where the intersection is taken over all localizations at  maximal ideals of $R$, such that $\frmm\cap \sO_L$ does not divide $\frf$.  
\end{notation}
Note that $\sO_L[1/\frf]$ are the $\frf$-integers.
If we write any subset of $(\cA/\cR,\omega(\cA),\omega(\cA^{\vee}),x)$ it means that only the conditions to define the mentioned objects are in force, e.g. $(\cA/\cR,\omega(\cA))$ is an abelian scheme with CM by $\sO_L$, the ring $R$ satisfies the above conditions and $\omega(\cA)$ is a basis of $\omega_{\cA/\cR}$ etc.

Note that $\cA/\cR$ has an action by 
$\sO_L^{\times}$
and that the subgroup $\sO_{\frf}^{\times}$ of units congruent to $1$ modulo $\frf$ fixes $x$. 
\begin{remark}
The theory of complex multiplication shows that one can choose the ring $R$ above to be a subring of an algebraic number field.
\end{remark}

\section{Equivariant coherent Eisenstein-Kronecker classes} In this section we present our construction of an equivariant coherent Eisenstein-Kronecker class. This construction uses the completion of the Poincar\'e bundle as main input and is modelled after the construction of the (de Rham) polylogarithm on abelian schemes.

\subsection{Preliminaries on the completed Poincar\'e bundle} \label{sec_Prelim_Poincare}
Let us fix some notations for this section.
\begin{notation}
We let $\pi:\cA\to \cS$ be an  abelian scheme  over a noetherian base $\cS$ and $e:\cS\to \cA$ denotes the unit section. We denote by  $\pi^{\vee}:\cA^{\vee}\to \cS$ the dual abelian scheme with unit section $e^{\vee}$ and let $\sP$ be the Poincar\'e bundle on $\cA\times_\cS\cA^{\vee}$, with rigidifications
\[
	(e\times \id)^*\sP\cong \sO_{\cA^\vee}, \quad (\id\times e^{\vee})^*\sP\cong \sO_{\cA}.
\]
\end{notation}
\nomenclature{$\sP$}{\nomrefpage}
We also need the universal vector extension of $\cA^{\vee}$.
\begin{notation}
We denote by $\pi^{\natural}:\cA^{\natural}\to \cS$ \nomenclature{$\pi^{\natural}:\cA^{\natural}\to \cS$}{\nomrefpage}the universal vector extension of $\cA^{\vee}$ with unit section $e^{\natural}$. 
The universal vector extension classifies isomorphism classes $(\sL,\nabla)$
of line bundles with integrable connection relative to $\cS$   which satisfy the theorem of the square (see \cite{Laumon} for further details). The universal line bundle with connection on $\cA\times_\cS\cA^{\natural}$ is denoted by $(\sP^{\natural},\nabla_{\sP^{\natural}})$. Forgetting the connection gives a homomorphism $p:\cA^{\natural}\to \cA^{\vee}$ sitting in 
an exact sequence of commutative group schemes over $\cS$
\begin{equation*}
0\to \omega_{\cA/\cS}\to \cA^{\natural}\xrightarrow{p} \cA^{\vee}\to 0.
\end{equation*}
\end{notation}
\nomenclature{$\cA^\natural$}{\nomrefpage}
\nomenclature{$\sP^{\natural},\nabla_{\sP^{\natural}}$}{\nomrefpage}
The line bundle $\sP^{\natural}$ is just the pull-back of $\sP$ via $\id\times p:\cA\times_\cS \cA^{\natural}\to \cA\times_\cS\cA^{\vee}$ and hence is still rigidified. 
The relative Lie algebra of $\cA^{\natural}$ identifies with the first de Rham cohomology  $\Lie(\cA^{\natural}/\cS)\isom H^{1}_{dR}(\cA/\cS)$ and
the exact sequence of relative Lie algebras 
\begin{equation*}
0\to \omega_{\cA/\cS}\to H^{1}_{dR}(\cA/\cS)\to \Lie(\cA^{\vee}/\cS)\to 0
\end{equation*}
is just the Hodge filtration of $H^{1}_{dR}(\cA/\cS)$. Recall that we denoted the dual of $H^{1}_{dR}(\cA/\cS)$ by $\sH$. The universal vector extension gives another interpretation of $\sH$ as the relative co-Lie algebra of $\cA^\natural$:
\begin{equation}\label{eq:sH-def}
\sH:=\ul{\Hom}_{\sO_\cS}(H^{1}_{dR}(\cA/\cS),\sO_{\cS})\isom\omega_{\cA^{\natural}/\cS}
\end{equation}
The dual of the Gau\ss-Manin connection gives a canonical connection on $\sH$.

We review the formal completions of group schemes.
Consider $\pi:\cG\to \cT$  a smooth commutative group scheme over a base scheme $\cT$ with unit section $e:\cT\to \cG$. Let $\sJ\subset \sO_\cG$ be the ideal defining the closed immersion $e$. We denote by 
\begin{equation*}
\cG^{(n)}:=\ul{\Spec}_{\cO_\cG}(\sO_\cG/\sJ^{n+1})\to \cG
\end{equation*} 
the $n$-th infinitesimal neighbourhood of the closed immersion $e$ and $\widehat{\cG}$ the formal completion. Then $\cG^{(n)}$ is a ringed space with underlying topological space $\cT$ and structure sheaf $\sO_{\cG^{(n)}}=\sO_\cG/\sJ^{n+1}$ which is affine over $\cT$. One has $\widehat{\cG}=\indlim_n\cG^{(n)}$ and 
\begin{equation*}
\sO_{\widehat{\cG}}=\prolim_n\sO_{\cG^{(n)}}.
\end{equation*}
The group structure induces maps
\begin{equation*}
\cG^{(n)}\times_\cT\cG^{(m)}\to \cG^{(n+m)}
\end{equation*}
and hence a co-commutative coproduct
\begin{equation*}
\sO_{\cG^{(n+m)}}\to \sO_{\cG^{(n)}}\otimes\sO_{\cG^{(m)}}.
\end{equation*}
In the limit one gets the formal group structure on $\widehat{\cG}$ and a coproduct $\widehat{\sO}_\cG\to \widehat{\sO}_\cG\widehat{\otimes}\widehat{\sO}_\cG$ (completed tensor product). As $\cG$ is smooth, the unit section is a regular immersion and has conormal bundle
\begin{equation*}
\sJ/\sJ^{2}\isom \omega_{\cG/\cT}=e^{*}\Omega_{\cG/\cT}
\end{equation*}
and $\sJ^{n}/\sJ^{n+1}\isom \Sym^{n}_{\sO_\cT}(\omega_{\cG/\cT})$ so that one has exact sequences
\begin{equation}\label{eq:Gn-exact-sequence}
0\to \Sym^{n}_{\sO_\cT}(\omega_{\cG/\cT})\to \sO_{\cG^{(n)}}\to \sO_{\cG^{(n-1)}}\to 0.
\end{equation}
In particular, one has a filtration on $\sO_{\cG^{(n)}}$ whose associated graded is $\bigoplus_{b=0}^{n}\Sym^{b}_{\sO_\cT}(\omega_{\cG/\cT})$.
As the formal completion is along a section, the $\sO_{\cG^{(n)}}$ are $\sO_\cT$-algebras and hence the sequence
\begin{equation*}
0\to \omega_{\cG/\cT}\to \sO_{\cG^{(1)}}\to \sO_{\cT}\to 0
\end{equation*}
splits canonically: $\sO_{\cG^{(1)}}\isom \sO_{\cS}\oplus \omega_{\cG/\cT}$. Using the comultiplication iteratively, the co-commutativity and this splitting one gets the \emph{moment map}
\begin{equation*}
\mom_{\cG^{(n)}}:\sO_{\cG^{(n)}}\to \TSym^{n}_{\sO_\cT}(\sO_{\cG^{(1)}})\isom\bigoplus_{b=0}^{n}\TSym^{b}_{\sO_\cT}(\omega_{\cG/\cT}).
\end{equation*}
If $\widehat{\TSym}(\omega_{\cG/\cT})$ denotes the completion of $\TSym^{\cdot}(\omega_{\cG/\cT})$ along its augmention ideal we get in the limit
\begin{equation}\label{eq:moment-G}
\mom_{\widehat{\cG}}:\sO_{\widehat{\cG}}\to \widehat{\TSym}(\omega_{\cG/\cT}).
\end{equation}
\begin{remark}
This map is the analogue of the moment map for completed group rings defined in \cite[3.13]{BKL-topological-polylog}. 
Notice that in the case where $\cT$ is a scheme of characteristic zero $\mom_{\widehat{\cG}}$ is an isomorphism. This can be seen on the associated graded which is the canonical homomorphism $\Sym^{\cdot}(\omega_{\cG/\cT})\to {\TSym}^{\cdot}(\omega_{\cG/\cT})$.
\end{remark}
It is obvious that all these notions are functorial: if $\varphi:\cG\to \cH$ is a homomorphism of group schemes, then one gets induced maps
\begin{equation*}
\varphi^{(n)}:\cG^{(n)}\to \cH^{(n)}\mbox{ and }\widehat{\varphi}:\widehat{\cG}\to \widehat{\cH}
\end{equation*}
and an obvious commutative diagram for the moment maps, which we do not write down.
\begin{definition}\label{def:infinitesimal-sheaf}
For a coherent sheaf of $\sO_\cG$-modules $\sF$ on $\cG$, we let $\widehat{\sF}$ be the completion of $\sF$ with respect to the unit section
$e:\cT\to \cG$, i.e.
\begin{equation*}
\widehat{\sF}=e^{-1}(\prolim \sF\otimes_{\sO_\cG} \sO_\cG/\sJ^{n+1}),
\end{equation*}
which is a $\sO_{\widehat{\cG}}$-module on $\cT$ and also isomorphic to 
$\iota^{*}_{\widehat{\cG}}\sF$, if $\iota_{\widehat{\cG}}:\widehat{\cG}\to \cG$ is the canonical map (the isomorphism is given by $\widehat{\pi}_*$, where  $\widehat{\pi}:\widehat{\cG}\to \cT$ is the structure map). We let
\begin{equation*}
\sF^{(n)}:=e^{-1}(\sF\otimes_{\sO_\cG} \sO_\cG/\sJ^{n+1}), 
\end{equation*}
which is a
$\sO_{\cG^{(n)}}$-module on $\cT$.
\end{definition}
Note that  $\widehat{\sF}\isom \prolim \sF^{(n)}$. Further, as $\pi^{(n)}:\cG^{(n)}\to \cT$ is affine
\begin{equation*}
\sF^{(n)}\isom\pi^{(n)}_*(\sF\mid_{\cG^{(n)}}),
\end{equation*}
where $\sF\mid_{\cG^{(n)}}$ is the pull-back in the sense of ringed spaces.
\begin{definition}
Applying  the above discussion to $\cA$, $\cA^{\vee}$ and $\cA^{\natural}$, we let 
\nomenclature{$\pi^{(n)}:\cA^{(n)}\to\cS$}{\nomrefpage}
\nomenclature{$\pi^{\vee(n)}:\cA^{\vee(n)}\to \cS$}{\nomrefpage}
\nomenclature{$\pi^{\natural(n)}:\cA^{\natural(n)}\to \cS$}{\nomrefpage}
\begin{align*}
\pi^{(n)}:\cA^{(n)}\to\cS&&\pi^{\vee(n)}:\cA^{\vee(n)}\to \cS&&\pi^{\natural(n)}:\cA^{\natural(n)}\to \cS
\end{align*}
be the $n$-th infinitesimal neighbourhood of the respective unit section $e$, $e^{\vee}$, $e^{\natural}$ and $\widehat{\pi}:\widehat{\cA}\to \cS$, $\widehat{\pi}^{\vee}:\widehat{\cA}^{\vee}\to \cS$ and $\widehat{\pi}^{\natural}:\widehat{\cA}^{\natural}\to \cS$ 
the formal completions.
\end{definition}
Consider the relative group scheme $\pr:\cA\times_{\cS}\cA^{\vee}\to \cA$ and its unit section $\id\times e^{\vee}$. Then the $n$-th infinitesimal neighbourhood is
\begin{equation*}
\cA\times_{\cS}\cA^{\vee(n)}\to \cA\times_{\cS}\cA^{\vee}
\end{equation*}
and $\cA\times_\cS\widehat{\cA}^{\vee}$ is the formal completion. The conormal bundle is $\pi^{*}\omega_{\cA^{\vee}/\cS}$. Similarly, for the relative group scheme $\pr:\cA\times_{\cS}\cA^{\natural}\to \cA$ with unit section $\id\times e^{\natural}$ we get $\cA\times_{\cS}\cA^{\natural(n)}$ and $\cA\times_{\cS}\widehat{\cA}^{\natural}$. Here the conormal bundle is $\pi^{*}\sH$, see \eqref{eq:sH-def}. From the moment map \eqref{eq:moment-G} we get two moment maps in this situation:

\begin{definition}\label{def:moment-map} We denote the pull-back via $e$ of  the moment maps associated to the relative group scheme $\pr:\cA\times_{\cS}\cA^{\vee}\to \cA$ by
\begin{equation*}
\mom_e:=\mom_{\widehat{\cA}^{\vee}}:\sO_{\widehat{\cA}^{\vee}}\to \widehat{\TSym}({\omega}_{\cA^{\vee}/\cS}).
\end{equation*}
Similarly, for the relative group scheme
$\pr:\cA\times_{\cS}\cA^{\natural}\to \cA$ 
\begin{equation*}
\mom_e:=\mom_{\widehat{\cA}^{\natural}}:\sO_{\widehat{\cA}^{\natural}}\to \widehat{\TSym}(\sH).
\end{equation*}
\end{definition}

\begin{definition}
We  apply the Definition \ref{def:infinitesimal-sheaf} to the Poincar\'e bundle $\sP$ on $\cA\times_{\cS}\cA^{\vee}$ and the map $\id\times e^{\vee}:\cA\to \cA\times_{\cS}\cA^{\vee}$ and get the \emph{completion of the Poincar\'e bundle} \nomenclature{$\widehat{\sP}$}{\nomrefpage}
\begin{equation*}
\widehat{\sP}:=(\id\times e^{\vee})^{-1}(\prolim\sP\otimes_{\sO_{\cA\times_{\cS}\cA}}\sO_{\cA\times_{\cS}\cA^{\vee(n)}})\isom (\id\times\widehat{\pi})_*\iota_{\cA\times_\cS\widehat{\cA}}^{*}\sP
\end{equation*}
where $\iota_{\cA\times_\cS\widehat{\cA}}:\cA\times_\cS\widehat{\cA}\to \cA\times_\cS\cA^{\vee}$ is the natural map. Then $\wP$ is an
$\sO_{\cA\times_{\cS}\widehat{\cA}^{\vee}}$-module
on $\cA$. We also consider the $\sO_{\cA\times_{\cS}\cA^{\vee(n)}}$-module 
\begin{equation*}
\sP^{(n)}:=(\id\times\pi^{\vee(n)})_*(\sP\mid_{\cA\times_\cS\cA^{\vee(n)}})
\end{equation*}
\nomenclature{$\sP^{(n)}$}{\nomrefpage}
on $\cA$. One has $\widehat{\sP}\isom \prolim_n\sP^{(n)}$.
Similarly, for $\iota_{\cA\times_{\cS}\widehat{\cA}^{\natural}}:\cA\times_{\cS}\widehat{\cA}^{\natural}\to \cA\times_{\cS}\cA^{\natural}$ one has
\nomenclature{$\widehat{\sP}^{\natural}$}{\nomrefpage}
\begin{equation*}
\widehat{\sP}^{\natural}\isom(\id\times \widehat{\pi}^{\natural})_* \iota_{\cA\times_{\cS}\widehat{\cA}^{\natural}}^{*}\sP
\end{equation*}
which is an $\sO_{\cA\times_{\cS}\widehat{\cA}^{\natural}}$-module on $\cA$. Further, we let
\begin{equation*}
\sP^{\natural(n)}:=\pi^{\natural(n)}_*(\sP^{\natural}\mid_{\sO_{\cA\times_\cS\cA^{\natural(n)}}})
\end{equation*}
so that $\widehat{\sP}^{\natural}=\prolim_n\sP^{\natural(n)}$. The  sheaves
$\widehat{\sP}^{\natural}$ and $\sP^{\natural(n)}$ also inherit a relative connection    $\nabla_{\wP}$ resp. $\nabla_{\sP^{\natural(n)}}$.
\end{definition}

Observe that the rigidifications of $\sP$ and $\sP^{\natural}$ induce isomorphisms $\sP^{(0)}\isom \sO_\cA$ and $\sP^{\natural(0)}\isom \sO_\cA$.
With this one deduces from \eqref{eq:Gn-exact-sequence} for $\cG=\cA^{\vee}$ and $\cG=\cA^{\natural}$ the exact sequences
\begin{equation*}
0\to \pi^{*}\Sym_{\sO_\cS}^{n}(\omega_{\cA^{\vee}/\cS})\to \sP^{(n)}\to \sP^{(n-1)}\to 0
\end{equation*}
and 
\begin{equation*}
0\to \pi^{*}\Sym_{\sO_\cS}^{n}(\sH)\to \sP^{\natural(n)}\to \sP^{\natural(n-1)}\to 0
\end{equation*}
where $\sH$ is defined in Definition \ref{def:sH}, see also \eqref{eq:sH-def}.
Let 
\begin{align*}
\sP\to \sP^{(n)}&&\mbox{and}&&\sP^{\natural}\to \sP^{\natural(n)}
\end{align*}
be the natural maps. Then the rigidifications of $\sP$ and $\sP^{\natural}$ induce sections 
\begin{equation*}
\bfone^{(n)}:\sO_\cS\to e^{*}\sP^{(n)}\mbox{ and }\bfone^{(n)}:\sO_\cS\to e^{*}\sP^{\natural(n)}
\end{equation*} 
which are compatible for different $n$ and give in the limit
\begin{align}\label{eq:splittings}
\bfone:\sO_\cS\to e^{*}\wP\isom \sO_{\widehat{\cA}^{\vee}}&&\mbox{and}&&\bfone:\sO_\cS\to e^{*}\widehat{\sP^{\natural}}\isom \sO_{\widehat{\cA}^{\natural}}.
\end{align}
Further, as $\sP^{\natural}$ is the pull-back of $\sP$ to $\cA\times_{\cS}\cA^{\natural}$ one has an inclusion 
\begin{equation}\label{eq:sP-inclusion}
\sP^{(n)}\to\sP^{\natural(n)}\isom \sP^{(n)}\otimes_{\sO_{\cA\times_{\cS}\cA^{\vee(n)}}}\sO_{\cA\times_{\cS}\cA^{\natural(n)}}.
\end{equation}
which is 
compatible with the sections $\bfone^{(n)}$.

\subsection{Properties of the completed Poincar\'e bundle}
In this section we discuss the properties of  the completed Poincar\'e bundles $\sP^{(n)}$, $\wP$, $\sP^{\natural(n)}$ and $\widehat{\sP}^{\natural}$ that are important for the construction of our equivariant coherent Eisenstein-Kronecker classes. The properties are similar to the one of the logarithm sheaf (see for example \cite{HuKiPol}). 
In fact, one can show that $\widehat{\sP}^{\natural}$ is isomorphic to the logarithm sheaf if the base scheme $\cS$ is of characteristic zero (see Theorem \ref{thm:scheider-theorem} below).
We advise the reader to skip the (technical) proofs in this section in a first reading.

Let $\varphi:\cA\to \cB$ be an isogeny and $\varphi^{\vee}:\cB^{\vee}\to \cA^{\vee}$ resp. $\varphi^{\natural}:\cB^{\natural}\to \cA^{\natural}$ its dual. The universal property of the Poincar\'e bundles gives rise to  isomorphisms
\begin{align}\begin{split}
\label{eq:isogeny-prop}
( \varphi\times \id)^{*}\sP_\cB&\isom (\id\times\varphi^{\vee})^{*}\sP_\cA\\
( \varphi\times \id)^{*}\sP^{\natural}_\cB&\isom (\id\times\varphi^{\natural})^{*}\sP^{\natural}_\cA
\end{split}
\end{align}
where $\sP_\cA$ resp. $\sP_\cB$ is the Poincar\'e bundle of $\cA$ resp. $\cB$ and similarly for $\sP^{\natural}$.
\begin{theorem}[Functoriality]\label{thm:functoriality}
Let $\varphi:\cA\to \cB$ be an isogeny. Then one has a canonical maps
\nomenclature{$\varphi^{(n)}_{\#}$}{\nomrefpage}
\begin{align*}
\varphi^{(n)}_{\#}:\sP^{(n)}_\cA\to \varphi^{*}\sP^{(n)}_\cB&&\varphi^{(n)}_{\#}:\sP^{\natural(n)}_\cA\to \varphi^{*}\sP^{\natural(n)}_\cB.
\end{align*}
If $\varphi^{\vee}$ resp. $\varphi^{\natural}$ is \'etale (for example if $\deg\varphi$ is invertible on $\cS$) $\varphi^{(n)}_{\#}$ is an isomorphism. 
The maps $\varphi^{(n)}_\#$ are compatible for different $n$ and one obtains \nomenclature{$\varphi_{\#}$}{\nomrefpage}
\begin{align*}
\varphi_{\#}:\widehat{\sP}_\cA\to \varphi^{*}\widehat{\sP}_\cB.&&\varphi_{\#}:\widehat{\sP}_\cA^{\natural}\to \varphi^{*}\widehat{\sP}_\cB^{\natural}.
\end{align*}
\end{theorem}
\begin{proof} We prove the statement for $\sP^{(n)}$ using \eqref{eq:isogeny-prop}. For $\sP^{\natural(n)}$ it is completely analogous.

Let $\varphi^{\vee(n)}:\cB^{\vee(n)}\to \cA^{\vee(n)}$ be the map induced by $\varphi^{\vee}$ and denote by $\pi_\cA^{\vee(n)}:\cA^{\vee(n)}\to \cS$ and $\pi_\cB^{\vee(n)}:\cB^{\vee(n)}\to \cS$ the structure maps. Then by definition
$\sP^{(n)}_\cA\isom (\id\times\pi_\cA^{\vee(n)})_*\sP_\cA{\mid_{\cA\times \cA^{\vee(n)}}}$ and similarly for $\sP^{(n)}_\cB$. From \eqref{eq:isogeny-prop} one has 
\begin{equation*}\label{eq:inf-isogeny-prop}
(\id_\cA\times\varphi^{\vee(n)})^{*}\sP_\cA{\mid_{\cA\times \cA^{\vee(n)}}}\isom (\varphi\times \id_{\cB^{\vee(n)}})^{*}\sP_\cB{\mid_{\cB\times \cB^{\vee(n)}}}.
\end{equation*}
The desired map
$\varphi^{(n)}_\#:\sP^{(n)}_\cA\to \varphi^{*}\sP^{(n)}_\cB$ is now the composition
\begin{align}\begin{split}
\label{eq:adjunction}\raisetag{-40pt}
\sP^{(n)}_\cA=(\id_\cA\times \pi_\cA^{\vee(n)})_*\sP_\cA{\mid_{\cA\times \cA^{\vee(n)}}}&\to (\id_\cA\times \pi_\cA^{\vee(n)})_*(\id_\cA\times\varphi^{\vee(n)})_*(\varphi\times\id_{\cB^{\vee(n)}})^{*}\sP_\cB{\mid_{\cB\times \cB^{\vee(n)}}}\\
&\isom (\id_\cA\times \pi_\cB^{\vee(n)})_*(\varphi\times \id_{\cB^{\vee(n)}})^{*}\sP_\cB{\mid_{\cB\times \cB^{\vee(n)}}}\\
&\isom \varphi^{*}(\id_\cB\times\pi_\cB^{\vee(n)})_*\sP_\cB{\mid_{\cB\times \cB^{\vee(n)}}}=\varphi^{*}\sP^{(n)}_\cB
\end{split}
\end{align}
where the isomorphism between the last two lines comes from the base change
\begin{equation*}
\xymatrixcolsep{5pc}\xymatrix{\cA\times_\cS \cB^{\vee(n)}\ar[r]^{\varphi\times\id_\cB^{\vee(n)}}\ar[d]_{\id_\cA\times\pi_\cB^{\vee(n)}}&\cB\times_\cS \cB^{\vee(n)}\ar[d]^{\id_\cB\times\pi_\cB^{\vee(n)}}\\
\cA\ar[r]^{\varphi}&\cB.
}
\end{equation*}
If  $\varphi^{\vee}$ is \'etale the map $\varphi^{\vee(n)}:\cB^{\vee(n)}\to \cA^{\vee(n)}$ is an isomorphism and hence the adjunction in \eqref{eq:adjunction} is an isomorphism. 
\end{proof}
The functoriality for isogenies leads to the important splitting principle.
\begin{corollary}
[Splitting principle]\label{cor:coh-log-splitting} Let $\varphi:\cA\to \cB$ be an isogeny and $x:\cS\to \ker\varphi$ a $\varphi$-torsion section.
Then $\varphi_{\#}$ induces a map
\begin{equation*}
\varphi_{\# x}:x^{*}\widehat{\sP}_\cA\to x^{*}\varphi^{*}\widehat{\sP}_\cB\isom e^{*}\widehat{\sP}_\cB\isom \sO_{\widehat{\cB}^{\vee}}.
\end{equation*}
If  $\varphi^{\vee}$ is \'etale one has an isomorphism
\begin{equation*}
\widehat{\sP}_\cA{\mid_{\ker{\varphi}}}\isom\pi_{\ker(\varphi)}^{*}\sO_{\widehat{\cB}^{\vee}},
\end{equation*}
where  $\pi_{\ker(\varphi)}:\ker\varphi\to \cS$ is the structure map and a canonical isomorphism 
\begin{equation*}
\varrho_x:x^{*}\widehat{\sP}_\cA\isom \sO_{\widehat{\cA}^{\vee}}.
\end{equation*}
\nomenclature{$\varrho_x$}{\nomrefpage}
The same statement holds for $\widehat{\sP}^{\natural}_\cA$ and $\widehat{\sP}^{\natural}_\cB$ if $\varphi^{\natural}$ is \'etale.
\end{corollary}
\begin{proof}
The first two statements are clear. 
The isomorphism $\varrho_x$ is the composition
\begin{equation*}
\varrho_x:x^{*}\widehat{\sP}_\cA\xrightarrow[\isom]{\varphi_\#}x^{*}\varphi^{*}\widehat{\sP}_\cB=e^{*}\widehat{\sP}_\cB \xleftarrow[\isom]{\varphi_\#}e^{*}\widehat{\sP}_\cA\isom \sO_{\widehat{\cA}^{\vee}}.\qedhere
\end{equation*}
\end{proof}
\begin{definition}\label{def:moment-x} Let $\varphi:\cA\to \cB$ be an isogeny with \'etale dual $\varphi^{\vee}$ and $x$ a $\varphi$-torsion section. Let 
\begin{equation*}
_\varrho\mom_x:=\mom_e\circ \varrho_{x} :x^{*}\widehat{\sP}_\cA\isom  \sO_{\widehat{\cA}^{\vee}}\to \widehat{\TSym}(\omega_{\cA^{\vee}/\cS})
\end{equation*}
\nomenclature{$_\varrho\mom_x$}{\nomrefpage}
be the composition of $\varrho_x$ from Corollary \ref{cor:coh-log-splitting} with the moment map defined in Definition \ref{def:moment-map}. Similarly
\begin{equation*}
_\varrho\mom_x:=\mom_e\circ \varrho_{x} :x^{*}\widehat{\sP}^{\natural}_\cA\isom  \sO_{\widehat{\cA}^{\natural}}\to \widehat{\TSym}(\sH),
\end{equation*}
if $\varphi^\natural$ is \'etale. In both cases, we write 
\begin{align*}
_\varrho\mom_x^{b}:x^{*}\widehat{\sP}_\cA\to \TSym^{b}(\omega_{\cA^{\vee}/\cS})&&
_\varrho\mom_x^{b}:x^{*}\widehat{\sP}_\cA^{\natural}\to \TSym^{b}(\sH)
\end{align*}
for the projections onto the $b$-component.
\end{definition}
The functoriality implies immediately that $\wP$ and $\widehat{\sP^{\natural}}$ are $\Gamma$-equivariant sheaves:
\begin{corollary}[$\Gamma$-equivariance]
Let $\Gamma$ be a discrete group acting (from the left) via (relative) automorphism on $\cA/\cS$. For each $\gamma\in \Gamma$ one has an isomorphisms
\begin{align*}
(\gamma_\#)^{-1}:\gamma^{*}\widehat{\sP}\isom \widehat{\sP}&&(\gamma_\#)^{-1}:\gamma^{*}\widehat{\sP}^{\natural}\isom \widehat{\sP}^{\natural}.
\end{align*}
\end{corollary}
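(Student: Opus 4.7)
The plan is to recognize this as an immediate consequence of the functoriality Theorem \ref{thm:functoriality}. Since each $\gamma\in\Gamma$ acts on $\cA/\cS$ by an automorphism, it is in particular an isogeny $\gamma:\cA\to\cA$ (take $\cB=\cA$ in the theorem). Its dual $\gamma^{\vee}:\cA^{\vee}\to\cA^{\vee}$ is also an automorphism, hence certainly \'etale, so the ``if $\varphi^{\vee}$ is \'etale'' clause of Theorem \ref{thm:functoriality} applies and yields an isomorphism
\begin{equation*}
\gamma_{\#}^{(n)}:\sP^{(\natural)(n)}\isom \gamma^{*}\sP^{(\natural)(n)}
\end{equation*}
for every $n$.

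Passing to the limit along the compatibility of the maps $\gamma_{\#}^{(n)}$ for varying $n$ (which is the last part of Theorem \ref{thm:functoriality}) gives an isomorphism $\gamma_{\#}:\widehat{\sP^{(\natural)}}\isom \gamma^{*}\widehat{\sP^{(\natural)}}$. Inverting this isomorphism produces the desired
\begin{equation*}
(\gamma_{\#})^{-1}:\gamma^{*}\widehat{\sP^{(\natural)}}\isom \widehat{\sP^{(\natural)}}.
\end{equation*}

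There is essentially no obstacle here: the content is entirely in Theorem \ref{thm:functoriality}, and the corollary only packages the case $\varphi=\gamma\in\Aut_{\cS}(\cA)$. The only minor thing worth remarking is that one should think of this as giving $\widehat{\sP^{(\natural)}}$ the structure of a $\Gamma$-equivariant sheaf on $\cA$; the cocycle condition $(\gamma_{1}\gamma_{2})_{\#}=\gamma_{2}^{*}(\gamma_{1,\#})\circ \gamma_{2,\#}$ that makes this structure coherent under composition of elements of $\Gamma$ follows from the manifest functoriality of the construction \eqref{eq:adjunction} in the isogeny $\varphi$, since both sides arise from the same adjunction chain applied to the composite isogeny $\gamma_{1}\gamma_{2}$.
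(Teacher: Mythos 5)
Your proposal is correct and matches the paper exactly: the paper gives no separate proof, stating only that the corollary follows immediately from the functoriality Theorem \ref{thm:functoriality} applied to the automorphism $\gamma$ (whose dual is automatically \'etale). Your additional remark on the cocycle condition is a sensible, if not strictly required, elaboration.
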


\begin{proposition}[Comultiplication]
For all $m,n$ there are canonical homomorphisms 
\begin{align*}
\sP^{(n+m)}\to \sP^{(n)}\otimes_{\sO_\cA}\sP^{(m)}
\end{align*}
and hence  homomorphisms
\begin{align*}
\widehat{\sP}\to \widehat{\sP}\widehat{\otimes}_{\sO_\cA}\widehat{\sP}
\end{align*} 
(completed tensor product) whose associated graded
\begin{equation*}
\pi^{*}\Sym_{\sO_\cS}^{\cdot}(\omega_{\cA^{\vee}/\cS})\to \pi^{*}\Sym_{\sO_\cS}^{\cdot}(\omega_{\cA^{\vee}/\cS})\otimes_{\sO_\cA} \pi^{*}\Sym_{\sO_\cS}^{\cdot}(\omega_{\cA^{\vee}/\cS})
\end{equation*}
is the homomorphism induced by the diagonal $\omega_{\cA^{\vee}/\cS}\to \omega_{\cA^{\vee}/\cS}\oplus \omega_{\cA^{\vee}/\cS}$. In particular, the comultiplication is co-commutative. The analogous statement holds for $\sP^{\natural(n)}$ and $\widehat{\sP}^{\natural}$.
\end{proposition}
\begin{proof} We give the proof for  $\sP^{(n)}$ for $\sP^{\natural(n)}$ it is exactly the same.

Let $\mu^{{\vee}(n,m)}:\cA^{{\vee}(n)}\times_\cS\cA^{{\vee}(m)}\to \cA^{{\vee}(n+m)}$ be induced by the group law and $\pi^{\vee(n)}:\cA^{\vee(n)}\to \cS$ the structure map. As $\sP$ satisfies the theorem of the square one gets
\begin{equation*}
(\id_\cA\times \mu^{{\vee}(n,m)})^{*}\sP{\mid_{\cA\times\cA^{{\vee}(n+m)}}}\isom
(\id_{\cA}\times \pr_1)^{*}\sP{\mid_{\cA\times\cA^{{\vee}(n)}}}\otimes (\id_{\cA}\times\pr_2)^{*}\sP{\mid_{\cA\times\cA^{{\vee}(m)}}}.
\end{equation*}
Using adjunction for $(\id_\cA\times \mu^{{\vee}(n,m)})^{*}$ and then push-forward with $(\id_\cA\times \pi^{{\vee}(n+m)})_*$ one gets a homomorphism
\begin{align*}
\sP^{(n+m)}&\isom(\id_\cA\times \pi^{{\vee}(n+m)})_*\sP{\mid_{\cA\times\cA^{{\vee}(n+m)}}}\\
&\to (\id_\cA\times \pi^{{\vee}(n)}\times \pi^{{\vee}(m)})_*\left((\id_{\cA}\times \pr_1)^{*}\sP{\mid_{\cA\times\cA^{{\vee}(n)}}}\otimes (\id_{\cA}\times\pr_2)^{*}\sP{\mid_{\cA\times\cA^{{\vee}(m)}}}\right)\\
&\isom (\id_\cA\times \pi^{{\vee}(n)})_*\sP{\mid_{\cA\times\cA^{{\vee}(n)}}}\otimes (\id_\cA\times \pi^{{\vee}(m)})_*\sP{\mid_{\cA\times\cA^{{\vee}(m)}}}\\
&=\sP^{(n)}\otimes_{\sO_{\cA}} \sP^{(m)}
\end{align*}
(using  $\pi^{{\vee}(n+m)}\circ \mu^{{\vee}(n,m)}=\pi^{{\vee}(n)}\times \pi^{{\vee}(m)}$).
The associated graded of $\wP$ is $\pi^{*}\Sym_{\sO_\cS}(\omega_{\cA^{\vee}/\cS})$
which is also the associated graded of $e^{*}\wP\isom \sO_{\widehat{\cA}^{\vee}}$. To see what the comultiplication looks like one might look at the pull-back $e^{*}$. Hence the associated graded of the map
$\wP\to \wP\widehat{\otimes}\wP$ coincides with the associated graded of 
the comultiplication $\sO_{\widehat{\cA}^{\vee}}\to \sO_{\widehat{\cA}^{\vee}}\widehat{\otimes}_{\sO_\cA}\sO_{\widehat{\cA}^{\vee}}$
of the formal group $\indlim_n{\cA^{\vee(n)}}$,
which is 
\begin{equation*}
\pi^{*}\Sym_{\sO_\cS}^{\cdot}(\omega_{\cA^{\vee}/\cS})\to \pi^{*}\Sym_{\sO_\cS}^{\cdot}(\omega_{\cA^{\vee}/\cS})\otimes_{\sO_\cA} \pi^{*}\Sym_{\sO_\cS}^{\cdot}(\omega_{\cA^{\vee}/\cS})
\end{equation*}
This map is induced from the dual of the addition $\Lie(\cA^{\vee}/S)\oplus \Lie(\cA^{\vee}/S)\to \Lie(\cA^{\vee}/S)$ which is the diagonal $\omega_{\cA^{\vee}/\cS}\to \omega_{\cA^{\vee}/\cS}\oplus\omega_{\cA^{\vee}/\cS}$.
\end{proof}
\begin{remark}
The comultiplication structure for  $\widehat{\sP}$ and $\widehat{\sP}^{\natural}$ in the proposition just reflects the fact that the torsors associated  to $\sP$ and $\sP^{\natural}$ have a partial group law. 
\end{remark}
\begin{corollary}\label{cor:comultiplication}
One has  homomorphisms
\begin{align*}
\sP^{(n)}\to \TSym^{n}_{\sO_\cA}(\sP^{(1)})&&
\sP^{\natural(n)}\to \TSym^{n}_{\sO_\cA}(\sP^{\natural(1)})
\end{align*}
which are  isomorphisms, if $n!$ is invertible on $\cS$.
\end{corollary}
\begin{proof} Again we prove only the case $\sP^{(n)}$ as the proof for $\sP^{\natural(n)}$ is entirely similar. The map $\sP^{(n)}\to \sP^{(1)}\otimes\cdots \otimes \sP^{(1)}$ ($n$-factors) factors through $ \TSym^{n}_{\sO_\cA}(\sP^{(1)})$ as the comultiplication is co-commutative and the map on the associated graded  in degree $k\le n$ is the canonical map
\begin{equation*}
\Sym_{\sO_\cS}^{k}(\omega_{\cA^{\vee}/\cS})\to \TSym^{k}_{\sO_\cS}(\omega_{\cA^{\vee}/\cS}),
\end{equation*}
which is an isomorphism if $k!$ is invertible. 
\end{proof}

The known higher direct images of the Poincar\'e bundle  allow to compute the cohomology of $\wP$. 
\begin{theorem}[Vanishing of cohomology]\label{thm:cohomology-vanishing}There is a canonical isomorphism 
\begin{equation*}
R^{i}\pi_*(\wP\otimes {\Omega^{d}_{\cA/\cS}})\isom \begin{cases}
\sO_\cS&\mbox{ if }i=d\\
0&\mbox{ if }i\neq d.
\end{cases}
\end{equation*}
\end{theorem}
\begin{remark}One has a similar result for $\sP^{\natural}$ if  $\cS$ is a scheme over a field of characteristic zero.  A proof can be found in \cite[Theorem 1.2.1]{Scheider}. 
\end{remark}
\begin{proof} Recall from \eqref{eq:omega-i-def} that $\omega_{\cA^{\vee}/\cS}^{d}:=\Lambda^{d}\omega_{\cA^{\vee}/\cS}$. By the projection formula, one has
\begin{equation*}
R\pi_*(\wP\otimes {\Omega^{d}_{\cA/\cS}})\isom (R\pi_*\wP)\otimes {\omega^{d}_{\cA/\cS}}.
\end{equation*}
Consider the closed immersion $\id\times e^{\vee}:\cA\to \cA\times_\cS\cA^{\vee}$ the natural map $\iota_{\cA\times_\cS\widehat{\cA}^{\vee}}:\cA\times_\cS\widehat{\cA}^{\vee}\to \cA\times_\cS\cA^{\vee}$
and the projection $\pr^{\vee}:\cA\times_\cS\cA^{\vee}\to \cA^{\vee}$. Then \begin{equation*}
(\id\times e^{\vee})_*\wP\isom (\id\times e^{\vee})_*(\id\times\widehat{\pi})_*\iota_{\cA\times_\cS\widehat{\cA}}^{*}\sP\isom \iota_{\cA\times_\cS\widehat{\cA}^{\vee}*}\iota_{\cA\times_\cS\widehat{\cA}^{\vee}}^{*}\sP
\end{equation*} 
and  $\pi=\pi^{\vee}\circ \pr^{\vee}\circ (\id\times e^{\vee})$.
This gives (note that $\iota_{\cA\times_\cS\widehat{\cA}^{\vee}}$ is flat)
\begin{equation*}
R\pi_*\wP\isom R\pi^{\vee}_*R\pr^{\vee}_*R\iota_{\cA\times_\cS\widehat{\cA}^{\vee}*}\iota_{\cA\times_\cS\widehat{\cA}^{\vee}}^{*}\sP
\end{equation*}
Let $\iota_{\widehat{\cA}^{\vee}}:\widehat{\cA}^{\vee}\to {\cA}^{\vee}$ be the natural map and $\widehat{\pr}^{\vee}:\cA\times_\cS\widehat{\cA}^{\vee}\to \widehat{\cA}^{\vee}$ the projection. Then $\pr^{\vee}\circ \iota_{\cA\times_\cS\widehat{\cA}^{\vee}}=\iota_{\widehat{\cA}^{\vee}}\circ \widehat{\pr}^{\vee}$ so that
\begin{equation*}
R\pr^{\vee}_*R\iota_{\cA\times_\cS\widehat{\cA}^{\vee}*}\iota_{\cA\times_\cS\widehat{\cA}^{\vee}}^{*}\sP\isom
 R\iota_{\widehat{\cA}^{\vee}*}R\widehat{\pr}^{\vee}_*\iota_{\cA\times_\cS\widehat{\cA}^{\vee}}^{*}\sP.
\end{equation*}
By the theorem on formal functions \cite[Thm. 4.1.5]{EGAIII} one has 
\begin{equation*}
R\widehat{\pr}^{\vee}_*\iota_{\cA\times_\cS\widehat{\cA}^{\vee}}^{*}\sP
\isom \iota_{\widehat{\cA}^{\vee}}^{*} R\pr^{\vee}_*\sP
\end{equation*}
and using the well-known cohomology of the Poincar\'e bundle (see \cite[Lemme 1.2.5]{Laumon}, observe that his $\omega_{\cA^{\vee}/\cS}$ is our $\omega_{\cA^{\vee}/\cS}^{d}$) one has
\begin{equation*}
R\pr^{\vee}_*\sP\isom e^{\vee}_*\omega_{\cA^{\vee}/\cS}^{-d}[-d].
\end{equation*}
Putting these results together gives 
\begin{equation*}
R\pi_*(\wP\otimes {\Omega^{d}_{\cA/\cS}})\isom R\pi^{\vee}_*e^{\vee}_*\omega^{d}_{\cA/\cS}\otimes\omega_{\cA^{\vee}/\cS}^{-d}[-d]\isom \sO_\cS[-d]. \qedhere
\end{equation*}
\end{proof}

\begin{corollary}\label{cor:vanishing-equiv-coh} Let $\Gamma$ act by relative automorphisms on $\cA/\cS$. 
For the equivariant  cohomology (see Appendix \ref{app1}) one has 
\begin{equation*}
H^{i}(\cA,\Gamma;\wP\otimes\Omega^{d}_{\cA/\cS})\isom 
\begin{cases}
H^{0}(\cS,\sO_{\cS})&i=d\\
0&i<d.
\end{cases}
\end{equation*}
\end{corollary}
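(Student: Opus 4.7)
The plan is to combine the cohomology computation of Theorem~\ref{thm:cohomology-vanishing} with the Grothendieck spectral sequence computing equivariant coherent cohomology. By definition (see Appendix~\ref{app1}), the equivariant functor $\mathcal{F} \mapsto H^0(\cA,\mathcal{F})^{\Gamma}$ factors as the composition $H^0(\Gamma,-)\circ H^0(\cA,-)$, so there is a convergent spectral sequence
\begin{equation*}
E_2^{p,q} = H^p\bigl(\Gamma, H^q(\cA, \wP\otimes \pi^{*}\omega^{d}_{\cA^{\vee}/\cS})\bigr) \Longrightarrow H^{p+q}\bigl(\cA,\Gamma;\wP\otimes \pi^{*}\omega^{d}_{\cA^{\vee}/\cS}\bigr).
\end{equation*}

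First I would reduce the computation of $H^{q}(\cA,\wP\otimes\pi^{*}\omega^{d}_{\cA^{\vee}/\cS})$ to Theorem~\ref{thm:cohomology-vanishing} by running the Leray spectral sequence for the structure map $\pi\colon\cA\to\cS$. Since Theorem~\ref{thm:cohomology-vanishing} tells us that $R^{i}\pi_{*}(\wP\otimes\pi^{*}\omega^{d}_{\cA^{\vee}/\cS})$ vanishes for $i\neq d$ and equals $\sO_{\cS}$ for $i=d$, the Leray sequence degenerates and gives
\begin{equation*}
H^{q}(\cA,\wP\otimes\pi^{*}\omega^{d}_{\cA^{\vee}/\cS}) \;\isom\; \begin{cases} H^{q-d}(\cS,\sO_{\cS}) & q\geq d, \\ 0 & q<d. \end{cases}
\end{equation*}

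Plugging this into the spectral sequence above, every term with $q<d$ vanishes, so $H^{i}(\cA,\Gamma;\wP\otimes\pi^{*}\omega^{d}_{\cA^{\vee}/\cS})=0$ for $i<d$. For $i=d$ the only surviving contribution on the $E_{2}$-page is
\begin{equation*}
E_2^{0,d} \;=\; H^{0}\bigl(\Gamma,H^{d}(\cA,\wP\otimes\pi^{*}\omega^{d}_{\cA^{\vee}/\cS})\bigr) \;\isom\; H^{0}\bigl(\Gamma,H^{0}(\cS,\sO_{\cS})\bigr),
\end{equation*}
and since $\Gamma$ acts on $\cA$ by $\cS$-automorphisms it acts trivially on $\sO_{\cS}$ and hence on $H^{0}(\cS,\sO_{\cS})$. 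Thus the $\Gamma$-invariants are the whole group and we obtain the required identification $H^{d}(\cA,\Gamma;\wP\otimes\pi^{*}\omega^{d}_{\cA^{\vee}/\cS})\isom H^{0}(\cS,\sO_{\cS})$.

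The only subtle point I foresee is verifying that the Grothendieck spectral sequence and the Leray spectral sequence are both available in the pro-coherent setting of $\wP=\varprojlim_{n}\sP^{(n)}$; this should follow from the fact that each $\sP^{(n)}$ is a genuine coherent sheaf for which both spectral sequences are standard, and the inverse system satisfies the Mittag-Leffler condition (indeed the transition maps $\sP^{(n+1)}\to\sP^{(n)}$ are surjective), so the cohomology of the limit is computed termwise. Once this bookkeeping is in place, the result follows formally from Theorem~\ref{thm:cohomology-vanishing}.
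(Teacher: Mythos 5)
Your proposal is correct and follows essentially the same route as the paper: Theorem \ref{thm:cohomology-vanishing} plus the Leray spectral sequence for $\pi$ gives $H^{q}(\cA,\wP\otimes\pi^{*}\omega^{d}_{\cA^{\vee}/\cS})\isom H^{q-d}(\cS,\sO_{\cS})$, and then the equivariant spectral sequence \eqref{eq:coh-equiv-spectral-seq} yields the vanishing for $i<d$ and the identification $H^{0}(\cS,\sO_{\cS})^{\Gamma}=H^{0}(\cS,\sO_{\cS})$ in degree $d$. Your extra remark on the pro-coherent bookkeeping is a reasonable precaution but not a deviation from the paper's argument.
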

\begin{proof} 
The Leray spectral sequence
\begin{equation*}
H^{p}(\cS,\Gamma;R^{q}\pi_*(\wP\otimes\Omega^{d}_{\cA/\cS}))\Rightarrow H^{p+q}(\cA,\Gamma;\wP\otimes \Omega^{d}_{\cA/\cS})
\end{equation*}
degenerates by  Theorem \ref{thm:cohomology-vanishing} and gives 
\begin{equation*}
H^{p}(\cA,\Gamma;\wP\otimes \Omega^{d}_{\cA/\cS})\isom H^{p-d}(\cS,R^{d}\pi_*(\wP\otimes \Omega^{d}_{\cA/\cS}))\isom \begin{cases}
H^{0}(\cS,\sO_{\cS})&p=d\\
0&p<d.
\end{cases}\qedhere
\end{equation*}
\end{proof}

\subsection{The equivariant coherent Eisenstein-Kronecker class}
We consider the abelian scheme $\cA$ over  a noetherian base $\cS$ with a relative action of $\Gamma$ by automorphisms. 

\begin{definition} For $\cD\subset \cA$ a closed subscheme which is finite \'etale over $\cS$ and stable under $\Gamma$, we let 
\begin{equation*}
\sO_{\cS}[\cD]:=H^{0}(\cD,\sO_\cD).
\end{equation*}
\nomenclature{$\sO_{\cS}[\cD]$}{\nomrefpage}
Further, we write $\sO_\cS[\cD]^{0}:=\ker(H^{0}(\cD,\sO_\cD)\to H^{0}(\cS,\sO_\cS))$ where the map is induced by the trace map and let
\begin{equation*}
\sO_\cS[\cD]^{0,\Gamma}:=\ker(H^{0}(\cD,\sO_\cD)\to H^{0}(\cS,\sO_\cS))^{\Gamma}
\end{equation*}
be the $\Gamma$-invariants.
\end{definition}
Note that for an \'etale isogeny $\delta$, the scheme $\ker\delta/\cS$ is unramified. Hence the section $e$ is an open immersion, so that $\ker\delta\smallsetminus\{e(\cS)\}\subset \cA$ is still a closed subscheme which is finite \'etale over $\cS$.

We want to consider closed subschemes $\cD\subset \cA$ \'etale over $\cS$ with special properties. 
\begin{notation}\label{not:D-def} Let $\delta:\cA\to \cA'$ be an \'etale isogeny with \'etale dual $\delta^{\vee}$.   We denote by $\cD$ one of the two closed subschemes 
\begin{align*}
\cD:= \ker\delta&&\mbox{or}&& \cD:=\ker\delta\smallsetminus \{e(\cS)\}.
\end{align*}
Let $\iota:\cD\to \cA$ be the closed immersion and $\pi_{\cD}:\cD\to \cS$ the finite \'etale structure map. We let  $\cU_\cD:=\cA\smallsetminus \cD$ be the complement of $\cD$ and $j:\cU_\cD\to \cA$ the open immersion. 
\end{notation}
Note that $\cD$ and $\cU_{\cD}$ are  $\Gamma$-stable subschemes and that   $\iota:\cD\to \cA$ is a locally complete intersection (see \cite[III. Prop. 1.2]{Hartshorne}). 
Summarizing, one obtains the following $\Gamma$-equivariant diagram
\begin{equation}\label{eq:set-up-over-R}
\xymatrix{
\cD\ar[r]^{\iota}\ar[rd]_{\pi_\cD}& \cA\ar[d]^{\pi}&\cU_\cD:=\cA\smallsetminus \cD\ar[l]_/.7em/{j}\ar[ld]^{\pi_\cU}\\&\cS.
}
\end{equation}
Moreover, for such $\cD$ one has (as $\delta$ and $\delta^{\vee}$ are \'etale) by Corollary \ref{cor:coh-log-splitting} an isomorphism
\begin{equation*}
\delta_\#:\wP{\mid_{\cD}}\isom \pi_\cD^{*}\widehat{\sO}_{\cA'}. 
\end{equation*}
With Corollary \ref{cor:vanishing-equiv-coh} the equivariant localization sequence (see Appendix \ref{sec:appendix_equivariant}) for  the closed subscheme $\cD\subset \cA$ gives 
\begin{equation}\label{eq:localization-sequence}
 0\to H^{d-1}(\cU_\cD,\Gamma;\wP\otimes \Omega^{d}_{\cA/\cS} )\to H^{d}_\cD(\cA,\Gamma;\wP\otimes \Omega^{d}_{\cA/\cS} )\to H^{0}(\cS,\sO_\cS).
\end{equation}
The next theorem associates to  functions in $\sO_{\cS}[\cD]^{0,\Gamma}$ cohomology classes.
\begin{theorem}\label{thm:coh-with-support-descr}Let 
$\cD$ be as in Notation \ref{not:D-def}. Then there is a canonical inclusion 
\begin{equation*}
\sO_\cS[\cD]^{\Gamma}\subset  H^{d}_\cD(\cA,\Gamma;\wP\otimes \Omega^{d}_{\cA/\cS} )
\end{equation*}
whose composition with 
\begin{equation*}
H^{d}_\cD(\cA,\Gamma;\wP\otimes \Omega^{d}_{\cA/\cS} )\to H^{0}(\cS,\sO_\cS)
\end{equation*}
is the trace map $\sO_\cS[\cD]^{\Gamma}\to H^{0}(\cS,\sO_\cS)$. In particular, one has a map
\begin{equation*}
\EK_{\Gamma,\cA}:\sO_\cS[\cD]^{0,\Gamma}\hookrightarrow 
H^{d-1}(\cU_\cD,\Gamma;\wP\otimes \Omega^{d}_{\cA/\cS} ).
\end{equation*}
Furthermore one has 
\begin{equation*}
H^{d-1}(\cU_\cD,\Gamma;\wP\otimes \Omega^{d}_{\cA/\cS} )\isom \prolim
H^{d-1}(\cU_\cD,\Gamma;\sP^{(n)}\otimes \Omega^{d}_{\cA/\cS} ).
\end{equation*}
\end{theorem}
The proof will be given in Section \ref{section-proof-of-thm}.

\begin{definition}[Equivariant coherent Eisenstein-Kronecker class]\label{def:equiv-coh-EK}Let $\EK_{\Gamma,\cA}$ be the map from Theorem \ref{thm:coh-with-support-descr}. 
For $f\in \sO_\cS[\cD]^{0,\Gamma}$  we call 
\begin{equation*}
\EK_{\Gamma,\cA}(f)\in H^{d-1}(\cU_\cD,\Gamma;\wP\otimes \Omega^{d}_{\cA/\cS} )
\end{equation*}
\nomenclature{$\EK_{\Gamma,\cA}(f)$}{\nomrefpage}
the \emph{equivariant coherent Eisenstein-Kronecker class} associated to $f$. 
Using the map $\sP^{(n)}\to \sP^{\natural(n)}$ from \eqref{eq:sP-inclusion} we denote by
\begin{equation*}
\EK_{\Gamma,\cA}^{\natural}(f)\in H^{d-1}(\cU_\cD,\Gamma;\wP^{\natural}\otimes \Omega^{d}_{\cA/\cS} )
\end{equation*}
the image of $\EK_{\Gamma,\cA}(f)$ under the homomorphisms
\begin{equation*}
H^{d-1}(\cU_\cD,\Gamma;\wP\otimes \Omega^{d}_{\cA/\cS} )\to
H^{d-1}(\cU_\cD,\Gamma;{\wP}^{\natural}\otimes \Omega^{d}_{\cA/\cS} ).
\end{equation*}
If $\cA$ is clear from the context, we write $\EK_{\Gamma}(f)$ and
$\EK_{\Gamma}^{\natural}(f)$.
\end{definition}

\begin{remark}
	In the case $d=1$, i.e. $\cA/\cS$ is an elliptic curve, the Eisenstein--Kronecker classes for $\Gamma=\{1\}$ coincide with the so-called \emph{infinitesimal Kronecker sections} 
\begin{equation*}
(l_n^D)_n\in \varprojlim_nH^0(\cU_\cD,\sP^{\natural(n)}\otimes\Omega^1_{\cA/\cS})\cong H^0(\cU_\cD,\widehat{\sP}^{\natural}\otimes\Omega^1_{\cA/\cS})
\end{equation*} 
which have been defined and studied in \cite{Sprang-deRham}.
\end{remark}

The equivariant coherent Eisenstein-Kronecker class  $\EK_{\Gamma}^{\natural}(f)$ is not yet sufficient to construct all cohomology classes we need for the critical $L$-values. 
%
Using the connection on ${\wP^{\natural}}$
\begin{equation*}
\nabla:=\nabla_{{\wP^{\natural}}}:{\wP^{\natural}}\to \Omega_{\cA/\cS}^{1}\otimes{\wP^{\natural}}
\end{equation*}
we construct further cohomology classes out of $\EK_{\Gamma}^{\natural}(f)$ as follows: 
Iterating this connection $a$-times gives a map 
\begin{equation*}
\nabla^{a}:{\widehat{\sP}^{\natural}}\to \TSym^{a}(\Omega_{\cA/\cS}^{1})\otimes{\widehat{\sP}^{\natural}}.
\end{equation*}
\begin{definition}\label{def:pol-derivatives} Let $f\in \sO_{\cS}[\cD]^{0,\Gamma}$. 
The image of $\EK_{\Gamma}^{\natural}(f)$ under $\nabla^{a}$  \begin{equation*}
\nabla^{a}\EK_{\Gamma}^{\natural}(f)\in 
H^{d-1}(\cU_\cD,\Gamma;\TSym^{a}(\Omega_{\cA/\cS}^{1})\otimes{\widehat{\sP}^{\natural}}\otimes \Omega^{d}_{\cA/\cS} ).
\end{equation*}
is called 
\emph{the $a$-th derivative of the equivariant coherent Eisenstein-Kronecker class}.
\end{definition}
We want to specialize these classes at torsion sections and decompose them into components via the moment map from Definition \ref{def:moment-x}
\begin{equation*}
_\varrho\mom_x^{b}:x^{*}\widehat{\sP^{\natural}}\to{\TSym}^{b}(\sH)
\end{equation*}
where $x:\cS\to \cU_{\cD}$ is a $\varphi$-torsion section where $\varphi$ is some isogeny $\varphi:\cA\to \cB$ with \'etale dual $\varphi^{\vee}$. Assume also that $x$ is  fixed by $\Gamma$.
Then the class $\nabla^{a}\EK_{\Gamma}^{\natural}(f)$ can be pulled-back by $x$ to give
\begin{equation*}
x^{*}\nabla^{a}\EK_{\Gamma}^{\natural}(f)\in 
 H^{d-1}(\cS,\Gamma;\TSym^{a}(\omega_{\cA/\cS})\otimes x^{*}{\widehat{\sP}^{\natural}}\otimes \omega^{d}_{\cA/\cS} ).
\end{equation*}
Composing with  the moment map $_\varrho\mom_x^{b}$
gives a class
\begin{equation}\label{eq:Eis-class}
_\varrho\mom^{b}_{x}(\nabla^{a}\EK_{\Gamma}^{\natural}(f))\in 
H^{d-1}(\cS,\Gamma;\TSym^{a}(\omega_{\cA/\cS})\otimes{\TSym}^{b}(\sH)\otimes \omega^{d}_{\cA/\cS} ).
\end{equation}
If $\cS=\cR=\Spec R$ is affine, the spectral sequence for equivariant cohomology \eqref{eq:coh-equiv-spectral-seq} collapses so that
\begin{multline}\label{eq:collapsing}
H^{d-1}(\cR,\Gamma;\TSym^{a}(\omega_{\cA/\cR})\otimes\TSym^{b}(\sH_\cA)\otimes \omega^{d}_{\cA/\cR} )\\
\isom 
H^{d-1}(\Gamma,\TSym^{a}(\omega_{\cA/\cR})\otimes\TSym^{b}(\sH)\otimes \omega^{d}_{\cA/\cR} ),
\end{multline} 
where we consider $\TSym^{a}(\omega_{\cA/\cR})\otimes\TSym^{b}(\sH)\otimes \omega^{d}_{\cA/\cR}$ as $R$-module.

\begin{definition}[Eisenstein-Kronecker class at $x$]\label{def:Eis-classes} Let $\cS=\cR$ affine and 
$x:\cR\to \cU_{\cD}$ a $\varphi$-torsion section fixed by 
$\Gamma\subset \Aut_{\cR}(\cA)$  and assume that $\varphi^{\natural}$ is \'etale. For $f\in R[\cD]^{0,\Gamma}$ we let 
\begin{equation*}
\Eis_{\Gamma}^{b,a}(f,x)\in 
H^{d-1}(\Gamma,\TSym^{a}(\omega_{\cA/\cR})\otimes\TSym^{b}(\sH)\otimes \omega^{d}_{\cA/\cR} )
\end{equation*}
be the image of  the class $_\varrho\mom^{b}_ x(\nabla^{a}\EK_{\Gamma}^{\natural}(f))$ in \eqref{eq:Eis-class} under the map in \eqref{eq:collapsing}. 
\end{definition}
\begin{remark} Let $\cA_0$ be an abelian scheme with CM by $\sO_K$ the ring of integers in a CM field $K$ and let $L$ be a finite extension of $K$. Then $\GL_{\sO_K}(\sO_L)$ is the automorphism group of $\sO_L\otimes_{\sO_K}\cA_0$ and one gets a group cohomology class
\begin{equation*}
\Eis_{\Gamma}^{b,a}(f,x)\in H^{d-1}(\Gamma,\TSym^{a}(\omega_{\cA/\cR})\otimes\TSym^{b}(\sH)\otimes \omega^{d}_{\cA/\cR} ).
\end{equation*}
for some $\Gamma\subset \GL_{\sO_K}(\sO_L)$ of finite index. 
For our purposes it is enough to consider only the restriction of this class to a subgroup of $ \sO_L^{\times}\subset \GL_{\sO_K}(\sO_L)$. 
\end{remark}

\subsection{The Eisenstein-Kronecker class for CM abelian varieties}
\label{subsection:Eisenstein-Kronecker-class}
In this section we decompose  the Eisenstein-Kronecker classes further in the case of CM abelian varieties.

Consider $(\cA/\cR,\omega(\cA),\omega(\cA^{\vee}),x)$ as in Notation \ref{notation-for-CM-ab}. In particular, $R\subset \C$ is a $\sO_{L^{Gal}}[1/d_L]$-algebra, $\cR:=\Spec R$ and $\cA/\cR$  an abelian scheme with CM by $\sO_L$. The point $x$ is a $\frf$-torsion section for some ideal $\frf\subset \sO_L$. We assume in this section that $\norm\frf\in R^{\times}$, so that $[\frf]$ and $[\frf]^{\vee}$ are \'etale. This restriction will be removed in Section \ref{section:integral-refinement}. 

Let us further specify $\cD\subset \cA$ in case where $\cA$ has CM.
\begin{definition}\label{def:D-specific}
Fix an integral ideal $\frc\subset \sO_L$ coprime to $\frf$, such that $N\frc\in R^{\times}$, i.e. $[\frc]$ and $[\frc]^{\vee}$ are \'etale. Let
$\cD=\cA[\frc]\setminus \{x(\cR)\}$ be as in Notation \ref{not:D-def} so that either 
\begin{equation*}
\cD=\cA[\frc]\mbox{ or }\cD=\cA[\frc]\setminus e(\cS).
\end{equation*}
Note that $\cD$ is $\Gamma$-stable for $\Gamma\subset \sO_L^{\times}$. We will consider functions $f\in R[\cD]^{0,\Gamma}$.
\end{definition}

Any subgroup $\Gamma\subset \sO_L^{\times}$ of finite index acts on $\cA$ and we use this action to further decompose the class $\Eis_{\Gamma}^{b,a}(f,x)$ of Definition \ref{def:Eis-classes}.
Recall for this the decomposition results from Proposition \ref{prop:CM-decompostion}. 

The splitting of the Hodge filtration $\sH\isom\sH(\ol{\Sigma})\oplus\sH(\Sigma)$ and the isomorphism $\sH(\ol{\Sigma})\isom \omega_{\cA^{\vee}/\cR}$ induces a projection
\begin{equation*}
\TSym^{b}(\sH)\to  \TSym^{b}(\omega_{\cA^{\vee}/\cR})
\end{equation*}
and by Corollary \ref{cor:Gamma-invariants} the $\Gamma$-invariants of 
$\TSym^{a}(\omega_{\cA/\cR})\otimes\TSym^{b}(\omega_{\cA^{\vee}/\cR})\otimes \omega^{d}_{\cA/\cR}$
are given by
\begin{equation*}
\bigoplus_{\stackrel{\beta-\alpha-\ul{1}\in \CI_L(\Gamma)}{|\alpha|=a,\ |\beta|=b}}\TSym^{\alpha}(\omega_{\cA/\cR})\otimes \TSym^{\beta}(\omega_{\cA^{\vee}/\cR})\otimes\omega^{d}_{\cA/\cR}
\end{equation*}
which is a direct summand and a trivial $\Gamma$-module. Denoting by $\pr_\Gamma$ the projection onto the $\Gamma$-invariants, this allows to project $\Eis_{\Gamma}^{b,a}(f,x)$ to
\begin{equation}\label{eq:pr-Eis}
\pr_\Gamma (\Eis_{\Gamma}^{b,a}(f,x))\in \bigoplus_{\stackrel{\beta-\alpha-\ul{1}\in \CI_L(\Gamma)}{|\alpha|=a,\ |\beta|=b}} H^{d-1}(\Gamma,\TSym^{\alpha}(\omega_{\cA/\cR})\otimes\TSym^{\beta}(\omega_{\cA^{\vee}/\cR})\otimes \omega^{d}_{\cA/\cR} ).
\end{equation}
We would like to get rid of the group cohomology, which means to evaluate on some fundamental cycle. In our case this can be done canonically.
\begin{proposition}\label{prop:orientation} Let $\Gamma\subset \sO_L^{\times}$ be of finite index, and let $\ul{1}\in I_\Sigma$ be the element defined in Notation \ref{not:I-Sigma}. For $\alpha\in I_{{\Sigma}}^+$ and $\beta\in I_{\ol{\Sigma}}^+$  with $\beta-\alpha-\ul{1}\in \CI_L(\Gamma)$, there is a canonical homomorphism
\begin{multline*}
H^{d-1}(\Gamma, \TSym^{\alpha}(\omega_{\cA/\cR})\otimes_R \TSym^{\beta}(\omega_{\cA^{\vee}/\cR})\otimes_R\omega^{d}_{\cA/\cR})\to\\ 
\TSym^{\alpha+\ul{1}}(\omega_{\cA/\cR})\otimes_R \TSym^{\beta}(\omega_{\cA^{\vee}/\cR}).
\end{multline*}
\end{proposition}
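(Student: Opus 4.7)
The desired homomorphism has two essentially independent ingredients: an identification of $\TSym^{\alpha}(\omega_{\cA/\cR})\otimes\omega_{\cA/\cR}^{d}$ with $\TSym^{\alpha+\ul{1}}(\omega_{\cA/\cR})$ coming from the CM decomposition, and evaluation against a canonical fundamental class of $\Gamma$.

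First, I would invoke Corollary~\ref{cor:decomposition-sH} to write $\omega_{\cA/\cR}=\bigoplus_{\sigma\in\Sigma}\omega_{\cA/\cR}(-\sigma)$ as a direct sum of $d$ line bundles. Since each summand is a line bundle, one obtains a canonical $\T_{L,R}$-equivariant isomorphism
\[
\omega_{\cA/\cR}^{d}=\Lambda^{d}\omega_{\cA/\cR}\isom \bigotimes_{\sigma\in\Sigma}\omega_{\cA/\cR}(-\sigma),
\]
and combining this with $\TSym^{\alpha}(\omega_{\cA/\cR})\isom\bigotimes_{\sigma}\omega_{\cA/\cR}(-\sigma)^{\otimes\alpha(\sigma)}$ gives the canonical identification
\[
\TSym^{\alpha}(\omega_{\cA/\cR})\otimes_{R}\omega_{\cA/\cR}^{d}\isom\TSym^{\alpha+\ul{1}}(\omega_{\cA/\cR}).
\]
Thus it suffices to construct a natural map $H^{d-1}(\Gamma,M)\to M$, where $M:=\TSym^{\alpha+\ul{1}}(\omega_{\cA/\cR})\otimes_{R}\TSym^{\beta}(\omega_{\cA^{\vee}/\cR})$ carries the $\Gamma$-action via the single character $\mu:=\beta-\alpha-\ul{1}$.

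Next I would produce a canonical fundamental class $[\Gamma]\in H_{d-1}(\Gamma,\Z)$. Because $L$ is totally complex with $[L:\Q]=2d$, Dirichlet's unit theorem gives $\mathrm{rank}_{\Z}\,\sO_{L}^{\times}=d-1$, and the same holds for the finite-index subgroup $\Gamma$. The logarithm embedding
\[
\log\colon \sO_{L}^{\times}\otimes\R\hookrightarrow\R^{\Sigma},\qquad \gamma\mapsto(\log|\sigma(\gamma)|)_{\sigma\in\Sigma},
\]
identifies $\Gamma$ with a cocompact lattice in the hyperplane $H=\{(x_{\sigma})\colon\sum_{\sigma}x_{\sigma}=0\}\subset\R^{\Sigma}$. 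Choosing an ordering of $\Sigma$ orients $\R^{\Sigma}$ and, via the outward normal $\ul{1}\in\R^{\Sigma}$, orients $H$; the resulting class $[\Gamma]\in H_{d-1}(\Gamma,\Z)$ (represented on a Koszul resolution by the wedge of an oriented $\Z$-basis of $\Gamma/\Gamma_{\mathrm{tor}}$) depends on the ordering only through the same sign that appears in the identification of $\Lambda^{d}\omega_{\cA/\cR}$ in the first step. Cap product with $[\Gamma]$ produces
\[
H^{d-1}(\Gamma,M)\xrightarrow{\cap[\Gamma]} H_{0}(\Gamma,M)=M_{\Gamma},
\]
and since $M$ is a single-character $\Gamma$-module, the coinvariants receive a canonical map to $M$: for $\mu\in\CI_{L}(\Gamma)$ (the case of interest by \eqref{eq:pr-Eis}) the action is trivial and $M_{\Gamma}=M$, while for $\mu\notin\CI_{L}(\Gamma)$ the composition is defined to be zero, consistent with Corollary~\ref{cor:Gamma-invariants}.

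The main obstacle I anticipate is the canonicity of the orientation: a priori both the isomorphism $\omega_{\cA/\cR}^{d}\isom\bigotimes_{\sigma}\omega_{\cA/\cR}(-\sigma)$ and the orientation of $H$ depend on an ordering of $\Sigma$ and are therefore only well defined up to sign. The crucial point to verify is that these two sign ambiguities are induced by the same choice of ordering and cancel in the composite, so that the resulting homomorphism is genuinely canonical and independent of any auxiliary data.
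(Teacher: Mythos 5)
Your construction is essentially the paper's own: an ordering of $\Sigma$ is used simultaneously to identify $\omega^{d}_{\cA/\cR}\isom\TSym^{\ul{1}}(\omega_{\cA/\cR})$ via the CM decomposition and to orient the unit hyperplane, and the homomorphism is cap product against the resulting fundamental class followed by the projection to the $\Gamma$-trivial situation. The sign cancellation you flag as the main obstacle does hold, and is exactly what the paper relies on: a permutation $\tau$ of $\Sigma$ multiplies the identification $\Lambda^{d}\omega_{\cA/\cR}\isom\bigotimes_{\sigma\in\Sigma}\omega_{\cA/\cR}(-\sigma)$ by $\sgn(\tau)$ and changes the orientation of $L^{1}_{\R}$ (hence the generator of $H_{d-1}$) by the same $\sgn(\tau)$, so the composite is independent of the ordering.

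The one place your write-up would fail as stated is the torsion in $\Gamma$: a finite-index subgroup of $\sO_L^{\times}$ may contain roots of unity, so $\Gamma$ need not be free abelian, and the wedge of an oriented basis of $\Gamma/\Gamma_{\mathrm{tor}}$ only defines a class in $H_{d-1}(\Gamma/\Gamma_{\mathrm{tor}},\Z)$, not in $H_{d-1}(\Gamma,\Z)$; inflation goes the wrong way to transport the resulting cap product to a map out of $H^{d-1}(\Gamma,M)$. The paper's proof handles this by first performing your construction for a free subgroup $\Gamma'\subset\Gamma$ of finite index, then choosing any $\xi\in H_{d-1}(\Gamma,\Z)$ with $\res\xi=\xi'$ (the restriction being surjective) and verifying, via the projection formula $\res\eta\cap\res\xi=[\Gamma:\Gamma']\,\eta\cap\xi$ together with the torsion-freeness of $R\subset\C$, that the composite $H^{d-1}(\Gamma,\omega^{d}_{\cA/\cR})\xrightarrow{\cap\xi}\omega^{d}_{\cA/\cR}\isom\TSym^{\ul{1}}\omega_{\cA/\cR}$ is independent of the choice of $\xi$ and of the ordering. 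You should add this reduction step; otherwise the argument is the same.
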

\begin{proof}Because $ \TSym^{\alpha}(\omega_{\cA/\cR})\otimes_R \TSym^{\beta}(\omega_{\cA^{\vee}/\cR})\otimes_R\omega^{d}_{\cA/\cR}$ is a trivial $\Gamma$-module, it is sufficient to define a canonical homomorphism
$H^{d-1}(\Gamma,\Z)\otimes\omega^{d}_{\cA/\cR}\to \TSym^{\ul{1}}(\omega_{\cA/\cR})$. 
Let $\Gamma'\subset\Gamma$ be free of finite index. Then $H_{d-1}(\Gamma',\Z)$ is non-canonically isomorphic to $\Z$ and for any generator $\xi'\in H_{d-1}(\Gamma',\Z)$ we get an isomorphism
\begin{equation*}
H^{d-1}(\Gamma',\Z)\xrightarrow{\cap \xi'} \Z.
\end{equation*}
Choosing an ordering of $\Sigma$ allows to choose an orientation 
on $\R\otimes L$ and hence one on $L_\R^{1}:=\ker((\R\otimes L)^{\times}\xrightarrow{\norm_{L/\Q}}\R^{\times})$. This orientation induces an isomorphism $\Z\isom H_{d-1}(\Gamma'\backslash L_\R^{1},\Z)\isom H_{d-1}(\Gamma',\Z)$. 
Using the same ordering we get at the same time an isomorphism $\omega^{d}_{\cA/\cR}\isom \TSym^{\ul{1}}\omega_{\cA/\cR}$ and hence an isomorphism 
\begin{equation*}
H^{d-1}(\Gamma',\Z)\otimes\omega^{d}_{\cA/\cR}\isom \TSym^{\ul{1}}\omega_{\cA/\cR}
\end{equation*}
independent of the choice of the generator $\xi'$ and independent of the choice of the ordering. To treat the general case, we first show that 
\begin{equation*}
\res: H_{d-1}(\Gamma,\Z)\to H_{d-1}(\Gamma',\Z)
\end{equation*} 
is surjective. This follows by first decomposing $\Gamma=\Gamma^{\free}\times \Gamma^{\tors}$ into its free and torsion part. Then $\res: H_{d-1}(\Gamma,\Z)\to H_{d-1}(\Gamma^{\free},\Z)$ is surjective. Moreover, $\res: H_{d-1}(\Gamma^{\free},\Z)\to H_{d-1}(\Gamma',\Z)$ is an isomorphism, which can be seen as follows: One has $\Lambda^{i}\Gamma^{\free}\isom H_i(\Gamma^{\free},\Z)$ and similarly for $H_i(\Gamma',\Z)$. The map $\cor:  H_{i}(\Gamma',\Z)\to H_{i}(\Gamma^{\free},\Z)$ is induced by the inclusion $\Lambda^{i}\Gamma'\subset \Lambda^{i}\Gamma^{\free}$. Hence,  $\cor:H_{d-1}(\Gamma',\Z)\to H_{d-1}(\Gamma^{\free},\Z)$ is an inclusion onto a subgroup of index $[\Gamma^{\free}:\Gamma']$. As  $\cor\circ \res=[\Gamma^{\free}:\Gamma']$ 
one sees  that $\res:H_{d-1}(\Gamma',\Z)\to H_{d-1}(\Gamma^{\free},\Z)$ is an isomorphism. This shows the claim.

Let $\xi\in H_{d-1}(\Gamma,\Z)$ be an element with $\res(\xi)=\xi'\in H_{d-1}(\Gamma',\Z)$  and define using the same ordering of $\Sigma$
\begin{equation*}
H^{d-1}(\Gamma,\Z)\otimes \omega^{d}_{\cA/\cR}\xrightarrow{\cap\xi} \TSym^{\ul{1}}\omega_{\cA/\cR}.
\end{equation*} 
To see that this is independent of the choice of $\xi$ and the ordering, we note that the diagram
\begin{equation}\label{eq:res}
\xymatrix{
H^{d-1}(\Gamma,\Z)\otimes\omega^{d}_{\cA/\cR}\ar[r]^{\cap\xi}\ar[d]_\res& \TSym^{\ul{1}}\omega_{\cA/\cR}\ar[d]^{[\Gamma:\Gamma']}\\
H^{d-1}(\Gamma',\Z)\otimes \omega^{d}_{\cA/\cR}\ar[r]^{\cap\res \xi}&\TSym^{\ul{1}}\omega_{\cA/\cR}
}
\end{equation}
commutes, because for any $\eta\in H^{d-1}(\Gamma,\omega^{d}_{\cA/\cR})$ one has by the projection formula
\begin{equation*}
\cor(\res\eta\cap\res\xi)=\eta\cap \cor\circ\res\xi=[\Gamma:\Gamma']\eta\cap\xi
\end{equation*}
and $\cor:H_0(\Gamma',\Z)\to H_0(\Gamma,\Z)$ is an isomorphism.
As $R\subset \C$ is torsion free, the multiplication with $[\Gamma:\Gamma']$ is injective. This shows the proposition.
\end{proof}
Recall that we let $(\cA/\cR,\omega(\cA),\omega(\cA^{\vee}),x)$ be as in Notation \ref{notation-for-CM-ab}, where $x$ is $[\frf]$-torsion section and $N\frf\in R^{\times}$.
The bases $\omega(\cA),\omega(\cA^{\vee})$ define a trivialization 
\begin{equation}\label{eq:trivialization}
\TSym^{\alpha+\ul{1}}(\omega_{\cA/\cR})\otimes_R\TSym^{\beta}(\omega_{\cA^{\vee}/\cR})\isom R.
\end{equation}
\begin{definition}\label{def:Eis-beta-alpha} Let $x$ be a  $[\frf]$-torsion section with $x\in \cU_{\cD}$. Let  $\Gamma\subset \sO_L^{\times}$ be of finite index fixing $x$,
$f\in  R[\cD]^{0,\Gamma}$, and $\beta-\alpha-\ul{1}\in \CI_L(\Gamma)$ be critical. We define the class \nomenclature{$\Eis_{\Gamma}^{\beta,\alpha}(f,x)$}{\nomrefpage}
\begin{align*}
 \Eis_{\Gamma}^{\beta,\alpha}(f,x)\in 
\TSym^{\alpha+\ul{1}}(\omega_{\cA/\cR})\otimes_R\TSym^{\beta}(\omega_{\cA^{\vee}/\cR})
\end{align*}
to be the $\alpha,\beta $-component of $\pr_{\Gamma}\Eis^{b,a}_{\Gamma}(f,x)$ in \eqref{eq:pr-Eis} using the map in Proposition \ref{prop:orientation}. Using the trivialization \eqref{eq:trivialization} by the bases $\omega(\cA)^{[\alpha+\ul{1}]},\omega(\cA^{\vee})^{[\beta]}$ gives an element of $R$ which is denoted by
\begin{align*}
\Eis_{\Gamma}^{\beta,\alpha}(f,x)(\omega(\cA)^{[\alpha+\ul{1}]},\omega(\cA^{\vee})^{[\beta]})\in R.
\end{align*}
\end{definition}
For later use we record the behaviour of $\Eis_{\Gamma}^{\beta,\alpha}(f,x)$ under change of $\Gamma$.
\begin{corollary}\label{cor:change-of-group}Let $\Gamma'\subset \Gamma\subset \sO_L^{\times}$ be two groups of finite index and $\beta-\alpha-\ul{1}\in \CI_L(\Gamma)$, then
\begin{equation*}
\Eis_{\Gamma'}^{\beta,\alpha}(f,x)=[\Gamma:\Gamma']\Eis_{\Gamma}^{\beta,\alpha}(f,x).
\end{equation*}
\end{corollary}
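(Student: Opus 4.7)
The plan is to trace each step of the construction of $\Eis_{\Gamma}^{\beta,\alpha}(f,x)$ and observe that everything before the final application of Proposition~\ref{prop:orientation} commutes on the nose with the restriction $\res\colon H^{d-1}(-,\Gamma;\cF)\to H^{d-1}(-,\Gamma';\cF)$. The multiplicity $[\Gamma:\Gamma']$ will then be isolated entirely to the cap-product step, where it is produced by the commutative diagram \eqref{eq:res} in the proof of Proposition~\ref{prop:orientation}.

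First I would check compatibility of the equivariant coherent Eisenstein--Kronecker class with restriction. Since $\Gamma'\subset \Gamma$ we have $R[\cD]^{0,\Gamma}\subset R[\cD]^{0,\Gamma'}$, so $f$ is automatically $\Gamma'$-invariant. The localization sequence \eqref{eq:localization-sequence} and the injection of Theorem~\ref{thm:coh-with-support-descr} are built from sheaf cohomology spectral sequences that are strictly functorial in the acting group, so
\[
\res\,\EK^{\natural}_{\Gamma,\cA}(f)=\EK^{\natural}_{\Gamma',\cA}(f)
\quad\text{in}\quad
H^{d-1}(\cU_\cD,\Gamma';\widehat{\sP^{\natural}}\otimes\pi^{*}\omega^{d}_{\cA^{\vee}/\cR}).
\]
Applying the connection $\nabla^{a}$, pulling back along the $\Gamma$-fixed (hence $\Gamma'$-fixed) torsion section $x$, composing with the moment map $_\varrho\mom^{b}_{x}$, and finally applying the projection onto $\TSym^{b}(\omega_{\cA^{\vee}/\cR})$ together with $\pr_{\Gamma}$ are all natural operations on equivariant cohomology. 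Since $\CI_L(\Gamma)\subset \CI_L(\Gamma')$, restricting the $(\beta,\alpha)$-component of the $\Gamma$-class lands in the $(\beta,\alpha)$-component of the $\Gamma'$-class; that is,
\[
\res\bigl(\pr_{\Gamma}\,_\varrho\mom^{b}_{x}(\nabla^{a}\EK^{\natural}_{\Gamma}(f))\bigr)_{(\beta,\alpha)}
=\bigl(\pr_{\Gamma'}\,_\varrho\mom^{b}_{x}(\nabla^{a}\EK^{\natural}_{\Gamma'}(f))\bigr)_{(\beta,\alpha)}.
\]

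It then remains to compare the orientation maps of Proposition~\ref{prop:orientation} for $\Gamma$ and $\Gamma'$. Here the key point is to use compatible fundamental classes: pick $\xi\in H_{d-1}(\Gamma,\Z)$ coming from the fixed ordering of $\Sigma$ and use $\res\xi\in H_{d-1}(\Gamma',\Z)$ for $\Gamma'$. The identity
\[
\res\eta\cap\res\xi=\cor(\eta\cap\cor\res\xi)=[\Gamma:\Gamma']\,\eta\cap\xi
\]
established in the proof of Proposition~\ref{prop:orientation} is exactly the commutativity of diagram \eqref{eq:res}. Applied in the coefficient module $\TSym^{\alpha}(\omega_{\cA/\cR})\otimes\TSym^{\beta}(\omega_{\cA^{\vee}/\cR})\otimes\omega^{d}_{\cA/\cR}$, which by Corollary~\ref{cor:Gamma-invariants} sits as a trivial $\Gamma$-direct summand (so restriction acts as the identity on it), this yields
\[
\Eis_{\Gamma'}^{\beta,\alpha}(f,x)=[\Gamma:\Gamma']\,\Eis_{\Gamma}^{\beta,\alpha}(f,x),
\]
and trivializing with the bases $\omega(\cA)^{[\alpha+\ul 1]},\omega(\cA^{\vee})^{[\beta]}$ gives the claimed identity in $R$.

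The only genuinely delicate point is the observation that the entire chain of constructions is $\Gamma$-functorial except for the passage from equivariant group cohomology to its trivialization in terms of the coherent bases; once this is granted, the multiplicity $[\Gamma:\Gamma']$ is forced by the cap-product/corestriction formula, so there is no real obstacle beyond bookkeeping.
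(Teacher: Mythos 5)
Your proof is correct and follows essentially the same route as the paper: the paper's own proof consists of the single remark that the claim ``follows immediately from the commutative diagram \eqref{eq:res}'', which is exactly the cap-product/corestriction identity you isolate as the source of the factor $[\Gamma:\Gamma']$. The additional bookkeeping you supply (functoriality of the localization sequence, the connection, the moment map and the projections under restriction of the group) is implicit in the paper and is carried out correctly.
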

\begin{proof}
This follows immediately from the commutative diagram \eqref{eq:res}.
\end{proof}

\subsection{A refinement of the Eisenstein-Kronecker class}\label{section:integral-refinement} In this section we define a refinement of the Eisenstein-Kronecker class from Definition \ref{def:Eis-beta-alpha} which has better integrality properties. This definition  is inspired by a construction in \cite{BKL-topological-polylog}. The price to pay is that this class depends on the isogeny $[\frf]$ for an integral ideal $\frf\subset \sO_L$ such that the torsion section $x\in\cA[\frf]$.

Fix an integral ideal $\frf\subset \sO_L$. We keep the notation from section \ref{subsection:Eisenstein-Kronecker-class}, so that  
\begin{equation*}
(\cA/\cR,\omega(\cA),\omega(\cA^{\vee}),x)
\end{equation*}is as in Notation \ref{notation-for-CM-ab} and $\cD= \cA[\frc]$ for an ideal $\frc\neq \sO_L$ coprime to $\frf$ and $N\frc$ invertible in $R$. Let $x:\cS\to \cU_\cD$ be a $\frf$-torsion section fixed by $\Gamma$. Here $[\frf]$ is the isogeny 
\begin{equation*}
[\frf]:\cA\to \cB:=\cA(\frf^{-1}).
\end{equation*} 
We do \emph{not} assume in this section
that $\norm\frf$ is invertible in $R$, so that $[\frf]$ and $[\frf]^{\vee}$ are not necessarily \'etale. 

\begin{theorem}[Refined Eisenstein-Kronecker class]\label{thm:refined-EK}
Let $\frf$ and $\frc\neq \sO_L$ and $x$ be as above. For $\cB:=\cA(\frf^{-1})$ fix bases $\omega(\cB)$ of $\omega_{\cB/\cR}$  and $\omega(\cB)$ of $\omega_{\cB^{\vee}/\cR}$.  Then for each 
$f\in R[\cD]^{0,\Gamma}$ there  is a  section 
\begin{equation*}
_{[\frf]}\Eis_{\Gamma,\cB}^{\beta,\alpha}(f,x)\in \TSym^{\alpha+\ul{1}}(\omega_{\cB/\cR})\otimes_R \TSym^{\beta}(\omega_{\cB^{\vee}/\cR})
\end{equation*}
depending on $\frf$ called the \emph{refined Eisenstein-Kronecker class}. Using the bases $\omega(\cB),\omega(\cB^{\vee})$ one gets as in Definition \ref{def:Eis-beta-alpha} 
\begin{align*}
_{[\frf]}\Eis_{\Gamma,\cB}^{\beta,\alpha}(f,x)(\omega(\cB)^{[\alpha+\ul{1}]},\omega(\cB^{\vee})^{[\beta]})\in R.
\end{align*}
\end{theorem}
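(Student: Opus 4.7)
The plan is to mimic the construction of $\Eis^{\beta,\alpha}_\Gamma(f,x)$ from Definition \ref{def:Eis-beta-alpha}, replacing the splitting isomorphism $\varrho_x$ of Corollary \ref{cor:coh-log-splitting} (which forced $[\frf]^\vee$ to be \'etale, i.e.\ $\norm\frf\in R^\times$) by the functorial morphism $[\frf]_\#\colon\widehat{\sP_\cA^\natural}\to [\frf]^*\widehat{\sP_\cB^\natural}$ of Theorem \ref{thm:functoriality}, which is defined for any isogeny. Since $\frf$ and $\frc$ are coprime and $\frc\ne\sO_L$, the point $x\in \cA[\frf](\cR)$ (taken nontrivial) avoids $\cD=\cA[\frc]$, so cohomology classes on $\cU_\cD$ can be pulled back along $x$.

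Starting from the equivariant coherent Eisenstein-Kronecker class $\EK_\Gamma^\natural(f)\in H^{d-1}(\cU_\cD,\Gamma;\widehat{\sP_\cA^\natural}\otimes\Omega_{\cA/\cR}^d)$ of Definition \ref{def:equiv-coh-EK} (available because $\cD$ is \'etale over $\cR$ by the invertibility of $\norm\frc$), I apply $[\frf]_\#$ to obtain a class with coefficients in $[\frf]^*\widehat{\sP_\cB^\natural}\otimes\Omega_{\cA/\cR}^d$. The key point is that $[\frf]_\#$ is \emph{horizontal}: by functoriality of the universal vector extension (equivalently, via Scheider's identification in Theorem \ref{thm:scheider-theorem} of $\widehat{\sP^\natural}$ with the de Rham logarithm sheaf), it intertwines $\nabla_\cA$ on $\widehat{\sP_\cA^\natural}$ with the pulled-back connection $[\frf]^*\nabla_\cB$ on $[\frf]^*\widehat{\sP_\cB^\natural}$. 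This allows me to take the $a$-th derivative with coefficients in $[\frf]^*\TSym^a(\Omega^1_{\cB/\cR})$ rather than $\TSym^a(\Omega^1_{\cA/\cR})$. Pulling back along $x$ and using $[\frf]\circ x=e_\cB$ yields a class in
\begin{equation*}
H^{d-1}(\cR,\Gamma;\,e_\cB^*\widehat{\sP_\cB^\natural}\otimes\TSym^a(\omega_{\cB/\cR})\otimes\omega_{\cA/\cR}^d).
\end{equation*}

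Next I apply the moment map $\mom^b$ of Definition \ref{def:moment-map} for $\cB$ to project $e_\cB^*\widehat{\sP_\cB^\natural}$ onto $\TSym^b(\sH_\cB)$, then use the $\T_L$-equivariant Hodge splitting $\sH_\cB\cong\omega_{\cB^\vee/\cR}\oplus\Lie(\cB/\cR)$ of Proposition \ref{prop:CM-decompostion} to further project onto $\TSym^\beta(\omega_{\cB^\vee/\cR})$; in parallel I decompose $\TSym^a(\omega_{\cB/\cR})$ into its $\alpha$-component. By Corollary \ref{cor:Gamma-invariants}, the $\Gamma$-invariants are nonzero exactly when $\beta-\alpha-\ul{1}\in\CI_L(\Gamma)$, leaving a class in $H^{d-1}(\Gamma,\TSym^\alpha(\omega_{\cB/\cR})\otimes\TSym^\beta(\omega_{\cB^\vee/\cR})\otimes\omega_{\cA/\cR}^d)$. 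To replace the stray $\omega_{\cA/\cR}^d$ by $\omega_{\cB/\cR}^d$, I use the composition
\begin{equation*}
\omega_{\cA/\cR}^d\xrightarrow{\sim}\omega_{\cA^\vee/\cR}^d\xrightarrow{([\frf]^\vee)^*}\omega_{\cB^\vee/\cR}^d\xrightarrow{\sim}\omega_{\cB/\cR}^d,
\end{equation*}
where the outer isomorphisms come from Corollary \ref{cor:decomposition-sH} and the middle map is pullback along the dual isogeny $[\frf]^\vee\colon\cB^\vee\to\cA^\vee$. The orientation map of Proposition \ref{prop:orientation} applied to $\cB$ then delivers the refined class in $\TSym^{\alpha+\ul{1}}(\omega_{\cB/\cR})\otimes_R\TSym^\beta(\omega_{\cB^\vee/\cR})$.

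The hard part will be the horizontality of $[\frf]_\#$ with respect to $\nabla_\cA$ and $\nabla_\cB$, because Theorem \ref{thm:functoriality} was stated only as a map of $\sO$-modules; this compatibility must be extracted from the universal property of $\cA^\natural$ (as the moduli of line bundles with integrable connection satisfying the theorem of the square), or equivalently from the universal property of the de Rham logarithm sheaf via Theorem \ref{thm:scheider-theorem}. A secondary compatibility check, important for the subsequent Hecke $L$-function applications, is that when $\norm\frf\in R^\times$ the refined class $_{[\frf]}\Eis^{\beta,\alpha}_{\Gamma,\cB}(f,x)$ agrees with $\Eis^{\beta,\alpha}_\Gamma(f,x)$ of Definition \ref{def:Eis-beta-alpha} up to an explicit factor depending on $\frf$, $\alpha$, $\beta$, reflecting the fact that $[\frf]_\#$ then specializes to the isomorphism $\varrho_x$ of Corollary \ref{cor:coh-log-splitting}.
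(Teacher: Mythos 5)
Your construction is essentially the paper's: push $\EK_\Gamma^\natural(f)$ into $[\frf]^*\widehat{\sP_\cB^\natural}$ via $[\frf]_\#$, derive with the pulled-back connection $[\frf]^*(\nabla^a)$ so the derivatives land in $\TSym^a(\omega_{\cB/\cR})$, pull back along $x$ using $[\frf]\circ x=e_\cB$, apply the moment map and the Hodge projection for $\cB$, and transport the volume twist with $[\frf]^{\vee*}$ (the paper does this at the start rather than the end, which is immaterial). One small correction: horizontality of $[\frf]_\#$ is not needed for the construction itself, since you only ever apply the pulled-back connection to a class that already has coefficients in $[\frf]^*\widehat{\sP_\cB^\natural}$; it is needed only for the comparison with the unrefined class in Proposition \ref{prop:integral-Eis-class}, which is exactly the ``secondary compatibility check'' you flag.
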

\begin{proof}
Consider the class 
\begin{equation*}
\Eis_{\Gamma,\cA}^{\natural}(f)\in \prolim_n H^{d-1}(\cU_\cD,\Gamma;{\sP^{\natural(n)}}\otimes \Omega^{d}_{\cA/\cR})
\end{equation*}
from Definition \ref{def:equiv-coh-EK}. Recall from \eqref{eq:omega-equality} that 
$\Omega^{d}_{\cA/\cS}\isom  \pi^{*}\omega^{d}_{\cA^{\vee}/\cS}$.
Applying the maps $[\frf]_\#:{\sP_\cA^{\natural(n)}}\to [\frf]^{*}{\sP_{\cB}^{\natural(n)}}$ and $[\frf]^{\vee *}:\omega_{\cA^{\vee}/\cR}\to \omega_{\cB^{\vee}/\cR}$ gives a class
\begin{equation*}
[\frf]^{\vee *}[\frf]_\#\Eis_{\Gamma,\cA}^{\natural}(f)\in\prolim_n
H^{d-1}(\cU_\cD,\Gamma;[\frf]^{*}{\sP_{\cB}^{\natural(n)}}\otimes \Omega^{d}_{\cB/\cR}),
\end{equation*}
where we have used the identification ${\pi}^{*}\omega^{d}_{\cB^{\vee}/\cR}\isom\Omega^{d}_{\cB/\cR}$. This class  can be derived with the operator
\begin{equation*}
[\frf]^{*}(\nabla^{a}):[\frf]^{*}{\sP_{\cB}^{\natural(n)}}\to[\frf]^{*}\TSym^{a}(\Omega^{1}_{\cB/\cR})\otimes [\frf]^{*}{\sP_{\cB}^{\natural(n)}}.
\end{equation*}
This gives
\begin{equation*}
[\frf]^{*}(\nabla^{a})([\frf]^{\vee *}[\frf]_\#\Eis_{\Gamma,\cA}^{\natural}(f))\in \prolim_n
H^{d-1}(\cU_\cD,\Gamma;\pi^{*}\TSym^{a}(\omega_{\cB/\cR})\otimes[\frf]^{*}{\sP_{\cB}^{\natural(n)}}\otimes \Omega^{d}_{\cB/\cR}).
\end{equation*}
Pulling-back by $x$, using the moment map $\mom_{e_\cB}^{b}:x^{*}
[\frf]^{*}\widehat{\sP_{\cB}^{\natural}}\isom e_\cB^{*}\widehat{\sP_{\cB}^{\natural}}\to \TSym^{b}(\sH_\cB)$, 
and observing that $\cR$ is affine gives a class
\begin{equation*}
\mom_{e_\cB}^{b}x^{*}[\frf]^{*}(\nabla^{a})([\frf]^{\vee *}[\frf]_\#\Eis_{\Gamma,\cA}^{\natural}(f))\in 
H^{d-1}(\Gamma,\TSym^{a}(\omega_{\cB/\cR})\otimes\TSym^{b}(\sH_\cB)\otimes \omega^{d}_{\cB/\cR}).
\end{equation*}
Applying the projection 
$\TSym^{b}(\sH_\cB)\to \TSym^{b}(\omega_{\cB^{\vee}/\cR})$, the evaluation from Proposition \ref{prop:orientation}, and the decomposition into $\alpha,\beta$, 
yields the desired section
\begin{equation*}
_{[\frf]}\Eis_{\Gamma,\cB}^{\beta,\alpha}(f,x)\in
\TSym^{\alpha+\ul{1}}(\omega_{\cB/\cR})\otimes \TSym^{\beta}(\omega_{\cB^{\vee}/\cR}).\qedhere
\end{equation*}
\end{proof}
We want to compare this refined class with the one defined in Definition \ref{def:Eis-beta-alpha}. For this consider the map $[\frf]^{*}\Omega_{\cB/\cR}^{1}\to \Omega_{\cA/\cR}^{1}$, which induces
\begin{equation*}
\TSym^{\alpha+\ul{1}}([\frf]^{*}):\TSym^{\alpha+\ul{1}}(\omega_{\cB/\cR})\to \TSym^{\alpha+\ul{1}}(\omega_{\cA/\cR})
\end{equation*}
and the image of the refined class $_{[\frf]}\Eis_{\Gamma,\cB}^{\beta,\alpha}(f,x)$ under this map
\begin{equation*}
\TSym^{\alpha+\ul{1}}([\frf]^{*}) (_{[\frf]}\Eis_{\Gamma,\cB}^{\beta,\alpha}(f,x))\in \TSym^{\alpha+\ul{1}}(\omega_{\cA/\cR})\otimes \TSym^{\beta}(\omega_{\cB^{\vee}/\cR}).
\end{equation*}
On the other hand one has by the covariant functoriality $[\frf]_\#=[\frf]^{\vee *}:\omega_{\cA^{\vee}/\cR}\to \omega_{\cB^{\vee}/\cR}$ a map 
\begin{equation*}
\TSym^{\beta}([\frf]_\#):\TSym^{\beta}(\omega_{\cA^{\vee}/\cR})\to \TSym^{\beta}(\omega_{\cB^{\vee}/\cR}),
\end{equation*}
which applied to non-refined class $\Eis_{\Gamma,\cA}^{\beta,\alpha}(f,x)$ gives, over  the base is $R[\frac{1}{\norm \frf}]$,
\begin{equation*}
\TSym^{\beta}([\frf]_\#)(\Eis_{\Gamma,\cA}^{\beta,\alpha}(f,x))\in
\TSym^{\alpha+\ul{1}}(\omega_{\cA/\cR[\frac{1}{\norm \frf}]})\otimes \TSym^{\beta}(\omega_{\cB^{\vee}/\cR[\frac{1}{\norm \frf}]}).
\end{equation*}
The natural comparison map induced by the flat map $R\to R[\frac{1}{\norm\frf}]$ 
\begin{equation}\label{eq:comparison-map}
\TSym^{\alpha+\ul{1}}(\omega_{\cA/\cR})\otimes\TSym^{\beta}(\omega_{\cB^{\vee}/\cR})\to 
\TSym^{\alpha+\ul{1}}(\omega_{\cA/\cR[\frac{1}{\norm\frf}]})\otimes\TSym^{\beta}(\omega_{\cB^{\vee}/\cR[\frac{1}{\norm\frf}]})
\end{equation}
(which is injective as $R\subset \C$ is torsion free) allows to compare the two classes and one gets:
\begin{proposition}\label{prop:integral-Eis-class}
Let $x\in \cA[\frf](\cR)$ be a $[\frf]$-torsion section and
$f\in R[\cD]^{0,\Gamma}$. Then under the map in \eqref{eq:comparison-map} one has 
\begin{equation*}
\TSym^{\alpha+\ul{1}}([\frf]^{*}) (_{[\frf]}\Eis_{\Gamma,\cB}^{\beta,\alpha}(f,x))=
\TSym^{\beta}([\frf]_\#)(\Eis_{\Gamma,\cA}^{\beta,\alpha}(f,x))
\end{equation*}
in 
\begin{equation*}
\TSym^{\alpha+\ul{1}}(\omega_{\cA/\cR})\otimes_R \TSym^{\beta}(\omega_{\cB^{\vee}/\cR})\otimes_RR[\frac{1}{\norm\frf}].
\end{equation*}
In particular,
\begin{equation*}
\TSym^{\beta}([\frf]_\#)(\Eis_{\Gamma,\cA}^{\beta,\alpha}(f,x))(\omega(\cA)^{[\alpha+\ul{1}]},\omega(\cB^{\vee})^{[\beta]})\in R\subset R[\frac{1}{\norm\frf}].
\end{equation*}
\end{proposition}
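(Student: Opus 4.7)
The plan is to trace both classes back to the common starting point $\EK_\Gamma^\natural(f)\in H^{d-1}(\cU_\cD,\Gamma;\widehat{\sP_\cA^\natural}\otimes\pi^*\omega^d_{\cA^\vee/\cR})$ and to verify the identity after inverting $\norm\frf$. Since the natural map \eqref{eq:comparison-map} is injective (as $R$ is torsion-free and the base change is flat), an equality in $\TSym^{\alpha+\ul{1}}(\omega_{\cA/\cR[1/\norm\frf]})\otimes\TSym^\beta(\omega_{\cB^\vee/\cR[1/\norm\frf]})$ implies the claimed equality in the integral module.

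The argument will rest on three ingredients. First, the functoriality morphism $[\frf]_\#\colon\widehat{\sP_\cA^\natural}\to[\frf]^*\widehat{\sP_\cB^\natural}$ is horizontal with respect to the canonical connections, i.e.
\[
([\frf]^*\nabla_{\widehat{\sP_\cB^\natural}})\circ[\frf]_\# = (\TSym^1([\frf]^*)\otimes[\frf]_\#)\circ\nabla_{\widehat{\sP_\cA^\natural}},
\]
and by iteration the analogous identity with $\nabla^a$ and $\TSym^a([\frf]^*)$ holds. Second, over $R[1/\norm\frf]$ the isogeny $[\frf]^\vee$ is \'etale, so the splitting of Corollary \ref{cor:coh-log-splitting} factors as
\[
\varrho_x\colon x^*\widehat{\sP_\cA^\natural}\xrightarrow{[\frf]_\#} e_\cB^*\widehat{\sP_\cB^\natural}\xleftarrow{[\frf]_\#} e_\cA^*\widehat{\sP_\cA^\natural}\isom e_\cA^*\widehat{\sO_\cA^\natural},
\]
which is exactly the shape used in the construction of the refined class. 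Third, the moment map is natural along the zero section, i.e.\ $\mom_{e_\cB}^b\circ(e_\cA^*[\frf]_\#)=\TSym^b([\frf]_\#)\circ\mom_{e_\cA}^b$, which follows from the compatibility of the comultiplication of Definition \ref{def:moment-map} with isogenies.

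With these in hand, the chain of operations producing ${}_{[\frf]}\Eis_{\Gamma,\cB}^{\beta,\alpha}(f,x)$ — apply $[\frf]_\#$ and $[\frf]^{\vee*}$, derive via $[\frf]^*\nabla^a$, pull back by $x$ (using $x^*[\frf]^*=e_\cB^*$), apply $\mom_{e_\cB}^b$ followed by the projection $\TSym^b(\sH_\cB)\twoheadrightarrow\TSym^b(\omega_{\cB^\vee/\cR})$ and finally the orientation map of Proposition \ref{prop:orientation} — is identified step by step with the chain defining $\Eis_{\Gamma,\cA}^{\beta,\alpha}(f,x)$ post-composed with $\TSym^{\alpha+\ul{1}}([\frf]^*)$ on the covariant factor and $\TSym^\beta([\frf]_\#)$ on the contravariant factor. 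This yields the required identity after base change to $R[1/\norm\frf]$, and hence the identity in the integral module by injectivity of \eqref{eq:comparison-map}.

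The \emph{in particular} assertion is then immediate: by Theorem \ref{thm:refined-EK}, ${}_{[\frf]}\Eis_{\Gamma,\cB}^{\beta,\alpha}(f,x)$ lies already in $\TSym^{\alpha+\ul{1}}(\omega_{\cB/\cR})\otimes_R\TSym^\beta(\omega_{\cB^\vee/\cR})$, so $\TSym^{\alpha+\ul{1}}([\frf]^*)$ sends it into $\TSym^{\alpha+\ul{1}}(\omega_{\cA/\cR})\otimes_R\TSym^\beta(\omega_{\cB^\vee/\cR})$, and pairing against the chosen bases gives an element of $R$; the proved identity transfers this integrality to $\TSym^\beta([\frf]_\#)(\Eis_{\Gamma,\cA}^{\beta,\alpha}(f,x))(\omega(\cA)^{[\alpha+\ul{1}]},\omega(\cB^\vee)^{[\beta]})$. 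The main obstacle I anticipate is ingredient (i), namely establishing the horizontality of $[\frf]_\#$ cleanly, together with the careful bookkeeping required to keep the contravariant $\TSym^a([\frf]^*)$ on cotangents separate from the covariant $\TSym^b([\frf]_\#)$ on $\sH$, and to verify that the rigidifications of $\widehat{\sP_\cA^\natural}$ and $\widehat{\sP_\cB^\natural}$ along their zero sections match correctly under pullback by the $[\frf]$-torsion section $x$; once these diagrams commute, the rest is a direct diagram chase.
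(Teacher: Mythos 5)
Your proposal is correct and follows essentially the same route as the paper's proof: reduce to $R[\frac{1}{\norm\frf}]$ via injectivity of \eqref{eq:comparison-map}, then chase the two commutative diagrams expressing (i) horizontality of $[\frf]_\#$ with respect to the connections and (ii) compatibility of $\varrho_x=[\frf]_{\#,e_\cA}^{-1}\circ[\frf]_{\#,x}$ with the moment maps. The paper likewise simply asserts the horizontality of $[\frf]_\#$ (the point you flag as the main obstacle) rather than proving it, so your write-up matches the paper's level of detail as well as its structure.
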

\begin{proof}We may assume $R=R[\frac{1}{\norm\frf}]$. As $[\frf]_\#:\widehat{\sP}_{\cA}^{\natural}\to [\frf]^{*}\widehat{\sP}_{\cB}^{\natural}$ is a morphism of sheaves with connection, one has a commutative diagram (using $x^{*}[\frf]^{*}=e_\cB^{*}$)
\begin{equation*}
\xymatrixcolsep{7pc}\xymatrix{
x^{*}\widehat{\sP}_{\cA}^{\natural}\ar[r]^{x^{*}\nabla_\cA^{\alpha}}\ar[d]_{[\frf]_{\#,x}}&\TSym^{\alpha}(\omega_{\cA/\cR})\otimes x^{*}\widehat{\sP}_{\cA}^{\natural}\ar[dd]^{\id\otimes [\frf]_\#}\\
e_\cB^{*}\widehat{\sP}_\cB^{\natural}\ar[d]_{e_\cB^{*}\nabla^{\alpha}}\\
\TSym^{\alpha}(\omega_{\cB/\cR})\otimes e_\cB^{*}\widehat{\sP}_\cB^{\natural}\ar[r]^{\TSym^{\alpha}([\frf]^{*})}&\TSym^{\alpha}(\omega_{\cA/\cR})\otimes e_\cB^{*}\widehat{\sP}_\cB^{\natural}.
}
\end{equation*}
Further we claim that the diagram
\begin{equation*}
\xymatrixcolsep{5pc}\xymatrix{
x^{*}\widehat{\sP}_{\cA}^{\natural}\ar[r]^{_\varrho\mom_x^{\beta}}\ar[d]_{[\frf]_{\#, x}}&\TSym^{\beta}(\sH_{\cA})\ar[d]^{\TSym^{\beta}([\frf]_\#)}\\
e_\cB^{*}\widehat{\sP}_\cB^{\natural}\ar[r]^{\mom_{e_\cB}^{\beta}}&\TSym^{\beta}(\sH_\cB)
}
\end{equation*}
commutes. Recall that $_\varrho\mom_x^{\beta}=\mom_{e_\cA}^{\beta}\circ\varrho_x$ and that
$\varrho_x$ is the composition
\begin{equation*}
x^{*}\widehat{\sP}_{\cA}^{\natural}\xrightarrow[\isom]{[\frf]_{\#,x}}e_\cB^{*}\widehat{\sP}_{\cB}^{\natural}\xrightarrow[\isom ]{[\frf]_{\#,e_\cA}^{-1}}e_\cA^{*}\widehat{\sP}_{\cA}^{\natural}.
\end{equation*}  
(see Corollary \ref{cor:coh-log-splitting}). With this the commutativity of the diagram follows from the functoriality of the moment map, which gives
\begin{equation*}
\TSym^{\beta}\circ \mom_{e_\cA}^{\beta}=\mom_{e_\cB}^{\beta}\circ[\frf]_{\#,e_\cA}\qedhere
\end{equation*}
\end{proof}
\subsection{Proof of Theorem \ref{thm:coh-with-support-descr}}\label{section-proof-of-thm}
We start with the computation of an $\Ext$-group.
\begin{proposition}
Let $\sI\subset \sO_\cA$ be the ideal sheaf defining $\cD$ and 
write $\sO_{\cA_k}:=\sO_\cA/\sI^{k+1}$. Then for all $k\ge 0$
\begin{equation*}
\Ext^{q}_{\sO_\cA}(\sI^{k}/\sI^{k+1},\sP^{(n)}\otimes\Omega^{d}_{\cA/\cS})\isom \begin{cases}
0& q<d\\
H^{q-d}(\cD,\ul{\Hom}_{\sO_{\cD}}(\sI^{k}/\sI^{k+1},\iota^{*}\sP^{(n)}))&q\ge d.
\end{cases}
\end{equation*}
\end{proposition}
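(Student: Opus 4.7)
The key geometric input is that $\iota\colon\cD\hookrightarrow \cA$ is a regular closed immersion of codimension $d$ with $\cD\to\cS$ \'etale, so the conormal sequence
\[
0\to \sI/\sI^{2}\to \iota^{*}\Omega^{1}_{\cA/\cS}\to \Omega^{1}_{\cD/\cS}=0
\]
identifies the conormal bundle $\sI/\sI^{2}$ with $\iota^{*}\Omega^{1}_{\cA/\cS}$, a locally free $\sO_\cD$-module of rank $d$. Since the immersion is regular, $\sI^{n}/\sI^{n+1}\isom\Sym^{n}_{\sO_\cD}(\sI/\sI^{2})$ is locally free as well. My plan is then to split the computation into a local part (the sheaf $\sExt$) and a global part (passing from $\sExt$ to $\Ext$ via the local-to-global spectral sequence).

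For the local computation I would use the Koszul resolution of $\iota_{*}\sO_\cD$ on $\cA$ arising from a regular sequence cutting out $\cD$; tensoring with the locally free sheaf $\sI^{n}/\sI^{n+1}$ yields a locally free resolution of length exactly $d$ of the $\sO_\cA$-module $\iota_{*}(\sI^{n}/\sI^{n+1})$. Hence $\sExt^{q}_{\sO_\cA}(\sI^{n}/\sI^{n+1},-)$ is supported in degrees $0\le q\le d$, and the standard fundamental local isomorphism in the codimension-$d$ regular case gives
\[
\sExt^{q}_{\sO_\cA}\!\bigl(\sI^{n}/\sI^{n+1},\,\wP\otimes\Omega^{d}_{\cA/\cS}\bigr)\isom
\begin{cases}0 & q\neq d,\\
\iota_{*}\bigl(\sHom_{\sO_\cD}(\sI^{n}/\sI^{n+1},\iota^{*}\wP)\otimes \Lambda^{d}(\sI/\sI^{2})^{\vee}\otimes \iota^{*}\Omega^{d}_{\cA/\cS}\bigr) & q=d.
\end{cases}
\]
Using $\sI/\sI^{2}\isom \iota^{*}\Omega^{1}_{\cA/\cS}$, the factor $\Lambda^{d}(\sI/\sI^{2})^{\vee}\otimes \iota^{*}\Omega^{d}_{\cA/\cS}$ collapses canonically to $\sO_\cD$, and the $q=d$ term simplifies to $\iota_{*}\sHom_{\sO_\cD}(\sI^{n}/\sI^{n+1},\iota^{*}\wP)$.

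Plugging this into the local-to-global $\Ext$ spectral sequence
\[
E_{2}^{p,q}=H^{p}\!\bigl(\cA,\sExt^{q}_{\sO_\cA}(\sI^{n}/\sI^{n+1},\wP\otimes\Omega^{d}_{\cA/\cS})\bigr)\Rightarrow \Ext^{p+q}_{\sO_\cA}\!\bigl(\sI^{n}/\sI^{n+1},\wP\otimes\Omega^{d}_{\cA/\cS}\bigr),
\]
only the row $q=d$ survives, so the sequence degenerates; combined with exactness of $\iota_{*}$ and the identification $H^{p}(\cA,\iota_{*}\sF)\isom H^{p}(\cD,\sF)$ for a closed immersion, this yields the claimed formula, with vanishing for $q<d$.

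The main technical point — and the one I would write out carefully — is the cancellation $\Lambda^{d}(\sI/\sI^{2})^{\vee}\otimes \iota^{*}\Omega^{d}_{\cA/\cS}\isom \sO_\cD$ and, more fundamentally, the identification of $\sExt^{d}$ via the Koszul resolution (i.e.\ the fundamental local isomorphism). Everything else is a formal spectral sequence argument. The tensor factor $\wP$ causes no trouble since it is a coherent $\sO_\cA$-module and the statement is local on $\cA$, so it simply rides along through the computation.
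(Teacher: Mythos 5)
Your argument is correct and follows essentially the same route as the paper: the local-to-global $\Ext$ spectral sequence combined with the fundamental local isomorphism for the codimension-$d$ locally complete intersection $\iota$, with the dualizing factor $\Lambda^{d}(\sI/\sI^{2})^{\vee}$ cancelling against $\iota^{*}\Omega^{d}_{\cA/\cS}$ (the paper identifies $\omega_{\cD/\cA}\isom\iota^{*}\pi^{*}\omega^{-d}_{\cA/\cS}$ via Hartshorne's remark on fibres of the \'etale isogeny $\delta$ rather than via the conormal sequence, but this amounts to the same computation). The one imprecision is calling $\wP$ coherent --- it is a completion, i.e.\ an inverse limit of finite locally free sheaves --- but since the Koszul/$\sHom$ computation commutes with this limit the argument is unaffected, and the paper treats this point with the same level of detail.
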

\begin{proof}
The local to global spectral sequence for $\Ext$ gives
\begin{equation*}
H^{p}(\cA,\sExt_{\sO_\cA}^{q}(\sI^{k}/\sI^{k+1},\sP^{(n)}\otimes\Omega^{d}_{\cA/\cS}))\Rightarrow \Ext^{p+q}_{\sO_\cA}(\sI^{k}/\sI^{k+1},\sP^{(n)}\otimes\Omega^{d}_{\cA/\cS}).
\end{equation*}
Recall the notations in Notation \ref{not:D-def}. As $\iota:\cD\to \cA$ is locally a complete intersection, the local $\Ext$ can be computed by \cite[III. prop. 7.2]{Hartshorne}. For this we need to determine the dualizing sheaf $\omega_{\cD/\cA}$. Let $\delta:\cA\to \cB$ be the \'etale isogeny such that $\cD\subset\ker\delta$ is a closed subscheme. Then by \cite{Hartshorne} p. 141 Remark 2 one has 
\begin{equation*}
\omega_{\cD/\cA}\isom \iota^{*}\delta^{*}\omega_{S/\cB}\isom \iota^{*}\delta^{*}\pi_\cB^{*}\omega^{-d}_{\cB/\cS}\isom \iota^{*}\pi^{*}\omega^{-d}_{\cA/\cS}.
\end{equation*}
Further
$\sI^{k}/\sI^{k+1}$ is a locally free $\sO_\cD$-module and 
one gets
\begin{align*}
\sExt_{\sO_\cA}^{q}(\sI^{k}/\sI^{k+1},\sP^{(n)}\otimes\Omega^{d}_{\cA/\cS})&\isom\sExt^{q}_{\sO_\cA}(\sO_{\cD},\sP^{(n)}\otimes\Omega^{d}_{\cA/\cS})\otimes\ul{\Hom}_{\sO_{\cD}}(\sI^{k}/\sI^{k+1},\sO_{\cD}) \\
&\isom \begin{cases}
0&q\neq d\\
\iota_{*}\ul{\Hom}_{\sO_{\cD}}(\sI^{k}/\sI^{k+1},\iota^{*}\sP^{(n)}) &q=d.
\end{cases}
\end{align*}
Inserting this result into the local to global spectral sequence gives the desired result.
\end{proof}
By \cite{SGA2} Exp. II, Th\'eorème 6 b)
one has then the  following description of the cohomology with support:
\begin{equation}\label{eq:support-descr}
H^{i}_\cD({\cA},\sP^{(n)}\otimes\Omega_{\cA/\cS}^{d})\isom
\indlim_k\Ext^{i}_{\sO_\cA}(\sO_{\cA_k},\sP^{(n)}\otimes\Omega_{\cA/\cS}^{d}).
\end{equation}
\begin{corollary}\label{cor:ext-vanishing}
For all $k\ge 1$ one has for $i<d$
\begin{equation*}
\Ext_{\sO_\cA}^{i}(\sO_{\cA_{k}},\sP^{(n)}\otimes\Omega^{d}_{\cA/\cS})=0
\end{equation*}
and for $i=d$ injections
\begin{equation*}
\Ext_{\sO_\cA}^{d}(\sO_{\cA_{k-1}},\sP^{(n)}\otimes\Omega^{d}_{\cA/\cS})\subset \Ext_{\sO_\cA}^{d}(\sO_{\cA_{k}},\sP^{(n)}\otimes\Omega^{d}_{\cA/\cS}).
\end{equation*}
In particular, one has $H^{i}_\cD({\cA},\sP^{(n)}\otimes\Omega_{\cA/\cS}^{d})=0$ for $i<d$ and
\begin{equation*}
H^{0}(\cD, \iota^{*}\sP^{(n)})\isom\Ext_{\sO_\cA}^{d}(\sO_{\cA_0},\sP^{(n)}\otimes\Omega^{d}_{\cA/\cS})\subset H^{d}_\cD({\cA},\sP^{(n)}\otimes\Omega_{\cA/\cS}^{d}).
\end{equation*}
\end{corollary}
\begin{proof}
This follows from the proposition and the long exact $\Ext$-sequence associated to 
\begin{equation*}
0\to \sI^{k}/\sI^{k+1}\to \sO_{\cA_{k}}\to \sO_{\cA_{k-1}}\to 0.\qedhere
\end{equation*}
\end{proof}
\begin{corollary}
One has
\begin{equation*}
H^{i}_\cD(\cA,\Gamma;\sP^{(n)}\otimes\Omega_{\cA/\cS}^{d})\isom 
\begin{cases}
H^{d}_\cD(\cA,\sP^{(n)}\otimes\Omega_{\cA/\cS}^{d})^{\Gamma}&i=d\\
0&i<d.
\end{cases}
\end{equation*}
and 
\begin{equation*}
H^{i}_\cD(\cA,\Gamma;\wP\otimes\Omega_{\cA/\cS}^{d})\isom \begin{cases}
\prolim
H^{d}_\cD(\cA,\Gamma;\sP^{(n)}\otimes\Omega_{\cA/\cS}^{d})&i=d\\
0&i<d.
\end{cases}
\end{equation*}
\end{corollary}
\begin{proof}
Corollary \ref{cor:ext-vanishing} shows that $H^{i}_\cD(\cA,\sP^{(n)}\otimes\Omega_{\cA/\cS}^{d})=0$ for $i\neq d$ and hence the spectral sequence for equivariant cohomology gives the first statement. By Lemma \ref{lemma:derived-limits} one has $\wP\isom R\prolim \sP^{(n)}$ and by Corollary \ref{cor:limit-sequences} an exact sequence
\begin{equation*}
0\to R^{1}\prolim H^{i-1}_\cD(\cA,\Gamma;\sP^{(n)}\otimes\Omega_{\cA/\cS}^{d})\to 
H^{i}_\cD(\cA,\Gamma;\wP\otimes\Omega_{\cA/\cS}^{d})\to \prolim
H^{i}_\cD(\cA,\Gamma;\sP^{(n)}\otimes\Omega_{\cA/\cS}^{d})\to 0,
\end{equation*}
which shows the second statement. 
\end{proof}
\begin{corollary}
One has 
\begin{equation*}
\prolim
H^{d-1}(\cA,\Gamma;\sP^{(n)}\otimes\Omega_{\cA/\cS}^{d})=0.
\end{equation*}
\end{corollary}
\begin{proof}
This follows from the surjection 
\begin{equation*}
H^{d-1}(\cA,\Gamma;\wP\otimes\Omega_{\cA/\cS}^{d})\to \prolim
H^{d-1}(\cA,\Gamma;\sP^{(n)}\otimes\Omega_{\cA/\cS}^{d})\to 0,
\end{equation*}
from Corollary \ref{cor:limit-sequences} together with the vanishing of 
$H^{d-1}(\cA,\Gamma;\wP\otimes\Omega_{\cA/\cS}^{d})$ from Corollary \ref{cor:vanishing-equiv-coh}.
\end{proof}
\begin{proof}[Proof of Theorem \ref{thm:coh-with-support-descr}]

From the vanishing of $H^{d-1}_\cD(\cA,\Gamma;\sP^{(n)}\otimes\Omega_{\cA/\cS}^{d})$ and the localization sequence one gets
\begin{equation*}
0\to H^{d-1}(\cA,\Gamma;\sP^{(n)}\otimes\Omega_{\cA/\cS}^{d})\to H^{d-1}(\cU_\cD,\Gamma;\sP^{(n)}\otimes\Omega_{\cA/\cS}^{d})\to H^{d}_\cD(\cA,\Gamma;\sP^{(n)}\otimes\Omega_{\cA/\cS}^{d}).
\end{equation*}
Taking the inverse limit one gets a commutative diagram
\begin{equation*}
\xymatrix{0\ar[r]&H^{d-1}(\cU_\cD,\Gamma;\wP\otimes\Omega_{\cA/\cS}^{d})\ar[r]\ar[d]&H^{d}_\cD(\cA,\Gamma;\wP\otimes\Omega_{\cA/\cS}^{d})\ar[d]\\
0\ar[r]&\prolim H^{d-1}(\cU_\cD,\Gamma;\sP^{(n)}\otimes\Omega_{\cA/\cS}^{d})\ar[r]&\prolim H^{d}_\cD(\cA,\Gamma;\sP^{(n)}\otimes\Omega_{\cA/\cS}^{d})
}
\end{equation*}
where the vertical arrows are surjections by Corollary \ref{cor:limit-sequences}. As the right vertical arrow is an isomorphism, it follows that 
\begin{equation*}
H^{d-1}(\cU_\cD,\Gamma;\wP\otimes \Omega^{d}_{\cA/\cS} )\isom \prolim H^{d-1}(\cU_\cD,\Gamma;\sP^{(n)}\otimes\Omega_{\cA/\cS}^{d}).
\end{equation*}

Using the splitting principle  $\iota^{*}{\sP}\isom\pi_{\cD}^{*}\sO_{\widehat{\cB}}$ of Corollary \ref{cor:coh-log-splitting} for the isogeny $\delta:\cA\to \cA'$ defining $\cD$
and that $\sO_\cD\subset \pi_{\cD}^{*}\sO_{\widehat{\cB}}$ one gets from  Corollary \ref{cor:ext-vanishing}
\begin{equation*}
\cO[\cD]=H^{0}(\cD,\sO_{\cD})\subset H^{0}(\cD, \iota^{*}\sP^{(n)})\subset 
H^{d}_\cD({\cA},\sP^{(n)}\otimes\Omega_{\cA/\cS}^{d}).
\end{equation*}
Further the composition with 
\begin{equation*}
H^{d}_\cD({\cA},\sP^{(n)}\otimes\Omega_{\cA/\cS}^{d})\to H^{d}({\cA},\sP^{(n)}\otimes\Omega_{\cA/\cS}^{d})\isom H^{0}(\cS,\sO_\cS)
\end{equation*}
is the trace map $\cO[\cD]=H^{0}(\cD,\sO_{\cD})\to H^{0}(\cS,\sO_\cS)$. Taking invariants under $\Gamma$ and the inverse limit over $n$ gives
\begin{equation*}
\cO[\cD]^{\Gamma}\hookrightarrow  \prolim H^{d}_\cD({\cA},\sP^{(n)}\otimes\Omega_{\cA/\cS}^{d})^{\Gamma}\isom
H^{d}_\cD({\cA},\Gamma;\wP\otimes\Omega_{\cA/\cS}^{d}).
\end{equation*}
Putting everything together one obtains an injection 
\begin{equation*}
\EK_{\Gamma,\cA}:\sO_\cS[\cD]^{0,\Gamma}\hookrightarrow 
H^{d-1}(\cU_\cD,\Gamma;\wP\otimes \Omega^{d}_{\cA/\cS} )
\end{equation*}
which proves finally all statements of Theorem \ref{thm:coh-with-support-descr}.
\end{proof}

\subsection{Relation to the de Rham polylogarithm}
In characteristic zero the bundle $\widehat{\sP}^{\natural}$ is nothing but the de Rham logarithm sheaf, a fact which was first shown in \cite{Scheider}. As we will use this fact later, we review the proof. 

We recall the definition of the de Rham logarithm sheaf. For any scheme $\cX$ over a field $k$ of characteristic zero we denote by $\sD_\cX$ the ring of differential operators relative $k$. Recall that 
\begin{equation*}
\sH:=\sHom_{\sO_\cS}(H^{1}_{dR}(\cA/\cS),\sO_{\cS})
\end{equation*}
is equipped with the dual of the Gau\ss-Manin connection. There is an exact sequence \cite[(1.1.1)]{Scheider} comming from the local to global spectral sequence for $\Ext$
\begin{equation}\label{eq:LocalGlobalExt}
0\to \Ext^{1}_{\sD_\cS}(\sO_{\cS},\sH)\xrightarrow{\pi^{*}} \Ext^{1}_{\sD_\cA}(\sO_{\cA},\pi^{*}\sH)\xrightarrow{\widetilde{\delta}} \Hom_{\sD_\cS}(\sH,\sH)\to 0
\end{equation}
which is split by $e^{*}$. Then the first logarithm sheaf is an extension of $\sD_\cA$-modules
\begin{equation*}
0\to \pi^{*}\sH\to \Log{1}\to \sO_{\cA}\to 0
\end{equation*}
which maps to $\id\in \Hom_{\sD_\cS}(\sH,\sH)$ and with a fixed splitting $\one{1}:e^{*}\Log{1}\isom \sO_{\cS}\oplus \sH$. The pair $(\Log{1},\one{1})$ is unique up to unique isomorphism. 
One then defines 
\begin{equation*}
\Log{n}:=\Sym^{n}\Log{1}
\end{equation*}
\nomenclature{$\Log{n}$}{\nomrefpage}
and $\sLog:=\prolim_n \Log{n}$. Recall from Corollary \ref{cor:comultiplication} that in characteristic zero there is also an isomorphism $\sP^{\natural(n)}\isom \TSym^{n}\sP^{\natural(1)}$, and that by \eqref{eq:splittings} we have a splitting $\one{1}:e^*\sP^{\natural(1)} \isom \sO_{\cS}\oplus \sH$ induced by the rigidification $(e \times \id)^*\sP^{\natural}\cong \sO_{\cA^\natural}$ of $\sP^{\natural}$. 
\begin{theorem}[Scheider, Thm. 2.3.1 \cite{Scheider}]\label{thm:scheider-theorem}
Let $\cS=\Spec(k)$  be a field of characterstic zero.
There is a canonical isomorphism $(\Log{1},\nabla_{\Log{1}},\bfone^{(1)})\isom (\sP^{\natural(1)},\nabla_{\sP^{\natural(1)}}, \one{1})$. In particular, one has an isomorphism
$\sLog\isom \widehat{\sP^{\natural}}$ respecting the sections $\bfone$ along $e:\cS\to \cA$.
\end{theorem}
\begin{proof} This is a special case of a result by Scheider see \cite[Thm. 2.3.1]{Scheider}. For the convenience of the reader, let us give the proof. Recall that the extension class $[(\Log{1},\nabla_{\Log{1}})]\in \Ext^{1}_{\sD_\cA}(\sO_{\cA},\pi^{*}\sH)$ of $\Log{1}$ is uniquely characterized by
\[
	e^*[(\Log{1},\nabla_{\Log{1}})]=0,\quad \widetilde{\delta}([(\Log{1},\nabla_{\Log{1}})])=\id_\sH\in \Hom_{\sD_\cS}(\sH,\sH).
\]
The choice of a splitting $\one{1}:e^{*}\Log{1}\isom \sO_{\cS}\oplus \sH$ pins down the actual extension within this extension class. Since we assumed $\cS=\Spec(k)$, the first term in the short exact sequence \eqref{eq:LocalGlobalExt} vanishes and $\Hom_{\sD_\cS}(\sH,\sH)=\Hom_{\sO_\cS}(\sH,\sH)$. Thus, it remains to prove $\widetilde{\delta}([(\sP^{\natural(1)},\nabla_{\sP^{\natural(1)}}))=\id_\sH$. The map $\widetilde{\delta}$ in \eqref{eq:LocalGlobalExt} has the following explicit description. Given
\[
	[\sL]=[0\to \pi^*\sH \to \sL\to \sO_\cA\to 0] \in \Ext^{1}_{\sD_\cA}(\sO_{\cA},\pi^{*}\sH),
\]
we get a long exact sequence in de Rham cohomology
\[
	0\to H^{0}_{dR}(\cA/\cS,  \pi^*\sH) \to H^{0}_{dR}(\cA/\cS,   \sL) \to  \underbrace{H^{0}_{dR}(\cA/\cS)}_{=k}\xrightarrow{\delta}  \underbrace{H^{1}_{dR}(\cA/\cS,  \pi^*\sH)}_{=\Hom_{k}(\sH,\sH)} \to \dots,
\]
and $\widetilde{\delta}([\sL])=\delta(1)\in \Hom_{k}(\sH,\sH)$.  Let us now describe an explicit inverse of the map $\widetilde{\delta}$. For a finite dimensional $k$-vector space $M$, let us write $\cS[M]:=\Spec(k\oplus M)$ for the corresponding square-zero extension. Given $\varphi\in \ker\left( \cA^\natural(\cS[M])\to \cA^\natural(\cS) \right)$, we get a vector bundle $(\sL_\varphi,\nabla_{\sL_\varphi}):=\pr_{A,*}(\id_\cA\times \varphi)^*(\sP^\natural,\nabla_{\sP^\natural})$ with integrable $S$-connection on $\cA$ fitting into a short exact sequence of vector bundles with integrable $S$-connection
\[
	0\to \pi^*M \to \sL_\varphi \to \sO_\cA\to 0.
\]
This construction provides a map $\Phi$
\[
	\Phi\colon\ker\left( \cA^\natural(\cS[M])\to \cA^\natural(\cS) \right) \to \Ext^{1}_{\sD_\cA}(\sO_{\cA},\pi^{*}M),\quad \varphi\mapsto \mathrm{KS}(\varphi):=[(\sL_\varphi,\nabla_{\sL_\varphi})].
\]
By the universal property of $\sP^\natural$, any vector bundle with integrable $\cS[M]$-connection on $\cA\times \cS[M]$ is obtained by pullback of $\sP^\natural$ along a unique map $\cS[M]\to \cA^\natural$. Thus, the map $\Phi$ is an isomorphism
\[
	\Phi\colon\ker\left( \cA^\natural(\cS[M])\to \cA^\natural(\cS) \right) \xrightarrow{\sim}  \Ext^{1}_{\sD_\cA}(\sO_{\cA},\pi^{*}M) .
\]
By \cite[(4.1.4)]{Mazur-Messing}, the Lie algebra of the universal vectorial extension $\cA^\natural$ is canonically isomorphic to $\sH^\vee$, and we obtain
\begin{equation}\label{eq:MazurMessing-Lie-algebra}
	\ker\left( \cA^\natural(\cS[M])\to \cA^\natural(\cS) \right) = \Lie(\cA^{\natural}/\cS)\otimes M \cong \Hom_{k}(\sH,M).
\end{equation}
For $M=\sH$, we get $S[\sH]=\Spec(k\oplus \sH)=\Inf^1_e \cA^\natural$ and the canonical inclusion $\iota \colon \Inf^1_e \cA^\natural \subseteq \cA^{\natural}$ gives an element $\iota \in \ker\left( \cA^\natural(\cS[\sH])\to \cA^\natural(\cS) \right) $ which corresponds to the identity $\id_\sH\in \Hom_{k}(\sH,\sH)$ under the isomorphism \eqref{eq:MazurMessing-Lie-algebra}. Note that $\Phi(\iota)=[(\sP^{\natural(1)},\nabla_{\sP^{\natural(1)}})]$, and the statement of the Theorem follows from the commutative diagram of isomorphisms
\[
	\xymatrix{
		\Ext^{1}_{\sD_\cA}(\sO_{\cA},\pi^{*}\sH)  \ar[r]^-{\widetilde{\delta}}_-{\cong} & \Hom_k(\sH,\sH)\\
		\Ext^{1}_{\sD_\cA}(\sO_{\cA},\pi^{*}\sH) \ar[u]^{\id}  & \ker\left( \cA^\natural(\cS[\sH])\to \cA^\natural(\cS) \right). \ar[l]_-{\Phi}^-{\cong} \ar[u]^{\eqref{eq:MazurMessing-Lie-algebra}}_-{\cong}
	}
\]
The commutativity of the diagram is checked by a direct computation using the explicit description of $\widetilde{\delta}$ given above and the explicit description of \eqref{eq:MazurMessing-Lie-algebra} given in \cite[(4.1.4)]{Mazur-Messing}.
\end{proof}
%

\begin{remark}
Let us briefly explain the relationship between the Eisenstein--Kronecker classes and the de Rham polylogarithm. Let $\cS=\Spec(k)$  be a field of characterstic zero. The de Rham complex for the connection $\nabla$ on ${\sP^{\natural(n)}}$
\begin{equation*}
{\sP^{\natural(n)}}\to \sP^{\natural(n)}\otimes \Omega_{\cA}^{1}\to \cdots\to {\sP^{\natural(n)}}\otimes \Omega_{\cA}^{d}
\end{equation*}
gives rise via the Hodge spectral sequence  to a map
\begin{equation*}
H^{d-1}(\cU_\cD,\Gamma;{\sP^{\natural(n)}}\otimes \Omega^{d}_{\cA} )\to  H^{2d-1}_\dR(\cU_\cD,\Gamma;{\sP^{\natural(n)}}).
\end{equation*}
It can be shown that the image of $\Eis_{\Gamma}^{\natural}(f)$ under this map is the de Rham realization of the polylogarithm defined in \cite{HuKiPol}. 

\end{remark}

\section{Explicit computation of the equivariant coherent Eisenstein--Kronecker classes}\label{section:explicit-computation}
In this section we write down an explicit representative over $\C$ of the equivariant Eisenstein-Kronecker class in terms of (generalized) Eisenstein-Kronecker series, see Definition \ref{def:EK-series}. The set up of this chapter is as in Notation \ref{notation-for-CM-ab}, i.e. $(\cA/\cR,\omega(\cA),\omega(\cA^\vee),x)$ is an abelian scheme with fixed bases of $\omega_{\cA/\cR}$ and $\omega_{\cA^{\vee}/\cR}$ and a $\frf$-torsion section $x$. For the computation, we will assume $\cR=\Spec(\C)$. This section relies on the ideas of Levin \cite{levin}, Nori \cite{Nori} and Graf \cite{Graf}.

\subsection{The completed Poincar\'e bundle as $\cC^{\infty}$-bundle} We will make the connection on the completed Poincar\'e bundle over $\C$ explicit.

Recall from \eqref{eq:C-Sigma} and \eqref{eq:uniformization} that the basis of differential forms $\omega(\cA)$ and the resulting identification $\omega(\cA):\Lie(\cA/\CC)\cong \CC^\Sigma$ gives a \emph{period lattice} $\Lambda\subseteq \CC^\Sigma$ and an induced \emph{complex uniformization}
\begin{equation}\label{eq:lattice}
	\theta\colon \cA(\CC)\cong \CC^\Sigma/\Lambda.
\end{equation}
\begin{notation}\label{not:coordinates}
We will choose an ordering of our CM type $\Sigma=\{\sigma_1,\dots,\sigma_d\}$. For a family of objects $a=(a_\sigma)_{\sigma\in \Sigma}$ parametrized by $\Sigma$ we will write
\[
	a=(a_i)_{i=1}^d,\quad \text{ with } a_i=a_{\sigma_i}.
\]
In particular, this gives the coordinates on $\C^{\Sigma}=\prod_{\sigma\in \Sigma}\C$
\[
	z=(z_1,\dots,z_d):=(z(\sigma_i))_{i=1}^d,
\]
and $I_L^+=I_{\overline{\Sigma}}^+\oplus I_\Sigma^+$ becomes identified with $\NN^d\times \NN^d$. It will be convenient to introduce the notation
\[
	z^\mu:=\prod_{i=1}^d z_i^{\mu_i},
\]
for $\mu=(\mu_1,\dots, \mu_d)\in \NN^d$ and $z\in \CC^{\Sigma}$.
\end{notation}
Since the coordinates on $\Lie(\cA/\CC)$ are induced by the basis $\omega(\cA)$, the action of a subgroup $\Gamma\subseteq\sO_L^\times$ of finite index is given by the formula
\[
	\gamma.z:=(\sigma_1(\gamma^{-1})z_1,\dots,\sigma_d(\gamma^{-1})z_d),\quad \gamma\in \sO_L^\times, z\in \CC^\Sigma.
\]

We start describing the smooth connection on the completion of the Poincar\'e bundle. By Proposition \ref{prop:CM-decompostion} the splitting induced by the $\Gamma$-action on $\cA$ coincides with the Hodge decomposition
\[
	\sH\cong \sH(\overline{\Sigma})\oplus\sH(\Sigma)\cong \overline{\Lie(\cA/\CC)}\oplus \Lie(\cA/\CC).
\]
\begin{definition}\label{eq:nu-def}
Let us define $\nu=\nu^{1,0}+\nu^{0,1}\in \sH\otimes_\CC (\omega_{\cA/\CC}\oplus \ol{\omega_{\cA/\CC}})$ with
\[
	\nu^{1,0}\in \sH(\Sigma) \otimes_\CC \omega_{\cA/\CC},\quad \nu^{0,1}\in \sH(\overline{\Sigma}) \otimes_\CC \overline{\omega_{\cA/\CC}}
\]
corresponding to the identity morphisms in 
	$$\sH(\Sigma) \otimes_\CC \omega_{\cA/\CC}\cong \Lie(\cA/\CC) \otimes_\CC \omega_{\cA/\CC}=\Hom_{\CC}(\omega_{\cA/\CC},\omega_{\cA/\CC})$$ 
respectively 
	$$\sH(\overline{\Sigma}) \otimes_\CC \overline{\omega_{\cA/\CC}}\cong \overline{\Lie(\cA/\CC)}\otimes_\CC \overline{\omega_{\cA/\CC}}=\Hom_{\CC}(\overline{\omega_{\cA/\CC}},\overline{\omega_{\cA/\CC}}).$$
\end{definition}
\begin{definition}\label{def:ubar}
We write $(\overline{\mathrm{u}}_1,\dots,\overline{\mathrm{u}}_d,\mathrm{u}_1,\dots,\mathrm{u}_d)$ for the basis of $\sH$ corresponding to the basis
\[
	\frac{\partial}{\partial \bar{z}_1},\dots,\frac{\partial}{\partial \bar{z}_d},\frac{\partial}{\partial z_1},\dots,\frac{\partial}{\partial z_d}\in \overline{\Lie(\cA/\CC)}\oplus\Lie(\cA/\CC)
\]
under the isomorphism $\sH\cong \overline{\Lie(\cA/\CC)}\oplus \Lie(\cA/\CC)$.
\end{definition}
In this basis, we may write
\begin{equation}
	\nu^{0,1}=\sum_{i=1}^d \overline{\mathrm{u}}_i d\overline{z}_i,\quad \nu^{1,0}=\sum_{i=1}^d \mathrm{u}_i dz_i.
\end{equation}
For a smooth manifold $X$, we will write $\cE^\cdot_{X}$ for the differential graded algebra of sheaves of smooth differential forms and $\cC^\infty_X$ for the sheaf of smooth functions on $X$. 
\nomenclature{$\cE^\cdot_{X}$}{\nomrefpage}
\begin{notation}
Let us write
\begin{align*}
\cP^{(n)}:={\sP}^{(n),an} \otimes_{\cO^{an}_{\cA(\CC)}}\cC^\infty_{\cA(\CC)},&\quad{\cP}^{\natural(n)}:={\sP}^{\natural(n), an} \otimes_{\cO^{an}_{\cA(\CC)}} \cC^\infty_{\cA(\CC)}\\
	\widehat{\cP}:=\prolim_n{\cP}^{(n)},& \quad	\widehat{\cP}^{\natural}:=\prolim_n{\cP}^{\natural(n)}
\end{align*}
\nomenclature{$\cP^{(n)}$}{\nomrefpage}
\nomenclature{${\cP}^{\natural(n)}$}{\nomrefpage}
for the sheaves of smooth sections of the completed Poincar\'e bundles $\widehat{\sP}$ and $\widehat{\sP}^\natural$ considered as pro-bundles. We also let 
\[
	\cH=\sH\otimes_\CC \cC^\infty_{\cA(\CC)}.
\]
We will write $\cH=\cH(\Sigma)\oplus\cH(\overline{\Sigma})$ for the decomposition induced by $\sH=\sH(\Sigma)\oplus\sH(\overline{\Sigma})$. 
\end{notation}

The connection $\nabla$ on $\sP^{\natural(n)}$ defines a smooth connection $\nabla_{\cC^\infty}$ on $\cP^{\natural(n)}$ as follows. We have $\cP^{\natural(n)} = (\sP^{\natural(n),an})^\nabla\otimes_\CC \cC^\infty_{\cA(\CC)}$, where $(\sP^{\natural(n),an})^\nabla$ is the local system of $\CC$ vector spaces given by horizontal sections of ${\sP}^{\natural(n),an}$. Then $\nabla_{\cC^\infty}$ is defined as:
\[
	\nabla_{\cC^\infty} := \id\otimes d\colon ({\sP}^{\natural(n),an})^\nabla\otimes_{\CC}\cC^\infty_{\cA(\CC)} \rightarrow  ({\sP}^{\natural(n),an})^\nabla\otimes_{\CC} \cE^1_{\cA(\CC)}.
\]
The connection $\nabla_{\cC^\infty}$ decomposes into a holomorphic and an anti-holomorphic connection $\nabla_{\cC^\infty}=\nabla'+\nabla''$ where
\[
	\nabla'\colon {\cP}^{\natural(n)}\rightarrow {\cP}^{\natural(n)}\otimes \cE^{1,0}_{\cA(\CC)},\quad \nabla''\colon {\cP}^{\natural(n)}\rightarrow {\cP}^{\natural(n)}\otimes \cE^{0,1}_{\cA(\CC)}.
\]
Note that the transition maps ${\cP}^{\natural(n)}\to {\cP}^{\natural(n-1)}$ are horizontal with respect to $\nabla'$ and $\nabla''$. We will sometimes simplify the notation and write $\otimes$ if the base of the tensor product is clear from the context. In the above case, the tensor product is over the sheaf of smooth functions on $\cA(\CC)$. Let us observe that the connection $\nabla''$ induces a Dolbeault resolution of the holomorphic sheaf ${\sP}^{\natural(n),an}$:
\[
	({\sP}^{\natural(n),an})[0]\xrightarrow{\sim} \left({\cP}^{\natural(n)}\otimes  \cE^{0,\bullet}_{\cA(\CC)},\nabla'' \right).
\]
Our next aim is to describe ${\cP}^{\natural(n)}$ explicitly. 
Recall the forms $\nu$, $\nu^{1,0}$ and $\nu^{0,1}$ defined in \eqref{eq:nu-def}. 
By abuse of notation, we will also write  for the  corresponding sections
\begin{equation*}
\nu=\nu^{1,0}+\nu^{0,1}\in \Gamma(\cA(\CC), \cH\otimes \cE^1_{\cA(\CC)})
\end{equation*}
with
\begin{align*}
	\nu^{1,0}\in \Gamma(\cA(\CC), \cH\otimes \cE^{1,0}_{\cA(\CC)}) &&\text{ and }&& \nu^{0,1}\in \Gamma(\cA(\CC), \cH\otimes \cE^{0,1}_{\cA(\CC)}).
\end{align*}
With this notation, we have the following result:
\begin{theorem}\label{thm_analyticPoincare}
	For $n\geq 0$, there  is a compatible system of horizontal isomorphism
	\[
		({\cP}^{\natural(n)},\nabla_{\cC^\infty})\cong \left( \bigoplus_{k=0}^n{\TSym^k} (\cH), d+\nu \right).
	\]
	fitting into a commutative diagram
	\[
		\xymatrix{  {{\cP}^{\natural(n)}}\ar[r]^-{\cong} & {\bigoplus_{k=0}^n{\TSym^k} (\cH)} \\  {\cP} \ar[u]\ar[r]^-{\cong} & \bigoplus_{k=0}^n{\TSym^k}(\cH(\overline{\Sigma}))\ar[u]  }
	\]
	where the vertical maps are the canonical inclusions. The pullback of the above isomorphism along $e$ is furthermore compatible with the moment maps on ${\cA}^{\natural(n)}$:
	\[
		\mom_{{\cA}^{\natural(n)}}\colon e^* {\cP}^{\natural(n)}= \cO_{{\cA}^{\natural(n)}} \xrightarrow{\sim} {\bigoplus_{k=0}^n{\TSym^k}(\sH)}
	\]
where one uses $e^{*}\cH=\sH$.
\end{theorem}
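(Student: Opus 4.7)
The plan is to establish the isomorphism pointwise at the unit section, then propagate it by the flat smooth connection, and identify the transported connection via the $n=1$ case. First, the rigidification $\bfone\colon\sO_\cS\isom e^*\widehat{\sP^\natural}$ together with the moment map on the formal group $\widehat{\cA^\natural}$ yields the canonical identification $e^*\widehat{\cP^\natural}\cong\widehat{\TSym}(\sH)$ required at the bottom of the diagram: indeed, the coordinate ring of $\widehat{\cA^\natural}$ is canonically $\widehat{\TSym}(\sH)$, since $\Lie(\cA^\natural)=H^1_{dR}(\cA/\CC)$ is the $\sO_\cS$-dual of $\sH$.

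Since the smooth connection $\nabla_{\cC^\infty}$ is flat, each $\sP^{\natural(n),an}\otimes_{\cO^{an}}\cC^\infty_{\cA(\CC)}$ is a smooth bundle with flat connection whose pro-local system of horizontal sections on $\cA(\CC)$ is unipotent with graded pieces $\TSym^k(\sH)$. The same holds for $\widehat{\TSym}(\cH)$ with the connection $d+\nu$. A standard inductive argument on the order of completion, using $\sP^{\natural(n)}\cong\TSym^n\sP^{\natural(1)}$ (valid in characteristic zero) and the fact that $d+\nu$ is a derivation for the symmetric tensor multiplication, reduces the theorem to matching the connections in the case $n=1$.

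The key identification is therefore in degree $n=1$. By Scheider's Theorem \ref{thm:scheider-theorem}, $\sP^{\natural(1)}$ is canonically the first logarithm sheaf $\Log{1}$, the unique extension of $\sO_\cA$ by $\pi^*\sH$ of $\sD_\cA$-modules, canonically split along $e$, whose extension class maps to $\id\in\Hom_{\sD_\cS}(\sH,\sH)$. In the smooth Dolbeault picture, this extension splits as $\cC^\infty_{\cA(\CC)}\oplus\cH$, and the connection has the form $d+\omega$ for a uniquely determined global section $\omega$ of $\cH\otimes\cE^1_{\cA(\CC)}$, whose class in the smooth Dolbeault resolution of $\pi^*\sH$ represents the identity under $\sH\cong\sH(\overline\Sigma)\oplus\sH(\Sigma)\cong\overline{\Lie(\cA/\CC)}\oplus\Lie(\cA/\CC)$. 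The $(0,1)$-part of $\omega$ is then $\nu^{0,1}\in\sH(\overline\Sigma)\otimes\overline{\omega_{\cA/\CC}}$ representing the identity of $\overline{\Lie(\cA/\CC)}$, and the holomorphy constraint together with the splitting along $e$ forces the $(1,0)$-part to be $\nu^{1,0}$; this is exactly the definition \ref{eq:nu-def} of $\nu$.

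The compatibility with the inclusion $\widehat{\cP}\hookrightarrow\widehat{\cP^\natural}$ follows because the projection $\cA^\natural\to\cA^\vee$ induces the inclusion $\omega_{\cA^\vee/\CC}=\sH(\overline\Sigma)\hookrightarrow\sH$ on Lie algebras, which on coordinate rings of the formal groups becomes the inclusion $\widehat{\TSym}(\cH(\overline\Sigma))\hookrightarrow\widehat{\TSym}(\cH)$. The main technical obstacle is the explicit identification of the connection form in degree one with $\nu$ via the Hodge splitting; once this is carried out, the rest of the theorem follows by formal manipulations.
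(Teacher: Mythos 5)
Your proposal follows essentially the same route as the paper: reduce to the first infinitesimal level via the comultiplication isomorphisms $\sP^{\natural(n)}\cong\TSym^n(\sP^{\natural(1)})$, invoke Scheider's theorem to identify $\sP^{\natural(1)}$ with $\Log{1}$, and then pin down the smooth connection form in degree one. The only difference is that for this last step the paper simply cites Levin (Prop.\ 2.4.5 of \cite{levin}), whereas you sketch the argument directly; your sketch is in the right direction (the translation-invariant connection form must represent the identity extension class, which is precisely $\nu$), though the clause ``the holomorphy constraint together with the splitting along $e$ forces the $(1,0)$-part to be $\nu^{1,0}$'' is exactly the point Levin's proof makes precise and would need to be fleshed out.
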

\begin{proof} 
Let us first show that it is enough to prove that there is a horizontal isomorphism
	\begin{equation}\label{eq_Poincare_con_1}
		({\cP}^{\natural(1)},\nabla_{\cC^\infty})\cong \left( (\CC\oplus \sH)\otimes_\CC \cC^\infty_{\cA(\CC)}, d+\nu \right).
	\end{equation}
	such that the pullback of the above isomorphism along $e$ is the identity:
	\[
		e^* {\cP}^{\natural(1)}= \CC\oplus\sH \xrightarrow{\sim} \CC\oplus\sH.
	\]
	Indeed, the co-multiplication maps
	\[
		\cP^{\natural(n)}\rightarrow \TSym^n(\cP^{\natural(1)})
	\]
	are isomorphisms by Corollary \ref{cor:comultiplication} and we can define
	\[
		{\cP^{(1)}}\cong (\CC\oplus \sH(\overline{\Sigma}))\otimes_\CC \cC^\infty_{\cA(\CC)},
	\]
	as the restriction of \eqref{eq_Poincare_con_1} to ${\cP^{(1)}}\subseteq  {\cP^{\natural(1)}}$. Once \eqref{eq_Poincare_con_1} is constructed, the desired horizontal morphism
	\[
		({\cP^{\natural(n)}},\nabla_{\cC^\infty})\cong \left( \bigoplus_{k=0}^n{\TSym^k} (\cH), d+\nu \right)
	\]
	follows from
	\[
		 {\cP^{\natural(n)}} \cong \TSym^n({\cP^{\natural(1)}}) \cong \TSym^n (\CC\oplus \sH)\otimes_\CC \cC^\infty_{\cA(\CC)}\cong \prod_{b=0}^n \TSym (\cH).
	\]
	So it remains to construct the horizontal isomorphism \eqref{eq_Poincare_con_1}. According to Scheider's theorem \ref{thm:scheider-theorem} there is a unique horizontal isomorphism 
\[
	\sP^{\natural(1)}\xrightarrow{\sim} \Log{1}
\]
which is compatible with the trivialization
\[
	\CC\oplus \sH\cong  e^*\sP^{\natural(1)}\xrightarrow{\sim} e^*\Log{1}=\CC\oplus \sH.
\]
It has been shown by Levin in \cite[Proposition 2.4.5]{levin} that there is a horizontal isomorphism
\[
	(\Log{1}_{\cC^\infty},\nabla)\xrightarrow{\sim} ((\CC\oplus\sH)\otimes\cC^\infty_{\cA(\CC)},d+\nu)
\]
compatible with the splitting along $e^*$ and the claim follows.
\end{proof}

The anti-holomorphic part $\nu^{0,1}$ of $\nu$ is an anti-holomorphic differential form with values in $\cH(\overline{\Sigma}) \subseteq \cH$:
\[
\nu^{0,1}\in H^{0}(\cA(\CC), \cH(\overline{\Sigma})\otimes \cE^{0,1}_{\cA(\CC)})\subseteq H^{0}(\cA(\CC), \cH\otimes \cE^{0,1}_{\cA(\CC)}).
\]
Thus, it follows from the above theorem that the anti-holomorphic part $\nabla''$ of $\nabla_{\cC^\infty}$ restricts to a connection on ${\cP}^{(n)}$ and we obtain a Dolbeault resolution for ${\sP}^{(n),an}$:
\begin{corollary}
For $n\geq 0$ there is a compatible system of quasi-isomorphisms
\begin{equation}\label{eq_Dolbeault_resolution}
	({\sP}^{(n),an})[0]\xrightarrow{\sim} \left({\cP}^{(n)}\otimes \cE^{0,\bullet}_{\cA(\CC)},\nabla'' \right)
\end{equation}
and tensoring the left hand side with the sheaf of holomorphic $p$-forms gives the quasi-isomorphism
\begin{equation}\label{eq_PB_resolution}
	({\sP}^{(n),an}\otimes \Omega^p_{\cA(\CC)})[0]\xrightarrow{\sim} \left( {\cP}^{(n)}\otimes\cE^{p,\bullet},\nabla'' \right).
\end{equation}
\end{corollary}
\subsection{A model to compute  equivariant cohomology}\label{section:model}
The goal of this section is to construct a  model which allows to compute the equivariant coherent polylogarithm class and the associated Eisenstein-Kronecker classes explicitly. We will use the  fact (see Appendix  \eqref{eq:coh-equiv}) that the equivariant sheaf cohomology
\[
	H^{i}(X,\Gamma,\cF)
\]
for a torsion free group $\Gamma$ can be computed using the Borel construction. For the rest of this subsection let us assume that $\Gamma$ is torsion free. 

Let 
\[
	L^1_{\RR}:=\left\{ (r_1,\dots,r_d)\in \RR^d_{>0}\mid \prod_{i=1}^d r_i=1 \right\}.
\]
with an action of $\Gamma$ given by
\[
	\Gamma\times L^1_\RR\rightarrow L^1_\RR,\quad (\gamma,r)\mapsto (|\sigma_1(\gamma)|^2r_1,\dots,|\sigma_d(\gamma)|^2r_d).
\]
This serves as an explicit model for the universal bundle $E\Gamma$ over $B\Gamma$:
\[
	E\Gamma:=L^1_\RR\rightarrow B\Gamma:=\Gamma\backslash E \Gamma.
\]
The inclusion of $L^1_\RR$ into $\RR^d$ gives a function
\[
	r=(r_1,\dots,r_d)\colon E\Gamma=L^1_\RR \subseteq \RR^d
\]
and we will consider $(r_1,\dots,r_{d-1})$ as coordinates on $E\Gamma$. In particular, we get the following explicit model for $\cA(\CC)\times_\Gamma E\Gamma$:
\[
	\cA(\CC)\times_\Gamma E\Gamma= \Gamma\backslash (\cA(\CC)\times L^1_\RR)
\]
where $\gamma\in \Gamma$ acts on $(z,r)\in \cA(\CC)\times L^1_\RR $ by
\[
	\gamma.(z,r)=((\sigma_1(\gamma^{-1})z_1,\cdots, \sigma_d(\gamma^{-1})z_d),(|\sigma_1(\gamma)|^2r_1,\dots,|\sigma_d(\gamma)|^2r_d)).
\]
Here, the topological space $E\Gamma$ carries the structure of a real manifold and the pullback of abelian sheaves $\pr^{-1}\cC^\infty_{\cA(\CC)}$ along $$\pr\colon \cA(\CC)\times E\Gamma\rightarrow \cA(\CC)$$ can be resolved by the complex of smooth relative differentials:
\begin{equation}\label{eq_diff_resolution}
	(\pr^{-1}\cC^\infty_{\cA(\CC)})[0]\xrightarrow{\sim} \cE^{\bullet}_{E\Gamma\times \cA(\CC)/\cA(\CC)}.
\end{equation}
\begin{definition}
We define
\[
	\cE^{1,0}_{\cA(\CC)\times E\Gamma}:=\pr^*\cE^{1,0}_{\cA(\CC)},\quad \cE^{0,1}_{\cA(\CC)\times E\Gamma}:=\pr^*\cE^{0,1}_{\cA(\CC)}\oplus \pr_{E\Gamma}^*\cE^1_{E\Gamma}.
\]
Using this we define the sheaf $\cE^{p,q}_{\cA(\CC)\times E\Gamma}$ of $(p,q)$-forms on $\cA(\CC)\times E\Gamma$ for non-negative integers $p$ and $q$ by taking exterior powers.
\end{definition}
 By summarizing the above discussion, we obtain a kind of Dolbeault resolution:
\begin{lemma}\label{lem_cP_resolution}
	We have a quasi-isomorphism
	\[
		\pr^{-1}({\sP}^{(n),an}\otimes \Omega^{d}_{\cA(\CC)})\xrightarrow{\sim} \left(\pr^*{\cP}^{(n)}\otimes \cE^{d,\bullet}_{\cA(\CC)\times E\Gamma}, \nabla'' \right).
	\]
	In particular, the cohomology
	\[
		H^i(\cU_\cD(\CC),\Gamma,\widehat{\sP}^{an}\otimes\Omega^{d}_{\cA(\CC)})\cong \varprojlim_n H^i(\cU_\cD(\CC),\Gamma,{\sP}^{(n),an}\otimes\Omega^{d}_{\cA(\CC)})
	\]
	can be described in terms of a compatible system of smooth $\Gamma$-invariant $\nabla''$-closed $(d,i)$-forms with values in $\pr^*{\cP}^{(n)}$.
\end{lemma}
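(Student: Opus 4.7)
The plan is to combine the two resolutions recorded just above the lemma, namely the Dolbeault resolution \eqref{eq_PB_resolution} of $\widehat{\cP}\otimes\Omega^d_{\cA(\CC)}$ on $\cA(\CC)$ and the fibrewise smooth resolution \eqref{eq_diff_resolution} of $\pr^{-1}\cC^\infty_{\cA(\CC)}$ by relative differentials along $\pr\colon \cA(\CC)\times E\Gamma\to \cA(\CC)$. First, since $\pr^{-1}$ is exact, applying it to \eqref{eq_PB_resolution} yields a quasi-isomorphism
\[
  \pr^{-1}(\widehat{\cP}\otimes\Omega^d_{\cA(\CC)})[0]\xrightarrow{\sim}\bigl(\pr^{-1}(\widehat{\cP}\otimes\cE^{d,\bullet}_{\cA(\CC)}),\nabla''\bigr)
\]
on $\cA(\CC)\times E\Gamma$.

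Next, the terms of this complex are modules over $\pr^{-1}\cC^\infty_{\cA(\CC)}$. Tensoring over $\pr^{-1}\cC^\infty_{\cA(\CC)}$ with \eqref{eq_diff_resolution} and passing to the total complex gives a second quasi-isomorphism, because \eqref{eq_diff_resolution} is a resolution of $\pr^{-1}\cC^\infty_{\cA(\CC)}$ by (locally free, hence flat) $\pr^{-1}\cC^\infty_{\cA(\CC)}$-modules. From the defining decomposition
\[
  \cE^{0,1}_{\cA(\CC)\times E\Gamma}=\pr^*\cE^{0,1}_{\cA(\CC)}\oplus\pr_{E\Gamma}^*\cE^1_{E\Gamma}
\]
one reads off $\cE^{d,k}_{\cA(\CC)\times E\Gamma}=\bigoplus_{p+q=k}\pr^*\cE^{d,p}_{\cA(\CC)}\otimes \pr_{E\Gamma}^*\cE^q_{E\Gamma}$, so the total complex of the resulting double complex is exactly $\pr^*\widehat{\cP}\otimes\cE^{d,\bullet}_{\cA(\CC)\times E\Gamma}$, equipped with a differential whose $(1,0)$-piece is the original $\nabla''$ of \eqref{eq_PB_resolution} and whose $(0,1)$-piece is the relative de Rham differential along $E\Gamma$; together these are what the paper denotes by $\nabla''$. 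This proves the first assertion.

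For the cohomology statement, I would invoke the description of equivariant cohomology recalled in Appendix \ref{app1}, namely
\[
  H^{i}(\cU_\cD(\CC),\Gamma,\widehat{\cP}\otimes\Omega^d_{\cA(\CC)})\isom H^i\bigl(\Gamma\backslash(\cU_\cD(\CC)\times E\Gamma),\widetilde{\widehat{\cP}\otimes\Omega^d_{\cA(\CC)}}\bigr).
\]
Since $\Gamma$ acts freely on $\cU_\cD(\CC)\times E\Gamma$ and the quasi-isomorphism just established is $\Gamma$-equivariant and by (soft, hence acyclic) smooth sheaves, the right-hand side is computed by the complex of $\Gamma$-invariant global smooth sections of $\pr^*\widehat{\cP}\otimes\cE^{d,\bullet}_{\cA(\CC)\times E\Gamma}$ with differential $\nabla''$. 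Cohomology classes in degree $d-1$ are thus represented by $\Gamma$-invariant $\nabla''$-closed $(d,d-1)$-forms with values in $\pr^*\widehat{\cP}$, as claimed.

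The only non-routine point is the identification of the total complex of the double complex with $\pr^*\widehat{\cP}\otimes\cE^{d,\bullet}_{\cA(\CC)\times E\Gamma}$ together with the correct differential; everything else is a formal manipulation with resolutions and the standard smooth model of equivariant cohomology. No genuine obstacle is expected.
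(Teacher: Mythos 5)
Your proposal is correct and follows essentially the same route as the paper: pull back the Dolbeault resolution \eqref{eq_PB_resolution} along $\pr$, tensor with the fibrewise resolution \eqref{eq_diff_resolution}, pass to the total complex, and then compute the Borel-model equivariant cohomology by $\Gamma$-invariant global sections. The only (harmless) divergence is that you tensor over $\pr^{-1}\cC^\infty_{\cA(\CC)}$ with a flatness remark where the paper tensors over $\CC$, and you make the softness/acyclicity of the smooth sheaves explicit where the paper leaves it implicit.
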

\begin{proof}
In \eqref{eq_PB_resolution} we have shown that there  is the resolution
\[
({\sP}^{(n),an}\otimes \Omega^p_{\cA(\CC)})[0]\xrightarrow{\sim} \left( {\cP}^{(n)}\otimes\cE^{p,\bullet}_{\cA(\CC)},\nabla'' \right).
\]
The pullback of this along $\pr\colon \cA(\CC)\times E\Gamma\rightarrow \cA(\CC)$ tensored with \eqref{eq_diff_resolution} over $\CC$ gives after passing to the associated double complex the desired resolution:
\[
	\pr^{-1}({\sP}^{(n),an}\otimes \Omega^{d}_{\cA(\CC)})\xrightarrow{\sim} \left(\pr^*{\cP}^{(n)}\otimes \cE^{d,\bullet}_{\cA(\CC)\times E\Gamma}, \nabla'' \right).
\]
As explained in Appendix \eqref{eq:coh-equiv}, the equivariant cohomology $H^i(\cU_\cD(\CC),\Gamma,{\sP}^{(n),an}\otimes\Omega^{d}_{\cA(\CC)})$ can be computed by the Borel construction
\[
	H^{i}(E\Gamma\times_\Gamma \cU_\cD(\CC), ({\sP}^{(n),an}\otimes\Omega^{d}_{\cA(\CC)})^\sim),
\]
where $({\sP}^{(n)an}\otimes\Omega^{d}_{\cA(\CC)})^\sim$ is the sheaf on $E\Gamma\times_\Gamma \cU_\cD(\CC)=(E\Gamma\times \cU_\cD(\CC))/\Gamma$ induced by the $\Gamma$-equivariant sheaf $\pr^{-1}({\sP}^{(n),an}\otimes \Omega^{d}_{\cA(\CC)})$. By the above resolution, this cohomology group coincides with the $i$-th cohomology of the complex
\[
	\left(H^{0}\left(E\Gamma\times \cU_{\cD}(\CC),\pr^*{\cP}^{(n)}\otimes \cE^{d,\bullet}_{\cA(\CC)\times E\Gamma}\right)^\Gamma, \nabla'' \right).\qedhere
\]
\end{proof}
\begin{definition}
We define the sheaf of $(p,q)$-currents $\cD_{\cA(\CC)\times E\Gamma}^{p,q}:=\left(\cE_{\cA(\CC)\times E\Gamma,c}^{d-p,2d-1-q}\right)^*$ as the dual of smooth $(d-p,2d-1-q)$-forms with compact support.
\end{definition}
The map $\omega\mapsto (\eta \mapsto \int \eta\wedge \omega)$ gives a morphism $\cE^{p,q}_{\cA(\CC)\times E\Gamma}\to \cD_{\cA(\CC)\times E\Gamma}^{p,q}$, which induces a quasi-isomorphism of complexes
\[
	\cE^{d,\bullet}_{\cA(\CC)\times E\Gamma} \rightarrow \cD_{\cA(\CC)\times E\Gamma}^{d,\bullet}.
\]
In particular, we can describe the equivariant cohomology $H^i(\cU_\cD(\CC),\Gamma,\widehat{\sP}^{an}\otimes\Omega^{d}_{\cA(\CC)})$ in terms of $(d,i)$-currents:
\begin{corollary}
	For $n\geq 0$, we have quasi-isomorphisms
	\[
		\pr^{-1}(\widehat{\sP}^{(n),an}\otimes \Omega^{d}_{\cA(\CC)})\xrightarrow{\sim} \left(\pr^*{\cP}^{(n)}\otimes \cD_{\cA(\CC)\times E\Gamma}^{d,\bullet}, \nabla'' \right).
	\]
	In particular, the cohomology
	\[
		H^i(\cU_\cD(\CC),\Gamma,\widehat{\sP}^{an}\otimes\Omega^{d}_{\cA(\CC)})\cong \varprojlim_n H^i(\cU_\cD(\CC),\Gamma,{\sP}^{(n),an}\otimes\Omega^{d}_{\cA(\CC)})
	\]
	can be described in terms of a pro-system of smooth $\Gamma$-invariant $\nabla''$-closed $(d,i)$-currents with values in $\pr^*{\cP}^{(n)}$.
\end{corollary}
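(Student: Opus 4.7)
The plan is to combine Lemma \ref{lem_cP_resolution} with the classical fact that smooth forms inject quasi-isomorphically into currents on a smooth manifold. First, I would establish that for each fixed $p$ the integration pairing
\[
\cE^{p,\bullet}_{\cA(\CC)\times E\Gamma}\ \longrightarrow\ \cD^{p,\bullet}_{\cA(\CC)\times E\Gamma},\qquad \omega\mapsto \Bigl(\eta\mapsto \int \eta\wedge\omega\Bigr)
\]
is a quasi-isomorphism of complexes of sheaves of $\cC^\infty$-modules on the real manifold $\cA(\CC)\times E\Gamma$. This is a sheaf-level statement, so it may be checked locally, where both sides are fine resolutions of the same sheaf; the $\nabla''$-Poincar\'e lemma in the complex direction together with the ordinary Poincar\'e lemma along $E\Gamma$ handles each factor, and the convention that the real directions of $E\Gamma$ are absorbed into the $(0,1)$-part reduces the product computation to a standard Dolbeault-type calculation. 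The fact that the differential is respected (up to a standard sign) is integration by parts.

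Next, I would tensor this quasi-isomorphism with $\pr^*\widehat{\cP}$. By Theorem \ref{thm_analyticPoincare}, the smooth sections of $\widehat{\cP}$ are isomorphic to $\widehat{\TSym}(\cH(\overline{\Sigma}))$ as a pro-$\cC^\infty$-module, so each finite layer $\pr^*\cP^{(n)}$ is a locally free $\cC^\infty$-module of finite rank. Tensoring with a locally free sheaf is exact, hence the quasi-isomorphism is preserved at every finite level. The transition maps in the pro-system $\{\cP^{(n)}\}_n$ are surjective, so the Mittag--Leffler condition holds and the quasi-isomorphism survives the passage to the inverse limit. Composing the resulting quasi-isomorphism with that of Lemma \ref{lem_cP_resolution} yields the first assertion
\[
\pr^{-1}(\widehat{\cP}\otimes \Omega^{d}_{\cA(\CC)})\ \xrightarrow{\sim}\ \Bigl(\pr^*\widehat{\cP}\otimes \cD^{d,\bullet}_{\cA(\CC)\times E\Gamma},\,\nabla''\Bigr).
\]

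The description of equivariant cohomology then follows by the same argument as in Lemma \ref{lem_cP_resolution}: pushing the resolution to the quotient $(\cA(\CC)\times E\Gamma)/\Gamma$, taking global sections over $\cA(\CC)\times E\Gamma$ followed by $\Gamma$-invariants, computes the group $H^i(\cU_\cD(\CC),\Gamma;\widehat{\cP}\otimes\Omega^d_{\cA(\CC)})$ as the cohomology of the complex of $\Gamma$-invariant $\nabla''$-closed $(d,\bullet)$-currents with values in $\pr^*\widehat{\cP}$. I expect the only delicate point to be the exchange of the inverse limit defining $\widehat{\cP}$ with cohomology; this is precisely where Mittag--Leffler intervenes, and the surjectivity of the transition maps at each finite level $\cP^{(n+1)}\twoheadrightarrow\cP^{(n)}$ is what makes the argument go through.
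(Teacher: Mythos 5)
Your proposal is correct and follows essentially the same route as the paper, which simply asserts that $\omega\mapsto(\eta\mapsto\int\eta\wedge\omega)$ is a quasi-isomorphism from forms to currents and then combines this with Lemma \ref{lem_cP_resolution}. Your additional care about tensoring level-by-level with the locally free layers $\cP^{(n)}$ and invoking Mittag--Leffler for the inverse limit fills in details the paper leaves implicit, but does not change the argument.
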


\subsection{Generalized Eisenstein-Kronecker series}\label{section:EK_Series}
In this section we introduce the generalized Eisenstein--Kronecker series which appears in the explicit description of the equivariant coherent Eisenstein--Kronecker class. 
\begin{definition}\label{def:EK-series}Using the coordinates from Notation \ref{not:coordinates}, let $\mu\in \NN^d$, $\Lambda\subseteq \CC^\Sigma$, $z,w\in \CC^\Sigma$ and $H(z,w):=\sum_{i=1}^d h_i z_i\overline{w}_i$ be a Hermitian form which is in diagonal form in the standard basis of $\CC^\Sigma$ with $h=(h_1,\dots,h_d)\in \RR_+^\Sigma$ and $\langle\cdot ,\cdot \rangle:=\Im H(\cdot,\cdot)$ the associated alternating form. Then we define the Eisenstein-Kronecker series
\begin{equation*}
K^{\mu}(H,z,w, s,\Lambda):= 
\dashsum_{\lambda\in \Lambda}\frac{(\zbar+\lambdabar )^{\mu}}{\lVert z+\lambda \rVert_H^{2s}}e^{2\pi i\langle \lambda,w\rangle}.
\end{equation*}
\nomenclature{$K^{\mu}(H,z,w, s,\Lambda)$}{\nomrefpage}
Here $\Hnorm{z}=H(z,z):=\sum_{i=1}^d h_i|z_i|^2$ is the associated absolute value and $\dashsum$ means that we only sum over $\lambda\in \Lambda\smallsetminus \{-z\}$. 
\end{definition}
\begin{remark}
These series are also a special case of the (generalization of) Epstein zeta functions considered by Siegel in \cite{Siegel}.
\end{remark}
The series $K^{\mu}(H,z,w, s,\Lambda)$ converges absolutely and uniformly for $\Re s>d+\frac{|\mu|}{2}$ and for $z,w$ in a compact subset of $\CC^\Sigma$. The following result shows that it admits an analytic continuation to $\C$ with possible poles in $s=0$ and $s=d$:
\begin{proposition}\label{cor:eisenstein-kronecker-fct-eq}
The Eisenstein-Kronecker series $K^{\mu}(H,z,w,s,\Lambda)$ has an analytic continuation to $\C$ with possible poles in $s=0$ and $s=d$ of order one, with residue $-\delta_{\mu,z}e^{2\pi i\langle -z,w\rangle}$ in $s=0$, and residue $\frac{\delta_{\mu,w}}{\vol_H(\Lambda)}$ in $s=d$. Moreover, it satisfies the functional equation
\begin{equation*}
\frac{\Gamma(s)K^{\mu}(H,z,w,s,\Lambda)}{\pi^s}=
\frac{e^{2\pi i\langle z,w\rangle}}{\vol_H(\Lambda)}
\frac{\Gamma(d+|\mu|-s)K^{\mu}(H,w,z,d+|\mu|-s,\Lambda^*)}{\pi^{d+|\mu|-s}}.
\end{equation*}
\end{proposition}
\begin{proof}
The proof follows in a standard way from the fact that the Eisenstein--Kronecker series can be written as the Mellin transform of a suitable theta function, see for example \cite[Theorem 3]{Siegel}. For the convenience of the reader, let us sketch the proof:
For $\mu\in \NN^d$, $t\in\RR_{>0}$ and $z,w\in \CC^\Sigma$ define the theta function
\begin{equation*}
\vartheta^{\mu}_t(H,z,w,\Lambda):=
\sum_{\lambda\in \Lambda}(\zbar+\lambdabar)^{\mu}e^{-\pi t\lVert z+\lambda\rVert_H^{2}}e^{2\pi i \langle \lambda,w\rangle}.
\end{equation*}
The function $\vartheta^{\mu}_t(H,z,w,\Lambda)$ is uniformly and absolutely convergent on each compact set in $\RR_{>0}\times\CC^\Sigma\times \CC^\Sigma$, and satisfies the functional equation (see \cite[Prop. 8]{Siegel} or \cite[Chapter VII,(3.6)]{Neukirch})
\begin{equation*}
\vartheta^{\mu}_t(H,z,w,\Lambda)=t^{-d-|\mu|}\frac{1}{\vol_H(\Lambda)}e^{2\pi i \langle w, z\rangle}\vartheta^{\mu}_{t^{-1}}(H,w,z,\Lambda^{*})
\end{equation*}
where 
\[
	\Lambda^*=\left\{ l\in\CC^\Sigma : \langle l,\Lambda \rangle\subseteq \ZZ\right\}
\]
 is the dual of $\Lambda$ with respect to $\langle-,-\rangle$ and $\vol_H(\Lambda)$ is the volume of $\CC^\Sigma/\Lambda$ with respect to the metric induced by the Hermitian form $H$. The statement of the proposition follows now from the fact that that the higher Eisenstein--Kronecker series can be written as a Mellin transform of the theta function
\begin{equation*}
\pi^{-s}\Gamma(s)K^{\mu}(H, z,w,s,\Lambda)= \int_{0}^{\infty}(\vartheta^{\mu}_t(H,z,w,\Lambda)-\delta_{\mu,z}e^{2\pi i\langle -z,w\rangle})t^{s}\frac{dt}{t}.
\end{equation*}
where $\delta_{\mu,z}$ is $1$ if both $z\in \Lambda$ and $|\mu|=0$, and zero in all other cases. 
\end{proof}

\subsection{The Eisenstein--Kronecker current} In this section, we will define the Eisenstein--Kronecker current, discuss its basic properties and relate it to the Eisenstein--Kronecker series introduced in section \ref{section:EK_Series}.  We assume in this subsection that $\Gamma$ is torsion-free. 

\begin{remark}Let us briefly  explain why it is crucial to work with currents instead of differential forms. In the following, we will define a pro-system of $\Gamma$-invariant smooth $(d,d-1)$-currents $\phi=(\phi_n)_n$ on $\cA(\CC)$ which satisfies the differential equation
\[
	\nabla''(\phi_n)=\delta_f\vol,
\]
where $\vol$ is a suitably normalized volume form on $\cA$ and $\delta_f$ is a linear combination of $\delta$-distributions with support on the divisor $\cD(\CC)$. The characterizing differential equation of $\phi$ shows that the restriction of $\phi$ to $\cU_{\cD}(\CC)$ is $\nabla''$-closed, hence it defines a equivariant cohomology class
\[
	[\phi|_{\cU_\cD(\CC)}]\in H^{d-1}(\cU_\cD(\CC),\Gamma,\widehat{\sP}^{an}\otimes\Omega^d_{\cA(\CC)}).
\]
We will also see that the current $\phi|_{\cU_\cD(\CC)}$ is represented by a smooth differential form. The main advantage of working with currents instead of differential forms is that $\phi|_{\cU_\cD(\CC)}$ extends as a smooth current to $\cA(\CC)$ but it does not extend as a differential form.  This extension together with its characterizing differential equation allows us to compute its residue along the divisor $\cD(\CC)$. This computation is crucial for showing that $\phi|_{\cU_\cD(\CC)}$ is an explicit representative for the equivariant Eisenstein--Kronecker class.
\end{remark}

Recall the notations from Notation \ref{not:coordinates} and let $H(z,w):=\sum_{i=1}^d z_i\overline{w}_i$ be the standard Hermitian form on $\CC^\Sigma$. For $r\in\RR_{>0}^d$ we write $Hr$ for the scaled Hermitian form $Hr(z,w)=\sum_{i=1}^d r_i z_i\overline{w}_i$ on $\CC^\Sigma$ with the associated alternating form
 \[
 	\langle \cdot , \cdot \rangle_{Hr} \colon \CC^\Sigma\times \CC^\Sigma \rightarrow \RR
 \]
 and the dual lattice
 \[
	\Lambda^{Hr,*}:=\{ l\in\CC^\Sigma \mid \langle l,\Lambda \rangle_{Hr}\subseteq \ZZ \}.
\]
 For $r=\underline{1}$ this gives back the standard Hermitian form $H(z,w)=\sum_{i=1}^d z_i\overline{w}_i$ with the alternating form $\langle\cdot,\cdot\rangle:=\langle\cdot,\cdot\rangle_{H}$ and dual lattice $\Lambda^*:=\Lambda^{H,*}$.

Let us define the volume form $\vol:=\frac{(2\pi i)^d}{\vol(\cA)}\bigwedge_{i=1}^d dz_i\wedge d\bar{z}_i$ with $\vol(\cA)=\int_\cA \bigwedge_{i=1}^d dz_i\wedge d\bar{z}$. Every element $l\in\Lambda^*$ of the dual lattice gives us a character
\[
	\chi_l\colon \CC^\Sigma/\Lambda\rightarrow \CC^\times,\quad w\mapsto \exp(2\pi i\langle l,w \rangle).
\]
Recall, that we write $r\colon L^1_\RR\rightarrow \RR^d$ for the inclusion and $\overline{\mathrm{u}}=(\overline{\mathrm{u}}_1,\dots,\overline{\mathrm{u}}_d)$ for the basis of $\sH(\overline{\Sigma})$. For $l\in \Lambda^*$ we let 
\[
	\tilde{l}:=\sum_{i=1}^d \frac{\bar{l}_i\bar{\mathrm{u}}_i}{r_i}\in H^{0}(\cA(\CC)\times E\Gamma,\pr^*(\cH(\overline{\Sigma})).
\]
By identifying $\omega_{\cA^\vee}$ with $\overline{\Lie(\cA/\CC)}$ we may view $\tilde{l}$ as an anti-holomorphic vector field
\[
	\tilde{l}=\sum_{i=1}^d \frac{\bar{l}_i}{r_i} \frac{\partial}{\partial \bar{z}_i}
\]
on $\cA(\CC)\times E\Gamma$. We will write $\iota_{\tilde{l}}$ for the contraction along $\tilde{l}$. 
\begin{definition}
For non-negative integers $j,b$ and $s\in \CC$, let us define the following $(d,d-1)$-current with values in $\TSym^{b}(\pr^*\cH(\overline{\Sigma}))$ on $\cA(\CC)\times E\Gamma$:
\begin{equation}\label{eq_phi_kjs}
	\phi_s^{(b,j)}:=\frac{(-1)^j \Gamma(s)}{j!}\sum_{t\in \frc^{-1}\Lambda/\Lambda}f(t)\sum_{0\neq l\in \Lambda^*}\frac{\tilde{l}^{[b]}}{\left( \pi \lVert \frac{l}{\sqrt{r}} \rVert_H^2 \right)^s} \chi_l\left(z-t \right) \iota_{\tilde{l}}(d\iota_{\tilde{l}})^j\vol.
\end{equation}
\end{definition}
Note, that locally in $(r,s)\in \RR_{>0}^d \times \CC$ the coefficients of $\chi_l$ have at most polynomial growth in $\lVert l \rVert$. Hence, the convergence of the current follows from \cite[VII,\S 1]{Schwartz}. In the following, we will show that the current can be represented by a smooth differential form on the open subset $\cU_\cD$. Let us start with a more explicit expression for $\iota_{\tilde{l}}(d\iota_{\tilde{l}})^j \vol$:
\begin{lemma}\label{lem_difform}Let $j$ be a non-negative integer:
\begin{enumerate}
\item \label{lem_difform_a} We have $\iota_{\tilde{l}}(d\iota_{\tilde{l}})^j \vol=0$ for $j\geq d$.
\item \label{lem_difform_b} For $1\leq j\leq d-1$ we have
	\[
		\iota_{\tilde{l}}(d\iota_{\tilde{l}})^j \vol=\frac{(-1)^jj!(2\pi i)^d}{\vol(\cA)}\sum_{\substack{\underline{\epsilon}\in\{0,1\}^d \\ |\underline{\epsilon}|=j+1}} \left( \frac{\bar{l}}{r} \right)^{\underline{\epsilon}} \omega_{\underline{\epsilon}}
	\]
	with
	\[
	\omega_{\underline{\epsilon}}=\iota_{-\sum_{i=1}^d r_i\frac{\partial}{\partial r_i} } \left( \bigwedge_{i=1}^d \omega_{\underline{\epsilon}}^i \right),\quad \omega_{\underline{\epsilon}}^i=\begin{cases}
	dz_i\wedge \frac{dr_i}{r_i} & \epsilon_i=1\\
	dz_i\wedge d\bar{z}_i & \epsilon_i=0,
\end{cases}
\]
and
\[
	\left( \frac{\bar{l}}{r} \right)^{\underline{\epsilon}}:=\prod_{i=1}^d \left( \frac{\bar{l}_i}{r_i} \right)^{\epsilon_i}.
\]
\item \label{lem_difform_c} For $j=d-1$ we get:
 	\[
\iota_{\tilde{l}}(d\iota_{\tilde{l}})^{d-1}\vol=- \frac{(-1)^{\frac{d(d-1)}{2}}d!(2\pi i)^d}{\vol(\cA)}\left(\frac{\bar{l}}{r}\right)^{\underline{1}}\cdot \bigwedge_{i=1}^{d-1}\frac{dr_i}{r_i}\wedge \bigwedge_{i=1}^ddz_i.
\]
Here, we use the notation $\underline{1}=(1,\dots,1)\in \NN^d$.
\end{enumerate}
\end{lemma}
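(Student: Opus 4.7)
The plan is to reduce everything to computing iterated Lie derivatives of $\vol$, using Cartan's magic formula $d\iota_{\tilde{l}} + \iota_{\tilde{l}} d = \mathcal{L}_{\tilde{l}}$. Since $\vol$ is closed (it involves no $r$-dependent coefficients), one has $d\iota_{\tilde{l}}\vol = \mathcal{L}_{\tilde{l}}\vol$, and because $d$ commutes with $\mathcal{L}_{\tilde{l}}$ every $\mathcal{L}_{\tilde{l}}^{k}\vol$ is also closed. A short induction then gives the key identity
\[
(d\iota_{\tilde{l}})^{j}\vol = \mathcal{L}_{\tilde{l}}^{j}\vol \qquad (j\geq 0),
\]
so I only need to Leibniz-expand $\mathcal{L}_{\tilde{l}}^{j}\vol$ and apply one final $\iota_{\tilde{l}}$.

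The basic building block is a direct calculation: $\mathcal{L}_{\tilde{l}}d\bar z_i = d(\bar l_i/r_i) = -(\bar l_i/r_i^{2})dr_i$ gives
\[
\mathcal{L}_{\tilde{l}}(dz_i\wedge d\bar z_i) = -\tfrac{\bar l_i}{r_i}\,dz_i\wedge\tfrac{dr_i}{r_i},
\]
and a second application vanishes because neither $dz_i$, $dr_i$, $1/r_i$ nor $\bar l_i$ depends on $\bar z$. Applying Leibniz for $\mathcal{L}_{\tilde l}^{j}$ to $\vol = c\bigwedge_{i}(dz_i\wedge d\bar z_i)$ therefore collapses the multinomial to a sum over $\underline\epsilon\in\{0,1\}^{d}$ with $|\underline\epsilon|=j$, yielding
\[
\mathcal{L}_{\tilde{l}}^{j}\vol = \frac{(-1)^{j}j!(2\pi i)^{d}}{\vol(\cA)}\sum_{|\underline\epsilon|=j}\Bigl(\frac{\bar l}{r}\Bigr)^{\underline\epsilon}\bigwedge_{i=1}^{d}\omega^{i}_{\underline\epsilon}.
\]

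For part \eqref{lem_difform_a}, this formula shows that in $\mathcal{L}^{d}\vol$ the unique surviving term ($\underline\epsilon=\underline1$) contains no $d\bar z_i$, so $\iota_{\tilde{l}}$—which only acts on $d\bar z$'s—kills it; for $j\ge d+1$ one has $\mathcal{L}^{j}\vol=0$ outright. For part \eqref{lem_difform_b}, apply $\iota_{\tilde{l}}$ to the above and observe that $\iota_{\tilde{l}}(\omega^{i}_{\underline\epsilon})$ is nonzero only when $\epsilon_i=0$, in which case it equals $-(\bar l_i/r_i)dz_i$. Reindex the resulting double sum by $\underline\epsilon'=\underline\epsilon+e_m$ with $m$ the position being contracted; then $(\bar l/r)^{\underline\epsilon}(\bar l_m/r_m)=(\bar l/r)^{\underline\epsilon'}$, and using the identity $-dz_m = \iota_{\sum_k r_k\partial/\partial r_k}(dz_m\wedge dr_m/r_m) = \iota_{\sum_k r_k\partial/\partial r_k}(\omega^{m}_{\underline\epsilon'})$ the inner sum reassembles (because $\iota_{\sum r_k\partial/\partial r_k}$ annihilates $\omega^{j}_{\underline\epsilon'}$ for $\epsilon'_j=0$) to $\iota_{\sum_k r_k\partial/\partial r_k}\bigl(\bigwedge_i\omega^{i}_{\underline\epsilon'}\bigr)=\omega_{\underline\epsilon'}$, which is exactly the formula of \eqref{lem_difform_b}.

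Part \eqref{lem_difform_c} is then a specialization of \eqref{lem_difform_b}: only $\underline\epsilon'=\underline 1$ contributes, so I just need to simplify $\omega_{\underline 1}$ on $L^{1}_{\RR}$. After reordering to $(-1)^{d(d-1)/2}\bigwedge_i dz_i \wedge \bigwedge_i dr_i/r_i$, the contraction produces $\sum_{k=1}^{d}(-1)^{k-1}\bigwedge_{i\neq k} dr_i/r_i$; plugging in the constraint $\sum_{i=1}^{d}dr_i/r_i = 0$ coming from $\prod r_i = 1$ rewrites each of these $(d-1)$-forms as an integer multiple of $\bigwedge_{i=1}^{d-1}dr_i/r_i$, and collecting signs yields the total factor $(-1)^{d-1}d$. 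Multiplying by the $(d-1)!$ from \eqref{lem_difform_b} produces the $d!$ in \eqref{lem_difform_c}. The main obstacle is pure sign bookkeeping—Koszul signs from reordering $dz$'s past $dr$'s, the Leibniz signs in $\iota_{\sum r_k\partial/\partial r_k}$, and the signs from the substitution $dr_d/r_d = -\sum_{i<d}dr_i/r_i$—but no geometric difficulty remains once $(d\iota_{\tilde l})^{j}\vol = \mathcal{L}_{\tilde l}^{j}\vol$ is established.
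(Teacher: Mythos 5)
Your reduction of $(d\iota_{\tilde l})^j\vol$ to $\mathcal{L}_{\tilde l}^j\vol$ via Cartan's formula, the Leibniz collapse to exponents $\underline\epsilon\in\{0,1\}^d$ (using $\mathcal{L}_{\tilde l}^2(dz_i\wedge d\bar z_i)=0$), and the reassembly of the final contraction through the identity $-dz_m=\iota_{\sum_k r_k\partial/\partial r_k}(dz_m\wedge \tfrac{dr_m}{r_m})$ are all correct, and parts \eqref{lem_difform_a} and \eqref{lem_difform_b} are fully established by your argument. This is essentially the paper's own computation in a more systematic package: the paper's proof of \eqref{lem_difform_b} rests on the same two local identities ($d\iota_{\tilde l}(dz_i\wedge d\bar z_i)=-\tfrac{\bar l_i}{r_i}dz_i\wedge\tfrac{dr_i}{r_i}$ and the recovery of $\iota_{\tilde l}(dz_i\wedge d\bar z_i)$ from it by contracting with $\sum_k r_k\partial_{r_k}$), and your Cartan/Lie-derivative framing merely explains cleanly why the iteration distributes multinomially with a factor $j!$.

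The gap is in part \eqref{lem_difform_c}, precisely where you defer to ``sign bookkeeping.'' Carrying out your own plan: the reordering contributes $(-1)^{d(d-1)/2}$, passing $\iota_{\sum_k r_k\partial_{r_k}}$ over the $d$-form $\bigwedge_i dz_i$ contributes $(-1)^{d}$ (a Koszul sign you list but never apply), and the contraction-plus-substitution gives $(-1)^{d-1}d\,\bigwedge_{i=1}^{d-1}\tfrac{dr_i}{r_i}$, so altogether $\omega_{\underline 1}=-(-1)^{d(d-1)/2}\,d\,\bigwedge_{i=1}^{d-1}\tfrac{dr_i}{r_i}\wedge\bigwedge_{i=1}^{d}dz_i$; combined with \eqref{lem_difform_b} at $j=d-1$ this yields the total sign $(-1)^{d}(-1)^{d(d-1)/2}=(-1)^{d(d+1)/2}$, which equals the displayed $(-1)^{d-1}$ only for $d\equiv 2,3\pmod 4$. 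The case $d=1$ already shows the displayed sign cannot hold in general: there $\iota_{\tilde l}(d\iota_{\tilde l})^{0}\vol=\iota_{\tilde l}\vol=-\tfrac{2\pi i}{\vol(\cA)}\tfrac{\bar l_1}{r_1}dz_1$, whereas \eqref{lem_difform_c} asserts the opposite sign; $d=2,3$ agree, $d=4$ does not. So you cannot simply assert that ``multiplying by the $(d-1)!$ produces the $d!$'': either you must exhibit the sign computation and record the correct exponent $(-1)^{d(d+1)/2}$, or note explicitly that \eqref{lem_difform_c} as printed is consistent with \eqref{lem_difform_b} only for $d\equiv 2,3\pmod 4$ (a discrepancy that is harmless downstream, since the overall sign is absorbed into the orientation conventions of Proposition \ref{prop:orientation}, but which a proof of the stated formula must confront rather than wave away).
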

\begin{proof}
\eqref{lem_difform_a}: This is clear since the degree of the anti-holomorphic differential in $\vol$  is $d$, so contracting more than $d$ times along $\tilde{l}$ gives $0$.

\noindent\eqref{lem_difform_b}: Recall that we have
\[
\vol=\frac{(2\pi i)^d}{\vol(\cA)}\bigwedge_{i=1}^d dz_i\wedge d\overline{z}_i.
\]
Now the formula follows from
\[
	d\iota_{\tilde{l}}(dz_i\wedge d\overline{z}_i)=dz_i\wedge d\left( \frac{\overline{l}_i}{r_i} \right)=-\frac{\overline{l}_i}{r_i} dz_i\wedge \frac{dr_i}{r_i}
\]
and
\[
	\iota_{\tilde{l}}(dz_i\wedge d\bar{z}_i)=-\frac{\overline{l}_i}{r_i} dz_i=\iota_{-\sum_{i=1}^d r_i\frac{\partial}{\partial r_i} }\left( d\iota_{\tilde{l}}(dz_i\wedge d\bar{z}_i) \right).
\]
\eqref{lem_difform_c}: In the case $j=d-1$ the only $\underline{\epsilon}$ in the above sum is $\underline{\epsilon}=\underline{1}$. So the formula in \eqref{lem_difform_b} reduces to
\begin{equation*}
		\iota_{\tilde{l}}(d\iota_{\tilde{l}})^{d-1}\vol=-\frac{(d-1)!(2\pi i)^d}{\vol(\cA)} \left(\frac{\bar{l}}{r}\right)^{\underline{1}} \cdot \omega_{\underline{1}}.
\end{equation*}
Now, a straightforward computations using $\prod_{i=1}^dr_i=1$ shows
\begin{align*}
	\omega_{\underline{1}}=\iota_{-\sum_{i=1}^d r_i\frac{\partial}{\partial r_i} } \left( \bigwedge_{i=1}^d dz_i\wedge \frac{dr_i}{r_i} \right)=d\cdot \left(\bigwedge_{i=1}^{d-1} dz_i \wedge \frac{dr_i}{r_i}\right) \wedge dz_d.
\end{align*}
By reordering the differential forms, we get
\[
	\omega_{\underline{1}}=(-1)^{\frac{d(d-1)}{2}} \bigwedge_{i=1}^{d-1}\frac{dr_i}{r_i}\wedge \bigwedge_{i=1}^ddz_i,
\]
and the result follows.
\end{proof}

In the next step, let us prove that $\phi_{s}^{(b,j)}$ is represented by smooth differential forms:
\begin{proposition}\label{prop_polform}
For $n\geq b$, the current $\phi_{s}^{(b,j)}|_{\cU_\cD(\CC)\times E\Gamma}$ is represented by a smooth $(d,d-1)$-form, i.e.,
\[
	\phi_s^{(b,j)}|_{\cU_\cD(\CC)\times E\Gamma}\in \Gamma(\cU_\cD\times E\Gamma, \pr^*{\cP}^{(n)}\otimes \cE^{d,d-1}_{\cA(\CC)\times E\Gamma}).
\]
More precisely, we have the following explicit formula in terms of Eisenstein--Kronecker series
\begin{equation}
	\label{eq_phijk_form} \phi_s^{(b,j)}|_{\cU_\cD(\CC)\times E\Gamma}
	=\frac{(2\pi i)^d}{\vol(\cA)}\sum_{\substack{\beta\in\NN^d \\ |\beta|=b}} \sum_{t\in \frc^{-1}\Lambda/\Lambda}f(t)\sum_{\substack{\underline{\epsilon}\in\{0,1\}^d \\ |\underline{\epsilon}|=j+1}} \frac{\Gamma(s)K^{\beta+\underline{\epsilon}}(Hr,0,w-t,s,\Lambda^{Hr,*})}{\pi^s}\omega_{\underline{\epsilon}}\otimes \bar{\mathrm{u}}^{[\beta]},
\end{equation}
with
\[
	\omega_{\underline{\epsilon}}=\iota_{-\sum_{i=1}^d r_i\frac{\partial}{\partial r_i} } \left( \bigwedge_{i=1}^d \omega_{\underline{\epsilon}}^i \right),\quad \omega_{\underline{\epsilon}}^i=\begin{cases}
	dz_i\wedge \frac{dr_i}{r_i} & \epsilon_i=1\\
	dz_i\wedge d\bar{z}_i & \epsilon_i=0.
\end{cases}
\]
In particular, $s\mapsto \phi_s^{(b,j)}$ admits an analytic continuation to $\CC$.
\end{proposition}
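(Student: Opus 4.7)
The plan is to compute the current $\phi_s^{(b,j)}$ term by term by inserting the explicit formula of Lemma~\ref{lem_difform}\ref{lem_difform_b} for the differential form $\iota_{\tilde l}(d\iota_{\tilde l})^j\vol$ and then recognizing the resulting Fourier series over $\Lambda^*$ as a sum of Eisenstein--Kronecker series after a natural change of variables. Smoothness on $\cU_\cD$ and the analytic continuation in $s$ will then both be inherited from the corresponding properties of the Eisenstein--Kronecker series established in Corollary~\ref{cor:eisenstein-kronecker-fct-eq}.

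For the main computation, I would first expand the divided power
\begin{equation*}
\tilde l^{[b]} \;=\; \sum_{\substack{\beta\in\NN^d\\|\beta|=b}} \frac{\bar l^{\beta}}{r^{\beta}}\, \bar{\mathrm u}^{[\beta]}
\end{equation*}
in the basis of $\TSym^b(\pr^*\cH(\overline\Sigma))$ and insert both this expansion and Lemma~\ref{lem_difform}\ref{lem_difform_b} into the definition \eqref{eq_phi_kjs}. The signs $(-1)^j$ and the factor $j!$ cancel, producing the identity of currents
\begin{equation*}
\phi_s^{(b,j)} \;=\; \frac{(2\pi i)^d}{\vol(\cA)} \sum_{\substack{|\beta|=b\\|\underline\epsilon|=j+1}} \sum_{t}f(t)\, \frac{\Gamma(s)}{\pi^s}\left(\sum_{0\neq l\in\Lambda^*} \frac{\bar l^{\beta+\underline\epsilon}/r^{\beta+\underline\epsilon}}{\|l\|_{Hr^{-1}}^{2s}}\,\chi_l(z-t)\right) \omega_{\underline\epsilon}\otimes \bar{\mathrm u}^{[\beta]}.
\end{equation*}
Here $\|l\|_{Hr^{-1}}^2=\sum_i |l_i|^2/r_i$. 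The componentwise map $l\mapsto\lambda:=l/r$ is a bijection $\Lambda^*\xrightarrow{\sim}\Lambda^{Hr,*}$ under which $\|l\|_{Hr^{-1}}^2=Hr(\lambda,\lambda)=|\lambda|_{Hr}^2$ and $\langle l,z-t\rangle=\langle\lambda,z-t\rangle_{Hr}$. Consequently the bracketed series equals precisely $K^{\beta+\underline\epsilon}(Hr,0,z-t,s,\Lambda^{Hr,*})$, yielding formula~\eqref{eq_phijk_form}.

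It remains to show that the right-hand side of~\eqref{eq_phijk_form} is actually represented by a smooth form on $\cU_\cD(\CC)\times E\Gamma$. For this I would invoke the functional equation of Corollary~\ref{cor:eisenstein-kronecker-fct-eq}, which rewrites
\begin{equation*}
\frac{\Gamma(s)K^{\beta+\underline\epsilon}(Hr,0,z-t,s,\Lambda^{Hr,*})}{\pi^s} \;=\; \frac{1}{\vol_{Hr}(\Lambda^{Hr,*})}\cdot\frac{\Gamma(d+|\beta+\underline\epsilon|-s)\,K^{\beta+\underline\epsilon}(Hr,z-t,0,d+|\beta+\underline\epsilon|-s,\Lambda)}{\pi^{d+|\beta+\underline\epsilon|-s}}.
\end{equation*}
The series on the right is summed over the original lattice $\Lambda$ and, for $\Re s$ sufficiently negative, is absolutely and locally uniformly convergent together with all derivatives in the parameter $z-t$ on the locus $\{z : z-t\notin\Lambda\}\supseteq\cU_\cD(\CC)$, producing a smooth $(d,d-1)$-form. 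The analytic continuation in $s$ to all of $\CC$ is then inherited from Corollary~\ref{cor:eisenstein-kronecker-fct-eq}. The main technical obstacle is the careful bookkeeping of the two Hermitian forms $H$ and $Hr$ together with their dual lattices $\Lambda^*$ and $\Lambda^{Hr,*}$, and matching the resulting exponential characters and norm factors under the change of variables $l=r\lambda$.
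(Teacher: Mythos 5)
Your proposal is correct and follows essentially the same route as the paper: expand $\iota_{\tilde l}(d\iota_{\tilde l})^j\vol$ via Lemma \ref{lem_difform}\eqref{lem_difform_b} together with $\tilde l^{[b]}=\sum_{|\beta|=b}(\bar l/r)^\beta\bar{\mathrm u}^{[\beta]}$, cancel the $(-1)^j/j!$ factors, and identify the resulting Fourier series with $K^{\beta+\underline\epsilon}(Hr,0,w-t,s,\Lambda^{Hr,*})$ via the bijection $r\Lambda^{Hr,*}=\Lambda^{*}$. Your extra step of invoking the functional equation of Corollary \ref{cor:eisenstein-kronecker-fct-eq} to justify smoothness on $\cU_\cD(\CC)\times E\Gamma$ is a reasonable way to make explicit what the paper leaves implicit (and note that since $|\beta+\underline\epsilon|=b+j+1\geq 1$ the possible poles at $s=0,d$ are absent in any case).
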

\begin{proof}
 Both sides of \eqref{eq_phijk_form} vary holomorphically in $s$, so it is enough to prove the formula in the region where the defining series of the Eisenstein--Kronecker series converge, i.e. for $\Re(s)>2d+b$. Recall that we defined for $l\in \Lambda^*$ the vector field $\tilde{l}:=\sum_{i=1}^d \frac{\bar{l}_i\bar{\mathrm{u}}_i}{r_i}$. By Lemma \ref{lem_difform} \eqref{lem_difform_b} and the equation
\[
	\tilde{l}^{[b]} =\sum_{\substack{ \beta\in\NN^d \\ |\beta|=b}} \left( \frac{\bar{l}}{r}\right)^{\beta} \ul{\mathrm{u}}^{[\beta]}
\] 
  it is enough to show
 \begin{align}\label{eq_phijkEK}
 	\sum_{l\in \Lambda^{*}\smallsetminus \{0\}} \left( \frac{\bar{l}}{r} \right)^{\beta+\underline{\epsilon}} \frac{1}{\left( \pi \left\lVert \frac{l}{\sqrt{r}} \right\rVert^2_H \right)^s}\chi_l(w-t)=\frac{K^{\beta+\underline{\epsilon}}(Hr,0,w-t,s,\Lambda^{Hr,*})}{\pi^s}.
 \end{align}
Using $\Hnorm{\frac{rl}{\sqrt{r}}}=\lVert l\rVert^2_{Hr}$, $r\Lambda^{Hr,*}=\Lambda^{H,*}$ this follows from the definition of the Eisenstein--Kronecker series. Finally, let us note that the possible simple poles of the Eisenstein--Kronecker series at $s=0$ and $s=d$ cancel by the assumption $\sum_{t\in \frc^{-1}\Lambda/\Lambda}f(t)=0$.
\end{proof}

Using the explicit description of $\widehat{\cP}$ from Theorem \ref{thm_analyticPoincare} we may define the Eisenstein--Kronecker current:
\begin{definition}\label{def:EK-current}
The \emph{Eisenstein--Kronecker current} is the pro-system $\phi=(\phi^{(n)})_n$ of $(d,d-1)$-currents
\[
	\phi^{(n)}:=\sum_{b=0}^n\sum_{j=0}^{2d} \phi_{b+j+1}^{(b,j)} \in H^{0}\left( \cA(\CC)\times E\Gamma, \cD_{\cA(\CC)\times E\Gamma}^{d,d-1}(\pr^*{\cP}^{(n)})\right)
\]
with values in $\pr^*{\cP}^{(n)}$, where the $\phi_{b+j+1}^{(b,j)}$ were defined in \eqref{eq_phi_kjs}.
\end{definition}
\nomenclature{$\phi=(\phi^{(n)})_n$}{\nomrefpage}

\subsection{The equivariant coherent Eisenstein--Kronecker class}
In this section  we will compute explicit representatives for the Eisenstein-Kronecker classes 
$\Eis^{\beta,\alpha}_\Gamma(f,x)$
defined in Definition \ref{def:Eis-classes} on the model $E\Gamma$ for the equivariant cohomology constructed in Section \ref{section:model}.

Let $x$ be a $\Gamma$-invariant $\frf$-torsion section of $\cU_{\cD}(\CC)$ and $\cD:=\cA[\frc]\setminus\{x(\cR)\}$ be as in Definition \ref{def:D-specific}. Note that $[\frc]$ is automatically \'etale since we are working in this section over $\CC$.  Recall from Definition \ref{def:Eis-classes}, that the definition of the Eisenstein--Kronecker class $\EK_\Gamma^{\beta,\alpha}(f,x)$ depends on a $\Gamma$-invariant function $f\in R[\cD]^{0,\Gamma}$.
Using the above complex uniformization $\cA(\CC)=\CC^\Sigma/\Lambda$ allows us to view $f$ as a $\Gamma$-invariant function
\[
	f\colon \frc^{-1}\Lambda/\Lambda \rightarrow R
\]
satisfying $\sum_{t\in \frc^{-1}\Lambda/\Lambda}f(t)=0$. The chosen $\frf$-torsion section $x$ corresponds to a $\Gamma$-invariant element $x\in\frf^{-1}\Lambda / \Lambda$. 
\begin{definition}
For $t\in \cA(\CC)=\CC^\Sigma/\Lambda$ let us write $\delta_t$ for the $\delta$-distribution concentrated in $t$ and 
\begin{equation*}
\delta_f:=\sum_{t\in\frc^{-1}\Lambda/\Lambda} f(t)\delta_t.
\end{equation*}
\end{definition}

\begin{theorem}\label{thm_explicit_pol} For $n\geq 0$, the $(d,d-1)$-current $\phi^{(n)}$ from Definition \ref{def:EK-current} is $\Gamma$-equivariant and  solves the differential equation
\begin{align}
	\label{eq_eqPol_2}\nabla''(\phi^{(n)})&=\delta_f\vol.
\end{align}
\end{theorem}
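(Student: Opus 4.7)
The plan is to verify the two assertions separately.

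For $\Gamma$-equivariance of each $\phi_s^{(b,j)}$: the action of $\gamma\in\Gamma$ is $z \mapsto \Phi_L(\gamma^{-1})z$ and $r_i \mapsto |\sigma_i(\gamma)|^2 r_i$, with compatible actions on the lattices $\Lambda\subseteq\CC^\Sigma$ and $\Lambda^*$ and on the sheaf $\cH(\ol{\Sigma})$ (via the induced action on the abelian scheme). Substituting the summation variables $l$ and $t$ by their transforms in the series defining $\phi_s^{(b,j)}$, one checks directly that $\chi_l(z-t)$, $\|l/\sqrt r\|_H^2$, $\tilde l = \overline{l}\,\overline{\mathrm{u}}/r$ and $\iota_{\tilde l}(d\iota_{\tilde l})^j\vol$ are all invariant, and the $\Gamma$-invariance of $f$ completes the argument.

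For the differential equation $\nabla''(\phi) = \delta_f\vol$: by Theorem \ref{thm_analyticPoincare}, $\nabla'' = \bar\partial_z + d_r + \nu^{0,1}\wedge$ with $\nu^{0,1} = \sum_i \overline{\mathrm{u}}_i\, d\bar z_i$. On $\cU_\cD(\CC)\times E\Gamma$, Proposition \ref{prop_polform} provides $\phi$ as a smooth sum of Eisenstein--Kronecker series, so one first aims to verify $\nabla''$-closedness there. Applying $\nabla''$ termwise to each $\phi_s^{(b,j)}$ (justified in the region $\Re(s)\gg 0$ of absolute convergence and then extended by analytic continuation in $s$), three contributions arise: the derivative $\bar\partial_z \chi_l(z-t)$ produces a factor $\pi \sum_i l_i\, d\bar z_i$; the relative differential $d_r$ acting on $r^{-s}\tilde l^{[b]}$ produces $dr_i/r_i$ factors that combine with $\omega_{\underline{\epsilon}}$; and multiplication by $\nu^{0,1}$ raises $b\mapsto b+1$ while contributing an additional $d\bar z_i$-factor. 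Following Levin's strategy \cite{levin}, adapted to the equivariant setting, these contributions telescope across indices $(b,j)$ in $\phi = \sum_{b,j}\phi_{b+j+1}^{(b,j)}$, yielding $\nabla''(\phi)|_{\cU_\cD\times E\Gamma}=0$. The distributional source on $\cD$ then comes from the boundary terms in the telescoping, which by Poisson summation
\[
\sum_{l\in\Lambda^*}\chi_l(w) = \vol_H(\Lambda)^{-1}\sum_{\lambda\in\Lambda}\delta_\lambda(w),
\]
combined with the residue $\vol_H(\Lambda)^{-1}$ of the Eisenstein--Kronecker series at $s=d$ (Corollary \ref{cor:eisenstein-kronecker-fct-eq}) and the normalisation $\vol = \frac{(2\pi i)^d}{\vol(\cA)}\bigwedge_i dz_i\wedge d\bar z_i$, produces exactly $\sum_t f(t)\delta_t \cdot \vol = \delta_f\vol$.

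The main obstacle is the combinatorial bookkeeping in the telescoping step: three operations ($\bar\partial_z$, $d_r$, $\nu^{0,1}\wedge$) act on a doubly-indexed family of currents with prefactors $(-1)^j\Gamma(s)/j!$ and a nontrivial differential-form factor $\omega_{\underline{\epsilon}}$, and one must carefully track signs, index shifts $(b,j)\to(b+1,j)$ and $(b,j)\to(b,j+1)$, and the interplay with the diagonal $s = b+j+1$. A secondary technical point is rigorously transferring the termwise identity from the region of absolute convergence to the analytically continued values appearing in $\phi$, and correctly identifying the residue contributions at the poles $s=0$ and $s=d$ of the Eisenstein--Kronecker series as the source $\delta_f\vol$.
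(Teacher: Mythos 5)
Your treatment of the $\Gamma$-equivariance is correct and matches the paper's argument. The differential equation, however, is where the real content of the theorem lies, and your argument for it has a genuine gap in two places. First, the ``telescoping'' of the three contributions of $\nabla''=\bar\partial_z+d_r+\nu^{0,1}\wedge$ across the doubly-indexed family $\phi^{(b,j)}_{b+j+1}$ is asserted but never established, and it is not routine bookkeeping: the identity that makes the cancellation work is an operator identity for each nonzero Fourier mode $l\in\Lambda^*$. The paper expands $\phi=\sum_{l}\phi_l\chi_l$ and solves the mode-$l$ equation $d\phi_l+A_l\wedge\phi_l=\sum_t f(t)\chi_l(-t)\,\vol$ exactly, where $A_l=2\pi i\,d\langle l,\cdot\rangle+\nu$ and $C_l=d+A_l$, by showing that the anticommutator $C_l\iota_{\tilde{l}}+\iota_{\tilde{l}}C_l=\cL_{\tilde{l}}+\iota_{\tilde{l}}(A_l)$ is invertible (the Lie derivative $\cL_{\tilde{l}}$ is nilpotent and $\iota_{\tilde{l}}(A_l)=\pi\lVert l/\sqrt{r}\rVert_H^2+\tilde{l}$ is a unit in $\widehat{\TSym}(\cH)$); expanding this inverse as a geometric series in $j$ and a binomial series in $b$ is precisely what produces the prefactors $(-1)^j\Gamma(b+j+1)/j!$ and the diagonal specialization $s=b+j+1$. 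Without identifying this homotopy operator, or an equivalent explicit identity among the $\phi^{(b,j)}$, the termwise computation does not close up. Relatedly, since $s$ is coupled to the summation indices via $s=b+j+1$ in the definition of $\phi$, there is no single free variable in which to ``analytically continue'' a termwise identity proved for $\Re(s)\gg0$; in the paper the continuation is only used to make sense of each individual $\phi^{(b,j)}_s$, not to prove the differential equation.

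Second, your identification of the source term is incorrect. The distribution $\delta_f\vol$ does not come from the residues of the Eisenstein--Kronecker series at $s=d$: those residues cancel exactly because $\sum_t f(t)=0$, which is what makes $\phi^{(b,j)}_{b+j+1}$ well defined in the first place (cf.\ Proposition \ref{prop_polform}). Rather, $\delta_f\vol$ arises because each mode-$l$ solution $\phi_{l,t}$ has source $\chi_l(-t)\vol$, and resumming these constant terms over all $l\in\Lambda^*$ reproduces the Fourier expansion of $\delta_t$ (your Poisson-summation display, but applied to this resummation rather than to ``boundary terms''); the $l=0$ mode is handled by $\phi_0=0$, again using $\sum_t f(t)=0$. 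So the correct mechanism is exact mode-by-mode solvability followed by Fourier resummation, not telescoping boundary terms plus residues, and as written your argument would not produce the right-hand side of \eqref{eq_eqPol_2}.
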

\begin{proof}
The $\Gamma$-invariance  of $\phi^{(n)}$ will follow from the more general formula $\gamma^*\phi_s^{(b,j)}=\phi_s^{(b,j)}$. For $\gamma\in \Gamma$ and $l\in \Lambda^*$, the function $\gamma^*\langle  l, \cdot \rangle$ is $\ZZ$-valued on $\Lambda\subseteq \CC^\Sigma$, we deduce that 
$$\overline{\gamma}. l:=(\overline{\sigma_1(\gamma^{-1})}l_1,\cdots,  \overline{\sigma_d(\gamma^{-1})}l_d)\in \Lambda^*\subseteq \CC^\Sigma.$$
gives a well-defined action on $\Lambda^*$. We have
\begin{align*}
	\gamma^*\Hnorm{\frac{l}{\sqrt{r}}} & =\gamma^*\left( \sum_{i=1}^d \frac{|l_i|^2}{r_i} \right)=\sum_{i=1}^d \frac{|l_i|^2}{\sigma_i(\gamma)\overline{\sigma_i(\gamma)}r_i}=\Hnorm{\frac{\overline{\gamma}.l}{\sqrt{r}}} \\
	\gamma^*\langle  l, \cdot \rangle &=\langle \overline{\gamma}.l,\cdot  \rangle \quad \text{and} \quad \gamma^*\circ \iota_{\tilde{l}}=\iota_{\widetilde{\overline{\gamma}.l}}\circ \gamma^*.
\end{align*}
Since $f$ is $\Gamma$-invariant, we deduce $\gamma^*\phi_s^{(b,j)}=\phi_s^{(b,j)}$ as desired. It remains to show that $\phi^{(n)}$ satisfies the differential equation
\[
	\nabla''(\phi^{(n)})=\delta_f \vol.
\]
Let us assume that there is a $(d,d-1)$-current $\phi^{(n)}$ solving the above differential equation and consider its Fourier expansion 
$$\phi^{(n)}=\sum_{l\in \Lambda^*}\phi_l \chi_l.$$
The above differential equation \eqref{eq_eqPol_2} can be restated, because $\nabla'(\phi^{(n)})=0$, as the following differential equation for the Fourier coefficients
\[
	d\phi_l +(2\pi i d\langle l, \cdot \rangle+\nu)\wedge \phi_l=\sum_{t\in \frc^{-1}\Lambda/\Lambda}f(t)\chi_l(-t)\vol .
\]
Our aim is to solve this equation coefficient-wise. This strategy goes back to Nori \cite{Nori}. For $l=0$ it is solved by $\phi_0=0$. For $l\in \Lambda^*$ define $A_l:=2\pi i d \langle l, \cdot \rangle+\nu \in \cE^{1}_{\cA(\CC)\times E\Gamma}\otimes {\TSym}^{\leq n}(\sH)$. We get an operator
$$C_l:=d+A_l \colon \cE^{\bullet}_{\cA(\CC)\times E\Gamma}\otimes {\TSym}^{\leq n}(\sH)\rightarrow \cE^{\bullet+1}_{\cA(\CC)\times E\Gamma}\otimes {\TSym}^{\leq n} (\sH).$$
\emph{Claim: For $l\neq 0$ the operator $C_l\circ \iota_{\tilde{l}}+\iota_{\tilde{l}}\circ C_l$ is invertible on $(d,d)$-currents and 
\[
	\phi_{l,t}:=\iota_{\tilde{l}}\circ( C_l\circ \iota_{\tilde{l}}+\iota_{\tilde{l}}\circ C_l )^{-1}(\exp(2\pi i \langle l,-t\rangle)\vol)
\]
solves the equation}
\[
	d\phi_{l,t}+A_l\wedge \phi_{l,t}=\chi_l(-t)\vol.
\]
\emph{Proof of the Claim:} Let us first observe	$C_l \circ C_l=0$  and  $\iota_{\tilde{l}}\circ \iota_{\tilde{l}}=0$. Our next aim is to prove that the operator 
\[
	C_l\circ \iota_{\tilde{l}}+\iota_{\tilde{l}}\circ C_l=d\iota_{\tilde{l}} +\iota_{\tilde{l}}d+A_l\iota_{\tilde{l}}+\iota_{\tilde{l}}A_l
\]
is invertible on $(d,d)$-currents. The first term $\cL_{\tilde{l}}:=d\iota_{\tilde{l}} +\iota_{\tilde{l}}d$ is the Lie-derivative along the vector field $\tilde{l}$. It is nilpotent, more precisely $\cL_{\tilde{l}}^{2d+1}(\vol)=0$. Let us compute the second term evaluated on a form $\omega$:
\[
	(A_l\iota_{\tilde{l}}+\iota_{\tilde{l}}A_l)(\omega)=A_l\wedge \iota_{\tilde{l}}(\omega)+\iota_{\tilde{l}}(A_l\wedge \omega)=\iota_{\tilde{l}}(A_l)\wedge\omega.
\]
Thus the operator $A_l\iota_{\tilde{l}}+\iota_{\tilde{l}}A_l$ is just multiplication by
\[
	\iota_{\tilde{l}}(A_l)=2\pi i \iota_{\tilde{l}} d(\langle l, \cdot \rangle)+\tilde{l}=\pi \Hnorm{\frac{l}{\sqrt{r}}}+\tilde{l}.
\] 
Since $\pi \Hnorm{\frac{l}{\sqrt{r}}}\in \CC^\times$ and $\tilde{l}\in \TSym^1(\pr^*\cH)$ we conclude that $A_l\iota_{\tilde{l}}+\iota_{\tilde{l}}A_l$ is invertible in $\widehat{\TSym}^\cdot (\pr^*\cH)$. We have already seen that $\cL_{\tilde{l}}$ is nilpotent and deduce that $C_l\circ \iota_{\tilde{l}}+\iota_{\tilde{l}}\circ C_l$ is invertible on $(d,d)$-currents.\par 
Next, let us prove that $\phi_{l,t}$ solves the equation
\[
	d\phi_{l,t}+A_l\wedge \phi_{l,t}=\exp (2\pi i\langle l,-t \rangle)\vol.
\]
Let us define $\omega_{l,t}:=\exp(2\pi i\langle l,-t \rangle)\vol$. We have $C_l(\omega_{l,t})=d\omega_{l,t}+A_l\wedge\omega_{l,t}=0$ and since $C_l$ commutes with $(C_l\iota_{\tilde{l}}+\iota_{\tilde{l}}C_l)^{-1}$ we deduce
\begin{align*}
	\omega_{l,t}&=(C_l\iota_{\tilde{l}}+\iota_{\tilde{l}}C_l)(C_l\iota_{\tilde{l}}+\iota_{\tilde{l}}C_l)^{-1}(\omega_{l,t})=C_l\iota_{\tilde{l}}(C_l\iota_{\tilde{l}}+\iota_{\tilde{l}}C_l)^{-1}\omega_{l,t}.
\end{align*}
Thus $\phi_{l,t}=\iota_{\tilde{l}}(C_l\iota_{\tilde{l}}+\iota_{\tilde{l}}C_l)^{-1}\omega_{l,t}$ solves the equation
\[
	d\phi_{l,t}+A_l\wedge \phi_{l,t}=\chi_l(-t)\vol
\]
as desired. This finishes the proof of the Claim.\par 

In a next step, we compute $\phi_{l,t}$ explicitly.
\begin{align*}
	\phi_{l,t}&=\iota_{\tilde{l}}(C_l \iota_{\tilde{l}}+ \iota_{\tilde{l}}C_l)^{-1}\omega_{l,t}=\iota_{\tilde{l}}(\cL_{\tilde{l}}+\iota_{\tilde{l}}(A_l))^{-1}\omega_{l,t}= \sum_{j=0}^{2d} (-1)^j \iota_{\tilde{l}}(A_l)^{-(j+1)}\iota_{\tilde{l}}(\cL_{\tilde{l}})^j\omega_{l,t}\\
	&=\sum_{j=0}^{2d}(-1)^j \frac{1}{\left(\pi \Hnorm{\frac{l}{\sqrt{r}}} +\tilde{l}\right)^{j+1}}\iota_{\tilde{l}}(d\iota_{\tilde{l}})^j\omega_{l,t}
\end{align*}
After expanding the last sum using the binomial series
\[
	 \frac{1}{\left(\pi \Hnorm{\frac{l}{\sqrt{r}}} +\tilde{l}\right)^{j+1}}=\sum_{b= 0}^n \frac{(j+b)!}{j!}\left(\pi \Hnorm{\frac{l}{\sqrt{r}}}\right)^{-(b+j+1)}\tilde{l}^{[b]}
\]
we see that $\sum_{t\in \frc^{-1}\Lambda/\Lambda}f(t)\phi_{l,t}$ are the Fourier coefficients of $\phi$ and the result follows.
\end{proof}
\begin{remark}
	Using the inclusion ${\cP}^{(n)}\subseteq {\cP}^{\natural(n)}$ we may view $\phi=(\phi^{(n)})_n$ as a pro-system of $(2d-1)$ current in ${\cP}^{\natural(n)}$ satisfying
	\[
		\nabla''(\phi)=\delta_f \vol.
	\]
	Furthermore, we have $\nabla'(\phi)=0$ for trivial reasons: $\phi$ is already of top degree in the holomorphic differential forms. Thus, $\phi$ is an equivariant current satisfiying
	\[
		\nabla(\phi)=\delta_f \vol.
	\]
	By Theorem \ref{thm:scheider-theorem} the sheaf ${\sP}^{\natural(n)}$ is canonically isomorphic to the de Rham logarithm sheaf
	so that  $[\phi]$ is a representative of the equivariant de Rham polylogarithm. 
\end{remark}

In Lemma \ref{lem_cP_resolution} we have seen that 
\[
		\pr^{-1}({\sP}^{(n),an}\otimes \Omega^{d}_{\cA(\CC)})\xrightarrow{\sim} \left(\pr^*{\cP}^{(n)}\otimes \cE^{d,\bullet}_{\cA(\CC)\times E\Gamma}, \nabla'' \right).
	\]
	In particular, the cohomology
	\[
		H^i(\cU_\cD(\CC),\Gamma,\widehat{\sP}^{an}\otimes\Omega^{d}_{\cA(\CC)})\cong \varprojlim_n H^i(\cU_\cD(\CC),\Gamma,{\sP}^{(n),an}\otimes\Omega^{d}_{\cA(\CC)})
	\]
	can be described in terms of smooth $\Gamma$-invariant $\nabla''$-closed $(d,d-1)$-forms with values in $\pr^*{\cP}^{(n)}$. As a corollary of the above Proposition and Theorem \ref{thm_explicit_pol} we obtain:
\begin{corollary}\label{cor_cohpol}
The pro-system of $(d,d-1)$-forms $\phi|_{\cU_\cD(\CC)\times E\Gamma}$ represents the equivariant coherent Eisenstein--Kronecker class
\[
	[\phi]_{coh}=\EK_{\Gamma}(f)\in H^{d-1}(\cU_\cD(\CC),\Gamma,\widehat{\sP}^{an}\otimes\Omega^d_{\cA(\CC)}).
\]	
\end{corollary}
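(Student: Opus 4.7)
The strategy is to use the Dolbeault-type resolution of Lemma \ref{lem_cP_resolution} to interpret both the class $[\phi]_{coh}$ and the equivariant Eisenstein--Kronecker class $\EK_\Gamma(f)$ as cohomology classes living in the same complex, and then to match them through the localization long exact sequence that defined $\EK_\Gamma(f)$ in the first place.

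First, I would verify that the restriction $\phi|_{\cU_\cD(\CC)\times E\Gamma}$ indeed represents a well-defined class in $H^{d-1}(\cU_\cD(\CC),\Gamma,\widehat{\cP}\otimes\Omega^d_{\cA(\CC)})$. By Proposition \ref{prop_polform}, this current is actually a smooth $\Gamma$-equivariant $(d,d-1)$-form on the open subset $\cU_\cD(\CC)\times E\Gamma$, and by Theorem \ref{thm_explicit_pol} one has $\nabla''\phi=\delta_f\vol$, whose support is contained in $\cD(\CC)\times E\Gamma$. Consequently $\nabla''(\phi|_{\cU_\cD(\CC)\times E\Gamma})=0$, so Lemma \ref{lem_cP_resolution} assigns to $\phi|_{\cU_\cD(\CC)\times E\Gamma}$ the class $[\phi]_{coh}$.

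Next, I would trace the construction of $\EK_\Gamma(f)$ through the localization sequence
\begin{equation*}
0\to H^{d-1}(\cU_\cD,\Gamma;\wP\otimes\Omega^d_{\cA/\cS})\to H^d_\cD(\cA,\Gamma;\wP\otimes\Omega^d_{\cA/\cS})\to H^0(\cS,\sO_\cS).
\end{equation*}
By Definition \ref{def:equiv-coh-EK}, $\EK_\Gamma(f)$ is the (unique) preimage of the image of $f$ under the inclusion $\sO_\cS[\cD]^{0,\Gamma}\hookrightarrow H^d_\cD(\cA,\Gamma;\wP\otimes\Omega^d_{\cA/\cS})$ of Theorem \ref{thm:coh-with-support-descr}. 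Hence it suffices to show that $\phi$, viewed as a class in (equivariant) cohomology with supports on $\cD$ via the Dolbeault-type resolution, coincides with the image of $f$. By injectivity of the connecting map on the image of $\sO_\cS[\cD]^{0,\Gamma}$, it is enough to verify that $\phi$ is a globally defined current on $\cA(\CC)\times E\Gamma$ whose image under $\nabla''$ equals a Dolbeault representative of $f$ viewed as a local cohomology class; this is precisely $\delta_f\vol$, which is the statement of Theorem \ref{thm_explicit_pol}.

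The main obstacle is therefore the analytic/algebraic compatibility at the last step: namely, showing that under the quasi-isomorphism between the local Ext-description of $H^d_\cD(\cA,\Gamma;\wP\otimes\Omega^d_{\cA/\cS})$ used in Theorem \ref{thm:coh-with-support-descr} and its Dolbeault realisation by currents supported on $\cD$, the function $f\in \sO_\cS[\cD]^{0,\Gamma}$ corresponds to the $\delta$-current $\delta_f\vol$. Concretely one has to check that the isomorphism $\wP|_\cD \isom \widehat{\sO}|_\cD$ provided by Corollary \ref{cor:coh-log-splitting} (the splitting principle) identifies the tautological section $\bfone$ with the constant current $1$, and that the Bochner--Martinelli/residue morphism on a smooth codimension-$d$ immersion sends $f\cdot\bfone$ to the current $\delta_f\vol$. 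Once this functorial identification is in place, the equality $\nabla''\phi=\delta_f\vol$ proved in Theorem \ref{thm_explicit_pol} immediately forces $[\phi]_{coh}=\EK_\Gamma(f)$, completing the proof.
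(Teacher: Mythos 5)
Your proposal is correct and follows essentially the same route as the paper: represent the restriction of $\phi$ as a $\Gamma$-equivariant $\nabla''$-closed $(d,d-1)$-form via the Dolbeault-type resolution of Lemma \ref{lem_cP_resolution}, compute its image under the (injective) boundary map of the localization sequence by extending to the global current $\phi$ and applying $\nabla''$, and then invoke $\nabla''\phi=\delta_f\vol$ from Theorem \ref{thm_explicit_pol} to match it with the image of $f$ under the inclusion of Theorem \ref{thm:coh-with-support-descr}. The compatibility you flag as the ``main obstacle'' (identifying $f\cdot\bfone$ with $\delta_f\vol$ under the splitting principle and the current-theoretic residue map) is exactly the point the paper also treats by assertion, so your write-up is, if anything, slightly more explicit about what remains to be verified there.
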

\begin{proof}
The pro-system of smooth $(d,d-1)$-forms $(\phi^{(n)}|_{\cU_\cD(\CC)\times E\Gamma})_n$ satisfies the differential equation
\[
	\nabla''(\phi^{(n)}|_{\cU_\cD(\CC)})=0,
\]
so it is a $\Gamma$-equivariant $\nabla''$-closed $(d,d-1)$-form and gives a cohomology class in 
\[
	H^{d-1}(\cU_\cD(\CC),\Gamma,\widehat{\sP}^{an}\otimes\Omega^d_{\cA(\CC)})\cong \varprojlim_n H^{d-1}(\cU_\cD(\CC),\Gamma,{\sP}^{(n),an}\otimes\Omega^d_{\cA(\CC)}).
\]
It remains to compute the residue map
\[
	H^{d-1}(\cU_\cD(\CC),\Gamma,\widehat{\sP}^{an}\otimes\Omega^d_{\cA(\CC)})\rightarrow H^{d}_{\cD}(\cA(\CC),\Gamma,\widehat{\sP}^{an}\otimes\Omega^d_{\cA(\CC)}).
\]
The residue map in Dolbeault cohomology is computed by extending $\phi^(n)|_{\cU_\cD(\CC)\times E\Gamma}$ to a current and applying $\nabla''$. By definition, the current $\phi^{(n)}$ extends the differential form $\phi^{(n)}|_{\cU_\cD(\CC)\times E\Gamma}$. Thus $[\nabla''(\phi^{(n)})]$ represents the class $\res([\phi^{(n)}|_{\cU_\cD(\CC)\times E\Gamma}])$. According to Theorem \ref{thm_explicit_pol} we have $\nabla''(\phi^{(n)})=\delta_f \vol$. The class $\delta_f \vol$ represents the cohomology class in $H^d_{\cD(\CC)}(\cA(\CC),\Gamma; {\sP}^{(n),an}\otimes \Omega^d_{\cA(\CC)/\cS})$ corresponding to $f \in \CC[\cD]^{0,\Gamma}$ under the inclusion $\CC[\cD]^{0,\Gamma}\hookrightarrow H^d_{\cD(\CC)}(\cA(\CC),\Gamma; {\sP}^{(n),an}\otimes \Omega^d_{\cA(\CC)})$ and the result follows.
\end{proof}

 
Our next aim is to compute the specialization of the Eisenstein-Kronecker class at the chosen $\Gamma$-invariant $\frf$-torsion section $x\in \cU_\cD(\CC)$. In the following, we consider tuples $(\beta,\alpha)\in \NN^d\times\NN^d$ such that the associated infinity type is critical, i.e.
\[
	\sum_{i=1}^d \beta_i \ol{\sigma}_i -\sum_{i=1}^d (\alpha_i+1)\sigma_i \in \CI_L(\Gamma).
\]
Using the isomorphism $\NN^d\times\NN^d \cong I^+_{\ol{\Sigma}}\oplus I^+_{\Sigma} $, we will write this condition in the following as $\beta-\alpha-\ul{1}\in \CI_L(\Gamma)$. For $(\beta,\alpha)\in \NN^d\times\NN^d$ such that $\beta-\alpha-\ul{1}\in \CI_L(\Gamma)$, we have an isomorphism
 \begin{multline*}
 	H^{d-1}(\Gamma, \TSym^{\alpha}(\omega_\cA) \otimes \TSym^{\beta}(\sH) \otimes \omega^d_{\cA/\CC} )\\ \cong H^{d-1}(B\Gamma, \CC)\otimes_\CC \TSym^{\alpha}(\omega_\cA) \otimes \TSym^{\beta}(\sH) \otimes \omega^d_{\cA/\CC},
 \end{multline*}
 which allows us to compute $\EK_\Gamma^{\beta,\alpha}(f,x) \in H^{d-1}(\Gamma, \TSym^{\alpha}(\omega_\cA) \otimes \TSym^{\beta}(\sH(\overline{\Sigma}))\otimes \omega^d_{\cA/\CC} )$ in terms of differential forms on $E\Gamma$:
 
\begin{proposition}\label{prop_psi} 
Let $(\beta,\alpha)\in\NN^d\times \NN^d$ such that $\beta-\alpha-\underline{1}\in\CI_L(\Gamma)$ is a critical infinity type and write $a:=|\alpha|$ and $b:=|\beta|$. The $\Gamma$-invariant differential form $$\psi^{(\beta,\alpha)}(f,x)\in  H^0(E\Gamma, \TSym^{\alpha}(\omega_{\cA/\CC}) \otimes_\CC \TSym^{\beta}(\sH(\overline{\Sigma}))\otimes_\CC \omega^d_{\cA/\CC}\otimes_\CC\cE^{d-1}_{E\Gamma})$$ with
\begin{multline*}
\psi^{(\beta,\alpha)}(f,x):=d\cdot (-1)^{\frac{d(d-1)}{2}}\sum_{t\in \frc^{-1}\Lambda/\Lambda}f(t) \frac{\Gamma(a+d)K^{\beta+\alpha+\underline{1}}(Hr,x-t,0,a+d,\Lambda)}{\pi^{a+d}}\\
\times (\pi r)^{\alpha+\ul{1}} \bigwedge_{i=1}^{d-1} \frac{dr_i}{r_i} \otimes dz^{[\alpha]} \otimes \bar{\mathrm{u}}^{[\beta]} \otimes \bigwedge_{i=1}^d dz_i
\end{multline*}
represents the cohomology class
\[
	\EK_\Gamma^{\beta,\alpha}(f,x)=[\psi^{(\beta,\alpha)}(f,x)] \in H^{d-1}(\Gamma, \TSym^{\alpha}(\omega_\cA) \otimes \TSym^{\beta}(\sH(\overline{\Sigma}))\otimes \omega^d_{\cA/\CC} ).
\]
\end{proposition}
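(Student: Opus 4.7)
The plan is to unfold the definition of $\EK^{\beta,\alpha}_\Gamma(f,x)$ in four steps and to show that, applied to the explicit representative $\phi$ of $\EK^{\natural}_\Gamma(f)$ furnished by Corollary \ref{cor_cohpol}, it yields $\psi^{(\beta,\alpha)}(f,x)$. The four steps are: (i) apply the iterated algebraic connection $\nabla^{a}$; (ii) pull back along $x$; (iii) apply the moment map composed with the projection onto $\TSym^{b}(\sH(\overline{\Sigma}))$; (iv) decompose onto the prescribed $(\alpha,\beta)$-type.

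First I would reduce step (i) to the bare iterated holomorphic derivative. Under Scheider's identification (Theorem \ref{thm_analyticPoincare}) the smooth connection on $\widehat{\cP^\natural}$ is $d+\nu = (\partial+\nu^{1,0})+(\bar\partial+\nu^{0,1})$ and the algebraic connection $\nabla$ corresponds to $\partial+\nu^{1,0}$. Since $\phi$ lives in $\pr^{*}\widehat{\cP}\subset \pr^{*}\widehat{\cP^\natural}$, its $\sH$-valued part is a polynomial in the $\bar{\mathrm{u}}_i$'s; the $\nu^{1,0}$-contributions in $\nabla^{a}$ introduce factors of $\mathrm{u}_i\in \sH(\Sigma)$, which are killed at step (iii). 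Hence only
\[
	\partial^{a} \;=\; a!\sum_{|\alpha|=a}\partial^{\alpha}\,dz^{[\alpha]}\in \TSym^{a}(\Omega^{1}_{\cA/\CC})
\]
acting on the scalar coefficient of $\bar{\mathrm{u}}^{[\beta]}$ in $\phi$ will survive. For step (ii), pulling back by the point $x$ restricts $\phi$ to $\{x\}\times E\Gamma$, which annihilates every $d\bar z_i$. Inspecting the decomposition $\phi=\sum \phi_{b+j+1}^{(b,j)}$ of Proposition \ref{prop_polform}, the only $\omega_{\underline{\epsilon}}$ free of $d\bar z_i$ is $\underline{\epsilon}=\underline{1}$, so only the summand with $j=d-1$ and $s=b+d$ contributes; by Lemma \ref{lem_difform}(c) the surviving form equals $\omega_{\underline{1}} = \frac{(-1)^{d-1}d!(2\pi i)^{d}}{\vol(\cA)}(\bar l/r)^{\underline{1}}\bigwedge_{i=1}^{d-1}\frac{dr_i}{r_i}\wedge\bigwedge_{i=1}^{d}dz_i$.

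Combining these, the $(\bar{\mathrm{u}}^{[\beta]},dz^{[\alpha]})$-component of $x^{*}\nabla^{a}\phi$ will be obtained by extracting $(\bar l/r)^{\beta}\bar{\mathrm{u}}^{[\beta]}$ from $\tilde l^{[b]}$, differentiating $\chi_l(z-t)$ via $\partial^{\alpha}$ at $z=x$ to produce $(-\pi r)^{\alpha}\bar l^{\alpha}\chi_l(x-t)$, and then recognising—via the substitution $\lambda=l/r$ together with $r\Lambda^{Hr,*}=\Lambda^{*}$ used in the proof of Proposition \ref{prop_polform}—the resulting lattice sum as a scalar multiple of $K^{\beta+\alpha+\underline{1}}(Hr,0,x-t,b+d,\Lambda^{Hr,*})$. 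The final manoeuvre is to swap the entries of this Eisenstein--Kronecker series by the functional equation (Corollary \ref{cor:eisenstein-kronecker-fct-eq}): with $\mu=\beta+\alpha+\underline{1}$ of total weight $|\mu|=a+b+d$ it exchanges $s=b+d$ with $d+|\mu|-s=a+d$, and using $\vol_{Hr}(\Lambda^{Hr,*})=\vol(\cA)^{-1}$ on $L^{1}_\RR$ (since $\prod_i r_i=1$) rewrites the series as $\frac{\Gamma(a+d)}{\pi^{a+d}}K^{\mu}(Hr,x-t,0,a+d,\Lambda)$ up to explicit factors.

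The hard part will be the combinatorial bookkeeping: tracking signs, factorials, divided powers, and especially the various powers of $r_i$ arising from $\tilde l^{[b]}$, from $\partial^{a}$, and from the functional equation. In particular, the $r^{2\alpha}$ produced before the functional equation must combine with the $\vol(\cA)$-prefactor to yield the $(\pi r)^{a+d}$ appearing in $\psi^{(\beta,\alpha)}$, and the combination $dz^{[\alpha+\underline{1}]}\otimes\bigwedge dz_i$ must be reconciled with $dz^{[\alpha]}\otimes \bigwedge dz_i$ via the canonical isomorphism $\omega^{d}_\cA\cong \TSym^{\underline{1}}(\omega_\cA)$ from Proposition \ref{prop:orientation} (which introduces a multinomial factor $(\alpha+\underline{1})!/\alpha!$). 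Once this is settled, the $\Gamma$-invariance of $\psi^{(\beta,\alpha)}$ will follow from the $\Gamma$-invariance of $\phi$ (Theorem \ref{thm_explicit_pol}) together with the criticality hypothesis $\beta-\alpha-\underline{1}\in\CI_L(\Gamma)$, and no further closedness check is required because $\psi^{(\beta,\alpha)}$ is already of top degree on the $(d-1)$-dimensional manifold $E\Gamma$.
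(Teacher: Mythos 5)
Your plan reproduces the paper's proof step for step: reduce $\nabla^{a}$ to the bare holomorphic derivative via the projection to $\TSym^{\beta}(\sH(\overline{\Sigma}))$, observe that only the $\underline{\epsilon}=\underline{1}$, $j=d-1$ term of $\phi$ survives pullback along $x$ (Lemma \ref{lem_difform}, part (3)), apply the derivative identity $\partial_z^{\alpha}K^{\beta+\underline{1}}\rightsquigarrow K^{\beta+\alpha+\underline{1}}$, and finish with the functional equation exchanging $s=b+d$ and $a+d$ together with the volume identity $\frac{(2\pi i)^d}{\vol(\cA)}=\frac{(\pi r)^d}{\vol_{Hr}(\Lambda)}$. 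The sign/factorial bookkeeping you defer is exactly what the paper also treats summarily, so this is essentially the same argument and is correct.
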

\begin{proof}
Let us use the embedding ${\cP}^{(n)}\subseteq {\cP}^{\natural(n)}$ to view $\phi^{(n)}$ as a smooth $(d,d-1)$-form in $\pr^*\cP^{\natural(n)}$. Then $\Eis_{\Gamma}^{\beta,\alpha}$ is represented for $n\geq |\beta|$ by the  projection of
\[
	\varrho_x(x^*(\nabla')^a \phi)\in H^{0}\left(E\Gamma, \TSym^{a}(\omega_{\cA/\CC}) \otimes_\CC {\TSym^{\leq n}} (\sH(\overline{\Sigma} ))\otimes_\CC \omega^d_{\cA/\CC}\otimes_\CC \cE^{d-1}_{E\Gamma} \right)
\]
to the direct summand $\TSym^{\alpha}(\omega_{\cA/\CC})\otimes_\CC \TSym^{\beta}(\sH(\overline{\Sigma} ))\otimes \omega^d_{\cA/\CC}$. The $\beta$-component $\TSym^{\beta}(\sH(\overline{\Sigma}))$ is contained in the subspace of $\TSym^{b}(\sH(\overline{\Sigma}))$ spanned by powers of $\bar{\mathrm{u}}_1,\dots,\bar{\mathrm{u}}_d$. Thus the holomorphic connection 
$$\nabla'=d^{1,0}+\nu^{1,0}=d^{1,0}+\sum_{i=1}^d  dz_i\otimes u_i$$
acts just by the holomorphic exterior derivation $d^{1,0}$ after projection to $\TSym^{\beta}(\sH(\overline{\Sigma}))$. Furthermore, we have $e^*\omega_{\underline{\epsilon}}=0$ if $\underline{\epsilon}\neq (1,\dots,1)$. We deduce from Corollary \ref{cor_cohpol}, Proposition \ref{prop_polform} and Lemma \ref{lem_difform} \eqref{lem_difform_c} that the coherent equivariant Eisenstein--Kronecker class on $B\Gamma$ is represented by the differential form:
\begin{multline*}
	(-1)^{\frac{d(d-1)}{2}}\frac{d(2\pi i)^d}{\vol(\cA)}\sum_{t\in \frc^{-1}\Lambda/\Lambda}f(t) \partial_z^{\alpha}\left. \frac{\Gamma(b+d)K^{\beta+\underline{1}}(Hr,0,z-t,b+d,\Lambda^{Hr,*})}{\pi^{b+d}}\right|_{z=x}\\
	\times \bigwedge_{i=1}^{d-1} \frac{dr_i}{r_i} \otimes dz^{[\alpha]} \otimes \bar{\mathrm{u}}^{[\beta]} \otimes \bigwedge_{i=1}^d dz_i
\end{multline*}
Combining this with
\[
	\partial_z^{\alpha} K^{\beta+\underline{1}}(Hr,0,z-t,b+d,\Lambda^{Hr,*})|_{z=x}=(\pi r)^{\alpha}K^{\beta+\alpha+\underline{1}}(Hr,0,z-t,b+d,\Lambda^{Hr,*})|_{z=x}
\]
gives the representative
\begin{multline*}
(-1)^{\frac{d(d-1)}{2}}\frac{d(2\pi i)^d}{\vol(\cA)}\sum_{t\in \frc^{-1}\Lambda/\Lambda}f(t) \frac{\Gamma(b+d)K^{\beta+\alpha+\underline{1}}(Hr,0,x-t,b+d,\Lambda^{Hr,*})}{\pi^{b+d}}\\
\times(\pi r)^{\alpha} \bigwedge_{i=1}^{d-1} \frac{dr_i}{r_i} \otimes dz^{[\alpha]} \otimes \bar{\mathrm{u}}^{[\beta]} \otimes \bigwedge_{i=1}^d dz_i
\end{multline*}
for $\Eis_{\Gamma}^{\beta,\alpha}(f,x)$. Recall the formula $\vol(\cA)=\int_{\cA/\CC}\bigwedge_{i=1}^d dz_i\wedge d\bar{z}_i$. The measure $\bigwedge_{i=1}^d dz_i\wedge d\bar{z}_i$ differs from the measure $\mu_{st}$ induced from the metric given by the standard scalar product by a factor $(2i)^d$, so
\[
	\frac{(2\pi i)^d}{\vol(\cA)}=\frac{\pi^d }{\vol_H(\Lambda)}=\frac{(\pi r)^{\ul{1}} }{\vol_{Hr}(\Lambda)}.
\]
Now, the desired result follows from the functional equation
\[
	\frac{\Gamma(b+d)K^{\beta+\alpha+\underline{1}}(Hr,0,x-t,b+d,\Lambda^{Hr,*})}{\pi^{b+d}\vol_{Hr}(\Lambda) }=\frac{\Gamma(a+d)K^{\beta+\alpha+\underline{1}}(Hr,x-t,0,a+d,\Lambda)}{\pi^{a+d}}.\qedhere
\]
\end{proof}

\subsection{Eisenstein series and fiber integration} The map $E\Gamma\to B\Gamma$ is on the level of differential forms given by a fibre integration. This fibre integration turns the  Eisenstein--Kronecker series into  Eisenstein series in the following sense:

\begin{definition}\label{def:E-beta-alpha}
	Let $\Gamma\subseteq \sO_L^\times$ be an arbitrary subgroup of finite index, $(\beta,\alpha)\in \NN^d\times\NN^d$ with $\beta-\alpha\in \CI_L(\Gamma)$, $s\in \CC$ with $\Re(s)>\frac{b-a}{2}+d$. For $t'\in \QQ\otimes \Lambda$, we define the Eisenstein series
	\[
		E^{\beta,\alpha}(t',s;\Lambda,\Gamma):=\dashsum_{\lambda\in \Gamma \backslash (\Lambda+\Gamma t')}\frac{{\overline{\lambda}}^{\beta}}{{\lambda}^{\alpha}N(\lambda)^s},
	\]
	where $\lambda$ runs over all non-zero $\Gamma$-cosets of $\Lambda+\Gamma t'$. 
\end{definition}
\nomenclature{$E^{\beta,\alpha}(t',s;\Lambda,\Gamma)$}{\nomrefpage}
Note that the summand is independent of the chosen representative in the $\Gamma$-coset, since $\beta-\alpha\in \HCI_L(\Gamma)$ (compare Definition \ref{def:critical}), and since the norm is $\Gamma$-invariant. In particular, the Eisenstein series $E^{\beta,\alpha}(t',s;\Lambda,\Gamma)$ does only depend on the double coset of $t'$ in $\Gamma \backslash (\QQ\otimes \Lambda/\Lambda)$.
For later reference, let us observe the following lemma, which allows us to pass to a torsion free subgroup for studying certain properties of the Eisenstein series.
\begin{lemma}\label{lem_change_of_group}
For a subgroups of finite index $\Gamma'\subset \Gamma\subseteq \sO_L^\times$, $\beta-\alpha\in \CI_L(\Gamma)$, $s$ and $t'$ as above, we have:
\[
	E^{\beta,\alpha}(t',s;\Lambda,\Gamma')=[\Gamma:\Gamma']E^{\beta,\alpha}(t',s;\Lambda,\Gamma).
\]
\end{lemma}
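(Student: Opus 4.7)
The plan is straightforward once we notice that the summand
\[
x \longmapsto \frac{\bar{x}^{\beta}}{x^{\alpha}\,N(x)^{s}}
\]
is invariant under the multiplication action of $\Gamma$ on $L^{\times}$. Indeed, for $\gamma\in\Gamma$ we have $\overline{\gamma x}^{\beta}/(\gamma x)^{\alpha}=\bar\gamma^{\beta}\gamma^{-\alpha}\cdot \bar{x}^{\beta}/x^{\alpha}$, and the hypothesis $\beta-\alpha\in\CI_{L}(\Gamma)$ (in particular $\beta-\alpha\in\HCI_{L}(\Gamma)$, so the corresponding character of $\T_{L,R}$ is trivial on $\Gamma$) gives $\bar\gamma^{\beta}\gamma^{-\alpha}=1$. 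Moreover, since $L$ is totally complex, $N_{L/\Q}(\gamma)=\prod_{\sigma\in\Sigma}|\sigma(\gamma)|^{2}>0$ for units $\gamma$, and combined with $|N(\gamma)|=1$ this yields $N(\gamma)=1$, so $N(\gamma x)^{s}=N(x)^{s}$. Hence the summand is well-defined on both $\Gamma$- and $\Gamma'$-orbits, and since $\Gamma'\subset\Gamma$ implies $\CI_{L}(\Gamma)\subset\CI_{L}(\Gamma')$, the definition of $E^{\beta,\alpha}(O,s;\Lambda,\Gamma')$ makes sense.

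Next, I would observe that $\Gamma$ acts freely on $L\setminus\{0\}$: if $\gamma x=x$ with $x\neq 0$ then $\gamma=1$, since $L$ is an integral domain. Consequently $\Gamma$ acts freely on $(\Lambda+O)\setminus\{0\}$ (which is the set we are summing over by virtue of $\dashsum$), and for each $\Gamma$-orbit $\Gamma x$ the natural map $\Gamma'\backslash\Gamma x\to \Gamma'\backslash\Gamma$ is a bijection. Thus every $\Gamma$-orbit breaks into exactly $[\Gamma:\Gamma']$ many $\Gamma'$-orbits.

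Combining the two observations, we compute
\[
E^{\beta,\alpha}(O,s;\Lambda,\Gamma')
=\dashsum_{[x]\in\Gamma'\backslash(\Lambda+O)}\frac{\bar{x}^{\beta}}{x^{\alpha}N(x)^{s}}
=\sum_{[y]\in\Gamma\backslash(\Lambda+O)}\ \sum_{[x]\mapsto[y]}\frac{\bar{y}^{\beta}}{y^{\alpha}N(y)^{s}}
=[\Gamma:\Gamma']\,E^{\beta,\alpha}(O,s;\Lambda,\Gamma),
\]
where in the middle step we used $\Gamma$-invariance of the summand to replace each $\Gamma'$-representative $x$ by a fixed $\Gamma$-representative $y$ of the enveloping $\Gamma$-orbit. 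This is valid in the region of absolute convergence $\Re(s)>\tfrac{b-a}{2}+d$, where the rearrangement of the series is justified.

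The main (and only) subtle point is the verification that $N(\gamma x)=N(x)$ for $\gamma\in\Gamma$ — it would fail in the totally real case because units can have norm $-1$ — but as noted above the assumption that $L$ is totally complex (built into our whole setup) forces $N(\gamma)=1$. Apart from this, the proof is a formal manipulation of the orbit sum.
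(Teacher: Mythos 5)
Your proof is correct and is exactly the ``straight-forward computation'' that the paper leaves to the reader: $\Gamma$-invariance of the summand (via triviality of the character $\beta-\alpha$ on $\Gamma$ and $N(\gamma)=1$ for units of a totally complex field), freeness of the action so that each $\Gamma$-orbit splits into precisely $[\Gamma:\Gamma']$ many $\Gamma'$-orbits, and a rearrangement justified by absolute convergence for $\Re(s)>\tfrac{b-a}{2}+d$. No issues.
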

\begin{proof}
This is a straight-forward computation.
\end{proof}
Next, we describe the relation of the Eisenstein series $E^{\beta,\alpha}(t',s;\Lambda,\Gamma)$ to the Eisenstein-Kronecker series.

\begin{lemma}\label{lem_Eis}
	Let $(\beta,\alpha)\in \NN^d\times \NN^d$ with $\beta-\alpha\in\CI_L(\Gamma)$ and $t'\in  \QQ\otimes\Lambda/\Lambda$ a torsion section, then $\Gamma(\alpha+\underline{s})E^{\beta,\alpha}(t',s;\Lambda,\Gamma)$ admits an analytic continuation to $\CC$. More precisely, if $\Gamma\subseteq \sO_L^\times$ is torsion free, we have
	\begin{align*}
		&\Gamma(\alpha+\underline{s})E^{\beta,\alpha}(t',s;\Lambda,\Gamma)\\
		=&d\sum_{\substack{t\in \QQ\otimes\Lambda/\Lambda \\  \Gamma t=\Gamma t'}}\int_{r\in B\Gamma} \frac{\Gamma(a+ds)K^{\beta+\alpha}(Hr,t,0,a+ds;\Lambda)}{\pi^{a+ds}}(\pi  r)^{\alpha+\ul{s}}\bigwedge_{i=1}^{d-1}\frac{dr_i}{r_i},
	\end{align*}
	where the first sum runs over all torsion sections in $\QQ\otimes\Lambda/\Lambda$ which are in the same $\Gamma$-coset as $t'$. Furthermore, we write $\underline{s}=(s,\dots,s)$ and $\Gamma(\alpha+\underline{s}):=\prod_{i=1}^d \Gamma(\alpha_i+s) $ for the product of $\Gamma$-functions.
\end{lemma}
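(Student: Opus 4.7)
The plan is to apply a componentwise Mellin identity to recast each summand of $E^{\beta,\alpha}(O,s;\Lambda,\Gamma)$ as an integral over $\RR_{>0}^d$, then fibre along the norm map $\RR_{>0}^d\to \RR_{>0}$ to expose the $L^1_\RR$-factor, and finally unfold the sum over $\Gamma$-orbits onto $B\Gamma=L^1_\RR/\Gamma$.

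First I would use the componentwise Gamma integral $\Gamma(c)(\pi|w_i|^2)^{-c}=\int_0^\infty e^{-\pi r_i|w_i|^2}r_i^{c}\,\frac{dr_i}{r_i}$ together with the termwise identity $\bar{w}^\beta/(w^\alpha N(w)^s)=\bar{w}^{\alpha+\beta}\prod_i|w_i|^{-2(\alpha_i+s)}$ to obtain, for $\Re(s)\gg 0$,
\[
\Gamma(\alpha+\underline{s})\,\frac{\bar{w}^\beta}{w^\alpha N(w)^s}=\pi^{a+ds}\,\bar{w}^{\alpha+\beta}\int_{\RR_{>0}^d}e^{-\pi Hr(w,w)}\,r^{\alpha+\underline{s}}\bigwedge_i\frac{dr_i}{r_i}.
\]
Parametrising $\RR_{>0}^d\cong L^1_\RR\times\RR_{>0}$ via $r_i=tr'_i$, a direct Jacobian computation gives the Haar decomposition $\bigwedge_i\frac{dr_i}{r_i}=d\cdot \frac{dt}{t}\wedge\bigwedge_{i=1}^{d-1}\frac{dr'_i}{r'_i}$ (up to a sign compatible with the orientation on $B\Gamma$ from Proposition~\ref{prop:orientation}); using $Hr(w,w)=t\cdot Hr'(w,w)$ and performing the $t$-integral produces a second Gamma factor. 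Interpreting $(\pi r')^{a+ds}:=\prod_i(\pi r'_i)^{\alpha_i+s}$ as in Proposition~\ref{prop_psi}, I would thereby arrive at the pointwise identity
\[
\Gamma(\alpha+\underline{s})\,\frac{\bar{w}^\beta}{w^\alpha N(w)^s}=d\int_{L^1_\RR}\frac{\Gamma(a+ds)\,\bar{w}^{\alpha+\beta}}{\pi^{a+ds}\,|w|_{Hr'}^{2(a+ds)}}\,(\pi r')^{a+ds}\bigwedge_{i=1}^{d-1}\frac{dr'_i}{r'_i}.
\]

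The central step would then be to sum over $w\in\Gamma\backslash(\Lambda+O)$ and unfold the $L^1_\RR$-integrals onto $B\Gamma$, which by Dirichlet's theorem is a compact $(d-1)$-torus since $L$ is totally complex and $\Gamma$ has finite index in $\sO_L^\times$. Setting $F(w,r'):=\bar{w}^{\alpha+\beta}(r')^{\alpha+\underline{s}}|w|_{Hr'}^{-2(a+ds)}$, a direct computation yields
\[
F(\gamma w,\gamma r')=F(w,r')\cdot \Bigl(\prod_i\sigma_i(\gamma)^{\alpha_i}\bar{\sigma}_i(\gamma)^{-\beta_i}\Bigr)\cdot N_{L/\QQ}(\gamma)^s,
\]
and both factors equal $1$: the first by the Hecke-type triviality $\beta-\alpha\in\CI_L(\Gamma)$, the second by $N_{L/\QQ}(\gamma)=\prod_i|\sigma_i(\gamma)|^2=1$ for units of a totally complex field. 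Hence the standard unfolding $\dashsum_{w\in\Gamma\backslash(\Lambda+O)}\int_{L^1_\RR}F(w,r')\cdots=\int_{B\Gamma}\dashsum_{w\in\Lambda+O}F(w,r')\cdots$ is valid in the range of absolute convergence, and decomposing $w=t'+\lambda$ identifies the inner sum with $\sum_{t'\in(O+\Lambda)/\Lambda}K^{\beta+\alpha}(Hr',t',0,a+ds;\Lambda)$. Collecting the remaining factors of $\pi^{a+ds}$ then produces the displayed formula.

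The analytic continuation asserted at the start of the statement will then follow from the meromorphic continuation of $K^{\beta+\alpha}$ in Corollary~\ref{cor:eisenstein-kronecker-fct-eq} combined with the compactness of $B\Gamma$ and uniform estimates on $K^{\beta+\alpha}$ as $r'$ varies in compacta of the $s$-plane. The main obstacle I expect is the verification of $\Gamma$-invariance of $F$: the precise cancellation between the Hecke-type character relation and the triviality of $N_{L/\QQ}$ on the units of a totally complex field is exactly what permits the $L^1_\RR$-integral to be folded onto the compact $B\Gamma$. Once this invariance is in hand, the rest reduces to a classical Epstein/Hecke integral representation adapted to the present units action.
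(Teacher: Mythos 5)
Your argument is correct and is essentially the paper's own proof read in the opposite direction: the paper starts from the right-hand side, expands $\Gamma(a+ds)K^{\beta+\alpha}$ by its Mellin integral and then applies the substitution $\tilde r_i=ur_i$ realizing $B\Gamma\times\RR_{>0}\cong\Gamma\backslash(\RR_{>0})^d$, which is exactly your parametrisation $r_i=tr_i'$ combined with your unfolding step. Your explicit verification of the $\Gamma$-invariance of the integrand (triviality of $\beta-\alpha$ on $\Gamma$ together with $N_{L/\QQ}(\gamma)=\prod_i|\sigma_i(\gamma)|^2=1$) is left implicit in the paper but is the correct justification for the unfolding.
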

\begin{proof}By the previous Lemma, we may without loss of generality assume that $\Gamma$ is torsion free.
	The right hand side of the above equation is defined for $s\in \CC$, so it remains to compare it to the left hand side for $\Re(s)>\frac{b-a}{2}+d$:
	\begin{align*}
		& d\sum_{\substack{t\in \QQ\otimes\Lambda/\Lambda \\ \Gamma t=\Gamma t'}}\int_{r\in B\Gamma} \frac{\Gamma(a+ds)K^{\beta+\alpha}(Hr,t,0,a+ds;\Lambda)}{\pi^{a+ds}}(\pi  r)^{\alpha+\ul{s}}\bigwedge_{i=1}^{d-1}\frac{dr_i}{r_i}=\\
		=& d\int_{r\in B\Gamma}  \sum_{\lambda \in \Lambda+\Gamma t'} \int_{u\in\RR_{>0}} \overline{\lambda}^{\beta+\alpha}\exp\left( -\pi \lVert \lambda \rVert_{Hr}^2 u \right) u^{a+ds} \frac{du}{u} (\pi r)^{\alpha+\ul{s}}\bigwedge_{i=1}^{d-1}\frac{dr_i}{r_i}=\\
	\end{align*}
		Let us now make the substitution $\tilde{r}_i:=u r_i$ corresponding to the bijection
	\[
		B\Gamma\times \RR_{>0}=(\Gamma\backslash L^1_{\RR})\times\RR_{>0}\xrightarrow{\sim} \Gamma\backslash (\RR_{>0})^d,\quad ((r_1,...,r_d),u)\mapsto (ur_1,\dots,ur_d).
	\]
	Applying the Mellin transform and using the above substitution gives
		\begin{equation*}
		=  \sum_{\lambda\in \Gamma\backslash (\Lambda+\Gamma t')}\Gamma(\alpha+\underline{s})\frac{\overline{\lambda}^{\beta+\alpha}}{ \lambda^{\alpha+\underline{s}} (\pi \overline{\lambda})^{\alpha+\underline{s}}}\pi^{a+ds}=\Gamma(\alpha+\underline{s})E^{\beta,\alpha}(t',s;\Lambda,\Gamma) \qedhere
	\end{equation*}
\end{proof}

\subsection{The main formula for the Eisenstein-Kronecker class}
In this section we will finally  compute the Eisenstein-Kronecker class $\Eis_{\Gamma}^{\beta,\alpha}(f,x)$ explicitly. 

Recall that these classes are obtained by capping the classes $\pr_\Gamma(\Eis_{\Gamma}^{\beta,\alpha}(f,x))$ from \eqref{eq:pr-Eis} with $\xi\in H_{d-1}(\Gamma,\ZZ)$ (cf. Proposition \ref{prop:orientation}):
\[
	H^{d-1}(\Gamma,\TSym^{\alpha}(\omega_\cA) \otimes \TSym^{\beta}(\sH(\overline{\Sigma}))\otimes \omega^d_{\cA/\CC})\rightarrow \TSym^{\alpha+\underline{1}}(\omega_\cA) \otimes \TSym^{\beta}(\sH(\overline{\Sigma})).
\]
It will be convenient to introduce the notation $\alpha!:=\alpha_1!\cdots \alpha_d!$ for $\alpha\in \NN^d$. Furthermore, we use the canonical isomorphism $\omega_{\cA^\vee/\CC}\cong \sH(\ol{\Sigma})$ to view $\ol{\mathrm{u}}$ as a basis of $\omega_{\cA^\vee/\CC}$. With this convention, we have the following explicit formula for the Eisenstein-Kronecker class.
\begin{theorem}[Explicit form of the Eisenstein-Kronecker class]\label{thm:computation-of-Eis} Let $\Gamma\subseteq \sO_L^\times$ be a subgroup of finite index, $\frf$ and $\frc$ co-prime ideals in $\sO_L$, $x\in \frf^{-1}\Lambda/\Lambda$ a $\Gamma$-invariant torsion section of $\cU_{\cD}(\CC)$, and $f\colon \frc^{-1}\Lambda/\Lambda\to \CC$ a $\Gamma$-invariant function satisfying $\sum_{t\in \frc^{-1}\Lambda/\Lambda}f(t)=0$, where $\Lambda$ is the period lattice from \eqref{eq:lattice}. For $(\beta,\alpha)\in \mathbb{N}^d\times \mathbb{N}^d$ such that $\beta-\alpha-\underline{1}\in\CI_L(\Gamma)$ is a critical infinity type, we have the following equality in $\TSym^{\alpha+\underline{1}}(\omega_{\cA/\CC}) \otimes \TSym^{\beta}(\omega_{\cA^\vee/\CC})$:
	\[
		\Eis_{\Gamma}^{\beta,\alpha}(f,x)=(-1)^{\frac{d(d-1)}{2}}\cdot \alpha!\sum_{ t \in \Gamma\backslash (\frc^{-1}\Lambda/\Lambda)} f(-t) E^{\beta,\alpha+\underline{1}}(t+x,0;\Lambda,\Gamma) \cdot  dz^{[\alpha+\underline{1}]} \otimes  \bar{\mathrm{u}}^{[\beta]},
	\]
	where the sum runs over all $\Gamma$-cosets of $\frc$-torsion sections. Note that the summand is well-defined on $\Gamma$-cosets, since $f$ and $x$ are $\Gamma$-invariant and the Eisenstein series depends only on the double coset of $t+x$ in $\Gamma\backslash (\QQ\otimes \Lambda/\Lambda)$, see Definition \ref{def:E-beta-alpha}.
\end{theorem}
\begin{proof} By Lemma \ref{lem_change_of_group} and Corollary \ref{cor:change-of-group} it suffices to prove the statement
for a torsion free $\Gamma$. The cap product
\begin{multline*}
	H_{d-1}(B\Gamma,\ZZ)\otimes_\ZZ H^{d-1}(B\Gamma,\TSym^{\alpha}(\omega_{\cA/\CC})\otimes \TSym^{\beta}(\omega_{\cA^\vee/\CC})\otimes\omega^d_{\cA/\CC}) \xrightarrow{\cap}\\ H_0(B\Gamma,\TSym^{\alpha+\underline{1}}(\omega_{\cA/\CC}) \otimes \TSym^{\beta}(\omega_{\cA^\vee/\CC})\otimes\omega^d_{\cA/\CC})
\end{multline*}
is given by $([Z],[\omega])\mapsto \int_Z\omega$  if $Z$ is a cycle in singular homology and $\omega$ a $(d-1)$-form representing a cohomology class in degree $d-1$. Using Proposition \ref{prop_psi}, Lemma \ref{lem_Eis} and Proposition \ref{prop:orientation} we see that the image of $\Eis_{\Gamma}^{\beta,\alpha}(f,x)$ under
	\[
		H^{d-1}(B\Gamma,\TSym^{\alpha}(\omega_{\cA/\CC}) \otimes \TSym^{\beta}(\omega_{\cA^\vee/\CC})\otimes\omega^d_{\cA/\CC}) \cong  \TSym^{\alpha+\underline{1}}(\omega_{\cA/\CC}) \otimes \TSym^{\beta}(\omega_{\cA^\vee/\CC})
	\]
	is given by the following fiber integral:
	\begin{align*}
	&\int_{r\in B\Gamma}\psi^{(\beta,\alpha)}(f,x)=(-1)^{\frac{d(d-1)}{2}} \cdot \Gamma(\alpha+\underline{1})\sum_{ t \in\Gamma\backslash( \frc^{-1}\Lambda/\Lambda)} f(-t)E^{\beta,\alpha+\underline{1}}(x+t,0;\Lambda,\Gamma)dz^{\otimes (\alpha+\underline{1})}\bar{\mathrm{u}}^{\otimes (\beta)}.
	\end{align*}	
\end{proof}
We have chosen our basis $(dz_1,\ldots,dz_d)$ in such a way that it is the given basis $\omega(\cA)=(\omega(\cA)(\sigma))_{\sigma\in \Sigma}$. 
On the other hand, recall from Definition \ref{def:ubar} that $(\ol{\mathrm{u}}_1,\ldots,\ol{\mathrm{u}}_d)$ corresponds to 
$(\frac{\partial}{\partial\bar{z}_1},\ldots,\frac{\partial}{\partial\bar{z}_d})$ under the isomorphism
\begin{equation*}
 \overline{\Lie(\cA/\CC)}\isom \omega_{\cA^\vee/\C},
\end{equation*}
and hence is not an algebraic basis of $\omega_{\cA^\vee/\C}$. Nevertheless, each 
$\mathrm{\bar{u}}_i$ is a generator of the $1$-dimensional $\CC$-vector space $\omega_{\cA^\vee/\C}(\overline{\sigma}_i)$, and hence a complex multiple of our fixed algebraic basis $\omega(\cA^{\vee})(\overline{\sigma}_i)$. Recall the pairing 
\begin{equation*}
\langle \cdot ,\cdot \rangle_{\dR,\cA}:\ol{\omega_{\cA/\C}}\times \omega_{\cA^{\vee}/\C}\to \C^{\ol{\Sigma}}
\end{equation*}
from Corollary \ref{cor:pairing}.
This  allows us to state the explicit computation of the Eisenstein--Kronecker class in terms of our algebraic bases $\omega(\cA)$ and $\omega(\cA^\vee)$:
\begin{corollary}\label{cor:EK-in-terms-of-period-lattice}
For $(\beta,\alpha)\in \mathbb{N}^d\times \mathbb{N}^d$ such that $\beta-\alpha-\underline{1}\in\CI_L(\Gamma)$ is a critical infinity type and $\Lambda$ the period lattice from \eqref{eq:lattice}, we have the following equality in $\TSym^{\alpha+\underline{1}}(\omega_{\cA/\CC}) \otimes \TSym^{\beta}(\omega_{\cA^{\vee}/\C})$:
\begin{multline*}
		\Eis_{\Gamma}^{\beta,\alpha}(f,x)=\\\frac{(-1)^{\frac{d(d-1)}{2}}\cdot \alpha!}{\langle \ol{\omega(\cA)},\omega(\cA^{\vee})\rangle_{\dR,\cA}^\beta}\sum_{ t\in \Gamma\backslash (\frc^{-1}\Lambda/\Lambda)} f(-t) E^{\beta,\alpha+\underline{1}}(t+x,0;\Lambda,\Gamma) \cdot  \omega(\cA)^{[\alpha+\underline{1}]} \otimes  \omega(\cA^\vee)^{[\beta]}.
\end{multline*}
\end{corollary}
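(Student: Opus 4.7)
The plan is to deduce this corollary directly from Theorem~\ref{thm:computation-of-Eis} by converting the formula there from the complex--analytic basis $\bar{\mathrm{u}}$ of $\omega_{\cA^\vee/\CC}$ to the algebraic basis $\omega(\cA^\vee)$. The first tensor factor in Theorem~\ref{thm:computation-of-Eis} is already written in the algebraic basis: by the complex uniformization \eqref{eq:C-Sigma} the differentials $dz_1,\dots,dz_d$ are precisely $\omega(\cA)(\sigma_1),\dots,\omega(\cA)(\sigma_d)$, so $dz^{[\alpha+\underline{1}]}=\omega(\cA)^{[\alpha+\underline{1}]}$ tautologically. Hence the whole task reduces to expressing $\bar{\mathrm{u}}^{[\beta]}$ in terms of $\omega(\cA^\vee)^{[\beta]}$.

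For each $\sigma\in\Sigma$, Proposition~\ref{prop:CM-decompostion} shows that the $\bar{\sigma}$-isotypic component of $\omega_{\cA^\vee/\CC}$ is one-dimensional over $\CC$, so there exists a unique scalar $\lambda_{\bar{\sigma}}\in\CC^\times$ with $\bar{\mathrm{u}}_\sigma=\lambda_{\bar{\sigma}}\,\omega(\cA^\vee)(\bar{\sigma})$. To identify $\lambda_{\bar{\sigma}}$, I would pair both sides against $\overline{\omega(\cA)(\sigma)}$ using the period pairing
\[
<\cdot,\cdot>_\cA\colon \overline{\omega_{\cA/\CC}}\times \omega_{\cA^\vee/\CC}\to \CC^{\bar{\Sigma}}
\]
of Corollary~\ref{cor:pairing}. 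By Definition~\ref{def:ubar}, $\bar{\mathrm{u}}_\sigma$ is the image of $\partial/\partial\bar{z}_\sigma$ under the isomorphism $\overline{\Lie(\cA/\CC)}\cong \omega_{\cA^\vee/\CC}$, i.e.\ the complex conjugate of the vector dual to $\omega(\cA)(\sigma)$; on the other side, $<\cdot,\cdot>_\cA$ becomes, under the analogous Hodge-theoretic identification $\overline{\omega_{\cA/\CC}}\cong \Lie(\cA^\vee/\CC)$, the canonical evaluation pairing. Consequently $<\overline{\omega(\cA)(\sigma)},\bar{\mathrm{u}}_\sigma>_\cA = 1$ in the $\bar{\sigma}$-component, which yields $\lambda_{\bar{\sigma}}=<\overline{\omega(\cA)(\sigma)},\omega(\cA^\vee)(\bar{\sigma})>_\cA^{-1}$. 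Multiplying over $\sigma\in\Sigma$ in the divided-power sense gives
\[
\bar{\mathrm{u}}^{[\beta]}=\frac{\omega(\cA^\vee)^{[\beta]}}{<\overline{\omega(\cA)},\omega(\cA^\vee)>_\cA^{\beta}},
\]
and substituting into the formula of Theorem~\ref{thm:computation-of-Eis} produces the claimed identity.

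The only non-routine step is the normalization $<\overline{\omega(\cA)(\sigma)},\bar{\mathrm{u}}_\sigma>_\cA = 1$. The main obstacle is thus careful bookkeeping across the chain of complex-analytic identifications $\sH\cong H^1_\dR(\cA^\vee/\CC)\cong H_1(\cA(\CC),\CC)\cong \omega_{\cA^\vee/\CC}\oplus \overline{\omega_{\cA^\vee/\CC}}$ together with $\overline{\omega_{\cA/\CC}}\cong \Lie(\cA^\vee/\CC)$: one must verify that these are compatibly oriented so that genuine dual bases really pair to~$1$ in the correct isotypic component and that the $\TSym$-convention~\ref{def:TSym-convention} matches the indexing on both sides. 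Once this normalization is pinned down the corollary is an immediate formal consequence of Theorem~\ref{thm:computation-of-Eis}.
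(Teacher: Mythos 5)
Your proposal is correct and follows exactly the route the paper takes: the passage between Theorem \ref{thm:computation-of-Eis} and the corollary consists precisely of noting that $dz$ is the chosen basis $\omega(\cA)$ and that each $\bar{\mathrm{u}}(\sigma)$ is the complex multiple of $\omega(\cA^{\vee})(\bar{\sigma})$ determined by the pairing $<\ol{\omega(\cA)},\omega(\cA^{\vee})>_\cA$, which is what you verify. Your normalization $<\overline{\omega(\cA)(\sigma)},\bar{\mathrm{u}}_\sigma>_\cA=1$ is the right key point and holds because the identifications $\ol{\omega_{\cA/\C}}\cong\Lie(\cA^{\vee}/\C)$ and $\omega_{\cA^{\vee}/\C}\cong\ol{\Lie(\cA/\C)}$ are mutually dual, so dual bases pair to $1$.
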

\begin{remark}
Note that $\langle\ol{\omega(\cA)},\omega(\cA^{\vee})\rangle_{\dR,\cA}$ is an invariant of the tuple  $(\cA,\omega(\cA),\omega(\cA^\vee))$, which does not depend on auxiliary choices like a polarization. In Section \ref{section:periods}, we will explain the relation of this invariant to classical periods of $\cA$ and $\cA^\vee$.
\end{remark}
\section{Application to  special values of $L$-functions for Hecke characters}
In this section we discuss the consequences of the computation in Theorem \ref{thm:computation-of-Eis} and Corollary \ref{cor:EK-in-terms-of-period-lattice} for the special values of $L$-functions of algebraic Hecke characters. 
\subsection{Hecke $L$-functions} 
In this section we define the relevant Hecke $L$-function and relate them to the Eisenstein series from Definition \ref{def:E-beta-alpha}.

Let $L$ be a  number field and $\frf\subset \sO_L$ an integral ideal, $\cI(\frf)$ the group of fractional ideals prime to $\frf$. We let $\cP_\frf\subset \cI(\frf)$ be the subgroup of 
the principal ideals generated by $\lambda\in L^{\times }$ with $\lambda\equiv 1{\mod^{\times}}\frf$ and totally positive at the real places.
%

\begin{definition}
An algebraic Hecke character $\chi$ with values in a number field $E\subset \C$, of conductor dividing $\frf$ and infinity type $\mu\in I_L$ is a homomorphism 
\begin{equation*}
\chi:\cI(\frf)\to E^{\times}\subset \C^{\times}
\end{equation*}
such that for all $\lambda\in \cP_\frf$ one has
\begin{equation*}
\chi((\lambda))=\prod_{\sigma\in J_L}\sigma(\lambda)^{\mu(\sigma)}.
\end{equation*}
A finite Hecke character is called a Dirichlet character. 
\end{definition}
If $\frf\mid \frf'$ one has $\cI(\frf')\subset \cI(\frf)$ and one identifies the Hecke character of conductor dividing $\frf$ with the ones of conductor dividing $\frf'$ obtained by restriction. The smallest $\frf$ where $\chi$ can be defined, is called the conductor of $\chi$. 

In the case where $L$ contains a maximal CM field $K\subset L$, it follows from the discussion after Theorem \ref{thm:decomposition} that an algebraic Hecke character is of the form 
\begin{equation}\label{eq:Hecke-character}
\chi=\varrho\cdot (\chi_0\circ\norm_{L/K})
\end{equation}
where $\chi_0$ is an algebraic Hecke character of $K$ and $\varrho$ a  Dirichlet character of $L$.

For a Hecke character $\chi$ of $L$ of conductor $\frf$, 
one defines  the $L$-function for $\Re s\gg 0$ by
\begin{equation*}
L_\frf(\chi,s):=\sum_{\stackrel{\fra\in \cI(\frf)}{\fra\subset \sO_L}}\frac{\chi(\fra)}{\norm\fra^{s}},
\end{equation*}
\nomenclature{$L_\frf(\chi,s)$}{\nomrefpage}
where the sum is extended over all integral ideals $\fra$ coprime to $\frf$.
For an integral ideal $\frb$ of $\sO_L$ with ray class $[\frb]\in \cI(\frf)/P_\frf$ we introduce the partial $L$-function
\begin{align*}
L_\frf(\chi,s,[\frb]):=\sum_{\stackrel{\fra\in [\frb]}{\fra\subset \sO_L}}\frac{\chi(\fra)}{\norm\fra^{s}}&&\mbox{so that}&&
L_\frf(\chi,s)=\sum_{[\frb]\in \cI(\frf)/P_\frf}L_\frf(\chi,s,[\frb]).
\end{align*}
The $L$-series has an analytic continuation to $\C$ and satisfies a functional equation.
The value at $s=0$ of $L_\frf(\chi,s)$ is called critical, if none of the $\Gamma$-factors occurring on each side of the functional equation have a pole at $s=0$. 
It is known that critical values can occur only if either $L$ is totally real or if $L$ contains a CM field (see \cite{Deligne-conj}). 
In the latter case, one has  that $s=0$ is critical for $L_\frf(\chi,s)$ if and only if the infinity type $\mu$ of $\chi$ is critical in the sense of Definition \ref{def:critical}. In this case, the set
\[
	\Sigma:=\{\sigma \in J_L\mid \mu(\sigma)<0\}
\]
is a CM type of $L$ which will be called the \emph{CM type attached to $\chi$}, and we can write $\mu$ uniquely as
\begin{equation*}
\mu=\beta-\alpha
\end{equation*}
with $\beta\in I_{\ol{\Sigma}}^{+}$ and $\alpha-\ul{1}\in I_\Sigma^{+}$ (see Notation \ref{not:I-Sigma}). 

From now on, let $\chi$ be a critical Hecke character of conductor $\frf$ with attached CM type $\Sigma$ and infinity type $\mu=\beta-\alpha$. Let $\sO_{\frf}^{\times}$ be the subgroup of units of $\sO_L$, which are congruent to $1$ mod $\frf$. 
The partial $L$-functions $L_\frf(\chi,s,[\frb])$ can be expressed through the series $E^{\beta,\alpha}(1,s;\frf\frb^{-1},\sO_\frf^{\times})$ defined in Definition
\ref{def:E-beta-alpha} as follows: If the conductor $\frf$ is not trivial there is a bijection
\begin{align*}
\sO_{\frf}^{\times}\backslash(1+\frf\frb^{-1})\isom \{\fra \in [\frb]\mid \fra\mbox{ integral}\}&& \lambda\mapsto \lambda\frb.
\end{align*}
In this case, one has for each $\lambda\in 1+\frf\frb^{-1}$ the formula $\chi((\lambda))=\prod_{\sigma\in \Sigma}\frac{\ol{\sigma}(\lambda)^{\beta}}{\sigma(\lambda)^{\alpha}}$. For the trivial conductor $\frf=\sO_L$ the integral ideals in the class $[\frb]$ are parametrized by  $\sO_L^{\times}\backslash(\frb^{-1}\smallsetminus\{0\})$, and then $1\in \frb^{-1}$.
Thus, one gets for arbitrary conductor $\frf$:
\begin{lemma}\label{lemma:L-and-Eis}
One has
\begin{equation}
L_\frf(\chi,s,[\frb])=\chi(\frb)\norm\frb^{-s}\dashsum_{\lambda\in \sO_{\frf}^{\times}\backslash(1+\frf\frb^{-1})}
\frac{\chi((\lambda))}{\norm\lambda^{s}}=\chi(\frb)\norm\frb^{-s}E^{\beta,\alpha}(1,s,\frf\frb^{-1},{\sO_{\frf}^{\times}}),
\end{equation}
where $\dashsum$ means that we do not sum over $\lambda=0$ and $E^{\beta,\alpha}(1,s,\frf\frb^{-1},{\sO_{\frf}^{\times}})$ is the Eisenstein series from Definition \ref{def:E-beta-alpha}. In particular, one can evaluate both sides at $s=0$ and gets
\begin{equation}\label{eq:L-in-terms-of-E}
L_\frf(\chi,0,[\frb])=\chi(\frb)E^{\beta,\alpha}(1,0;\frf\frb^{-1},{\sO_{\frf}^{\times}}).
\end{equation}
\end{lemma}

\subsection{Periods}\label{section:periods}
Consider $(\cA/\cR,\omega(\cA),\omega(\cA^{\vee}))$ as in Notation \ref{notation-for-CM-ab}. For the integrality results on special values of $L$-functions it is necessary to choose the periods  compatible for $\cA$ and $\cA(\fra)$ for certain ideals $\fra$ of $\sO_L$.

One would like to fix an $\sO_L$-basis of $H_1(\cA(\C),\Z)$, but this is not always possible, so that we define:
\begin{definition}\label{def:a-structure}
Let $\cA/\cR$ be an abelian scheme with CM by $\sO_L$ and $R\subset \C$. Let $\fra$ be a fractional ideal of $L$. An $\fra$- resp. $L$-structure of $\cA$ is an isomorphism of $\sO_L$-modules
\begin{align*}
\xi_\fra:\fra\isom H_1(\cA(\C),\Z)&&\mbox{resp.}&& \xi_{L}:L\isom  H_1(\cA(\C),\Q).
\end{align*}
\end{definition}
\nomenclature{$\xi_\fra,\xi_L$}{\nomrefpage}
Notice that if $\cA$ has an $\fra$-structure $\xi_\fra$, then 
$\frb\otimes\cA$ has an $\fra\frb$-structure because 
\begin{equation*}
H_1(\frb\otimes\cA(\C),\Z)\isom \frb\otimes_{\sO_L} H_1(\cA(\C),\Z).
\end{equation*}
An $\fra$-structure induces an $L$-structure by tensoring with $\Q$
\begin{equation*}
\xi_\fra: L\isom \fra\otimes_{\Z}\Q \xrightarrow{\isom}
H_1(\cA(\C),\Q),
\end{equation*}
which we again denote by $\xi_\fra$ by abuse of notation. To compare the period of $\cA$ with the one of $\cA^{\vee}$ consider the following commutative diagram, which expresses the compatibility of the Betti-pairing  with the de Rham pairing (see \cite[(10.2.8)]{Deligne-HodgeIII})
\begin{equation*}\xymatrix{
\langle\cdot,\cdot\rangle_{top}\colon H_{1}(\cA(\C),\Z)\times H_{1}(\cA^{\vee}(\C),\Z) \ar[r]\ar[d]&(2\pi i)\Z\ar[d]\\
\langle\cdot,\cdot\rangle_\dR\colon \sH_{\cA/\CC}\times \sH_{\cA^\vee/\CC}\ar[r]& \CC,
}
\end{equation*}
here we use the notation $\sH_{\cA/\CC}:=H^1_{\dR}(\cA/\CC)^\vee$ (see Definition \ref{def:sH}).
In particular, 
an $L$-structure $\xi_L$ on $\cA$ induces an $L$-structure $\xi_L^{\vee}$ on $\cA^{\vee}$, as $H_1(\cA^{\vee}(\C),\Q)$ is the $L$-dual of $H_1(\cA(\C),\Q)$. More precisely, it induces a natural $\fra^{-1}\frd^{-1}_L$-structure (see \eqref{eq:R-O-pairing}).
\begin{remark}
CM abelian varieties with an $\fra$-structure always exist, as a CM type $\Sigma$ induces an isomorphism $\Phi_\Sigma:L\otimes\R\isom \C^{\Sigma}$,
which allows to view fractional ideals $\fra\subset L$ as lattices in $\C^{\Sigma}$. It is well-known (see \cite{Lang}) that $\C^{\Sigma}/\fra$ is an abelian variety with CM by $\sO_L$, which can be defined over an algebraic number field $k$ and hence there is an affine open $\cR=\Spec R\subset \Spec \sO_k$ over which it can be defined. On the other hand, for each abelian variety $\cA/\cR$ with CM by $\sO_L$ the $\sO_L$-module $H_1(\cA(\C),\Z)$ is isomorphic to some fractional ideal.
\end{remark}
Recall the period pairing from Corollary \ref{cor:pairing}
\begin{equation*}
\langle\cdot,\cdot \rangle_\cA:\omega_{\cA/\C}\times H_1(\cA(\C),\Z)\to \C^{\Sigma}.
\end{equation*}
Similarly, one has the pairing 
\begin{equation*}
\langle\cdot ,\cdot\rangle_{\cA^{\vee}}:\omega_{\cA^{\vee}/\C}\times H_1(\cA^{\vee}(\C),\Z)\to \C^{\ol{\Sigma}}.
\end{equation*} 
for the dual abelian scheme $\cA^{\vee}$.
\begin{definition}\label{def:periods}
Let $(\cA/\cR,\omega(\cA),\omega(\cA^{\vee}))$ be as in Notation \ref{notation-for-CM-ab} and suppose it has an $\sO_L$-structure $\xi_{\sO_L}:\sO_L\isom H_1(\cA(\C),\Z)$. Denote by $\xi_L^{\vee}$ the induced $L$-structure on $\cA^{\vee}$. Then we define periods
\begin{align*}
\Omega:=\langle\omega(\cA), \xi_{\sO_L}(1)\rangle_\cA\in \C^{\Sigma}&&\mbox{and}&&
\Omega^{\vee}:=\langle\omega(\cA^{\vee}), \xi_{L}^{\vee}(1)\rangle_{\cA^{\vee}}\in \C^{\ol{\Sigma}}.
\end{align*}
\end{definition}
\nomenclature{$\Omega,\Omega^\vee$}{\nomrefpage}
For later use and for the comparison with more classical formulations, we express $\Omega^{\vee}$ differently.
\begin{proposition}\label{prop:Omega-dual}
Suppose that $(\cA/\cR,\omega(\cA),\omega(\cA^{\vee}))$ has an $\sO_L$-structure $\xi_{\sO_L}:\sO_L\isom H_1(\cA(\C),\Z)$. Then
\begin{equation*}
\Omega^{\vee}=\frac{2\pi i\langle\ol{\omega(\cA)},\omega(\cA^{\vee})\rangle_{\dR,\cA}}{\ol{\Omega}}.
\end{equation*}
\end{proposition}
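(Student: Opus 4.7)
The plan is to read off the identity from the $\sO_L$-linear period pairing $H_1(\cA(\C),\Z)\times H_1(\cA^\vee(\C),\Z)\to 2\pi i\frd^{-1}$ by decomposing it under the Hodge filtration. By the very definition of $\xi_L^\vee$ as the $L$-linear dual of $\xi_{\sO_L}$ with respect to this pairing, the corresponding $L\otimes\C$-linear extension satisfies
\[
\langle \xi_{\sO_L}(1), \xi_L^\vee(1)\rangle = 2\pi i\cdot 1 \in L \subset L\otimes\C.
\]
Projected to the $\overline\sigma$-component of $L\otimes\C = \C^\Sigma \oplus \C^{\overline\Sigma}$, this reads $2\pi i$ for each $\overline\sigma\in\overline\Sigma$.

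Next, I invoke the Hodge decompositions
\[
H_1(\cA,\C) = \Lie(\cA/\C)\oplus \omega_{\cA^\vee/\C},\qquad H_1(\cA^\vee,\C) = \Lie(\cA^\vee/\C)\oplus \omega_{\cA/\C}
\]
from Proposition \ref{prop:CM-decompostion}, where the second summands use the isomorphisms $\omega_{\cA^\vee/\C}\cong\overline{\Lie(\cA/\C)}$ and $\omega_{\cA/\C}\cong\overline{\Lie(\cA^\vee/\C)}$. A real element $v\in H_1(\cA,\R)$ decomposes as $(v,\overline v)$ under the first splitting, and similarly on the dual side. By the $\sO_L$-equivariance of the pairing, only two of the four cross-pairings are nonzero in matching character components, so the $\overline\Sigma$-part of the pairing arises entirely from the natural evaluation $\omega_{\cA^\vee/\C}\times \Lie(\cA^\vee/\C)\to\C^{\overline\Sigma}$ applied to $(\overline{\xi_{\sO_L}(1)},\xi_L^\vee(1))$.

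It then remains to express this evaluation in the chosen bases. Applying complex conjugation to the defining identity $\Omega_\sigma=\langle\omega(\cA)(-\sigma),\xi_{\sO_L}(1)\rangle_{\cA,\sigma}$ yields
\[
\overline\Omega_\sigma=\langle\overline{\omega(\cA)(-\sigma)},\overline{\xi_{\sO_L}(1)}\rangle_{\cA,\overline\sigma},
\]
which is the natural evaluation of $\overline{\omega(\cA)(-\sigma)}\in\Lie(\cA^\vee/\C)$ against $\overline{\xi_{\sO_L}(1)}\in\omega_{\cA^\vee/\C}$. Writing $p_\sigma:=\langle\overline{\omega(\cA)(-\sigma)},\omega(\cA^\vee)(\overline\sigma)\rangle_{\cA,\overline\sigma}$, the ratio of these two evaluations identifies the $\overline\sigma$-component of $\overline{\xi_{\sO_L}(1)}$ in $\omega_{\cA^\vee/\C}(\overline\sigma)$ as $(\overline\Omega_\sigma/p_\sigma)\cdot\omega(\cA^\vee)(\overline\sigma)$. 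Pairing with $\xi_L^\vee(1)$ and using $\omega(\cA^\vee)(\overline\sigma)(\xi_L^\vee(1))=\Omega^\vee_{\overline\sigma}$ from the definition of $\Omega^\vee$, the $\overline\sigma$-identity $\overline\Omega_\sigma\,\Omega^\vee_{\overline\sigma}/p_\sigma=2\pi i$ rearranges to the claimed formula. The main difficulty is bookkeeping: carefully tracking the identifications between complex-conjugate and dual spaces, and checking that no additional factor of $2\pi i$ is absorbed in the Hodge splitting of the pairing, so that the $2\pi i$ in the statement is exactly the one forced by the normalization of $\xi_L^\vee$.
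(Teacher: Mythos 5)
Your argument is correct and follows essentially the same route as the paper: both proofs rest on the normalization $\langle\xi_{\sO_L}(1),\xi_L^{\vee}(1)\rangle=2\pi i$ of the $\sO_L$-linear pairing, the identification of the $\Lie(\cA/\C)$-component of $\xi_{\sO_L}(1)$ as $\Omega\,\omega(\cA)^{\vee}$ (hence its $\omega_{\cA^{\vee}/\C}$-component as the conjugate), and the observation that duality sends dual bases to dual bases up to the factor $2\pi i$. The paper phrases this basis-freely while you track the $\ol{\sigma}$-components explicitly, but the content is identical.
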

\begin{proof}
The pairing $\langle,\rangle_\cA$ comes from the isomorphism \begin{equation*}
H^1(\cA(\C),\C)\isom H^{1}_\dR(\cA/\C)\isom \Lie(\cA/\C)\oplus \ol{\Lie(\cA/\C)},
\end{equation*} 
which maps $\xi_{\sO_L}(1)\mapsto  (\Omega\omega(\cA)^{\vee},\ol{\Omega\omega(\cA)^{\vee}})$. The dual of this isomorphism induces the pairing $\langle,\rangle_{\cA^{\vee}}$ and maps the dual basis to the dual basis, hence
\begin{equation*}
(\frac{1}{\Omega}\omega(\cA),\frac{1}{\ol{\Omega}}\ol{\omega(\cA)})\mapsto \frac{1}{2\pi i}\xi_{L}^{\vee}(1).
\end{equation*}
Taking the pairing $\langle\cdot,\cdot\rangle_{\dR,\cA}$ with $\omega(\cA^{\vee})$ gives the desired result. 
\end{proof}
From now on we fix $(\cA/\cR,\omega(\cA),\omega(\cA^{\vee}))$ with an $\sO_L$-structure $\xi_{\sO_L}:\sO_L\isom H_1(\cA(\C),\Z)$ as our reference variety. For a fractional ideal
$\fra$ of $L$ we write 
\begin{equation*}
\cA(\fra):=\fra\otimes\cA
\end{equation*}
with its induced $\fra$-structure $\xi_\fra$. Note that $\Lie(\cA(\fra)/\cR)\isom \fra\otimes \Lie(\cA/\cR)$. The basis 
$\omega(\cA)$ can be viewed as an isomorphism (cf. \eqref{eq:omega-as-map})
\begin{equation*}
\omega(\cA):\Lie(\cA/\cR)\isom (\sO_L\otimes R)(\Sigma)
\end{equation*}
which induces an isomorphism
\begin{equation}\label{eq:fra-omega}
\fra\otimes \omega(\cA):\Lie(\cA(\fra)/\cR)\isom (\fra\otimes R)(\Sigma)
\end{equation}
and hence an isomorphism $\Lie(\cA(\fra)/\C)\isom \C^{\Sigma}$.
\begin{proposition}\label{prop:period-properties}
With the above notation one has 
\begin{equation*}
\langle\fra\otimes \omega(\cA), \xi_\fra(1)\rangle_{\cA(\fra)}=\langle \omega(\cA), \xi_{\sO_{L}}(1)\rangle_{\cA}=\Omega
\end{equation*}
and the image of
\begin{equation*}
H_1(\cA(\fra)(\C),\Z)\to \Lie(\cA(\fra)/\C)\xrightarrow[\isom]{\fra\otimes \omega(\cA)}\C^{\Sigma}
\end{equation*}
is equal to $\fra\Omega$, so that $\cA(\fra)(\C)\isom\C^{\Sigma}/\fra\Omega$. Suppose further that $\fra=\frf\frb^{-1}$ with integral ideals $\frf,\frb$ and that $[\frf]:\cA(\fra)\to \cA(\frb^{-1})$ and
$[\frb]:\cA\to \cA(\frb^{-1})$  are \'etale.  Then 
\begin{equation*}
\fra\otimes\omega(\cA)=[\frf]^{*}([\frb]^{*})^{-1}\omega(\cA)
\end{equation*} 
is a basis of $\omega_{\cA(\fra)/\cR}$. 
\end{proposition}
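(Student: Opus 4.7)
The plan is to reduce all three assertions to elementary computations in the complex uniformization, using the functoriality of the Serre construction together with the explicit description \eqref{eq:uniformization} of $\cA(\C)$ given by $\omega(\cA)$ and $\xi_{\sO_L}$.

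First I would unfold the setup: the pair $(\omega(\cA),\xi_{\sO_L})$ gives the uniformization $\cA(\C)=\C^{\Sigma}/\Lambda$ where $\Lambda\subset \C^{\Sigma}$ is the image of $H_1(\cA(\C),\Z)$ under $\Lie(\cA/\C)\xrightarrow[\isom]{\omega(\cA)}\C^{\Sigma}$. By the very definition of $\Omega$ in \ref{def:periods}, the element $\xi_{\sO_L}(a)\in H_1(\cA(\C),\Z)$ is sent to $\Phi_\Sigma(a)\cdot\Omega\in\C^{\Sigma}$ for each $a\in\sO_L$, where $\Phi_\Sigma\colon \sO_L\otimes\R\isom \C^{\Sigma}$ is the standard map. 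In particular, $\Lambda=\Phi_\Sigma(\sO_L)\cdot\Omega$.

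Next, functoriality of the Serre construction gives $H_1(\cA(\fra)(\C),\Z)=\fra\otimes_{\sO_L}H_1(\cA(\C),\Z)$ and $\Lie(\cA(\fra)/\C)=\fra\otimes_{\sO_L}\Lie(\cA/\C)$, while the isomorphism $\fra\otimes\omega(\cA)\colon \Lie(\cA(\fra)/\cR)\isom (\fra\otimes R)(\Sigma)$ of \eqref{eq:fra-omega}, base-changed to $\C$, becomes the canonical identification $\fra\otimes_{\sO_L}\C^{\Sigma}\isom \C^{\Sigma}$, $a\otimes z\mapsto \Phi_\Sigma(a)z$. Under this identification, $\xi_\fra(1)=1\otimes \xi_{\sO_L}(1)$ maps to $\Phi_\Sigma(1)\cdot\Omega=\Omega$, which proves $\langle \fra\otimes\omega(\cA),\xi_\fra(1)\rangle_{\cA(\fra)}=\Omega$. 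Simultaneously, the image of $H_1(\cA(\fra)(\C),\Z)$ is $\{\Phi_\Sigma(a)\cdot \Omega:a\in\fra\}=\fra\Omega$, yielding $\cA(\fra)(\C)=\C^{\Sigma}/\fra\Omega$.

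For the last assertion, I would observe that the isogenies $[\frf]\colon \cA(\frf)\to \cA$ and $[\frb]\colon \cA(\frf)\to \cA(\fra)$ are by construction induced by the inclusions of fractional ideals $\frf\subset \sO_L$ and $\frf\subset \fra=\frf\frb^{-1}$. In the uniformizations from the previous step, these correspond to the inclusions of lattices $\frf\Omega\subset \sO_L\Omega$ and $\frf\Omega\subset\fra\Omega$ inside a common ambient $\C^{\Sigma}$, so both isogenies lift to the identity on the universal cover $\C^{\Sigma}$. Hence the differentials $d[\frf]$ and $d[\frb]$ are the identity $\C^{\Sigma}=\C^{\Sigma}$ on Lie algebras, and the dual pullbacks on cotangent spaces are likewise the identity. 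It follows that $[\frf]^{*}([\frb]^{*})^{-1}\omega(\cA)$ is the basis of $\omega_{\cA(\fra)/\cR}$ whose dual trivialization of $\Lie(\cA(\fra)/\C)$ coincides with the standard coordinates $(dz_1,\dots,dz_d)$ under $\cA(\fra)(\C)=\C^{\Sigma}/\fra\Omega$, which is exactly the basis corresponding (via dualization) to $\fra\otimes\omega(\cA)$.

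The main technical nuisance will be to match the two avatars of $\fra\otimes\omega(\cA)$ carefully, namely as the Lie-algebra isomorphism of \eqref{eq:fra-omega} and as a basis of $\omega_{\cA(\fra)/\cR}$ appearing in the last identity. This identification is canonical once one fixes the uniformization, but it requires a short book-keeping argument since $\fra$ is not assumed principal. Once this is done, the equality of the two sides is immediate from the fact that both represent the standard coordinate basis on $\C^{\Sigma}$.
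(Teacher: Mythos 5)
Your proposal is correct and follows essentially the same route as the paper's proof: both reduce everything to the functoriality of the Serre construction, the definition of the induced $\fra$-structure, and the resulting identification of period lattices, with the final basis claim coming from the fact that the \'etale isogenies $[\frf]$ and $[\frb]$ induce isomorphisms on invariant differentials. Your added explicitness (lifting the isogenies to the identity on the universal cover $\C^{\Sigma}$) is a harmless elaboration of the same argument.
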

\begin{proof} The map
\begin{equation*}
H_1(\cA(\fra)(\C),\Q)\to \Lie(\cA(\fra)/\C)\xrightarrow[\isom]{\fra\otimes \omega(\cA)}\C^{\Sigma}
\end{equation*}
is induced from 
\begin{equation*}
H_1(\cA(\C),\Q)\to \Lie(\cA/\C)\xrightarrow[\isom]{ \omega(\cA)}\C^{\Sigma}
\end{equation*}
by tensoring over $\Z$ with $\fra$. Thus, by definition of the induced $\fra$-structure, the image of $\xi_\fra(1)$ in $\C^{\Sigma}$ is the same as the image of $\xi_{\sO_L}(1)$. This shows 
\begin{equation*}
\langle\fra\otimes \omega(\cA), \xi_\fra(1)\rangle_{\cA(\fra)}=\Omega.
\end{equation*}
The image of $H_1(\cA(\C),\Z)$ in $\C^{\Sigma}$ is the $\sO_L$-module generated by 
\begin{equation*}
\omega(\cA)(\xi_{\sO_L}(1))=\langle\omega(\cA), \xi_{\sO_L}(1)\rangle_\cA=\Omega.
\end{equation*}
and again from the definitions one gets that the image of 
$H_1(\cA(\fra)(\C),\Z)$ in $\C^{\Sigma}$ is $\fra\Omega$. 
In the case $\fra=\frf\frb^{-1}$ one gets as period lattice $\frf\frb^{-1}\Omega$ and hence $\Omega$ is a $[\frf]$-torsion section.
If 
$[\frf]$ and $[\frb]$ are \'etale, one has isomorphisms
\begin{equation*}
\omega_{\cA(\fra)/\cR}\xrightarrow[\isom]{[\frf]^{*}}\omega_{\cA(\frb^{-1})/\cR}\xleftarrow[\isom]{[\frb]^{*}}\omega_{\cA/\cR}. 
\end{equation*}
This implies that $\fra\otimes \omega(\cA)=[\frf]^{*}([\frb]^{*})^{-1}\omega(\cA)$ is a basis of $\omega_{\cA(\fra)/\cR}$. 
\end{proof}

\subsection{Integrality of special values of Eisenstein series} We can now formulate our fundamental integrality result.\label{section:integrality-of-Eisenstein-series}
We use the notations from Section \ref{section:periods}. 
Let $\frf$, $\frb$ and $\frc$ be pairwise coprime integral ideals in $\sO_L$ and write $\fra:=\frf\frb^{-1}$ as before. 

Let $\chi$ be a critical algebraic Hecke character of conductor $\frf$ and attached CM type $\Sigma$. By Lemma \ref{lemma:L-and-Eis} the partial $L$-values $L_\frf(\chi,0,[\frb])$ are expressed by Eisenstein series on an abelian variety with CM type $\Sigma$ and with an $\fra= \frf\frb^{-1}$-structure evaluated at an $\frf$-torsion section. To have compatible periods on all these abelian varieties we fix an abelian variety with CM type $\Sigma$, an $\sO_L$-structure $\xi_{\sO_L}:\sO_L\isom H_1(\cA(\C),\Z)$ and bases  $(\cA/\cR,\omega(\cA),\omega(\cA^{\vee}))$, where $R$ is a $\sO_{L^{Gal}}[1/d_L]$-algebra, as in Notation \ref{notation-for-CM-ab}.

Using Proposition \ref{prop:period-properties} we have a uniformization 
\begin{equation*}
\cA(\fra)(\C)\isom \C^{\Sigma}/\frf\frb^{-1}\Omega,
\end{equation*}
where $\Omega$ is the period. In particular, for all integral ideals $\frb^{-1}$, the point $\Omega$ is an $\frf$-torsion section on $\cA(\fra)$.
Notice that $\cA(\fra)(\cR)\isom \fra\otimes\cA(\cR)$ so that if an $\frf$-torsion section is defined over $\cR$ for one of the Serre constructions $\cA(\fra)$, then all other  Serre constructions have a $\cR$-rational $\frf$-torsion section. We now choose a particular such point.

\begin{definition}\label{def:x-Omega}
Let $(\cA/\cR,\omega(\cA),\omega(\cA^{\vee}))$ be an abelian variety with CM type $\Sigma$ and $\sO_L$-structure $\xi_{\sO_L}:\sO_L\isom H_1(\cA(\C),\Z)$. Let 
\begin{equation*}
x_\Omega\in \cA(\frf)(\C)
\end{equation*} 
be the $\frf$-torsion section on $\cA(\frf)$ which corresponds under the uniformization $\cA(\frf)(\C)\isom \C^{\Sigma}/\frf\Omega$ to $\Omega$ and enlarge $\cR$ such that $x_\Omega$ is defined over $\cR$. Via the isogeny
\begin{equation*}
[\frb]:\cA(\frf)\to \cA(\fra)
\end{equation*}
whose degree is coprime to $\frf$, this induces $\frf$-torsion sections $[\frb](x_\Omega)$ on $\cA(\fra)$ which are all defined over $\cR$. 
\end{definition}
Notice that on $\C$-valued points the isogeny $[\frb]$
\begin{equation*}
[\frb]:\C^{\Sigma}/\frf\Omega\to \C^{\Sigma}/\frf\frb^{-1}\Omega
\end{equation*}
is just the quotient map for the lattice $\frf\Omega\subset \frf\frb^{-1}\Omega$. In particular, one has $[\frb](\Omega)=\Omega$. 

Let $\cD=\cA[\frc]\smallsetminus\{x(\cR)\}$ be as in Definition \ref{def:D-specific}. Note that over $\cR[1/N\frc]$ the isogenies $[\frc]$ and $[\frc]^{\vee}$ are \'etale. Finally, we let
$\Gamma=\sO_{\frf}^{\times}$
be the units which are congruent to $1$ modulo $\frf$. This group fixes all $\frf$-torsion sections.

\begin{theorem}\label{thm:integral-Eisenstein-series}  With the above notations  let $f\in R[\frac{1}{\norm\frc}][\cD]^{0,\sO_\frf^{\times}}$. Then for critical $\beta-\alpha$
\begin{equation*}
\frac{(\alpha-\ul{1})!(2\pi i)^{|\beta|}}{\Omega^{\alpha}\Omega^{\vee \beta}}
\sum_{t \in \sO_{\frf}^{\times}\backslash (\frc^{-1}\fra/\fra)} f(- t) E^{\beta,\alpha}(t+1,0;\fra,\sO_{\frf}^{\times}) \in R[\frac{1}{\frf\norm(\frb\frc)}].
\end{equation*}
\end{theorem}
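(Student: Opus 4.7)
The plan is to combine the integrality of the refined Eisenstein-Kronecker class (Theorem \ref{thm:refined-EK} together with Proposition \ref{prop:integral-Eis-class}) with the explicit analytic formula in Corollary \ref{cor:EK-in-terms-of-period-lattice} and the period identification of Proposition \ref{prop:Omega-dual}. First I would pass to the Serre twist $\cB:=\cA(\frf\frb^{-1})$, endowed with the basis $\omega(\cB):=\frf\frb^{-1}\otimes\omega(\cA)$ and the analogously defined dual basis $\omega(\cB^{\vee})$. By Proposition \ref{prop:period-properties}, $\cB$ has period lattice $\frf\frb^{-1}\Omega\subset\CC^{\Sigma}$ and the periods of $(\cB,\omega(\cB))$ and $(\cB^{\vee},\omega(\cB^{\vee}))$ remain $\Omega$ and $\Omega^{\vee}$; the $\frf$-torsion section $x=z\Omega$ of $\cA$ transports to an $\frf$-torsion section of $\cB$ represented by the same $z\Omega\in\CC^{\Sigma}/(\frf\frb^{-1}\Omega)$, using $\gcd(\frf,\frb)=\sO_L$.

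Working over the localization $R[\tfrac{1}{\frf\norm(\frb\frc)}]$, where $[\frf]$ and $[\frb]$ become \'etale and $\ker[\frc]\subset\cB$ is finite \'etale, Proposition \ref{prop:integral-Eis-class}, applied to the setup that produces $\cB$ as a $[\frf]$-target from a further Serre twist of $\cA$ and using the comparison with the refined class of Theorem \ref{thm:refined-EK}, yields the integrality
\[
\Eis^{\beta,\alpha-\underline{1}}_{\sO_{\frf}^{\times},\cB}(f,x)\bigl(\omega(\cB)^{[\alpha]},\omega(\cB^{\vee})^{[\beta]}\bigr)\in R[\tfrac{1}{\frf\norm(\frb\frc)}].
\]
Corollary \ref{cor:EK-in-terms-of-period-lattice} applied to $\cB$ with period lattice $\Lambda_{\cB}=\frf\frb^{-1}\Omega$ rewrites the left hand side as
\[
\frac{(\alpha-\underline{1})!}{\langle\overline{\omega(\cB)},\omega(\cB^{\vee})\rangle_{\cB}^{\beta}}\sum_{\sO_{\frf}^{\times}t}f(-\sO_{\frf}^{\times}t)\,E^{\beta,\alpha}\bigl(\sO_{\frf}^{\times}(t+x),0;\frf\frb^{-1}\Omega,\sO_{\frf}^{\times}\bigr),
\]
and Proposition \ref{prop:Omega-dual} substitutes $\langle\overline{\omega(\cB)},\omega(\cB^{\vee})\rangle_{\cB}=\Omega^{\vee}\overline{\Omega}/(2\pi i)$. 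A final rescaling by $\Omega$ of both lattice and argument, using the elementary homogeneity
\[
E^{\beta,\alpha}(\Omega O,0;\Omega\Lambda,\Gamma)=\frac{\overline{\Omega}^{\beta}}{\Omega^{\alpha}}E^{\beta,\alpha}(O,0;\Lambda,\Gamma),
\]
(valid at the regular point $s=0$ by Corollary \ref{cor:eisenstein-kronecker-fct-eq} and the critical hypothesis $\beta-\alpha\in\CI_L(\sO_\frf^\times)$, which prevents the residues from entering) cancels the factor $\overline{\Omega}^{\beta}$ in the denominator and converts the Eisenstein--Kronecker series to lattice $\frf\frb^{-1}$ with argument $\sO_{\frf}^{\times}t+z$, producing exactly the expression of the theorem.

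The main obstacle I anticipate is ensuring that the integrality lands in the precise localization $R[\tfrac{1}{\frf\norm(\frb\frc)}]$ and not in a strictly larger one. Because the refined class of Theorem \ref{thm:refined-EK} gives integrality without inverting $\norm\frf$, the delicate point is to verify that the passage from $\cA$ to $\cB$ via $\omega(\cB)=[\frf]^{*}([\frb]^{*})^{-1}\omega(\cA)$ and the base-change maps in Proposition \ref{prop:integral-Eis-class} only introduce factors that are units in $R[\tfrac{1}{\frf\norm(\frb\frc)}]$; this is essentially an exercise in unraveling the $[\frf]^{*}$ and $[\frf]_{\#}$ matrices on the chosen bases. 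The rescaling identity for the Eisenstein--Kronecker series at $s=0$ is by comparison routine, resting on the analytic continuation and functional equation of Corollary \ref{cor:eisenstein-kronecker-fct-eq}.
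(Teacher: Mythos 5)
Your proposal follows essentially the same route as the paper: pass to $\cA(\frf\frb^{-1})$ with basis $[\frf]^{*}([\frb]^{*})^{-1}\omega(\cA)$ and period lattice $\frf\frb^{-1}\Omega$, invoke the refined class via Proposition \ref{prop:integral-Eis-class} for integrality, then combine Corollary \ref{cor:EK-in-terms-of-period-lattice}, the homogeneity $E^{\beta,\alpha}(\Omega O,0;\Omega\Lambda,\Gamma)=(\overline{\Omega}^{\beta}/\Omega^{\alpha})E^{\beta,\alpha}(O,0;\Lambda,\Gamma)$ and Proposition \ref{prop:Omega-dual}. The one point you flag as delicate is resolved exactly as you anticipate: the paper keeps the $\beta$-component on $\omega_{\cA(\frb^{-1})^{\vee}}$ (evaluating $\TSym^{\beta}([\frf]_{\#})$ of the class against $[\frb]_{\#}\omega(\cA^{\vee})$, which only needs $[\frb]^{\vee}$ \'etale) rather than against a basis of $\omega_{\cB^{\vee}}$ itself, which is what lands the result in $R[\frac{1}{\frf\norm(\frb\frc)}]$ instead of $R[\frac{1}{\norm(\frf\frb\frc)}]$.
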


\begin{proof}  Because the maps
$\Lie(\cA(\fra)/\cR)\to \Lie(\cA(\frb^{-1})/\cR)$ and $\Lie(\cA/\cR)\to \Lie(\cA(\frb^{-1})/\cR)$ are isomorphisms over $R[\frac{1}{\frf\norm(\frb\frc)}]$, the isogenies $[\frf]$ and $[\frb]$ are \'etale. Hence the form  $\fra\otimes\omega(\cA)=[\frf]^{*}([\frb]^{*})^{-1}\omega(\cA)$ is a basis of $\omega_{\cA(\fra)/\cR}$. 
By Proposition \ref{prop:period-properties} the period lattice of $\cA(\fra)$ with respect to $\fra\otimes\omega(\cA)$ is $\fra\Omega$.
Further, $[\frb]^{\vee}:\cA(\frb^{-1})^{\vee}\to \cA^{\vee}$ is \'etale, so that $[\frb]_\#\omega(\cA^{\vee})=[\frb]^{\vee *}\omega(\cA^{\vee})$ is a basis of $\omega_{\cA(\frb^{-1})^{\vee}/\cR}$.
Choose any 
basis $\omega(\cA(\fra)^{\vee})$ of $\omega_{\cA(\fra)^{\vee}/\cR}$ and consider the $[\frf]$-torsion sections $[\frb](x_\Omega)\in \cA(\fra)(\cR)$, which is $\Omega\in \cA(\fra)(\C)\isom  \C^{\Sigma}/\fra\Omega$ as explained after Definition \ref{def:x-Omega}. Then we have
\begin{equation}\label{eq:test-var}
(\cA(\fra)/\cR,[\frf]^{*}([\frb]^{*})^{-1}\omega(\cA),\omega(\cA(\fra)^{\vee}),[\frb](x_\Omega)).
\end{equation}
From Proposition \ref{prop:integral-Eis-class}, applied to $[\frf]:\cA(\fra)\to \cA(\frb^{-1})$, we know that 
\begin{equation*}
\TSym^{\beta}([\frf]_\#)
\left( \Eis_{\Gamma,\cA(\fra)}^{\beta,\alpha-\ul{1}}(f,[\frb](x_\Omega))\right)
(([\frf]^{*}([\frb]^{*})^{-1}\omega(\cA))^{[\alpha]},
([\frb]_\#\omega(\cA^{\vee}))^{[\beta]})
\in R[\frac{1}{\frf\norm(\frb\frc)}].
\end{equation*}
By  Corollary
\ref{cor:EK-in-terms-of-period-lattice}, $\TSym^{\beta}([\frf]_\#)
\left( \Eis_{\Gamma,\cA(\fra)}^{\beta,\alpha-\ul{1}}(f,[\frb](x_\Omega))\right)$ is equal to
\begin{multline*}
\frac{(-1)^{\frac{d(d-1)}{2}} (\alpha-\ul{1})!}{\langle\ol{[\frf]^{*}([\frb]^{*})^{-1}\omega(\cA)},\omega(\cA(\fra)^{\vee})\rangle_{\dR,\cA(\fra)}^\beta}\cdot\\
\sum_{t\in \sO_{\frf}^{\times}\backslash (\frc^{-1}\fra/\fra)} f(- t) E^{\beta,\alpha}((t+1)\Omega,0;\fra\Omega,\sO_{\frf}^{\times}) \cdot ([\frf]^{*}([\frb]^{*})^{-1}\omega(\cA))^{[\alpha]} \otimes  ([\frf]_\#\omega(\cA(\fra)^\vee))^{[\beta]}.
\end{multline*}
Observe that the de Rham pairing from Corollary \ref{cor:pairing} behaves as follows under isogenies
\begin{equation*}
\langle\ol{[\frf]^{*}([\frb]^{*})^{-1}\omega(\cA)},\omega(\cA(\fra)^{\vee})\rangle_{\dR,\cA(\fra)}=
\langle\ol{([\frb]^{*})^{-1}\omega(\cA)},[\frf]_\#\omega(\cA(\fra)^{\vee})\rangle_{\dR,\cA(\frb^{-1})}
\end{equation*}
and that for the basis $([\frb]_\#\omega(\cA^{\vee}))^{\vee}$ of $\Lie(\cA(\frb^{-1})^{\vee}/\cR)$ one has
\begin{equation*}
\langle([\frb]_\#\omega(\cA^{\vee}))^{\vee}, [\frf]_\#\omega(\cA(\fra)^\vee)\rangle_{\dR,\cA(\frb^{-1})}([\frb]_\#\omega(\cA^{\vee}))=[\frf]_\#\omega(\cA(\fra)^\vee).
\end{equation*}
This implies
\begin{multline*}
\TSym^{\beta}([\frf]_\#)
\left( \Eis_{\Gamma,\cA(\fra)}^{\beta,\alpha-\ul{1}}(f,[\frb](x_\Omega))\right)
(([\frf]^{*}([\frb]^{*})^{-1}\omega(\cA))^{[\alpha]},
([\frb]_\#\omega(\cA^{\vee}))^{[\beta]})=\\(-1)^{\frac{d(d-1)}{2}}\cdot(\alpha-\ul{1})!
\frac{\langle([\frb]_\#\omega(\cA^{\vee}))^{\vee},[\frf]_\#\omega(\cA(\fra)^{\vee})\rangle^{\beta}_{\dR,\cA(\frb^{-1})}}{\langle\ol{([\frb]^{*})^{-1}\omega(\cA)},[\frf]_\#\omega(\cA(\fra)^{\vee})\rangle^{\beta}_{\dR,\cA(\frb^{-1})}}\sum_{t\in \sO_{\frf}^{\times}\backslash (\frc^{-1}\fra/\fra)}f(- t) E^{\beta,\alpha}((t+1)\Omega,0;\fra\Omega,\sO_{\frf}^{\times}).
\end{multline*}
In the quotient in front of the sum, one can substitute 
$[\frf]_\#\omega(\cA(\fra)^{\vee})$ by $[\frb]_\#\omega(\cA^{\vee})$ without changing the value.
This gives 
\begin{equation*}
\frac{\langle([\frb]_\#\omega(\cA^{\vee}))^{\vee},[\frb]_\#\omega(\cA^{\vee})\rangle_{\dR,\cA(\frb^{-1})}}{\langle\ol{([\frb]^{*})^{-1}\omega(\cA)},[\frb]_\#\omega(\cA^{\vee})\rangle_{\dR,\cA(\frb^{-1})}}=
\frac{1}{\langle\ol{\omega(\cA)},\omega(\cA^{\vee})\rangle_{\dR,\cA}}.
\end{equation*}
Using the equality
\begin{equation*}
E^{\beta,\alpha}((t+1)\Omega,0;\fra\Omega,\sO_{\frf}^{\times})=\left(\frac{\ol{\Omega}^{\beta}}{\Omega^{\alpha}}\right)E^{\beta,\alpha}(t+1,0;\fra,\sO_{\frf}^{\times})
\end{equation*}
one concludes with Proposition \ref{prop:Omega-dual}.
\end{proof}

\subsection{Special values of $L$-series for algebraic Hecke characters}\label{section-special-values}
The next theorem is one of the main results in this paper and generalizes (and strengthens) the work of Katz \cite{Katz-CM} in the case of CM fields.
We use the notations from Sections \ref{section:periods} and \ref{section:integrality-of-Eisenstein-series}. In particular, we fix 
\begin{equation*}
(\cA/\cR,\omega(\cA),\omega(\cA^{\vee}))
\end{equation*}
with an $\sO_L$-structure $\xi_{\sO_L}:\sO_L\isom H_1(\cA(\C),\Z)$. 
\begin{theorem}[Special values of Hecke $L$-functions]\label{thm:special-values} Let $\frf$  be an integral ideal in $\sO_L$ and assume that the $\frf$-torsion section $x_\Omega$  on $\cA(\frf)$ from Definition \ref{def:x-Omega} is defined over $\cR$.
Let $\chi$ be a critical algebraic Hecke character with values in $E\subset\C$, of conductor $\frf$ and infinity type $\mu=\beta-\alpha\in\CI_L(\sO_{\frf}^{\times})$ (see Notation\ref{not:I-Sigma}). Then if $\frf\neq \sO_L$ one has for any integral ideal $\frc$ coprime to $\frf$
\begin{equation*}
\frac{(\alpha-\ul{1})!(2\pi i)^{|\beta|}}{\Omega^{\alpha}\Omega^{\vee\beta}}
\left(\chi(\frc)\norm\frc -1\right)L_\frf(\chi,0)\in \sO_ER[\frac{1}{\frf\norm\frc}],
\end{equation*}
where $\sO_ER\subset \C$ is the ring generated by $R$ and the integers $\sO_E$ of $E$.
If $\frf=\sO_L$ one has for any $\frc,\frc'$ coprime to each other
\begin{equation*}
\frac{(\alpha-\ul{1})!(2\pi i)^{|\beta|}}{\Omega^{\alpha}\Omega^{\vee\beta}}
\left(1-\chi(\frc')\right)\left(\chi(\frc)\norm\frc -1\right)L_{\sO_L}(\chi,0)\in\sO_ER[\frac{1}{\norm(\frc\frc')}].
\end{equation*}
Moreover, one can choose $\Spec R= \Spec\sO_k[\frac{1}{d_L\frf\norm\frc}]$ where $\sO_k$ are the integers  of an algebraic number field $k$. In particular, if $E\subset k$, the value
\begin{equation*}
\frac{(\alpha-\ul{1})!(2\pi i)^{|\beta|}}{\Omega^{\alpha}\Omega^{\vee\beta}}
L_\frf(\chi,0)\in k
\end{equation*}
is an algebraic number in $k$. 
\end{theorem}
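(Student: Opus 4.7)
The starting point is the decomposition
\[
L_\frf(\chi,0)=\sum_{[\frb]\in \cI(\frf)/P_\frf}L_\frf(\chi,0,[\frb])=\sum_{[\frb]}\chi(\frb)\,E^{\beta,\alpha}(1,0;\frf\frb^{-1},\sO_\frf^{\times})
\]
given by \eqref{eq:L-in-terms-of-E}. The plan is thus to control each partial series $E^{\beta,\alpha}(1,0;\frf\frb^{-1},\sO_\frf^{\times})$ using the main integrality result \ref{thm:integral-Eisenstein-series}, applied not to $\cA$ itself but to the twist $\cA(\frf\frb^{-1})$, which by \ref{prop:period-properties} has period lattice $\frf\frb^{-1}\Omega$, inherits an $\frf\frb^{-1}$-structure from $\xi_{\sO_L}$, and has the image of $1\in \frb^{-1}$ as a canonical $[\frf]$-torsion point $x=1\cdot\Omega$ (so $z=1$).

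For each $\frb$, I would feed into \ref{thm:integral-Eisenstein-series} the $\sO_\frf^{\times}$-invariant function on $\cA(\frf\frb^{-1})[\frc]$
\[
f\;=\;\norm\frc\cdot\mathbf{1}_{\{0\}}-\mathbf{1}_{\cA(\frf\frb^{-1})[\frc]},
\]
which lies in $R[1/\norm\frc][\cD]^{0,\sO_\frf^{\times}}$ by construction. The standard distribution relation for the series in \ref{def:E-beta-alpha}, obtained by decomposing $1+\frf\frb^{-1}\frc^{-1}$ into $\frf\frb^{-1}$-cosets, yields
\[
\sum_{\sO_\frf^{\times}t} f(-\sO_\frf^{\times}t)\,E^{\beta,\alpha}\bigl(\sO_\frf^{\times}(t+1),0;\frf\frb^{-1},\sO_\frf^{\times}\bigr)
=\norm\frc\,E^{\beta,\alpha}(1,0;\frf\frb^{-1},\sO_\frf^{\times})-E^{\beta,\alpha}(1,0;\frf(\frb\frc)^{-1},\sO_\frf^{\times}).
\]
Multiplying by $\chi(\frb)$, using \eqref{eq:L-in-terms-of-E} to recognise the second term as $\chi(\frc)^{-1}L_\frf(\chi,0,[\frb\frc])$, and summing over the ideal classes $[\frb]$ (reindexing $[\frb]\leftrightarrow[\frb\frc]$ in the second piece) telescopes to
\[
\sum_{[\frb]}\chi(\frb)\bigl(\tfrac{(\alpha-\ul{1})!(2\pi i)^{|\beta|}}{\Omega^{\alpha}\Omega^{\vee\beta}}\bigr)^{-1}\cdot(\text{integral Eisenstein-Kronecker expression})=\chi(\frc)^{-1}(\chi(\frc)\norm\frc-1)\,L_\frf(\chi,0).
\]
Since each individual summand belongs to $R[1/(\frf\norm(\frb\frc))]$ by \ref{thm:integral-Eisenstein-series}, and the $\chi(\frb)\in \sO_E$, the $\sO_E$-linear combination lies in $\sO_ER[1/(\frf\norm\frc)]$ after clearing the unit $\chi(\frc)\in\sO_E^{\times}$; this is exactly the claimed integrality. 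Choosing $R=\sO_k[1/(d_L\frf\norm\frc)]$ for the field of definition $k$ of $(\cA,\omega(\cA),\omega(\cA^\vee),\xi_{\sO_L})$ (which exists by the theory of complex multiplication) gives the final algebraicity statement.

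For the trivial conductor $\frf=\sO_L$ the only $[\frf]$-torsion section is $x=e$, so $\cD=\ker[\frc]\setminus\{e(\cR)\}$ and the function $f$ above is not admissible, because removing $\{e(\cR)\}$ forces $f(0)$ to be absent. The standard remedy is to introduce a second auxiliary ideal $\frc'$ coprime to $\frc$ and form an analogous function on $\ker[\frc\frc']\setminus\{e\}$ of the form $f_{\frc,\frc'}=\bigl(\norm\frc\cdot\mathbf{1}_{\ker[\frc']\setminus\{e\}}-\mathbf{1}_{\ker[\frc\frc']\setminus\{e\}}\bigr)$; its distribution-theoretic image on partial zeta-sums produces the \emph{two} Euler-like factors $(1-\chi(\frc'))$ and $(\chi(\frc)\norm\frc-1)$, at the cost of removing the contribution of the zero vector twice. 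The main technical obstacle I anticipate is not the distribution relation itself, which is formal, but rather ensuring that after summing the $\frb$-wise integrality statements the denominator $\norm\frb$ appearing in \ref{thm:integral-Eisenstein-series} is absorbed; this requires some care in organising the computation so that each term is viewed in a ring localised only at primes dividing $d_L\frf\norm\frc$, using that the $\cA(\frf\frb^{-1})$ may all be realised over such a ring after possibly enlarging $k$, and exploiting the functoriality of the construction under the Serre tensor $[\frb]$.
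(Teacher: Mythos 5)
Your overall route is the paper's: write $L_\frf(\chi,0)$ as a sum of partial $L$-values, identify each with $E^{\beta,\alpha}(1,0;\frf\frb^{-1},\sO_\frf^{\times})$ via \eqref{eq:L-in-terms-of-E}, feed $f_{[\frc]}=\norm\frc\,\mathbf{1}_{e}-\mathbf{1}_{\ker[\frc]}$ into Theorem \ref{thm:integral-Eisenstein-series} on the twists $\cA(\frf\frb^{-1})$, apply the distribution relation, and sum over classes. However, the step you flag as ``the main technical obstacle'' is a genuine gap, and your proposed fix does not close it. Theorem \ref{thm:integral-Eisenstein-series} places each class-$[\frb]$ contribution only in $R[\frac{1}{\frf\norm(\frb\frc)}]$, and the factor $\norm\frb$ in the denominator is not an artifact of where $\cA(\frf\frb^{-1})$ happens to be defined: it enters because the proof needs the isogenies $[\frf]$ and $[\frb]$ (and their duals) to be \'etale in order to transport the bases $\omega(\cA)$, $\omega(\cA^{\vee})$ and to invoke the splitting principle, so no enlargement of $k$ or appeal to functoriality of the Serre construction removes it for a fixed $\frb$. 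The paper's resolution is different and essential: the quantity $(\chi(\frc)\norm\frc-1)L_\frf(\chi,0)$ is independent of the chosen representatives $\frb_1,\dots,\frb_m$ of $\cI(\frf)/\cP_\frf$, so one runs the entire argument a second time with representatives $\widetilde{\frb}_1,\dots,\widetilde{\frb}_m$ coprime to $\frf\frb_1\cdots\frb_m\frc$ and concludes from $\sO_ER[\frac{1}{\frf\norm(\frb_1\cdots\frb_m\frc)}]\cap \sO_ER[\frac{1}{\frf\norm(\widetilde{\frb}_1\cdots\widetilde{\frb}_m\frc)}]=\sO_ER[\frac{1}{\frf\norm\frc}]$. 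Without this (or an equivalent) argument your conclusion only lands in $\sO_ER[\frac{1}{\frf\norm(\frb_1\cdots\frb_m\frc)}]$.

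A second, smaller problem is your function in the case $\frf=\sO_L$: the proposed $f_{\frc,\frc'}=\norm\frc\,\mathbf{1}_{\ker[\frc']\setminus\{e\}}-\mathbf{1}_{\ker[\frc\frc']\setminus\{e\}}$ has total trace $\norm\frc(\norm\frc'-1)-(\norm\frc\norm\frc'-1)=1-\norm\frc\neq 0$, so it does not lie in $R[\cD]^{0,\Gamma}$ and does not define an Eisenstein--Kronecker class: the degree-zero condition is needed both for the inclusion of Theorem \ref{thm:coh-with-support-descr} and for the cancellation of the poles of the Eisenstein--Kronecker series at $s=0$ and $s=d$. The correct choice is the pullback of $f_{[\frc]}$ along the projection $(\cA[\frc']\setminus\{e\})\times\cA[\frc]\to\cA[\frc]$, i.e.\ $\norm\frc\,\mathbf{1}_{(\ker[\frc']\setminus\{e\})\times\{e\}}-\mathbf{1}_{(\ker[\frc']\setminus\{e\})\times\ker[\frc]}$, whose support avoids $e(\cR)$ and whose trace vanishes; its distribution relation then yields the four-term identity producing exactly the factor $(1-\chi(\frc'))(\chi(\frc)\norm\frc-1)$.
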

The proof is given below.

\begin{remark}Let $K$ be an imaginary quadratic field and $L/K$ an extension. For Hecke characters of the form $\phi=\chi\cdot(\psi_K\circ N_{L/K}$, where $\phi_K$ is a  Hecke character  of $K$ and $\chi$  a character of finite order, Bergeron-Charollois-Garcia (see \cite{BCG} and \cite{BCG2}) proved that the critical values of $L$-functions of $\phi$ can be expressed as polynomials in Eisenstein--Kronecker series evaluated at torsion sections on elliptic curves. Thus they obtain similar algebraicity  results in this special case.
\end{remark}

Recall that  $\cD:=\ker[\frc]\smallsetminus \{x(\cR)\}$ is as in Definition
\ref{def:D-specific}.  In the case where $x\neq e$, $\cD=\ker[\frc]$ and we would like to  use  the special function 
\begin{equation}\label{eq:special-function}
f_{[\frc]}=\norm\frc 1_{e(\cR)}-1_{\ker[\frc]}\in R[\cD]^{0,\sO_{\frf}^{\times}}.
\end{equation}
For the evaluation of the sum over the $\frc$-torsion sections occurring in the theorem we use a distribution relation for the Eisenstein series defined in Definition \ref{def:E-beta-alpha}:
\begin{proposition}\label{prop:distribution-relation} Let $\frf,\frb,\frc$, with $\frc\neq \sO_L$ be pairwise coprime integral ideals in $\sO_L$ as before and let $\fra=\frf\frb^{-1}$. Then $1\in \frb^{-1}/\fra$ is fixed by $\sO_\frf^{\times}$ and the Eisenstein series from Definition \ref{def:E-beta-alpha} satisfy the distribution relation
\begin{equation*}
\sum_{ t\in \sO_{\frf}^{\times}\backslash (\frc^{-1}\fra/\fra)}  {E^{\beta,\alpha}( t+1,0;\fra,\sO_\frf^{\times})}=
{E^{\beta,\alpha}(1,0;\frc^{-1}\fra,\sO_\frf^{\times})}.
\end{equation*}
In particular, for the function $f_{[\frc]}$ defined in \eqref{eq:special-function} one gets
\begin{multline*}
\sum_{ t\in \sO_{\frf}^{\times}\backslash (\frc^{-1}\fra/\fra)} f_{[\frc]}(- t) {E^{\beta,\alpha}( t+1,0;\fra,\sO_\frf^{\times})}=\norm\frc E^{\beta,\alpha}(1,0;\fra,\sO_\frf^{\times})-{E^{\beta,\alpha}(1,0;\frc^{-1}\fra,\sO_\frf^{\times})}.
\end{multline*}
\end{proposition}
\begin{proof}
One has a disjoint decomposition
\begin{equation*}
\frc^{-1}\fra+1=\bigcup_{\sO_\frf^{\times} t\in \sO_{\frf}^{\times}\backslash (\frc^{-1}\fra/\fra)}(\fra+\sO_{\frf}^{\times}t+1), 
\end{equation*}
which implies
\begin{equation*}
\sO_{\frf}^{\times}\backslash(\frc^{-1}\fra+1)=
\bigcup_{\sO_\frf^{\times} t\in \sO_{\frf}^{\times}\backslash (\frc^{-1}\fra/\fra)}\sO_{\frf}^{\times}\backslash(\fra+\sO_{\frf}^{\times}t+1).
\end{equation*}
Using the definition of ${E^{\beta,\alpha}( t+1,0;\fra,\sO_\frf^{\times})}$ in Definition \ref{def:E-beta-alpha} and of the function $f_{[\frc]}$ the result follows.
\end{proof}
With this distribution relation we get from Theorem \ref{thm:integral-Eisenstein-series} the following corollary:
\begin{corollary}\label{cor:partial-L-values}
With the notations and assumptions of Theorem \ref{thm:special-values} and for $\frf\neq \sO_L$ one has
\begin{equation*}
\frac{(\alpha-\ul{1})!(2\pi i)^{|\beta|}}{\Omega^{\alpha}\Omega^{\vee\beta}}
\left(\chi(\frc)\norm\frc L_\frf(\chi,0,[\frb])- L_\frf(\chi,0,[\frb\frc])\right)\in \sO_ER[\frac{1}{\frf \norm(\frb\frc)}].
\end{equation*}
\end{corollary}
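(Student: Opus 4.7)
The plan is to apply Theorem~\ref{thm:integral-Eisenstein-series} with the distinguished function $f_{[\frc]} = \norm\frc\cdot 1_{e(\cR)} - 1_{\ker[\frc]}$ of \eqref{eq:special-function} at the $[\frf]$-torsion section $x = \Omega$ (corresponding to $z = 1$), and then translate the resulting Eisenstein--Kronecker sum into partial $L$-values using Proposition~\ref{prop:distribution-relation} and formula \eqref{eq:L-in-terms-of-E}.

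First I would check that $z = 1$ gives a bona fide non-trivial $[\frf]$-torsion section: because $\frb$ is coprime to $\frf \neq \sO_L$, one has $1 \notin \frf\frb^{-1}$, so $\Omega$ is non-zero in $\cA(\frf\frb^{-1})(\CC) = \CC^\Sigma/\frf\frb^{-1}\Omega$, whence $x \neq e$ and $\cD = \ker[\frc]$. The function $f_{[\frc]}$ is manifestly $\sO_\frf^\times$-invariant and of total mass zero, so it lies in $R[\tfrac{1}{\norm\frc}][\cD]^{0,\sO_\frf^\times}$. Feeding this data into Theorem~\ref{thm:integral-Eisenstein-series} yields
\begin{equation*}
\frac{(\alpha-\ul{1})!(2\pi i)^{|\beta|}}{\Omega^\alpha \Omega^{\vee\beta}} \sum_{\sO_\frf^\times t} f_{[\frc]}(-\sO_\frf^\times t)\, E^{\beta,\alpha}(\sO_\frf^\times t + 1, 0;\frf\frb^{-1},\sO_\frf^\times) \in R[\tfrac{1}{\frf\norm(\frb\frc)}].
\end{equation*}

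Proposition~\ref{prop:distribution-relation} rewrites the inner sum as $\norm\frc\, E^{\beta,\alpha}(1,0;\frf\frb^{-1},\sO_\frf^\times) - E^{\beta,\alpha}(1,0;\frf(\frb\frc)^{-1},\sO_\frf^\times)$, and \eqref{eq:L-in-terms-of-E} identifies these two Eisenstein--Kronecker values with $\chi(\frb)^{-1}L_\frf(\chi,0,[\frb])$ and $\chi(\frb\frc)^{-1}L_\frf(\chi,0,[\frb\frc])$ respectively. Multiplying the displayed integral element through by $\chi(\frb\frc)$ clears both denominators and produces exactly $\chi(\frc)\norm\frc\, L_\frf(\chi,0,[\frb]) - L_\frf(\chi,0,[\frb\frc])$. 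Since $\chi$ takes values in $E$ and $\frb\frc$ is an integral ideal, $\chi(\frb\frc)$ is an algebraic integer in $E$, i.e.\ lies in $\sO_E$; hence the multiplication keeps the expression inside $\sO_E R[\tfrac{1}{\frf\norm(\frb\frc)}]$, which is the stated conclusion. All the substantive content sits in Theorem~\ref{thm:integral-Eisenstein-series}; the present argument is a direct bookkeeping translation, and no new obstacle arises beyond correctly matching the combinatorics of Eisenstein--Kronecker series to partial Hecke $L$-values.
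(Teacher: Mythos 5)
Your proposal follows the paper's own proof exactly: apply Theorem \ref{thm:integral-Eisenstein-series} to $f_{[\frc]}$ at $x=\Omega$, invoke the distribution relation \ref{prop:distribution-relation}, and convert to partial $L$-values via \eqref{eq:L-in-terms-of-E}. The only slip is the claim that $\chi(\frb\frc)\in\sO_E$: since the critical infinity type has $\alpha(\sigma)\ge 1$, one only gets $\chi(\frb\frc)^m=\prod_{\sigma\in\Sigma}\ol{\sigma}(\lambda)^{\beta(\ol{\sigma})}\sigma(\lambda)^{-\alpha(\sigma)}\in\sO_E[\tfrac{1}{\norm(\frb\frc)}]$ (as the paper argues), not integrality; this is harmless here because $\norm(\frb\frc)$ is already inverted in the target ring.
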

\begin{proof}
From Theorem \ref{thm:integral-Eisenstein-series} and using the distribution relation from Proposition \ref{prop:distribution-relation} for the function $f_{[\frc]}$ from \eqref{eq:special-function} one gets
\begin{equation*}
\frac{(\alpha-\ul{1})!(2\pi i)^{|\beta|}}{\Omega^{\alpha}\Omega^{\vee\beta}}\left(\norm\frc E^{\beta,\alpha}(1,0;\frf\frb^{-1},\sO_\frf^{\times})-{E^{\beta,\alpha}(1,0;\frf\frb^{-1}\frc^{-1},\sO_\frf^{\times})}
\right)\in R[\frac{1}{\frf\norm(\frb\frc)}].
\end{equation*}
To conclude, observe that 
for integral $\frb$ the value $\chi(\frb)\in \sO_E[\frac{1}{\norm\frb}]$, because there is a power $m$ such that $\frb^{m}=(\lambda)$ is a principal ideal with $\lambda\in \sO_L$, so that 
\begin{equation*}
\chi(\frb)^{m}=\prod_{\sigma\in \Sigma}\frac{\ol{\sigma}(\lambda)^{\beta(\ol{\sigma})}}{\sigma(\lambda)^{\alpha(\sigma)}}\in \sO_E[\frac{1}{\norm\frb}].
\end{equation*}
Using \eqref{eq:L-in-terms-of-E} gives the desired result.
\end{proof}
\begin{proof}[Proof of Theorem \ref{thm:special-values}] Let us first assume that $\frf\neq \sO_L$. 
Choose representatives $\frb_1,\ldots,\frb_m$ for the ideal classes in $\cI(\frf)/\cP_\frf$. Then we get from Corollary \ref{cor:partial-L-values} by summing over all classes in $\cI(\frf)/\cP_\frf$ 
\begin{equation*}
\frac{(\alpha-\ul{1})!(2\pi i)^{|\beta|}}{\Omega^{\alpha}\Omega^{\vee\beta}}
\left(\chi(\frc)\norm\frc -1\right)L_\frf( \chi,0)\in\sO_ER[\frac{1}{\frf\norm(\frb_1\cdots\frb_m\frc)}].
\end{equation*}
Choosing other representatives $\widetilde{\frb}_1,\ldots,\widetilde{\frb}_m$ which are coprime to $ \frf\frb_1\cdots\frb_m\frc$ one gets
\begin{equation*}
\frac{(\alpha-\ul{1})!(2\pi i)^{|\beta|}}{\Omega^{\alpha}\Omega^{\vee\beta}}
\left(\chi(\frc)\norm\frc -1\right)L_\frf( \chi,0)\in\sO_ER[\frac{1}{\frf\norm(\widetilde{\frb}_1\cdots\widetilde{\frb}_m\frc)}].
\end{equation*}
The result follows, because the intersection of both rings is $R[\frac{1}{\frf\norm(\frc)}]$.
In the case where $\frf=\sO_L$, choose on $(\cA[\frc']\smallsetminus\{e(\cR)\})\times\cA[\frc]\subset \cD=\ker[\frc\frc']\smallsetminus\{e(\cR)\}$ the function $\widetilde{f_{[\frc]}}$ which is the pull-back of the function $f_{[\frc]}$ under the projection to the second factor. Then the distribution relation gives
\begin{multline*}
\sum_{t \in \sO_L^{\times}\backslash ((\frc\frc'\frb)^{-1}/\frb^{-1})} \widetilde{f}_{[\frc]}(- t) {E^{\beta,\alpha}(t+1,0;\frb^{-1},\sO_L^{\times})}=\\=
\norm\frc {E^{\beta,\alpha}(1,0;(\frb\frc')^{-1},\sO_L^{\times})}-\norm\frc{E^{\beta,\alpha}(1,0;\frb^{-1},\sO_L^{\times})}\\+{E^{\beta,\alpha}(1,0;(\frb\frc)^{-1},\sO_L^{\times})}
-{E^{\beta,\alpha}(1,0;(\frb\frc\frc')^{-1},\sO_L^{\times})}.
\end{multline*}
The rest of the argument is as in the case $\frf\neq \sO_L$.

By the theory of complex multiplication one can assume that $R$ is contained in an algebraic number field. In fact one choose $k$ so big that it contains $\sO_E$, that the locally free modules $\omega_{\cA/\sO_k}$ and $\omega_{\cA^{\vee}/\sO_k}$  become free, and so that $\cA$ has everywhere good reduction. 
\end{proof}

\subsection{A weak form of the Deligne conjecture and a generalization of a formula of Shimura}
In the introduction of \cite{Katz-CM} Katz formulated a conjectural generalization of a formula of Shimura, which is a version of the algebraicity statement in Theorem \ref{thm:special-values} with different periods. In this section we show how one gets this formula from our result. Moreover, one can deduce from Theorem \ref{thm:special-values} a weak form of the Deligne conjecture on the critical values  $L(\chi,0)$ (we thank Blasius for pointing this out), but note that meanwhile Kufner \cite{Kufner} has deduced the full Deligne conjecture from our main results. 

Let us start with a discussion on periods. Let $L$ be an algebraic number field which contains a CM field $K\subset L$ and a fixed CM type $\Sigma$ lifted from the CM type  $\Sigma_0$ of $K$. Let $(\cA_0,\omega(\cA_0),\omega(\cA_0^\vee))$ be an abelian variety with CM by $K$ with a $\sO_K$-structure $\xi_0\colon \sO_K \cong H_1(\cA_0(\CC),\ZZ)$. We may without loss of generality assume that our base ring is $\ol{\QQ}$, i.e. $R=\ol{\QQ}$. 
The datum of $\xi_0,\omega(\cA_0)$ and $\omega(\cA^\vee_0)$ specifies periods
\[
	\Omega_0:=\langle\omega(\cA_0),\xi_0(1)\rangle_{\cA_0},\quad \Omega_0^\vee:=\langle\omega(\cA^\vee_0),\xi^\vee_0(1)\rangle_{\cA^\vee_0}
\]
Let us recall that the datum of $(\omega(\cA_0),\omega(\cA_0^\vee))$ is equivalent to the datum of a basis of the $\sO_K\otimes_\ZZ R$-module $\sH_0:=H^1_{dR}(\cA_0/\cR)^\vee$ and hence gives a trivialization:
\begin{equation}\label{eq_basisH}
	\sH_0 \cong \sO_K\otimes_\ZZ \ol{\QQ},\quad (\text{recall }R=\ol{\QQ}).
\end{equation}
The abelian variety $\cA:=\sO_L\otimes_{\sO_K}\cA_0$ has CM by $\sO_L$. The isomorphism \eqref{eq_basisH} induces an isomorphism
\[
	\sH=\sO_L\otimes_{\sO_K}\sH_0\cong \sO_L\otimes \ol{\QQ}
\]
and thus gives a pair of bases $(\omega(\cA),\omega(\cA^\vee))$ for $\omega_\cA$ respectively $\omega_{\cA^\vee}$. The $\sO_K$-structure on $\cA_0$ induces a $\sO_L$-structure $\xi=\sO_L\otimes_{\sO_K}\xi_0$ on $\cA$. With this choice, we get
\begin{align*}
	\Omega:&=\langle \omega(\cA), \xi(1) \rangle_{\cA}=\langle \omega(\cA_0), \xi_0(1) \rangle_{\cA_0}\otimes_{K} 1 \in L\otimes_K\CC^{\Sigma_0}\cong \CC^{\Sigma},\\
	\Omega^\vee:&=\langle \omega(\cA^\vee), \xi^\vee(1) \rangle_{\cA^{\vee}}=\langle \omega(\cA^\vee_0), \xi^\vee_0(1) \rangle_{\cA_0^{\vee}}\otimes_{K} 1 \in L\otimes_K\CC^{\ol{\Sigma}_0} \cong \CC^{\ol{\Sigma}}
\end{align*}
and thus
\[
	\Omega(\sigma)=\Omega_0(\sigma|_K),\quad \Omega^\vee(\ol{\sigma})=\Omega_0^\vee(\ol{\sigma}|_K),\quad \sigma\in \Sigma,\ol{\sigma}\in\ol{\Sigma}.
\]
Finally, let us observe that the choice of a $\sO_K$-linear polarization $\cA_0\rightarrow \cA_0^\vee$ shows
\[
	\Omega_0^\vee(\ol{\sigma})\ol{\QQ}^\times=\Omega_0(\sigma)\ol{\QQ}^\times.
\]
With this identification our main result implies the following formula which has been formulated by Katz in the introduction of \cite[p. 203]{Katz-CM}:
\begin{corollary}\label{cor:Katz-formula}
	Let $\chi$ be a critical Hecke character of conductor $\frf\subseteq \sO_L$ of the totally imaginary field $L$ and of infinity type $\beta-\alpha=N_{L/K}^*\beta_0-N_{L/K}^*\alpha_0$. Then
	\[
		\left(\frac{(2\pi i)^{|\beta_0|}}{\Omega_0^{\alpha_0+\ol{\beta}_0}}\right)^{[L:K]}L_\frf(\chi,0)
	\]
	is algebraic. Here, we write $\ol{\beta}_0$ for the image of $\beta_0$ under the canonical map $\CC^{\ol{\Sigma}_0}\rightarrow \CC^{\Sigma_0}$ induced by the complex conjugation $\ol{(\cdot)}\colon{\Sigma}_0\cong \ol{\Sigma}_0, \sigma\mapsto \ol{\sigma}$.
\end{corollary}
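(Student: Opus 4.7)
The plan is to deduce this directly from Theorem \ref{thm:special-values} by carefully tracking how the periods $\Omega$, $\Omega^\vee$ (which are periods of $\cA = \sO_L \otimes_{\sO_K} \cA_0$) decompose in terms of the smaller periods $\Omega_0$, $\Omega_0^\vee$ of $\cA_0$ itself, and by using a $\sO_K$-linear polarization to express $\Omega_0^\vee$ in terms of $\Omega_0$ up to algebraic factors.

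First, I would invoke Theorem \ref{thm:special-values} for the CM abelian variety $(\cA,\omega(\cA),\omega(\cA^\vee),\xi)$ constructed above, which gives
\[
\frac{(\alpha-\ul{1})!(2\pi i)^{|\beta|}}{\Omega^{\alpha}\Omega^{\vee\beta}}\, L_\frf(\chi,0)\in \ol\QQ.
\]
Since $(\alpha-\ul{1})!$ is a rational integer, it may be ignored for the purpose of proving algebraicity. The next step is to translate all three factors into data about $\cA_0$ and its type $\Sigma_0$. For the factor of $2\pi i$, the relation $\beta = N^*_{L/K}(\beta_0)$ gives $|\beta| = \sum_{\sigma\in J_L}\beta_0(\sigma|_K) = [L:K]\,|\beta_0|$, because each $\sigma_0\in J_K$ has exactly $[L:K]$ extensions to $L$.

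Next I would compute $\Omega^{\alpha}\Omega^{\vee\beta}$. Using $\Omega(\sigma) = \Omega_0(\sigma|_K)$ for $\sigma\in \Sigma$ together with $\alpha(\sigma) = \alpha_0(\sigma|_K)$ (and the same $[L:K]$-fold multiplicity), I obtain
\[
\Omega^{\alpha} = \prod_{\sigma\in\Sigma}\Omega_0(\sigma|_K)^{\alpha(\sigma)}
   = \prod_{\sigma_0\in\Sigma_0}\Omega_0(\sigma_0)^{[L:K]\alpha_0(\sigma_0)}
   = \Omega_0^{[L:K]\alpha_0},
\]
and analogously $\Omega^{\vee\beta} = \Omega_0^{\vee\,[L:K]\beta_0}$. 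Now fix a $\sO_K$-linear polarization $\cA_0 \to \cA_0^{\vee}$; as recalled before the statement of the corollary, it yields $\Omega_0^{\vee}(\ol\sigma_0)\in \Omega_0(\sigma_0)\,\ol\QQ^{\times}$ for every $\sigma_0\in\Sigma_0$. Applying this to each factor of $\Omega_0^{\vee\,[L:K]\beta_0}$ converts $\beta_0\in I_{\ol\Sigma_0}$ into its conjugate $\ol{\beta_0}\in I_{\Sigma_0}$ and gives
\[
\Omega^{\alpha}\Omega^{\vee\beta}\;\in\; \Omega_0^{[L:K](\alpha_0+\ol{\beta_0})}\cdot \ol\QQ^{\times}.
\]

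Substituting these identifications back into the conclusion of Theorem \ref{thm:special-values} yields
\[
\left(\frac{(2\pi i)^{|\beta_0|}}{\Omega_0^{\alpha_0+\ol{\beta_0}}}\right)^{[L:K]} L_\frf(\chi,0) \in \ol\QQ,
\]
which is the desired statement. The only substantive input beyond bookkeeping is the polarization step that turns $\Omega_0^{\vee}$-factors into $\Omega_0$-factors modulo $\ol\QQ^\times$; everything else is the compatibility between the norm $N^{*}_{L/K}$ on characters and the product decomposition of periods of $\sO_L \otimes_{\sO_K} \cA_0$, which is built into the construction preceding the corollary.
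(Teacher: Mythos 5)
Your proposal is correct and follows exactly the route the paper intends: it applies Theorem \ref{thm:special-values} to $\cA=\sO_L\otimes_{\sO_K}\cA_0$, uses the identities $\Omega(\sigma)=\Omega_0(\sigma|_K)$, $\Omega^{\vee}(\ol{\sigma})=\Omega_0^{\vee}(\ol{\sigma}|_K)$ to collapse the periods to $[L:K]$-th powers, and invokes the $\sO_K$-linear polarization to replace $\Omega_0^{\vee}(\ol{\sigma}_0)$ by $\Omega_0(\sigma_0)$ modulo $\ol{\QQ}^{\times}$. The paper leaves this bookkeeping implicit ("With this identification our main result implies the following formula"), and your write-up supplies precisely those details correctly.
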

We deduce a weak version of the Deligne conjecture for critical Hecke $L$-values from Theorem \ref{thm:special-values}. Recall that one can attach to a critical Hecke character $\chi$ of our totally imaginary field $L$ a motive $M(\chi)$ (say of absolute Hodge cycles, to fix ideas) over $L$ with coefficients in $E$. Let $R_{L/\Q}M(\chi)$ be the restriction of scalars from $L$ to $\Q$ and suppose that $R_{L/\Q}M(\chi)$ is critical. Deligne  \cite{Deligne-conj} has attached periods to the motive $R_{L/\Q}M(\chi)$
\begin{equation*}
c^{+}R_{L/\Q}M(\chi)\in (E\otimes\C)^{\times}
\end{equation*}
which are well-defined up to elements in $E^{\times}$. Using our fixed embedding $\tau:E\to \C$, we get 
$c^{+}_\tau R_{L/\Q}M(\chi)\in \C^{\times}$. 
\begin{corollary}[Deligne conjecture up to $\Qbar^{\times}$]\label{cor:deligne-conjecture}
Let $\chi:\cI(\frf)\to \C^{\times}$ be as before a critical Hecke character of type $\beta-\alpha\in \CI_L(\sO_{\frf}^{\times})$. Then in $\C^{\times}$ one has
\begin{equation*}
c^{+}_\tau R_{L/\Q}M(\chi)\sim \left((2\pi i)^{-|\beta_0|}\Omega_0^{\alpha_0+\ol{\beta}_0}\right)^{[L:K]},
\end{equation*}
where $a\sim b$ if there is a $q\in \Qbar^{\times}$ with $a=qb$.
In particular, one has
\begin{equation*}
\frac{L_\frf(\chi,0)}{c^{+}_\tau R_{L/\Q}M(\chi)}\in\Qbar^{\times}.
\end{equation*}
\end{corollary}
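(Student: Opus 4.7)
The ``in particular'' statement will follow at once from the period equivalence combined with Theorem \ref{thm:special-values}: that theorem produces an auxiliary integral ideal $\frc$ coprime to $\frf$ making $\frac{(\alpha-\ul{1})!(2\pi i)^{|\beta|}}{\Omega^{\alpha}\Omega^{\vee\beta}}(\chi(\frc)\norm\frc-1)L_\frf(\chi,0)$ algebraic, and since $\chi(\frc)\norm\frc -1 \in \Qbar^\times$ for $\frc$ chosen outside a finite exceptional set, $L_\frf(\chi,0)$ itself divided by $\Omega^\alpha \Omega^{\vee\beta}/(2\pi i)^{|\beta|}$ already lies in $\Qbar$. So the whole proof reduces to establishing the displayed equivalence $c^+_\tau R_{L/\Q}M(\chi) \sim \Omega^\alpha\Omega^{\vee\beta}/(2\pi i)^{|\beta|}$ in $\C^\times/\Qbar^\times$.

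The first step will be bookkeeping, recasting our period in the form Deligne recognises. Using $\alpha = \norm_{L/K}^*\alpha_0$, $\beta = \norm_{L/K}^*\beta_0$, the identities $\Omega(\sigma)=\Omega_0(\sigma|_K)$ and $\Omega^\vee(\ol{\sigma})=\Omega_0^\vee(\ol{\sigma}|_K)$ recorded just before the corollary, and the fact that each $\sigma_0 \in \Sigma_0$ has exactly $[L:K]$ lifts to $\Sigma$, I will obtain
\begin{equation*}
\Omega^\alpha = (\Omega_0^{\alpha_0})^{[L:K]}, \quad \Omega^{\vee\beta}=(\Omega_0^{\vee\beta_0})^{[L:K]}, \quad |\beta|=[L:K]|\beta_0|.
\end{equation*}
A $\sO_K$-linear polarization of $\cA_0$ then forces $\Omega_0^\vee(\ol{\sigma}_0)\equiv \Omega_0(\sigma_0)\pmod{\Qbar^\times}$, whence $\Omega_0^{\vee\beta_0} \sim \Omega_0^{\ol{\beta}_0}$, and the target shape $\bigl(\Omega_0^{\alpha_0+\ol{\beta}_0}/(2\pi i)^{|\beta_0|}\bigr)^{[L:K]}$ emerges modulo $\Qbar^\times$.

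The second step, which is the real heart of the argument, will be the identification with Deligne's motivic period. Decomposing $\chi = \varrho\cdot(\chi_0\circ\norm_{L/K})$ with $\varrho$ of finite order and absorbing the rank-one Artin motive $M(\varrho)$ into the algebraic ambiguity (it does not alter $c^+$ mod $\Qbar^\times$), I may assume $\chi=\chi_0\circ\norm_{L/K}$. The motive $M(\chi)$ is then the base change $L\otimes_K M(\chi_0)$, and $R_{L/\Q}M(\chi)$ realises inside the cohomology of an abelian variety isogenous to $\sO_L\otimes_{\sO_K}\cA_0$, with Hodge decomposition refined by $\Sigma\sqcup\ol{\Sigma}$. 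I then plan to invoke Deligne's formalism for $c^+$ of a restriction of scalars of a CM motive together with Blasius's CM period relations \cite{Blasius}, which identify the Betti--de Rham comparison period of $M(\chi_0)$ with $\Omega_0^{\alpha_0+\ol{\beta}_0}/(2\pi i)^{|\beta_0|}$ modulo $\Qbar^\times$; the $[L:K]$-th power falls out of the $[L:K]$-fold stacking effected by $R_{L/\Q}$.

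\textbf{Main obstacle.} The difficult part is precisely this last identification: matching Deligne's abstract complex-conjugation-defined period $c^+_\tau$ with our concrete $\Omega_0(\sigma_0), \Omega_0^\vee(\ol{\sigma}_0)$ requires careful tracking of how the $\sO_L$-action on Betti cohomology, the CM type $\Sigma\sqcup\ol{\Sigma}$, and the restriction of scalars interact, in particular how the two complex conjugations (the Hodge one and the one from $R_{L/\Q}$ permuting archimedean places) combine to produce the correct number of $(2\pi i)$ factors and $\Omega_0$-factors on each side. This is exactly the kind of precise period comparison whose full (non-$\Qbar^\times$-ambiguous) version the authors announce they will treat in a future paper en route to the actual Deligne conjecture.
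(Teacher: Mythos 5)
Your outline is essentially sound, and the ``in particular'' step is exactly right (the algebraicity of $\frac{(\alpha-\ul{1})!(2\pi i)^{|\beta|}}{\Omega^{\alpha}\Omega^{\vee\beta}}L_\frf(\chi,0)$ is already recorded in Theorem \ref{thm:special-values}); your Step 1 bookkeeping also matches the discussion preceding the corollary verbatim. But your Step 2 takes a genuinely different route from the paper's. The paper never reduces to the CM field $K$ and never invokes Blasius: it applies Deligne's formula \cite[8.21]{Deligne-conj} for $c^{+}_\tau R_{L/\Q}M(\chi)$ directly over $L$, writing it as $\prod_{\sigma\in \Sigma}p(\mu^{*};\tau,\sigma)^{\alpha(\sigma)}p(\mu^{*};\tau,\ol{\sigma})^{-\beta(\ol{\sigma})}$ in terms of the reflex character $\mu^{*}$, observes that $\mu^{*}$ is the infinity type of the Hecke character attached to $H^{1}(\cA)$ for the concrete abelian variety $\cA=\sO_L\otimes_{\sO_K}\cA_0$, and hence identifies $p(\mu^{*};\tau,\sigma)\sim \Omega(\sigma)^{-1}\sim\Omega_0(\sigma\mid_K)^{-1}$; a polarization of $\cA_0$ then gives $p(\mu^{*};\tau,\ol{\sigma})\sim \Omega_0(\ol{\sigma}\mid_K)/(2\pi i)$. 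The descent to $K$ happens only at the level of periods, via $\Omega(\sigma)=\Omega_0(\sigma\mid_K)$. Your alternative --- writing $\chi=\varrho\,(\chi_0\circ\norm_{L/K})$, discarding $M(\varrho)$, and computing $c^{+}$ of $R_{L/\Q}(L\otimes_K M(\chi_0))$ from the $K$-level case --- is workable modulo $\Qbar^{\times}$, but it costs two extra reductions the paper avoids: you must control $c^{+}$ under tensoring with the Artin motives $M(\varrho)$ and $R_{L/K}\mathbf{1}$ (projection formula plus Deligne's behaviour of $c^{\pm}$ under Artin twists), and you still must match Deligne's abstractly defined period of $M(\chi_0)$ with the specific $\Omega_0$ attached to this paper's $(\cA_0,\omega(\cA_0))$. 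That matching is the actual content of the proof, and you defer it to a citation of \cite{Blasius}; modulo $\Qbar^{\times}$ it is really Deligne's own computation (via the reflex construction) rather than Blasius's finer period relations that is needed, and the paper makes it explicit. What your route buys is independence from the reflex-character formalism at level $L$; what the paper's buys is that the entire comparison is a single application of \cite[8.18--8.21]{Deligne-conj} to the abelian variety already in hand.
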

\begin{remark}
Blasius \cite{Blasius} in the case of a CM field $K$ and Kufner \cite{Kufner} in general have shown Deligne's conjecture, which claims in the case of algebraic Hecke characters of $K$ that 
\begin{equation*}
\frac{L_\frf(\chi,0)}{c^{+}_\tau R_{L/\Q}M(\chi)}\in E^{\times}.
\end{equation*}
if $\chi$ has values in $E$. 
\end{remark}
\begin{proof} We use the notations and results from \cite{Deligne-conj}.
Let $\mu$ be the infinity type of $\chi$ considered as a homomorphism $\mu:L^{\times}\to E^{\times}$ and denote by $\mu^{*}:E^{\times}\to L^{\times}$ the homomorphism defined by Deligne in \cite[8.19]{Deligne-conj} (the reflex of $\chi$ \cite{Blasius}). If $\mu^{*}$ is the infinity type of a Hecke character $\chi^{*}$, then the associated motive $M(\chi^{*})$ is the $H^{1}$ of an abelian variety defined over $E$ with complex multiplication by $L$.
Deligne 
\cite[8.21]{Deligne-conj} shows that (note that Deligne's conventions (loc.cit. section 8.3) for the infinity type are the inverse of ours)
\begin{equation*}
c^{+}_\tau R_{L/\Q}M(\chi)\sim \prod_{\sigma\in \Sigma}\frac{p(\mu^{*};\tau,\ol{\sigma})^{\beta(\ol{\sigma})}}{p(\mu^{*};\tau,\sigma)^{\alpha(\sigma)}},
\end{equation*}
where $p(\mu^{*};\tau,\sigma)$ is the period defined by Deligne in \cite[8.18]{Deligne-conj}. This is a period modulo $\Qbar^{\times}$ and is invariant under extension of the ground field and coefficients. Consider $\cA=\sO_L\otimes_{\sO_K}\cA_0$ as above.  We can consider $\mu^{*}$ also as a homomorphism $\mu^{*}:(Ek)^{\times}\to L^{\times}$ without changing the periods $p(\mu^{*};\tau,\sigma)$ and then $\mu^{*}$ is the infinity type of the Hecke character defined by the motive $H^{1}(\cA)$.  As the dual of  $H^{1}(\cA)$ is $H_1(\cA)$ and the periods $\Omega$ defined above are the periods of $H_1(\cA)$, one gets
\begin{equation*}
p(\mu^{*};\tau,\sigma)\sim\Omega(\sigma)^{-1}\sim\Omega_0(\sigma\mid_K)^{-1}.
\end{equation*}
Further, using a polarization of $\cA_0$ gives a semi-linear isomorphism $H^{1}(\cA_0)\isom H_1(\cA_0^{\vee})(-1)$, which implies that 
\begin{equation*}
p(\mu^{*};\tau,\ol{\sigma})\sim \frac{\Omega_0({\sigma}\mid_K)}{2\pi i}.
\end{equation*}
From this one gets the desired result.
\end{proof}
\begin{remark}
We thank one of the referees for pointing out that the formalism of motives clarifies the above proof. Recall from \eqref{eq:Hecke-character} that $\chi=\varrho\chi_0\circ N_{L/K}$ so that by \cite{Blasius-periods} M.17 Corollary 1 and 3 one has
\begin{equation*}
c^{+}_\tau(R_{L/\Q}M(\chi))\sim c^{+}_\tau(R_{L/\Q}M(\chi_0\circ N_{L/K}))\sim c^{+}_\tau(R_{L/\Q}M(\chi_0))^{[L:K]}\bmod \ol{\Q}^{\times}
\end{equation*}
On the other hand Deligne \cite[8.21]{Deligne-conj} shows that 
\begin{equation*}
c^{+}_\tau(R_{L/\Q}M(\chi_0))\sim (2\pi i)^{-|\beta_0|}\Omega_0^{\alpha_0+\ol{\beta}_0}\bmod \ol{\Q}^{\times}.
\end{equation*}
The formula in Corollary \ref{cor:Katz-formula} implies then that
$\frac{L_\frf(\chi,0)}{c^{+}_\tau R_{L/\Q}M(\chi)}\in\Qbar^{\times}$.
\end{remark}
\section{$p$-adic interpolation}
Let $L$ be a totally imaginary number field containing a CM field $K$ and $p$ an 'ordinary' prime which is co-prime to the discriminant $d_L$. In this section we will construct a $p$-adic measure interpolating $p$-adically all critical Hecke $L$-values of conductor dividing $p^\infty\frf$, where $\frf\subseteq \sO_L$ is a prime-to-$p$ ideal. This section profited from valuable comments by Blasius, which helped to eliminate some unnecessary choices in  our construction in an earlier version.

\subsection{The geometric setup}\label{sec:p-adic-geometric-setup}
In the following $\CC_p$ denotes the completion of a fixed algebraic closure of $\QQ_p$ and $\iota_p$ denotes a fixed embedding $\iota_p\colon \overline{\QQ}\rightarrow \CC_p$. Let us denote by $\Gamma$ a torsion-free subgroup of the units $\sO_L^\times$. For a CM type $\Sigma$ of $L$, let us define the sets
\begin{align*}
	\Sigma_p:=\{\frp \text{ induced by the $p$-adic embeddings $\iota_p\circ \sigma$ with }\sigma\in\Sigma \}\\
	\overline{\Sigma}_p:=\{\overline{\frp} \text{ induced by the $p$-adic embeddings $\iota_p\circ \overline{\sigma}$ with }\overline{\sigma}\in\overline{\Sigma} \}.
\end{align*}
\nomenclature{$\Sigma_p,\overline{\Sigma}_p$}{\nomrefpage}
Following Katz, we will say that the CM type $\Sigma$ of $L$ is \emph{ordinary at $\iota_p$} or \emph{$p$-ordinary} if the following condition holds:
\begin{equation*}
\tag{ORD-$p$}  \text{The sets $\Sigma_p$ and $\overline{\Sigma}_p$ are disjoint.}
\end{equation*}
If $\Sigma$ is lifted from a CM field $K$ with maximal totally real subfield $F$, then the condition (ORD-$p$) means that all primes in $F$ over $p$ split completely in $K$. A $p$-ordinary CM type induces a disjoint union
\[
	\{\frp\subseteq \sO_L: \frp\mid p\}=\Sigma_p \cup \overline{\Sigma}_p
\]
of the primes dividing $p$. In particular, a $p$-ordinary CM type $\Sigma$ induces the decomposition
\[
	\sO_L\otimes_\ZZ \Zp\cong \sO_L(\Sigma_p)\oplus \sO_L(\overline{\Sigma}_p),\quad  \sO_L(\Sigma_p)=\prod_{\frp\in\Sigma_p}\sO_{L_\frp},\quad \sO_L(\overline{\Sigma}_p)=\prod_{\overline{\frp}\in\overline{\Sigma}_p}\sO_{L_{\overline{\frp}}}.
\]
It will be convenient to write $\frp_\Sigma:=\prod_{\frp\in\Sigma_p} \frp$ and $\ol{\frp}_\Sigma:=\prod_{\ol{\frp}\in\ol{\Sigma}_p} \ol{\frp}$ for the product of primes in $\Sigma_p$ respectively $\ol{\Sigma}_p$. With this notation, we have:
\[
	\sO_L(\Sigma_p)=\varprojlim_n \sO_L/\frp_\Sigma^n,\quad \sO_L(\ol{\Sigma}_p)=\varprojlim_n \sO_L/\ol{\frp}_\Sigma^n.
\]
\nomenclature{$\frp_\Sigma,\ol{\frp}_\Sigma$}{\nomrefpage}
\nomenclature{$\sO_L(\Sigma_p)$}{\nomrefpage}
\begin{notation}\label{notation_padicAbelianVariety}
For a prime $p$ let $(\cA/\cR,\Sigma,\omega(\cA),\omega(\cA^\vee),x)$ be a datum as in Notation \ref{notation-for-CM-ab} with the following extra hypotheses:
\begin{itemize}
\item $\cR=\Spec R$ with $\mathrm{Quot}(R)=k$ a number field, $p\notin R^\times$, $d_L\in R^\times$ and $L^{Gal}\subseteq k$,
\item The CM type $\Sigma$ of $\cA$ satisfies $(\mathrm{ORD}\text{-}p)$ with respect to the embedding $\iota_p$.
\item $x\in \cA[\frf]$ for some ideal $\frf$ with $p\nmid \frf$. In particular, $[\frf]^{\vee}$ is \'etale over $\OCp$.
\end{itemize}
By abuse of notation, we will denote the base change of $\cA$ to $\OCp$ again by $\cA$. If we want to emphasize that we consider $\cA$ over $R$ respectively $\OCp$, we will write $\cA/R$ respectively $\cA/\OCp$.
\end{notation}

From now on let $(\cA/\cR,\Sigma,\omega(\cA),\omega(\cA^\vee),x)$ be a tuple as in Notation \ref{notation_padicAbelianVariety}.

\begin{definition} Let us define the following level structures on $\cA/\OCp$:
\begin{enumerate}
\item A \emph{$\Gamma_{00}(p^\infty)$-structure} on $\cA$ is an $\sO_L$-equivariant isomorphism
\begin{align*}
	\theta_p\colon \mu_{p^\infty}\otimes_\Zp \sO_{L}(\Sigma_p)\xrightarrow{\sim} \widehat{\cA}[p^\infty].
\end{align*}
A \emph{$\Gamma_{00}(p^\infty)$-structure} on $(\cA,\cA^\vee)$ is a pair $(\theta_p,\theta_p^\vee)$ of \emph{$\Gamma_{00}(p^\infty)$-structures} on $\cA$ and $\cA^\vee$. Note, that $\cA^\vee$ is of CM type $\overline{\Sigma}$, so $\theta_p^\vee$ is an $\sO_L$-equivariant isomorphism
\begin{align*}
	\theta_p^\vee\colon \mu_{p^\infty}\otimes_\Zp \sO_{L}(\ol{\Sigma}_p)\xrightarrow{\sim} \widehat{\cA^\vee}[p^\infty].
\end{align*}
\nomenclature{$\theta_p,\theta_p^\vee$}{\nomrefpage}
\item A $\Gamma_{arith}(p^\infty)$-structure on $\cA$ is a $\sO_L$-equivariant isomorphism of $p$-divisible groups
\[
	\mu_{p^\infty}\otimes_\Zp \sO_L(\Sigma_p)\times \underline{\Qp/\Zp}\otimes_\Zp \sO_L(\ol{\Sigma}_p)\xrightarrow{\sim} \cA[p^\infty].
\]
\end{enumerate}
\end{definition}

Note that  the $p$-divisible group $ \widehat{\cA}[p^\infty]$ is the infinitesimal part of  $\cA[p^\infty]$. Let us recall that we denoted for a fractional ideal $\fra \subseteq \sO_L$ by $\cA(\fra):=\fra\otimes \cA$ the Serre construction. Since the $p^n$-multiplication on $\cA$ factors in the following two ways
\begin{equation}\label{eq:p-isogenies}
	\xymatrix{
		[p^n]\colon \cA \ar[r]^-{[\frp^n_{\Sigma}]}_-{\mathrm{inf.}} & \cA(\ol{\frp}^n_{\Sigma}) \ar[r]^-{[\ol{\frp}^n_{\Sigma}]}_-{\et}  & \cA, & 
		[p^n]\colon \cA \ar[r]^-{[\ol{\frp}^n_{\Sigma}]}_-{\et} & \cA(\frp^n_{\Sigma}) \ar[r]^-{[\frp^n_{\Sigma}]}_-{\mathrm{inf.}} & \cA
	}
\end{equation}
with $\frp^n_{\Sigma}$ infinitesimal and $\ol{\frp}^n_{\Sigma}$ \'etale, we get the following decomposition of $\cA[p^n]$ over $\OCp$:
\[
	\cA[p^n]\cong \cA[\frp^n_{\Sigma}]\times \cA[\ol{\frp}^n_{\Sigma}]=\cA[p^n]^0\times \cA[p^n]^{\et}.
\]
By duality, the dual abelian variety is equipped with an $\cO_L$-endomorphisms structure as well and the dual of the diagram \eqref{eq:p-isogenies}  reads:
\begin{equation*}
	\xymatrix{
		 \cA^\vee & \ar[l]_-{[\frp^n_{\Sigma}]}^-{\et}  \cA(\ol{\frp}^n_{\Sigma})^\vee & \ar[l]_-{[\ol{\frp}^n_{\Sigma}]}^-{\mathrm{inf.}}  \cA^\vee \colon [p^n], & 
		\cA^\vee & \ar[l]_-{[\ol{\frp}^n_{\Sigma}]}^-{\mathrm{inf.}} \cA(\frp^n_{\Sigma})^\vee & \ar[l]_-{[\frp^n_{\Sigma}]}^-{\et}  \cA^\vee\colon [p^n].
	}
\end{equation*}
In particular, we get the decomposition
\[
	\cA^\vee[p^n]\cong \cA^\vee[\ol{\frp}^n_{\Sigma}]\times \cA^\vee[{\frp}^n_{\Sigma}]=\cA^\vee[p^n]^0\times \cA^\vee[p^n]^{\et}.
\]
\begin{definition}\label{def:A-n}
In the following, we use the shorter notation 
\begin{equation*}
\cA_{(n)}:=\cA(\ol{\frp}^n_{\Sigma})
\end{equation*} 
\nomenclature{$\cA_{(n)}$}{\nomrefpage}
so that one has an isogeny $[\ol{\frp}_\Sigma^{n}]:\cA_{(n)}\to \cA$.
\end{definition}
Using duality we can state the datum of a $\Gamma_{00}(p^\infty)$-structure on $(\cA,\cA^\vee)$ in several equivalent ways.

\begin{proposition}\label{prop:GammaStructures}The following are equivalent:
\begin{enumerate}
\item A $\Gamma_{00}(p^\infty)$-structure on $(\cA,\cA^\vee)$.
\item A $\Gamma_{arith}(p^\infty)$-structure on $\cA$.
\item A compatible system of $\sO_L$-equivariant isomorphisms of \'etale group schemes
\begin{align*}
	\cA_{(n)}[\ol{\frp}_\Sigma^n]\cong \sO_L(\ol{\Sigma}_p)/p^n\sO_L(\ol{\Sigma}_p),\quad \cA^\vee_{(n)}[\frp_\Sigma^n]\cong \sO_L(\Sigma_p)/p^n\sO_L(\Sigma_p),
\end{align*}
with $\cA_{(n)}:=\cA(\ol{\frp}^n_{\Sigma})$.
\end{enumerate}
\end{proposition}
\begin{proof}
Since we assumed $p\nmid d_L$, the trace induces isomorphisms
\[
	\sO_L(\Sigma_p)\cong \Hom_{\Zp}(\sO_L(\Sigma_p),\Zp),\quad \sO_L(\ol{\Sigma}_p)\cong \Hom_{\Zp}(\sO_L(\ol{\Sigma}_p),\Zp).
\]
The equivalence of $(1)$ and $(2)$  follows from the perfect pairing
\[
	\cA[\ol{\frp}_\Sigma^n]\times\cA^\vee[\ol{\frp}_\Sigma^n]  \rightarrow \mu_{p^n},
\]
obtained by restricting the Weil pairing 
\[
\langle \cdot, \cdot\rangle_{[p^n]} \colon\cA[p^n]\times \cA^\vee[p^n]\to \mu_{p^n}
\]
 to $\cA[\ol{\frp}_\Sigma^n]\times\cA^\vee[\ol{\frp}_\Sigma^n]$. The equivalence of $(1)$ and $(3)$ follows from the perfectness of the Weil pairings
\[
	\langle \cdot,\cdot\rangle_{[\frp_\Sigma^n]}\colon \cA[\frp_\Sigma^n]\times\cA_{(n)}^\vee [\frp_\Sigma^n]  \rightarrow \mu_{p^n}, \quad \langle \cdot,\cdot\rangle_{[\ol{\frp}_\Sigma^n]}\colon\cA_{(n)}[\ol{\frp}_\Sigma^n]\times \cA^\vee[\ol{\frp}_\Sigma^n]\to \mu_{p^n}
\]
associated to the isogenies $[\frp_\Sigma^n]\colon \cA\to \cA_{(n)}$ respectively $[\ol{\frp}_\Sigma^n]\colon \cA_{(n)}\to \cA$.
\end{proof}

\begin{lemma}\label{lem:GammaArith} Suppose that $\cA$ has an $\fra$-structure $\xi\colon\fra\cong H_1(\cA(\CC),\ZZ)$ for a 
prime-to-$p$ fractional ideal $\fra$ then using the embedding $\iota_p:\ol{\Q}\to \C_p$ one has natural isomorphisms:
	\begin{align*}
		\cA[\ol{\frp}_\Sigma^n](\OCp)\isom \cA[\ol{\frp}_\Sigma^n](\CC)&\cong \fra/\ol{\frp}_\Sigma^n\fra\cong \sO_L(\ol{\Sigma}_p)/p^n\sO_L(\ol{\Sigma}_p)\\
		\cA[\frp_\Sigma^n](\OCp)\isom \cA[\frp_\Sigma^n](\CC)&\cong \fra/\frp_\Sigma^n\fra\cong \sO_L(\Sigma_p)/p^n\sO_L(\Sigma_p).
	\end{align*}
	In particular, using $\ZZ/p^n\ZZ\cong \mu_{p^n}(\CC), 1\mapsto \exp(\frac{2\pi i}{p^n})$ there is a unique $\Gamma_{arith}(p^\infty)$-structure on $\cA$ given by the isomorphisms
	\begin{align*}
		\cA[\ol{\frp}_\Sigma^n]\cong \ZZ/p^n\ZZ \otimes_\ZZ \sO_L(\ol{\Sigma}_p),\quad  \cA[\frp_\Sigma^n]\cong \mu_{p^n}\otimes_\ZZ \sO_L(\Sigma_p).
	\end{align*}
	which induce the above isomorphisms on $\CC$-valued points.
\end{lemma}
\begin{proof}
The first isomorphism $\cA[\ol{\frp}_\Sigma^n](\OCp)\isom \cA[\ol{\frp}_\Sigma^n](\CC)$ is induced by $\iota_p$ and the second is clear. As $\fra$ is prime to $p$, the natural map 
\begin{equation*}
\fra/\ol{\frp}_\Sigma^n\fra\subset \sO_L/\ol{\frp}_\Sigma^n\fra\isom \sO_L/\ol{\frp}_\Sigma^n\times \sO_L/\fra\xrightarrow{\pr} \sO_L/\ol{\frp}_\Sigma^n=\sO_L(\ol{\Sigma}_p)/p^n\sO_L(\ol{\Sigma}_p)
\end{equation*}
is an isomorphism. The proof for $\cA[\frp_\Sigma^n](\OCp)$ is the same.
\end{proof}
Later, we will choose a reference abelian variety $\cA$ with $\sO_L$-structure and use the above Lemma to get an induced $\Gamma_{00}(p^\infty)$-structure.\par

Our next goal is to show that the choice of a $\Gamma_{00}(p^\infty)$-structure on $(\cA,\cA^\vee)$ induces canonical trivializations of the Tate module of the Cartier duals $\widehat{\cA}^t$ and $(\widehat{\cA}^\vee)^t$ of the $p$-divisible formal groups $\widehat{\cA}$ and $\widehat{\cA}^\vee$. Let us recall the notation $\cA_{(n)}:=\cA(\ol{\frp}^n_\Sigma)$ and note that the Weil pairings 
\[
	\langle \cdot,\cdot\rangle_{[\frp_\Sigma^n]}\colon \cA[\frp_\Sigma^n]\times\cA_{(n)}^\vee [\frp_\Sigma^n]  \rightarrow \mu_{p^n}, \quad \langle \cdot,\cdot\rangle_{[\ol{\frp}_\Sigma^n]}\colon\cA_{(n)}[\ol{\frp}_\Sigma^n]\times \cA^\vee[\ol{\frp}_\Sigma^n]\to \mu_{p^n}
\]
give the following explicit description of the Cartier duals of $\widehat{\cA}[p^n]=\cA[p^n]^0= \cA[\frp^n_\Sigma]$ and $\widehat{\cA}^\vee[p^n]=\cA^\vee[p^n]^0=\cA^\vee[\ol{\frp}^n_\Sigma]$:
\begin{equation}\label{eq:cartier-duals}
	(\cA[\frp^n_\Sigma])^t\cong \cA_{(n)}^\vee [\frp_\Sigma^n] ,\quad (\cA^\vee[\ol{\frp}^n_\Sigma])^t \cong \cA_{(n)}[\ol{\frp}_\Sigma^n].
\end{equation}
Thus we have
\[
	T_p\widehat{\cA}^t \cong \varprojlim_n \cA_{(n)}^\vee [\frp_\Sigma^n](\OCp),\quad T_p(\widehat{\cA}^\vee)^t \cong \varprojlim_n \cA_{(n)}[\ol{\frp}_\Sigma^n](\OCp).
\]

\begin{proposition}\label{prop:padicFG}
	Let $\cA$ be an abelian variety with a fixed $\Gamma_{00}(p^\infty)$-structure $(\theta_p,\theta_p^\vee)$ on $(\cA,\cA^\vee)$.
	Over $\OCp$ there are canonical $\sO_L$-equivariant isomorphisms
	\begin{align*}
		T_p\widehat{\cA}^t \cong \sO_L(\Sigma_p),\quad T_p(\widehat{\cA}^\vee)^t \cong \sO_{L}(\overline{\Sigma}_p).	
	\end{align*}
	Here, $T_p\widehat{\cA}^t$ denotes the $p$-adic Tate module of the Cartier dual $\widehat{\cA}^t$ of the $p$-divisible group $\widehat{\cA}$. Furthermore, we have isomorphisms
	\begin{align*}
		\widehat{\cA}\xrightarrow{\sim} \Gmf\otimes_{\Zp}\sO_L(\Sigma_p),\quad  \widehat{\cA}^\vee\xrightarrow{\sim} \Gmf\otimes_{\Zp}\sO_{L}(\overline{\Sigma}_p).
	\end{align*}
\end{proposition}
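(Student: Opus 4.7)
The plan is to deduce everything by applying Cartier duality to the $\Gamma_{00}(p^\infty)$-structures, and then using that multiplicative-type formal groups over $\OCp$ are determined by their $p$-divisible groups. By hypothesis, the structure
\[
  \theta_p\colon \mu_{p^\infty}\otimes_{\Zp}\sO_L(\Sigma_p)\xrightarrow{\sim} \widehat{\cA}[p^\infty]
\]
is an $\sO_L$-equivariant isomorphism of connected $p$-divisible groups of multiplicative type. Its Cartier dual is an $\sO_L$-equivariant isomorphism of étale $p$-divisible groups
\[
  \widehat{\cA}^t \xrightarrow{\sim} \sO_L(\Sigma_p)^\vee \otimes_{\Zp}\QQ_p/\Zp,
\]
using the standard identification $\Hom(M\otimes_{\Zp}\mu_{p^\infty},\Gm)\cong \Hom_{\Zp}(M,\Zp)\otimes_{\Zp}\QQ_p/\Zp$ for a finite free $\Zp$-module $M$. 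Passing to the Tate module gives $T_p\widehat{\cA}^t\cong \sO_L(\Sigma_p)^\vee$, and the assumption $p\nmid d_L$ makes the trace pairing $\sO_L\otimes_{\Z}\Zp\times\sO_L\otimes_{\Z}\Zp\to\Zp$ perfect and compatible with the decomposition into $\Sigma_p$- and $\overline{\Sigma}_p$-components, whence $\sO_L(\Sigma_p)^\vee \cong \sO_L(\Sigma_p)$ canonically.

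The same recipe applied to $\theta_p^\vee$ (whose CM type is $\overline{\Sigma}$) yields $T_p(\widehat{\cA^\vee})^t\cong \sO_L(\overline{\Sigma}_p)^\vee\cong \sO_L(\overline{\Sigma}_p)$. For the final two isomorphisms, I would invoke the fact that over $\OCp$ a connected $p$-divisible group of multiplicative type lifts uniquely to a formal group (equivalently, extend Cartier duality to formal groups so that $\Gmf$ is dual to $\QQ_p/\Zp$): dualising the étale $p$-divisible group $\widehat{\cA}^t\cong \sO_L(\Sigma_p)^\vee\otimes_{\Zp}\QQ_p/\Zp$ produces
\[
  \widehat{\cA}\;\cong\;\ul{\Hom}(\widehat{\cA}^t,\Gmf)\;\cong\;\sO_L(\Sigma_p)\otimes_{\Zp}\Gmf,
\]
and analogously $\widehat{\cA^\vee}\cong \Gmf\otimes_{\Zp}\sO_L(\overline{\Sigma}_p)$.

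The one point that requires care — and which I would single out as the main obstacle — is ensuring that the Cartier duality used on $\widehat{\cA}$ and $\widehat{\cA}^t$ is the extended duality between connected multiplicative-type formal groups and étale $p$-divisible groups over $\OCp$, rather than merely duality of $p$-divisible groups; once that is set up (via Serre--Tate theory for the ordinary situation guaranteed by condition (ORD-$p$)), the $\sO_L$-equivariance of all arrows is automatic from the $\sO_L$-equivariance of $\theta_p$ and $\theta_p^\vee$, and the asserted canonical isomorphisms follow.
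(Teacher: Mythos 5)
Your proposal is correct and follows essentially the same route as the paper: the paper's proof consists of quoting the Cartier-theory identification $T_p\widehat{\cA}^t\cong\Hom_{fGr,\OCp}(\widehat{\cA},\Gmf)$ and asserting that everything follows, which amounts exactly to your steps of dualising the $\Gamma_{00}(p^\infty)$-structure, invoking the trace pairing (valid since $p\nmid d_L$) to identify $\sO_L(\Sigma_p)^\vee\cong\sO_L(\Sigma_p)$, and using that a multiplicative-type formal group over $\OCp$ is recovered from its $p$-divisible group. You merely make explicit the details the paper leaves implicit.
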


\begin{proof}
By Proposition \ref{prop:GammaStructures}, the $\Gamma_{00}(p^\infty)$-structure gives compatible $\sO_L$-equivariant isomorphisms
\begin{align*}
	\cA_{(n)}^\vee [\frp_\Sigma^n] \cong \sO_L(\Sigma_p)/p^n\sO_L(\Sigma_p),\quad \cA_{(n)}[\ol{\frp}_\Sigma^n]\cong \sO_L(\ol{\Sigma}_p)/p^n\sO_L(\ol{\Sigma}_p)
\end{align*}
and the isomorphisms $T_p\widehat{\cA}^t \cong \sO_L(\Sigma_p)$ and $T_p(\widehat{\cA}^\vee)^t \cong \sO_{L}(\overline{\Sigma}_p)$ follow by passing to the limit. The second claim follows immediately from the definition of a $\Gamma_{00}(p^\infty)$-structure and the equivalence of $p$-divisible formal groups and infinitesimal $p$-divisible groups.
\end{proof}

In the following, let us consider a tuple  $(\cA/\cR,\Sigma,\omega(\cA))$ as in Notation \ref{notation_padicAbelianVariety} with a fixed $\Gamma_{00}(p^\infty)$-structure $\theta_p$ over $\OCp$. The $\sO_L^\times$-action on $\widehat{\cA}$ induces a decomposition
\[
	\omega_{\widehat{\cA}}= \bigoplus_{\sigma\in\Sigma }\omega_{\widehat{\cA}}(-\sigma).
\]
The isomorphism $\widehat{\cA}\cong \Gmf\otimes \sO_L(\Sigma_p)$ over $\OCp$ allows us to be more concrete. We have
\[
	\Lie(\widehat{\cA})\cong \Lie(\Gmf)\otimes_{\Zp}\sO_L(\Sigma_p)\cong \OCp\otimes_{\Zp} \sO_L(\Sigma_p) \cong \prod_{\sigma\in \Sigma}\OCp.
\]
Here, we have used the canonical isomorphism $\Lie(\Gmf)\cong \OCp$ coming from the canonical invariant derivation $(1+T)\frac{\partial}{\partial T}$ on $\Gmf$. Passing to the dual gives us a canonical basis of $\omega_\cA$:
\[
	\omega_{can}(\cA)=(\omega_{can}(\cA)(-\sigma))_{\sigma \in \Sigma}\in \bigoplus_{\sigma\in \Sigma}\omega_{\widehat{\cA}}(-\sigma)=\omega_{\widehat{\cA}}\cong \omega_{\cA}.
\]
The comparison of this basis $\omega_{can}(\cA)$ to the algebraic basis $\omega(\cA)$ of $\omega_\cA$ gives us the $p$-adic periods:

\begin{definition}
	\emph{The $p$-adic periods} $\Omega_p=(\Omega_p(\sigma))_{\sigma\in \Sigma}\in\OCp^\Sigma$ of $\cA$ are defined by the equation:
	\begin{align*}
		\omega(\cA)=\Omega_p\omega_{can}(\cA).
	\end{align*}
\end{definition}
\nomenclature{$\Omega_p,\Omega_p^\vee$}{\nomrefpage}
Of course, the $p$-adic periods depend on the triple $(\cA,\theta_p,\omega(\cA))$. If we want to stress this dependence we will write $\Omega_p(\cA,\theta_p,\omega(\cA))$. Usually, the dependence will be clear from the context. Applying this construction to $(\cA^\vee,\theta_p^\vee,\omega(\cA^\vee))$ gives $p$-adic periods
\[
	\Omega_p^\vee=(\Omega_p^\vee(\ol{\sigma}))_{\ol{\sigma}\in\ol{\Sigma}}\in \OCp^{\ol{\Sigma}}.
\]
The $p$-adic periods are independent under prime-to-$p$ isogenies in the following sense:
\begin{lemma}\label{lem_Gamma00_isogeny}
Let $\cA\rightarrow\cA'$ be an isogeny over $\OCp$ of prime-to-$p$ degree, hence \'etale, and let $\omega(\cA')$ be the basis of $\omega_{\cA'}$ induced from the isomorphism:
	\[
		\omega_{\cA}\cong \omega_{\cA'}.
	\]
	The $\Gamma_{00}(p^\infty)$-structure on $\cA$ induces a $\Gamma_{00}(p^\infty)$-structure $\theta_p'$ on $\cA'$ and we have the comparison
	\[
		\Omega_p(\cA,\theta_p,\omega(\cA))=\Omega_p(\cA',\theta_p',\omega(\cA')).
	\]
\end{lemma}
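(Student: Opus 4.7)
The plan is to use that a prime-to-$p$ isogeny induces an isomorphism on formal groups, and trace through the definitions. First I would note that since $\varphi\colon \cA\to \cA'$ has prime-to-$p$ degree, its kernel is a finite \'etale group scheme whose order is invertible in $\OCp$ and contains no $p$-torsion, so the induced map
\[
\widehat{\varphi}\colon \widehat{\cA}\xrightarrow{\sim} \widehat{\cA'}
\]
on formal completions along the identity is an isomorphism of formal groups over $\cS=\Spec \OCp$. In particular $\widehat{\varphi}$ restricts to an isomorphism $\widehat{\cA}[p^\infty]\xrightarrow{\sim}\widehat{\cA'}[p^\infty]$ on the associated $p$-divisible groups.

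Next I would spell out how $\theta_p'$ is defined from $\theta_p$ via $\varphi$: the only reasonable choice making the induction functorial is
\[
\theta_p'\;=\;\widehat{\varphi}\circ \theta_p\colon \mu_{p^\infty}\otimes_{\Zp}\sO_L(\Sigma_p)\xrightarrow{\sim} \widehat{\cA'}[p^\infty],
\]
and this is $\sO_L$-equivariant because $\widehat{\varphi}$ is, being induced from the $\sO_L$-linear $\varphi$. Under the identifications
\[
\widehat{\cA}\cong \Gmf\otimes_{\Zp}\sO_L(\Sigma_p)\cong \widehat{\cA'}
\]
furnished by $(\theta_p,\theta_p^\vee)$ and $(\theta_p',\theta_p'^\vee)$ as in Proposition~\ref{prop:padicFG}, the map $\widehat{\varphi}$ then corresponds to the identity of $\Gmf\otimes_{\Zp}\sO_L(\Sigma_p)$, because Cartier duality and the construction in Proposition~\ref{prop:padicFG} are functorial in isomorphisms of $p$-divisible groups.

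Consequently, pulling back the canonical invariant differential along $\widehat{\varphi}$ gives, component by component in $\Sigma$,
\[
\widehat{\varphi}^{\,*}\,\omega_{can}(\cA')(\sigma)=\omega_{can}(\cA)(\sigma),
\]
since on both sides these are the duals of the derivation $(1+T)\partial/\partial T$ on $\Gmf$ transported by the same identification. On the other hand, the basis $\omega(\cA')(\sigma)$ is defined from $\omega(\cA)(\sigma)$ via the isomorphism $\omega_{\cA}(\sigma)\cong \omega_{\cA'}(\sigma)$ induced by $\varphi$, i.e.\ $\varphi^{*}\omega(\cA')(\sigma)=\omega(\cA)(\sigma)$. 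Combining these, one obtains
\[
\Omega_p(\cA,\theta_p,\omega(\cA))\,\omega_{can}(\cA)=\omega(\cA)=\varphi^{*}\omega(\cA')=\Omega_p(\cA',\theta_p',\omega(\cA'))\,\omega_{can}(\cA),
\]
and cancelling $\omega_{can}(\cA)$ yields the desired equality. There is no real obstacle here; the only point requiring care is the compatibility in the third paragraph, namely that the isomorphism $\Gmf\otimes\sO_L(\Sigma_p)\cong\Gmf\otimes\sO_L(\Sigma_p)$ induced by $\widehat{\varphi}$ and the $\Gamma_{00}(p^\infty)$-structures is the identity, which is a direct consequence of the Cartier-dual construction $T_p\widehat{\cA}^t\cong \Hom_{fGr}(\widehat{\cA},\Gmf)$ being functorial in the isomorphism $\widehat{\varphi}$.
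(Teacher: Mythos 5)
Your proposal is correct and follows essentially the same route as the paper, which simply observes that both $\omega(\cA')(\sigma)$ and $\omega_{can}(\cA')(\sigma)$ correspond to $\omega(\cA)(\sigma)$ and $\omega_{can}(\cA)(\sigma)$ under the isomorphism of cotangent spaces induced by the prime-to-$p$ isogeny, so the ratio defining $\Omega_p$ is unchanged. You merely spell out in more detail the one fact the paper asserts implicitly, namely that $\widehat{\varphi}$ becomes the identity of $\Gmf\otimes_{\Zp}\sO_L(\Sigma_p)$ under the trivializations coming from $\theta_p$ and $\theta_p'=\widehat{\varphi}\circ\theta_p$, hence matches the canonical differentials.
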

\begin{proof}
Indeed, we have the equation $\omega(\cA')=\Omega_p(\cA,\theta_p,\omega(\cA))\omega_{can}(\cA')$	since $\omega(\cA')$ and $\omega_{can}(\cA')$ are pullbacks of the corresponding differentials on $\cA$.
\end{proof}

\subsection{Infinitesimal trivialization of the Poincar\'e bundle}
In this section, we will construct an isomorphism $\widehat{\sP}|_{\widehat{\cA}}\xrightarrow{\sim} \sO_{\widehat{\cA\times\cA^\vee}}$. This isomorphism will play a key role in the construction of the $p$-adic measure interpolating the Eisenstein-Kronecker series $p$-adically. Afterwards, we will discuss the basic properties of this isomorphism.

We keep the notation of the previous section and $(\cA/\cR,\Sigma,\omega(\cA),\omega(\cA^\vee),x)$ is as in Notation \ref{notation_padicAbelianVariety}.  In this section, we will always consider $\cA$ as an abelian variety over $\OCp$. Motivated by Norman's construction of $p$-adic theta functions, we obtain:
\begin{proposition}\label{prop_trivialization_Poincare}
	There is a canonical isomorphism
	\[
		\widehat{\sP}|_{\widehat{\cA}}\xrightarrow{\sim} \sO_{\widehat{\cA\times\cA^\vee}}.
	\]
\end{proposition}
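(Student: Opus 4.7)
The plan is to use the explicit description of the formal groups from Proposition~\ref{prop:padicFG} together with the biextension nature of the Poincar\'e bundle, following Norman's construction of $p$-adic theta functions.

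First I would invoke Proposition~\ref{prop:padicFG} to identify
\[
\widehat{\cA}\cong \Gmf\otimes_{\Zp}\sO_L(\Sigma_p),\qquad \widehat{\cA^\vee}\cong\Gmf\otimes_{\Zp}\sO_L(\overline{\Sigma}_p),
\]
so that $\widehat{\cA\times\cA^\vee}=\widehat{\cA}\times_\cS \widehat{\cA^\vee}$ is the formal spectrum of a power-series ring over $\OCp$. In particular, every line bundle on this formal scheme is abstractly trivial, so the content of the proposition lies in producing a \emph{canonical} trivialization.

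Second, I would observe that $\sP$ carries its natural structure of a biextension of $(\cA,\cA^\vee)$ by $\Gm$, rigidified along $e(\cR)\times\cA^\vee$ and along $\cA\times e^\vee(\cR)$. Restriction to $\widehat{\cA}\times\widehat{\cA^\vee}$ yields a biextension of the two multiplicative formal groups above by $\Gmf$. By the classification of biextensions of products of copies of $\Gmf$ (via Cartier duality), such a biextension is determined by a single $\Zp$-bilinear pairing on their Tate modules; in our setup this pairing is precisely the Oda pairing that already underlies Proposition~\ref{prop:GammaStructures}, identifying $T_p\widehat{\cA^\vee}$ with $\Hom_\Zp(T_p\widehat{\cA},\Zp)$.

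Third, a biextension of multiplicative formal groups by $\Gmf$ admits a tautological trivialization, defined by the universal Serre--Tate $q$-expansion attached to the classifying pairing; this gives a distinguished global section of $\widehat\sP|_{\widehat\cA}$. I would then verify that this section restricts to the rigidification $\bfone$ of $\widehat\sP$ along $\widehat{\cA}\times\{e^\vee\}$, which is immediate from the explicit shape of the biextension trivialization and from the partial group law in the $\widehat{\cA^\vee}$-direction. The main obstacle will be verifying that the resulting trivialization really is canonical --- intrinsic in the data and functorial under prime-to-$p$ isogenies (cf.\ Lemma~\ref{lem_Gamma00_isogeny}). This reduces to a direct but somewhat laborious bookkeeping calculation using the identifications in Proposition~\ref{prop:padicFG}, and it is the one place where one must be careful.
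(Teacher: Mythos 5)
Your instinct that the trivialization exists because, in the ordinary case, both $\widehat{\cA}$ and $\widehat{\cA^\vee}$ are formal multiplicative groups is correct, but the central step of your argument (the ``classifying pairing'' and the ``tautological trivialization'') is wrong as stated. The restriction of the Poincar\'e biextension to $\widehat{\cA}\times_\cS\widehat{\cA^\vee}$ is classified not by the Oda pairing but by the \emph{zero} pairing: under the Weil pairing $\cA[p^n]\times\cA^\vee[p^n]\to\mu_{p^n}$ the connected parts annihilate each other, since a bilinear map from a product of two multiplicative-type group schemes to $\mu_{p^n}$ amounts to a homomorphism from a connected group to an \'etale one and hence vanishes. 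The Oda pairing underlying Proposition \ref{prop:GammaStructures} is the \emph{perfect} pairing between a connected piece and an \'etale piece; it does not identify $T_p\widehat{\cA^\vee}$ with $\Hom_{\Zp}(T_p\widehat{\cA},\Zp)$ (you appear to be conflating the Cartier dual $\widehat{\cA}^t$ of the formal $p$-divisible group with the formal group $\widehat{\cA^\vee}$ of the dual abelian variety). Had the classifying pairing really been nondegenerate, the restricted biextension would be \emph{non}trivial and the proposition false. Likewise, there is no ``tautological trivialization defined by the universal Serre--Tate $q$-expansion'': Serre--Tate coordinates are attached to the \'etale parts of the Tate modules and parametrize deformations; they trivialize nothing on $\widehat{\cA}\times\widehat{\cA^\vee}$.

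Your route can be repaired: the classifying pairing is zero, so by the theory of biextensions of formal groups the restricted biextension is trivial, and the trivialization is unique because there are no nonzero bihomomorphisms $\widehat{\cA}\times\widehat{\cA^\vee}\to\Gmf$ (the source is connected while $\underline{\Hom}(\Gmf,\Gmf)$ is the constant group $\Zp$). But this imports the classification of biextensions of formal multiplicative groups over $\OCp$, which the paper neither proves nor cites. The actual proof is far more elementary and stays entirely inside the machinery already set up: for $C_n:=\ker[\frp_\Sigma^n]$ the dual isogeny $[\frp_\Sigma^n]^\vee$ is \'etale (this is exactly where ordinarity enters), so the splitting principle of Corollary \ref{cor:coh-log-splitting}, applied after base change to $C_n$ and to the tautological diagonal torsion section $\Delta\colon C_n\to\cA\times_\cS C_n$, gives $\widehat{\sP}|_{C_n}\isom\sO_{C_n}\otimes_{\sO_\cS}\sO_{\widehat{\cA^\vee}}$; one then passes to the limit over $n$ using that $\widehat{\cA}$ is exhausted by the $C_n$. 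Compatibility with the rigidification is automatic in that construction, so the bookkeeping you anticipate in your last step disappears.
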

\begin{proof}The construction is the same as in the case of elliptic curves, see \cite[II, \S 6]{Sprang_EisensteinKronecker}.  For the convenience of the reader, let us sketch the argument. Recall that $\frp_\Sigma:=\prod_{\frp\in\Sigma_p} \frp$ and $\overline{\frp}_{\Sigma}:=\prod_{\ol{\frp}\in\overline{\Sigma}_p} \ol{\frp}$. The infinitesimal part of the $p^n$-torsion is given by the subgroup scheme
	\[
		C_n:=\cA\left[\frp_\Sigma^n\right]=\ker [\frp_\Sigma^n].
	\]
	After the base change  $\cA\times_{\OCp} C_n\rightarrow C_n$ of $\cA$ to $C_n$ we have a canonical $C_n$-valued section
	\[
		\Delta\colon C_n\rightarrow \cA\times_{\OCp} C_n
	\]
	given by the diagonal. The completed Poincar\'e bundle on $\cA\times_{\OCp} C_n$ is given by $\pr_{\cA}^*\widehat{\sP}$, where $\pr_\cA\colon \cA\times_{\OCp} C_n\rightarrow \cA$ is the projection to the first component. Since the dual isogeny $[\frp_\Sigma^n]^\vee$ is \'etale we can apply Corollary \ref{cor:coh-log-splitting} and obtain an isomorphism
	\[
		\Delta^*\pr_{\cA}^*\widehat{\sP}\cong e^*\pr_{\cA}^*\widehat{\sP}.
	\]
	This gives an $\sO_{C_n}$-linear isomorphism
	\[
		\widehat{\sP}|_{C_n}\cong \Delta^*\pr_{\cA}^*\widehat{\sP} \cong e^*\pr_{\cA}^*\widehat{\sP} \cong \sO_{C_n}\otimes_{\OCp} \sO_{\widehat{\cA^\vee}}
	\]
	Passing to the limit over $n$ and observing $\varinjlim {C_n}=\widehat{\cA}$ proves the claim. 
\end{proof}

In particular, there is an isomorphism of $\sO_{\widehat{\cA}}$-modules 
\begin{equation}\label{eq_1splitting_1}
	\sP^{(1)}|_{\widehat{\cA}}\cong \sO_{\widehat{\cA}}\otimes_{\OCp}(\OCp\oplus \omega_{\cA^\vee}).
\end{equation}
Since $\sP^{\natural(1)}$ is the pushout of
\[
	0\rightarrow \pi^*\omega_{\cA^\vee}\rightarrow \sP^{(1)}\rightarrow \sO_\cA\rightarrow 0
\]
along $\pi^*\omega_{\cA^\vee}\rightarrow \pi^*\sH$ we also get a splitting
\begin{equation}\label{eq_1splitting_2}
	\sP^{\natural(1)}|_{\widehat{\cA}}\cong \sO_{\widehat{\cA}}\otimes_{\OCp}(\OCp\oplus \sH).
\end{equation}

Applying $\TSym^n$ to \eqref{eq_1splitting_1} and \eqref{eq_1splitting_2}, using the co-multiplication maps and passing to the limit over $n$ gives
\begin{align}
	\label{eq_Poincare_TSym1}\widehat{\sP^{\natural}}|_{\widehat{\cA}}&\hookrightarrow \sO_{\widehat{\cA}}\hat{\otimes}_{\OCp} \widehat{\TSym}^\cdot (\sH)\\
	\label{eq_Poincare_TSym2}\widehat{\sP}|_{\widehat{\cA}}&\hookrightarrow \sO_{\widehat{\cA}}\hat{\otimes}_{\OCp} \widehat{\TSym}^\cdot(\omega_{\cA^\vee}).
\end{align}
These maps are injective since $n!$ is a non-zero divisor in $\sO_{\CC_p}$ for all $n\in\NN$. For some arguments, it will be convenient to pass to the generic fiber of $\cA$. Let us introduce the following notation for the generic fiber of our abelian variety:
\begin{notation}\label{not_genfib}
	Let $(\cA/\cR,\omega(\cA),\omega(\cA^\vee),x)$ be an abelian scheme as in Notation \ref{notation_padicAbelianVariety}.
	We will write
	\[
		A:=\cA\times_{\cR}\Spec\Cp,\quad A^\vee:=\cA^\vee\times_{\cR}\Spec\Cp
	\]
	for the generic fiber of $\cA$. The pullback of the differential forms $\omega(\cA)$ and $\omega(\cA^\vee)$ along the maps $A\to \cA$ and $A^\vee \to \cA^\vee$ will be denoted by $\omega(A)$ and $\omega(A^\vee)$. The Poincar\'e bundle on $A\times A^\vee$ will be denoted by $\sP_{\Cp}$ and its completion by $\widehat{\sP}_{\Cp}$ and similarly we use $\widehat{\sP}^{\natural}_\Cp$.
\end{notation}
The following Lemma allows us to compare the connection $\nabla$ on $\widehat{\sP}_{\Cp}^\natural$ to the invariant differential of the formal group $\widehat{A}\times \widehat{A}^\vee$.
\begin{lemma}\label{lem_sP_triv}
\begin{enumerate}
\item The maps \eqref{eq_Poincare_TSym1} and \eqref{eq_Poincare_TSym2} induce isomorphisms
\begin{align*}
	\widehat{\sP}_{\Cp}^{\natural}|_{\widehat{A}}&\xrightarrow{\sim} \sO_{\widehat{A}}\hat{\otimes}_{\Cp} \widehat{\TSym}^\cdot (\sH_{\Cp})\\
	\widehat{\sP}_{\Cp}|_{\widehat{A}}&\xrightarrow{\sim} \sO_{\widehat{A}}\hat{\otimes}_{\Cp} \widehat{\TSym}^\cdot(\omega_{A^\vee}).
\end{align*}
In particular, the splitting of the Hodge filtration induces a canonical $\sO_{\widehat{A}}$-linear retraction $p\colon \widehat{\sP}_{\Cp}^{\natural}|_{\widehat{A}}\twoheadrightarrow \widehat{\sP}_{\Cp}|_{\widehat{A}} $ of the canonical injection $i\colon \widehat{\sP}_{\Cp}|_{\widehat{A}} \hookrightarrow \widehat{\sP}^{\natural}_{\Cp}|_{\widehat{A}}$.
\item The diagram
\begin{equation}\label{eq:PbigDiagram}
	\xymatrix{
		\sO_{\widehat{A\times A^\vee}} \ar[d]^{d\otimes\id} \ar[r]^-{\cong} &  \widehat{\sP}_{\Cp}|_{\widehat{A}} \ar[r]^{i} & \widehat{\sP}^{\natural}_{\Cp}|_{\widehat{A}} \ar[d]_{\nabla}\\
		\sO_{\widehat{A\times A^\vee}}\otimes\Omega^1_{\widehat{A}} & \widehat{\sP}_{\Cp}|_{\widehat{A}} \otimes\Omega^1_{\widehat{A}} \ar[l]_-{\cong} & \widehat{\sP}^{\natural}_{\Cp}|_{\widehat{A}} \otimes\Omega^1_{\widehat{A}}\ar[l]_{p}
	}
\end{equation}
commutes.
\end{enumerate}
\end{lemma}
\begin{proof}
$(1)$ By \eqref{eq_1splitting_1} and \eqref{eq_1splitting_1}, we have isomorphisms
\begin{align*}
		\sP_{\Cp}^{\natural(1)}|_{\widehat{A}}&\cong \sO_{\widehat{A}}\otimes_{\Cp}(\Cp\oplus \sH_{\Cp})\\
		\sP_{\Cp}^{(1)}|_{\widehat{A}}&\cong \sO_{\widehat{A}}\otimes_{\Cp}(\Cp\oplus \omega_{A^\vee}).
\end{align*}
Over a field of characteristic zero, the co-multiplication maps are isomorphisms
\begin{align*}
	\sP_{\Cp}^{\natural(n)} &\to \TSym^n \sP_{\Cp}^{\natural(1)}\\
	\sP_{\Cp}^{(n)} &\to \TSym^n \sP_{\Cp}^{(1)},
\end{align*} 
and the claim follows by passing to the limit.\\
$(2)$ By applying $\widehat{\TSym}$ it suffices to prove the commutativity of diagram \eqref{eq:PbigDiagram} for $\sP_{\Cp}^{\natural(1)}|_{\widehat{A}}$ instead of $\widehat{\sP}_{\Cp}^{\natural}$. Let us recall that $\sP_{\Cp}^{\natural(1)}|_{\widehat{A}}$ sits in a short exact sequence
\[
	0\rightarrow \pi^*\sH_\Cp\rightarrow \sP^{\natural(1)}_\Cp \rightarrow \sO_{A}\rightarrow 0
\]
which is horizontal if we equip $\pi^*\sH$ and $\sO_{A}$ with the canonical $\Cp$-linear derivation. Since this sequence splits over $\widehat{A}$, the connection $\nabla_{\sP^{\natural(1)}_\Cp}$ is uniquely determined by $\nabla_{\sP^{\natural(1)}_\Cp}(e^{(1)})\in \pi^*\sH_\Cp\otimes\Omega^1_{\widehat{A}}$, where $e^{(1)}$ is the image of $1\otimes(1,0)$ under
\[
	\sP_\Cp^{\natural(1)}|_{\widehat{A}}\cong \sO_{\widehat{A}}\otimes_{\Cp}(\Cp\oplus \sH_\Cp).
\]
The commutativity of the above diagram is now equivalent to the formula
\begin{equation}
	\nabla_{\sP_\Cp^{\natural(1)}}(e^{(1)})\in \pi^*\sH_\Cp(\overline{\Sigma})\otimes\Omega^1_{\widehat{A}}=\ker( \pi^*\sH_\Cp\otimes\Omega^1_{\widehat{A}} \twoheadrightarrow  \pi^*\omega_{A^\vee}\otimes\Omega^1_{\widehat{A}}).
\end{equation}
Before we verify this formula, let us recall that the canonical map
\[
	[p]_\sharp\colon \sH_\Cp\rightarrow \sH_\Cp
\]
acts by multiplication by $p$ on $\sH_\Cp(\Sigma)$ and is invertible on the 'unit root' space $\sH_\Cp(\overline{\Sigma})$. Let us write $\eta_\Sigma$ for the component of
\[
	\nabla_{\sP_\Cp^{\natural(1)}}(e^{(1)})
\]
in $\pi^*\sH_\Cp(\Sigma)\otimes\Omega^1_{\widehat{A}}$. Our aim is to show $\eta_\Sigma=0$. The horizontality of the invariance under isogenies map
\[
	[p]_\sharp \colon\sP_\Cp^{\natural(1)} \rightarrow [p]^*\sP_\Cp^{\natural(1)}
\]
and the fact that $[p]^*\colon \Gamma(A,\Omega^1_{A})\rightarrow \Gamma(A,\Omega^1_{A})$ and $[p]_\sharp \colon \sH_\Cp(\Sigma)\rightarrow \sH_\Cp(\Sigma)$ are both multiplication by $p$ implies the formula
\[
	p^2\eta_\Sigma=p\eta_\Sigma.
\]
Since $\Cp$ is $p$-torsion free this implies $\eta_\Sigma=0$.
\end{proof}

\subsection{Construction of $p$-adic theta functions}
In this section, we will use the isomorphism $\widehat{\sP}|_{\widehat{\cA}}\xrightarrow{\sim} \sO_{\widehat{\cA\times\cA^\vee}}$ from Proposition \ref{prop_trivialization_Poincare} to construct a certain $p$-adic theta function from our Eisenstein--Kronecker classes. 

Let $(\cA/\cR,\Sigma,\omega(\cA),\omega(\cA^\vee),x)$ be as in Notation \ref{notation_padicAbelianVariety}. We will work in this section over $\OCp$. 
\begin{lemma}
Let $y\in \cA(\OCp)$ be in the kernel of an isogeny $\varphi:\cA\to \cB$ whose dual $\varphi^{\vee}$ is \'etale. Let  $T_y:\cA\to \cA$ be the translation with $y$. Then there is a canonical isomorphism 
\begin{equation}\label{eq_translation_invariance}
	T_y^*\widehat{\sP}\isom \widehat{\sP}.
\end{equation}
In particular, on the generic fibre $A$ of $\cA$ over $\C_p$ there is always an isomorphism 
\begin{equation*}
T_y^{*}\wP_{\C_p}\isom \wP_{\C_p}.
\end{equation*}
\end{lemma}
\begin{proof}
By Theorem \ref{thm:functoriality} one has $T_y^*\widehat{\sP}\cong T_y\varphi^*\widehat{\sP}_{\cB} =\varphi^*\widehat{\sP}_{\cB}\cong  \widehat{\sP}$.
\end{proof}
\begin{definition}Let $y\in \cA(\OCp)$ be a $\varphi$-torsion section, i.e. $y\in \ker\varphi$ for an isogeny $\varphi:\cA\to \cB$ and assume the dual $\varphi^{\vee}$ to be \'etale.
Define 
\begin{equation}\label{eq_rho_hat_integral}
	\hat{\varrho}_y\colon T_y^* \widehat{\sP}|_{\widehat{\cA}}\cong \widehat{\sP}|_{\widehat{\cA}}\cong \sO_{\widehat{\cA \times \cA^\vee}}.
\end{equation}
\nomenclature{$\hat{\varrho}_y$}{\nomrefpage}
to be the composition of the isomorphism $T_y^*\widehat{\sP}\isom \widehat{\sP}$ from \eqref{eq_translation_invariance} restricted to $\widehat{\cA}$ with the isomorphism 
$\widehat{\sP}|_{\widehat{\cA}} \cong \sO_{\widehat{\cA\times \cA^\vee}}$ from  Proposition \ref{prop_trivialization_Poincare}. 
\end{definition}
In particular, for any torsion section $y\in \cA(\OCp)$ one has always an isomorphism on the generic fibre:
\begin{equation}\label{eq_rho_hat_generic}
	\hat{\varrho}_y\colon T_y^* \widehat{\sP}_{\Cp}|_{\widehat{A}}\cong \widehat{\sP}_{\Cp}|_{\widehat{A}}\cong \sO_{\widehat{A \times A^\vee}}.
\end{equation}
We will use this definition for the $\frf$-torsion section $x$ and for $\sws\in \cA[\frp_\Sigma^n](\OCp)$. Recall from \eqref{eq:p-isogenies} that the dual $[\frp_\Sigma^n]^{\vee}=[\ol{\frp_\Sigma^n}]$ is \'etale. Further, the  group $\cA[\frp_\Sigma^n](\OCp)$ identifies with the $p^{n}$-torsion $\widehat{\cA}[p^{n}]$ of the formal group $\widehat{\cA}$. Hence $\sws$ induces a translation 
\[
	T_\sws\colon \widehat{\cA}\to \widehat{\cA}
\]
on the formal group $\widehat{\cA}$. 

For later reference, let we need some  basic properties and compatibilities of $\hat{\varrho}_y$. 
Recall from Corollary \ref{cor:coh-log-splitting} the isomorphism 
\begin{equation*}
\varrho_y:y^{*}\wP\isom \sO_{\widehat{\cA}^{\vee}}
\end{equation*}
which holds if $y\in \ker\varphi$ with $\varphi^{\vee}$ \'etale. Further recall from \eqref{eq:moment-G} the moment map (we work over $\OCp$)
\begin{equation*}
\mom_{\sO_{\widehat{\cA}}}:\Gamma(\widehat{\cA},\sO_{\widehat{\cA}})\to \widehat{\TSym}(\omega_{\cA/\OCp}).
\end{equation*}

\begin{lemma}\label{lem_rhox}Let $y\in \ker \varphi(\OCp)\subset \cA(\OCp)$ be a $\varphi$-torsion section. Note that the first two properties are statements about the generic fiber, while the last statement holds integrally:
	\begin{enumerate}
		\item $\mom_{\widehat{A^\vee}}\circ( e^*\hat{\varrho}_y)=\varrho_y$ as maps $y^* \widehat{\sP}_{\Cp}\rightarrow \widehat{\TSym^\bullet(\omega_{A^\vee}})$.
		\item $\hat{\varrho}_y\circ p\circ \nabla\circ i =d_{\hat{A}}\circ \hat{\varrho}_y$ as maps $T_y^*\widehat{\sP}_{\Cp}|_{\widehat{A}}\rightarrow \sO_{\widehat{A\times A^\vee}}\otimes_{\Cp} \omega_{\widehat{A}}$. Here, $i\colon \widehat{\sP}_{\Cp}\subseteq \widehat{\sP_{\Cp}^\natural}$ is the canonical inclusion and $p\colon  \widehat{\sP}_{\Cp}^\natural\twoheadrightarrow \widehat{\sP_{\Cp}}$ the retraction induced by the splitting of the Hodge filtration.
		\item \label{lem_rhox_3} Assume that  $\varphi^{\vee}$ is \'etale. Then for $\sws\in \widehat{\cA}[p^n](\OCp)=\cA[\frp_\Sigma^n](\OCp)$ we have a commutative diagram
		\[
			\xymatrix{
				H^0(\cA,\widehat{\sP}) \ar[r]^{(\cdot)|_{\widehat{\cA}}\circ T_y^*}\ar[d]^{T_\sws^*} & H^0(\widehat{\cA},T_y^*\widehat{\sP}) \ar[r]^-{\hat{\varrho}_{y}} &  H^0(\widehat{\cA\times \cA^\vee},\sO_{\widehat{\cA\times \cA^\vee}}) \ar[d]_{(T_\sws\times \id)^*}\\
				H^0(\cA,T_\sws^*\widehat{\sP}) \ar[r]^{(\cdot)|_{\widehat{\cA}}\circ T_y^*} & H^0(\widehat{\cA},T_{y+\sws}^*\widehat{\sP}) \ar[r]^-{\hat{\varrho}_{y+\sws}} &  H^0(\widehat{\cA\times \cA^\vee},\sO_{\widehat{\cA\times \cA^\vee}}).
			}
		\]
	\end{enumerate}
\end{lemma}
\begin{proof}
The first claim follows immediately from the definitions of $\varrho_y$ and $\hat{\varrho}_y$. The second claim follows from Lemma \ref{lem_sP_triv} using that the canonical isomorphism
\[
	\varphi^* \widehat{\sP}_{\Cp}^\natural\cong \widehat{\sP}^\natural_{\Cp}
\]
is horizontal with respect to $\nabla$. The last claim reduces to the commutativity of the translation isomorphisms
\[
	\xymatrix{
		T_{y+\sws}^*\sP\ar[r]\ar[d] & T_y^*\sP\ar[d]\\
		T_{\sws}^*\sP\ar[r] & \sP.
	}
\]
\end{proof}
Fix an ideal $\frc\subseteq \sO_L$  coprime to $p\frf$ such that $N\frc\in R^{\times}$ and  let $\cD=\ker[\frc]\setminus \{x(\cR)\}$ be as in Definition \ref{def:D-specific} and 
\[
	f\colon \cD(\OCp)\rightarrow \OCp
\]
a $\Gamma$-invariant function on $\cD$. 
Let us write $D\subseteq A$ for the generic fiber of $\cD\subseteq \cA$ and $U_D:=A\smallsetminus D$ for its complement. 

Let again $y\in \cA(\OCp)$ be a $\varphi$-torsion section, with $\varphi^{\vee}$ \'etale. In the following, we will associate a $p$-adic theta function
\[
	\vartheta_\Gamma(f,y)\in H^0(\widehat{\cA\times \cA^\vee},\sO_{\widehat{\cA\times \cA^\vee}})^\Gamma
\]
to the coherent Eisenstein--Kronecker class $\EK_{\Gamma}(f)\in H^{d-1}(\cU_\cD,\Gamma,\widehat{\sP}\otimes \Omega_{\cA}^d)$. 

Let us write $\widehat{\cA}_{y}$ for the completion of $\cA$ at $y$.  The restriction of the equivariant coherent Eisenstein--Kronecker class $\EK_{\Gamma}(f)$ to $\widehat{\cA}_{y}$ gives:
\[
	\EK_{\Gamma}(f)|_{\widehat{\cA}_{y}}\in H^{d-1}(\widehat{\cA}_{y},\Gamma, \widehat{\sP}\otimes \Omega_{\cA}^d|_{\widehat{\cA}_{y}}).
\]
We will use the basis $\omega_{can}(\cA)$ to trivialize $\omega_{\cA}^d\cong\OCp$. Since $\widehat{\cA}_{y}$ is an affine formal scheme, we have
\[
	H^{d-1}(\widehat{\cA}_{y},\Gamma, \widehat{\sP}\otimes \Omega_{\cA}^d|_{\widehat{\cA}_{y}})=H^{d-1}(\Gamma,H^0(\widehat{\cA}_{y}, \widehat{\sP}\otimes \Omega_{\cA}^d|_{\widehat{A}_{y}})).
\]
Pullback along the translation $T_y$ by $y$ gives an isomorphism
\[
	T_y^*\colon H^0(\widehat{\cA}_{y}, \widehat{\sP}\otimes \Omega_{\cA}^d|_{\widehat{\cA}_{y}})\cong H^0(\widehat{\cA}, (T_y^*\widehat{\sP})\otimes \Omega_{\cA}^d|_{\widehat{\cA}}).
\]

\begin{lemma}\label{lem:capproduct}
	Let $\Gamma'\subseteq \Gamma$ be a torsion-free subgroup of finite index and $\xi\in H_{d-1}(\Gamma,\ZZ)$ an element such that $\res(\xi)\in H_{d-1}(\Gamma',\ZZ)$ is a generator. Then the cap-product with $\xi$ induces a canonical homomorphism
	\begin{multline}\label{eq:inclusion}
\cap\xi\colon H^{d-1}(\Gamma,H^0(\widehat{\cA\times \cA^\vee}, \sO_{\widehat{\cA\times \cA^\vee}}\otimes\Omega^d_{\cA}))\rightarrow H^0(\widehat{\cA\times \cA^\vee}, \sO_{\widehat{\cA\times \cA^\vee}}\otimes \TSym^{\ul{1}}\omega_\cA)^\Gamma\\\subseteq  H^0(\widehat{\cA\times \cA^\vee},\sO_{\widehat{\cA\times \cA^\vee}}),
\end{multline}
where the last inclusion uses the isomorphism $\TSym^{\ul{1}} \omega_\cA\cong \OCp$ induced by the canonical basis $\omega_{can}(\cA)$ of $\omega_\cA$.
\end{lemma}
\begin{proof} The only non-trivial statement is the assertion that the map is canonical, i.e. it does not depend on the choice of $\xi\in H_{d-1}(\Gamma,\ZZ)$, but this follows from the corresponding statement in Proposition \ref{prop:orientation} and the canonical inclusion given by the moment map
\[
	H^0(\widehat{\cA\times \cA^\vee},\sO_{\widehat{\cA\times \cA^\vee}}) \hookrightarrow \widehat{\TSym}^\cdot(\omega_{\cA}\oplus \omega_{\cA^\vee})=\prod_{\alpha,\beta}\TSym^\alpha(\omega_{\cA})\otimes \TSym^\beta(\omega_{\cA^\vee}).\qedhere
\]
\end{proof}

\begin{definition}\label{def:theta}Let $\cA/\OCp$, $f\in \OCp[\cD]^{0,\Gamma}$ and $y$ a $\varphi$-torsion section such that $\varphi^{\vee}$ is \'etale. We define the \emph{$p$-adic theta function} 
\begin{equation*}
\vartheta_\Gamma(f,y)\in H^0(\widehat{\cA\times \cA^\vee},\sO_{\widehat{\cA\times \cA^\vee}})
\end{equation*}
\nomenclature{$\vartheta_\Gamma(f,y)$}{\nomrefpage}
associated with $\EK_{\Gamma}(f)$ at $y$ as the image of
	\[
		\hat{\varrho}_yT_y^*(\EK_{\Gamma}(f)|_{\widehat{\cA}_y}) \in H^{d-1}(\Gamma,H^0(\widehat{\cA\times \cA^\vee}, \sO_{\widehat{\cA\times \cA^\vee}}\otimes \Omega^d_{\cA}))
	\]
	under the canonical map \eqref{eq:inclusion} of Lemma \ref{lem:capproduct}. Since $\varphi^{\vee}$ is always  \'etale on the generic fiber, we get in any case a $p$-adic theta function on the generic fiber
\begin{equation*}
\vartheta_\Gamma(f,y)\in H^0(\widehat{A\times A^\vee},\sO_{\widehat{A\times A^\vee}}).
\end{equation*} 
\end{definition}

Finally, let us observe the behaviour of $\vartheta_\Gamma(f,y)$ under varying the subgroup $\Gamma\subseteq\sO_L^\times$:
\begin{lemma}
	If $\Gamma'\subseteq \Gamma$ is a subgroup of finite index then
	\[
		\vartheta_{\Gamma'}(f,y)=[\Gamma:\Gamma']\vartheta_{\Gamma}(f,y).
	\]
\end{lemma}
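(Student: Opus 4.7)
The strategy is to reduce the claim to the analogous behavior of the evaluation map in group cohomology already recorded in Proposition \ref{prop:orientation}, exactly as in the proof of Corollary \ref{cor:change-of-group}. The key point is that every step in the construction of $\vartheta_\Gamma(f,x)$ apart from the final evaluation against the fundamental class is functorial with respect to restriction of groups, and it is in the final evaluation that the index $[\Gamma:\Gamma']$ enters.

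The plan is as follows. First, observe that the equivariant coherent Eisenstein--Kronecker class of Definition \ref{def:equiv-coh-EK} is compatible with restriction of groups: it is defined via the localization sequence \eqref{eq:localization-sequence} applied to $\Gamma$-equivariant sheaves, and since $f\in R[\cD]^{0,\Gamma}\subseteq R[\cD]^{0,\Gamma'}$ and the same localization class constructs both, we have
\[
  \res_{\Gamma'}^{\Gamma}\EK_{\Gamma}(f)=\EK_{\Gamma'}(f)
  \quad\text{in}\quad
  H^{d-1}(\cU_{\cD},\Gamma';\widehat{\sP}\otimes\pi^{*}\omega^{d}_{\cA^{\vee}/\cS}).
\]
Next, the three operations entering the definition of $\vartheta_{\Gamma}(f,x)$ after this, namely restriction to the formal neighborhood $\widehat{A}_{x}$, translation $T_{x}^{*}$, and the trivialization $\hat\varrho_{x}$ of Proposition \ref{prop_trivialization_Poincare} and \eqref{eq_translation_invariance}, do not involve the group action and are therefore compatible with $\res_{\Gamma'}^{\Gamma}$. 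Consequently, at the level of group cohomology one obtains
\[
  \hat\varrho_{x}T_{x}^{*}\!\bigl(\EK_{\Gamma'}(f)|_{\widehat{A}_{x}}\bigr)
  =\res_{\Gamma'}^{\Gamma}\!\bigl(\hat\varrho_{x}T_{x}^{*}(\EK_{\Gamma}(f)|_{\widehat{A}_{x}})\bigr)
\]
in $H^{d-1}(\Gamma',H^{0}(\widehat{A\times A^{\vee}},\sO_{\widehat{A\times A^{\vee}}}\otimes_{\Cp}\omega^{d}_{A}))$.

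Finally, the $p$-adic theta function is obtained from this cohomology class by cap product with a fundamental class of $B\Gamma$ (resp.\ $B\Gamma'$), using the isomorphism constructed as in Proposition \ref{prop:orientation}. The commutative diagram \eqref{eq:res} in that proof shows that if $\xi\in H_{d-1}(\Gamma,\ZZ)$ is the canonical generator and $\xi'\in H_{d-1}(\Gamma',\ZZ)$ is its preimage, then for any $\eta\in H^{d-1}(\Gamma,M)$ one has
\[
  (\res_{\Gamma'}^{\Gamma}\eta)\cap\xi'
  \;=\;[\Gamma:\Gamma']\cdot(\eta\cap\xi).
\]
Applying this identity to $\eta=\hat\varrho_{x}T_{x}^{*}(\EK_{\Gamma}(f)|_{\widehat{A}_{x}})$ yields the desired equality
\[
  \vartheta_{\Gamma'}(f,x)=[\Gamma:\Gamma']\,\vartheta_{\Gamma}(f,x).
\]
The only subtlety to verify carefully is the naturality of $\hat\varrho_{x}$ and $T_{x}^{*}$ as $\Gamma$-equivariant maps, so that the restriction really acts componentwise; this follows from the fact that both are defined purely via the geometry of $\widehat{\cA}$ together with the splitting principle of Corollary \ref{cor:coh-log-splitting}, which is independent of any choice of group and hence automatically commutes with passage from $\Gamma$ to $\Gamma'$.
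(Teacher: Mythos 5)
Your proof is correct and follows essentially the same route as the paper: the paper's own argument likewise reduces to the fact (from the proof of Proposition \ref{prop:orientation}) that the composite $\Cp\cong H_{d-1}(\Gamma,\Cp)\otimes H^{d-1}(\Gamma,\Cp)\to H_{d-1}(\Gamma',\Cp)\otimes H^{d-1}(\Gamma',\Cp)\cong\Cp$ is multiplication by $[\Gamma:\Gamma']$, with the functoriality of $\hat\varrho_x$, $T_x^*$ and the restriction of $\EK_\Gamma(f)$ left implicit. Your write-up simply makes those implicit compatibilities explicit.
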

\begin{proof}
	This follows exactly as in the proof of Proposition \ref{prop:orientation} from the fact that the restriction map
	\[
		\OCp\cong H_{d-1}(\Gamma,\OCp)\otimes H^{d-1}(\Gamma,\OCp)\rightarrow H_{d-1}(\Gamma',\OCp)\otimes H^{d-1}(\Gamma',\OCp)\cong \OCp
	\]
	is multiplication with $[\Gamma:\Gamma']$.
\end{proof}

\subsection{Construction of the $p$-adic Eisenstein measure}
In the previous section, we have constructed under certain assumptions a $p$-adic theta function 
\[
	\vartheta_\Gamma(f,x)\in H^0(\widehat{\cA\times \cA^\vee},\sO_{\widehat{\cA\times \cA^\vee}}).
\]
In this section, we will finally construct the $p$-adic measure interpolating the Eisenstein-Kronecker series $p$-adically from the $p$-adic theta function.

Let $(\cA/\cR,\Sigma,\omega(\cA),\omega(\cA^\vee),x)$ be as in Notation \ref{notation_padicAbelianVariety}. Furthermore, we assume that $(\cA,\cA^\vee)$ is equipped with a $\Gamma_{00}(p^\infty)$-structure $(\theta_p,\theta_p^\vee)$.

\begin{definition}
For $\alpha\in I^+_\Sigma$, $\beta\in I^+_{\overline{\Sigma}}$ let us define differential operators
\[
	\partial(\cA)^\alpha\partial(\cA^\vee)^\beta\colon \sO_{\widehat{\cA\times\cA^\vee}}\rightarrow \OCp
\]
by the formula
\[
	\mom_{\widehat{\cA\times\cA^\vee}}(f)=\left( (\partial(\cA)^\alpha\partial(\cA^\vee)^\beta f)\cdot\omega_{can}(\cA)^{[\alpha]}\omega_{can}(\cA^\vee)^{[\beta]} \right)_{\alpha,\beta}.
\]
Here $\mom_{\widehat{\cA\times\cA^\vee}}$ denotes the moment map of the formal group $\widehat{\cA\times\cA^\vee}$.
\end{definition}

The inclusion $\Zp\subseteq\OCp$ gives a map
\[
	\sO_L\otimes_\ZZ \Zp\subseteq \sO_L\otimes_\ZZ \OCp
\]
Since $\Sigma$ is assumed to be $p$-ordinary, we have $\sO_L\otimes_\ZZ \Zp= \sO_L(\Sigma_p)\oplus \sO_L(\overline{\Sigma}_p)$ and the above map decomposes into
\begin{align}
	\label{eq:coordinates}\rmt &\colon \sO_L(\Sigma_p)\rightarrow \bigoplus_{\sigma\in \Sigma}\OCp,\quad  \rms \colon \sO_L(\overline{\Sigma}_p)\rightarrow \bigoplus_{\ol{\sigma}\in \ol{\Sigma}}\OCp.
\end{align}
\begin{definition}
For $\alpha\in I_\Sigma$, $\beta\in I_{\overline{\Sigma}}$ we will write $\rmt^\alpha\rms^\beta$ for the map
\[
	\rmt^\alpha\rms^\beta\colon\sO_L\otimes_\ZZ \Zp\rightarrow \OCp,\quad (z\otimes x)\mapsto \prod_{\sigma\in \Sigma}(x\sigma(z))^{\alpha_\sigma}\cdot\prod_{\overline{\sigma}\in \overline{\Sigma}}(x\overline{\sigma}(z))^{\beta_{\overline{\sigma}}}. 
\]

\end{definition}

According to Katz \cite{Katz-another-look}, we have the following isomorphism between functions on formal toroidal groups and $p$-adic measures:
\begin{proposition}\label{prop:Mahler}
	There is a $\Gamma$-equivariant isomorphism of rings
	\[
		H^0(\widehat{\cA\times\cA^\vee},\sO_{\widehat{\cA\times\cA^\vee}})\xrightarrow{\sim} \Meas(\sO_L\otimes_\ZZ \Zp,\OCp),\quad g\mapsto \mu_g
	\]
	which is uniquely determined by the integration formula
	\[
		\int_{\sO_L\otimes_\ZZ \Zp} \rmt^\alpha\rms^\beta d\mu_g= \partial(\cA)^\alpha \partial(\cA^\vee)^\beta g.
	\]
\end{proposition}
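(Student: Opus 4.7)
The plan is to reduce to a multi-variable Amice--Mahler transform by using the explicit presentation of the formal groups $\widehat{\cA}$ and $\widehat{\cA^\vee}$ provided by Proposition \ref{prop:padicFG}, and then to identify the moment operators $\partial(\cA)^\alpha \partial(\cA^\vee)^\beta$ with integration of the coordinate monomials $\rmt^\alpha \rms^\beta$.

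First I would invoke Proposition \ref{prop:padicFG} to obtain the canonical isomorphism of formal groups
\[
\widehat{\cA \times \cA^\vee} \;\cong\; \Gmf \otimes_\Zp \bigl( \sO_L(\Sigma_p) \oplus \sO_L(\ol{\Sigma}_p) \bigr) \;=\; \Gmf \otimes_\Zp (\sO_L \otimes_\ZZ \Zp)
\]
(where we use $p \nmid d_L$ to split $\sO_L \otimes_\ZZ \Zp$ into its $\Sigma_p$- and $\ol{\Sigma}_p$-components). This reduces the construction of the isomorphism to the general fact: for any finitely generated free $\Zp$-module $M$, there is a canonical ring isomorphism
\[
H^0(\widehat{\Gmf \otimes_\Zp M},\sO) \;\xrightarrow{\sim}\; \Meas(\Hom_\Zp(M,\Zp),\OCp),\qquad g \mapsto \mu_g,
\]
which, after choosing a $\Zp$-basis, reduces to the classical Amice transform $\OCp[[T_1,\dots,T_n]] \cong \Meas(\Zp^n,\OCp)$. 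Basis-independence and functoriality in $M$ make this intrinsic.

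Next, I would use the trace pairing $\sO_L \otimes_\ZZ \Zp \times \sO_L \otimes_\ZZ \Zp \to \Zp$, which is perfect because $d_L \in \OCp^\times$, to identify $\Hom_\Zp(\sO_L \otimes_\ZZ \Zp,\Zp)$ with $\sO_L \otimes_\ZZ \Zp$ itself, so that $\mu_g$ becomes a measure on the latter. Under this identification the elementary characters decompose, via the ordinarity of $\Sigma$, into products indexed by $\Sigma \sqcup \ol{\Sigma}$; this is precisely the content of the definition \eqref{eq:coordinates} of the coordinates $\rmt$ and $\rms$. The integration formula $\int \rmt^\alpha \rms^\beta\, d\mu_g = \partial(\cA)^\alpha \partial(\cA^\vee)^\beta g$ is then the translation of the classical identity $\partial^k g(0) = \int_{\Zp} x^k d\mu_g(x)$, where $\partial = (1+T)\tfrac{d}{dT}$ is the translation-invariant derivation on $\Gmf$: by definition $\omega_{can}(\cA)$ (resp. $\omega_{can}(\cA^\vee)$) is dual to the Lie algebra generator coming from $\Lie(\Gmf) = \OCp$ under the decomposition, so the operators $\partial(\cA)^\alpha$ and $\partial(\cA^\vee)^\beta$ coincide with the corresponding iterated translation-invariant derivations on $\Gmf \otimes_\Zp (\sO_L \otimes_\ZZ \Zp)$.

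Finally, $\Gamma$-equivariance is immediate from functoriality: $\Gamma \subseteq \sO_L^\times$ acts on $\widehat{\cA\times\cA^\vee}$ through its $\sO_L$-action on the second tensor factor, and on $\sO_L \otimes_\ZZ \Zp$ by multiplication; both actions correspond under the above identifications. The ring structures match since the multiplication of measures via convolution corresponds, under Amice, to the multiplication of power series, which in turn is the ring structure on $H^0(\widehat{\cA\times\cA^\vee},\sO)$. The main technical point — and the part that requires care rather than difficulty — is the compatibility of all the normalizations: verifying that the canonical differentials $\omega_{can}(\cA)(\sigma)$ induced from $(1+T)\tfrac{d}{dT}$ on each $\Gmf$-factor pair correctly with the $\Sigma_p$-coordinate $\rmt$ (and similarly for $\omega_{can}(\cA^\vee)$ with $\rms$). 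Once this identification is fixed, the proposition is essentially \cite{Katz-another-look}, and the proof consists in bookkeeping of the canonical isomorphisms recorded above.
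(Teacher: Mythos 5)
Your proposal is correct and follows essentially the same route as the paper: identify $\widehat{\cA\times\cA^\vee}$ with a formal toroidal group via Proposition \ref{prop:padicFG} and then apply Katz's correspondence between functions on such groups and $p$-adic measures, with $\Gamma$-equivariance coming from functoriality. The paper simply cites \cite[Theorem 1]{Katz-another-look} at this point, whereas you unwind the Amice transform, the trace-pairing identification, and the normalization of $\omega_{can}$ explicitly; this is additional bookkeeping, not a different argument.
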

\begin{proof}
	According to Proposition \ref{prop:padicFG} we have
	\[
		\widehat{\cA\times\cA^\vee}=\Hom_{\Zp}(\sO_L\otimes\Zp, \Gmf).
	\]
	Now we can apply \cite[Theorem 1]{Katz-another-look}. The $\Gamma$-equivariance follows from the functoriality of Katz' construction of $p$-adic measures.
\end{proof}
Fix an ideal $\frc\subseteq \sO_L$  coprime to $p\frf$ such that $N\frc\in R^{\times}$ so that $[\frc]$ and $[\frc]^{\vee}$ are \'etale.  As in Definition \ref{def:D-specific} we let $\cD=\ker[\frc]\setminus \{x(\cR)\}$ and 
\[
	f\colon \cD(\OCp)\rightarrow \OCp
\]
a $\Gamma$-invariant function on $\cD$. 
\begin{definition}  The unique $p$-adic measure 
\begin{equation*}
\mu_{Eis}(f,x)\in \Meas(\sO_L\otimes_\ZZ \Zp,\OCp)
\end{equation*}
\nomenclature{$\mu_{Eis}(f,x)$}{\nomrefpage}
corresponding to $\vartheta_{\Gamma}(f,x)$ under $	H^0(\widehat{\cA\times\cA^\vee}, \sO_{\widehat{\cA\times\cA^\vee}})\cong \Meas(\sO_L\otimes_\ZZ \Zp,\OCp)$
	is called \emph{the $p$-adic Eisenstein measure} associated to the tuple $(\cA/\cR,\Sigma,\omega(\cA),\omega(\cA^\vee),x)$. If we want to emphasize the dependence on $\Gamma$, we will also write $\mu_{Eis,\Gamma}(f,x)$.
\end{definition}
\begin{theorem}[$p$-adic Eisenstein measure]\label{thm_Eisenstein_measure}
The $p$-adic Eisenstein measure has the following interpolation property: For $\alpha\in I^+_\Sigma$ and $\beta\in I^+_{\overline{\Sigma}}$ we have
	\[
		\frac{1}{\Omega_p^{\alpha+\underline{1}}\Omega_p^{\vee\beta}}\int_{\sO_L\otimes_\ZZ \Zp} \rmt^\alpha\rms^\beta d\mu_{Eis}(f,x)=\Eis_\Gamma^{\beta,\alpha}(f,x)(\omega(\cA)^{[\alpha+\underline{1}]},\omega(\cA^\vee)^{[\beta]}).
	\]
\end{theorem}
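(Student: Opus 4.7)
The starting point is the defining formula of the Katz correspondence,
\[
\int_{\sO_L\otimes_\ZZ \Zp}\rmt^\alpha\rms^\beta\, d\mu_{Eis}(f,x) = \partial(\cA)^\alpha\partial(\cA^\vee)^\beta\,\vartheta_\Gamma(f,x),
\]
so the task reduces to identifying the right hand side with $\Omega_p^{\alpha+\ul{1}}\,\Omega_p^{\vee\beta}\cdot\Eis_\Gamma^{\beta,\alpha}(f,x)(\omega(\cA)^{[\alpha+\ul{1}]},\omega(\cA^\vee)^{[\beta]})$. Since $\widehat{\cA\times\cA^\vee}\cong\widehat{\cA}\times_\cS\widehat{\cA^\vee}$ splits as a product of formal groups, the moment map factors and $\partial(\cA)^\alpha\partial(\cA^\vee)^\beta$ is the composition of ``differentiate $\alpha$ times along $\widehat{\cA}$ using the invariant derivations dual to $\omega_{can}(\cA)^{[\alpha]}$ and restrict to $e_{\widehat{\cA}}$'' with ``take the $\beta$-graded piece of $\mom_{\widehat{\cA^\vee}}$ against $\omega_{can}(\cA^\vee)^{[\beta]}$''.

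The comparison of these two operations with $\nabla^\alpha$ and ${}_\varrho\mom_x^\beta$ is furnished by Lemmas~\ref{lem_sP_triv} and~\ref{lem_rhox}. Iterating Lemma~\ref{lem_rhox}(2) and invoking the commutative square of Lemma~\ref{lem_sP_triv}(2), the iterated connection $\nabla^\alpha$ on $\widehat{\sP^\natural}$, followed by the Hodge-splitting retraction $\widehat{\sP^\natural}\twoheadrightarrow\widehat{\sP}$ of Lemma~\ref{lem_sP_triv}(1), translates on $\widehat{\cA}$ into the iterated formal derivative $d^\alpha$ on $\sO_{\widehat{\cA\times\cA^\vee}}$; reading off the coefficient of $\omega_{can}(\cA)^{[\alpha]}$ is precisely $\partial(\cA)^\alpha$. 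Once one has restricted to $e_{\widehat{\cA}}$, Lemma~\ref{lem_rhox}(1) identifies $\mom_{\widehat{\cA^\vee}}\circ(e_{\widehat{\cA}}^*\hat{\varrho}_x)$ with $\varrho_x$, so the $\beta$-component of this composition against $\omega_{can}(\cA^\vee)^{[\beta]}$ is the $\beta$-graded piece of ${}_\varrho\mom_x^\beta$ post-composed with the projection $\TSym^b(\sH)\twoheadrightarrow\TSym^\beta(\omega_{\cA^\vee})$ used in Definition~\ref{def:Eis-beta-alpha}. Part~\eqref{lem_rhox_3} of Lemma~\ref{lem_rhox} provides the $\Gamma$-equivariance needed so that the cap with the fundamental class of $H_{d-1}(\Gamma,\ZZ)$ implicit in the construction of $\vartheta_\Gamma$ matches the orientation isomorphism of Proposition~\ref{prop:orientation}; this is also where the shift $\alpha\rightsquigarrow\alpha+\ul{1}$ enters, via the identification $\omega^d_\cA\cong\TSym^{\ul{1}}\omega_\cA$ used both to trivialize $\omega_A^d$ by $\omega_{can}(\cA)^d$ in the definition of $\vartheta_\Gamma$ and to evaluate the Eisenstein class against a fundamental cycle.

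Putting these together yields
\[
\partial(\cA)^\alpha\partial(\cA^\vee)^\beta\vartheta_\Gamma(f,x) = \Eis_\Gamma^{\beta,\alpha}(f,x)\bigl(\omega_{can}(\cA)^{[\alpha+\ul{1}]},\omega_{can}(\cA^\vee)^{[\beta]}\bigr),
\]
and the final period change via $\omega(\cA)=\Omega_p\,\omega_{can}(\cA)$, $\omega(\cA^\vee)=\Omega_p^\vee\,\omega_{can}(\cA^\vee)$ produces exactly the factor $\Omega_p^{\alpha+\ul{1}}\Omega_p^{\vee\beta}$, completing the proof. The main obstacle will be verifying the compatibility of the three splittings that appear implicitly, namely the Hodge-splitting retraction of Lemma~\ref{lem_sP_triv}(1), the Hodge projection $\TSym^b(\sH)\twoheadrightarrow\TSym^\beta(\omega_{\cA^\vee})$ used in Definition~\ref{def:Eis-beta-alpha}, and the orientation shift $\alpha\rightsquigarrow\alpha+\ul{1}$ absorbing the factor $\omega_{can}(\cA)^{[\ul{1}]}$ coming from the trivialization of $\omega^d_A$. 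The $p$-ordinary hypothesis on $\Sigma$ is what guarantees that all three splittings arise from the same decomposition $\sO_L\otimes_\ZZ \Zp \cong \sO_L(\Sigma_p)\oplus\sO_L(\ol{\Sigma}_p)$; once this bookkeeping is in place the remaining steps are formal consequences of the two key lemmas above.
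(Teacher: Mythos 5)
Your proposal follows essentially the same route as the paper's proof: the defining integration formula for $\mu_{Eis}(f,x)$, the compatibilities of Lemma \ref{lem_rhox} (whose part (2) rests on Lemma \ref{lem_sP_triv}) to identify $\partial(\cA)^\alpha\partial(\cA^\vee)^\beta$ applied to the $p$-adic theta function with $\mom^{\beta}_{\widehat{\cA^\vee}}\circ\varrho_x\circ\nabla^{\alpha}$ applied to $\EK_\Gamma(f)|_{\widehat{\cA}_x}$, followed by the period change $\omega(\cA)=\Omega_p\,\omega_{can}(\cA)$, $\omega(\cA^\vee)=\Omega_p^\vee\,\omega_{can}(\cA^\vee)$. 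The one inaccuracy is a citation: the $\Gamma$-equivariance/cap-product compatibility is supplied by the unnumbered lemma following Lemma \ref{lem_rhox} together with Proposition \ref{prop:orientation}, not by Lemma \ref{lem_rhox}\eqref{lem_rhox_3}, which concerns translation by infinitesimal torsion sections and plays no role here.
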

\begin{proof}
Let us write $\mom^{\alpha,\beta}_{\widehat{\cA\times\cA^\vee}}$ respectively $\mom^{\beta}_{\widehat{\cA}^\vee}$ for the $(\alpha,\beta)$- resp. $\beta$-component of the moment map. With this notation, we have by the defining property of $\mu_{Eis}(f,x)$ the integration formula
\[
	\int_{\sO_L\otimes_\ZZ \Zp} \rmt^\alpha\rms^\beta d\mu_{Eis}(f,x)\omega_{can}(\cA)^{[\alpha+\underline{1}]}\omega_{can}(\cA)^{[\beta]} =\mom^{\alpha,\beta}_{\widehat{\cA\times\cA^\vee}}(\vartheta_\Gamma(f,x)  ).
\]
For the computation of $\mom^{\alpha,\beta}_{\widehat{\cA\times\cA^\vee}}( \vartheta_{\Gamma}(f,x))$, we can pass without loss of generality to the generic fiber $A$ of $\cA$. Using the inclusions $\omega_\cA\subseteq \omega_A$ and $\omega_{\cA^\vee}\subseteq \omega_{A^\vee}$, we may view $\omega(\cA)$ and $\omega(\cA^\vee)$ as bases of $\omega_A$ and $\omega_{A^\vee}$. Using Lemma \ref{lem_rhox} and the definition of $\vartheta_{\Gamma}(f,x)$ and $\Eis_\Gamma^{\beta,\alpha}(f,x)$ we get:
\begin{align*}
	\mom^{\alpha,\beta}_{\widehat{\cA\times\cA^\vee}}( \hat{\varrho}_xT_x^*(\EK_{\Gamma}(f)|_{\widehat{\cA}_x}) )&=\mom^{\alpha,\beta}_{\widehat{A\times A^\vee}}( \hat{\varrho}_xT_x^*(\EK_{\Gamma}(f)|_{\widehat{A}_x}) )\\
	&=\mom^{\beta}_{\widehat{A}^\vee}\left( \partial(\cA)^\alpha \hat{\varrho}_xT_x^*(\EK_{\Gamma}(f)|_{\widehat{A}_x}) \right)\cdot \omega_{can}(\cA)^{[\alpha]}\\
	&=\mom^{\beta}_{\widehat{A}^\vee}\left(\hat{\varrho}_xT_x^*(\nabla^{[\alpha]}\EK_{\Gamma}(f)|_{\widehat{A}_x}) \right)=\Eis_\Gamma^{\beta,\alpha}(f,x).
\end{align*}
The formulas
\[
	\omega(\cA)^{[\alpha+\ul{1}]}=\Omega_p^{\alpha+\ul{1}}\omega_{can}(\cA)^{[\alpha+\ul{1}]},\quad \omega(\cA^\vee)^{[\beta]}=\Omega^{\vee,\beta}_p\omega_{can}(\cA^\vee)^{[\beta]}
\]
allow us to re-write $\Eis_\Gamma^{\beta,\alpha}(f,x)$ in terms of the basis $\omega_{can}(\cA)$, $\omega_{can}(\cA^\vee)$
\begin{align*}
	&\Eis_\Gamma^{\beta,\alpha}(f,x)=\Eis_\Gamma^{\beta,\alpha}(f,x)(\omega(\cA)^{[\alpha+\ul{1}]},\omega(\cA^\vee)^{[\beta]}) \cdot \omega(\cA)^{[\alpha+\ul{1}]}\otimes\omega(\cA^\vee)^{[\beta]}=\\
	=&\Eis_\Gamma^{\beta,\alpha}(f,x)(\omega(\cA)^{[\alpha+\ul{1}]},\omega(\cA^\vee)^{[\beta]})\cdot  \Omega_p^{\alpha+\ul{1}}\Omega^{\vee\beta}_p\omega_{can}(\cA)^{[\alpha+\ul{1}]}\otimes\omega_{can}(\cA^\vee)^{[\beta]}.
\end{align*}
Combining the above computations shows the Theorem.
\end{proof}

\subsection{Translation operators}
While we already know the moments of the $p$-adic Eisen\-stein-Kronecker measure constructed in the previous section, we will need furthermore an explicit description of the integration of locally constant functions. This question is immediately related to the question of finding an explicit formula for the value of the corresponding $p$-adic theta functions at $p$-power torsion sections of $\widehat{\cA\times \cA^\vee}$, which is addressed in this section.

We continue with the notation of the previous section. In particular, we consider an abelian variery $(\cA/\cR,\omega(\cA),\omega(\cA^\vee),x)$  as in Notation \ref{notation_padicAbelianVariety}, and we will write $A$ for the generic fiber of $\cA$. Recall that $[\frf]^{\vee}$ is \'etale. The aim of this section is to compute the value of certain translations of $\vartheta_\Gamma({f,x})$ by infinitesimal torsion sections. Any infinitesimal torsion section $\sws\in \cA [\frp_\Sigma^n](\OCp)=\widehat{\cA}[p^n](\OCp)$ induces translation maps
\[
	T_\sws\colon \widehat{\cA}\to \widehat{\cA},
\] 
on the corresponding formal groups. These maps can be described explicitly in terms of the coordinate rings (induced by $\omega_{can}$) as the maps
\[
	\OCp\llbracket \ul{T} \rrbracket\to \OCp\llbracket \ul{T} \rrbracket,\quad   \ul{T}\mapsto \ul{T}+_{\widehat{\cA}}\sws,
\]
Similarly, a torsion section $\swt\in \cA^\vee [\ol{\frp}_\Sigma^n]=\widehat{\cA}^\vee[p^n]$ induces maps
\[
	T_\swt\colon \widehat{\cA}^\vee\to \widehat{\cA}^\vee.
\] 
The following result allows us to compute such translations of the $p$-adic theta function explicitly. Let us recall the notation $\cA_{(n)}=\cA(\overline{\frp}_\Sigma^n)$ from Definition \ref{def:A-n} and denote by $A_{(n)}$ the generic fiber of $\cA_{(n)}$. Let us denote by
\begin{align}\label{eq:D-prime}
\cD':=[\overline{\frp}_\Sigma^n]^{-1}(\cD)&&\mbox{and}&& f'\colon \cD'(\sO_{\Cp})\to \sO_{\Cp} R[\cD']^{(0,\Gamma)}
\end{align}
the pull-back of $\cD$ and $f\in \OCp[\cD]^{0,\Gamma}$ by the isogeny $[\overline{\frp}_\Sigma^n]:\cA_{(n)}\to \cA$. 

\begin{theorem}[Translation]\label{prop_translation_formula} With the above notation let $(\sws,\swt)\in \cA [\frp_\Sigma^n]\times \cA^\vee [\ol{\frp}_\Sigma^n]$ be $\Gamma$-invariant torsion sections on $\cA\times \cA^\vee$ and $x \in \cA[\frf ]$ our fixed $\frf$-torsion section (recall $p\nmid \frf$). Let $x'$ resp. $\sws'$ be the unique $\frf$-torsion resp. $\frp_\Sigma^n$-torsion section of $\cA_{(n)}$ mapping to $x$ resp. $\sws$ under $[\overline{\frp}_\Sigma^n]\colon \cA_{(n)}\to \cA$. Let  $\cD'$ and $f'$ be as in \eqref{eq:D-prime}.
Then
\begin{equation}
	(T_\sws\times T_\swt)^*\vartheta_\Gamma({f,x})=\sum_{s'\in \cA_{(n)}[\overline{\frp}^n_\Sigma](\OCp)}\langle s',\swt \rangle_{[\overline{\frp}_\Sigma^n]}\cdot [\overline{\frp}_\Sigma^n]_\#\vartheta_{\Gamma,A_{(n)}}({f',x'+s'+\sws'}),
\end{equation}
where $[\overline{\frp}_\Sigma^n]_\#$ denotes the map
\[
	\xymatrix{
		\Gamma(\widehat{A_{(n)}}\times \widehat{A_{(n)}^\vee }, \cO_{\widehat{A_{(n)}}\times \widehat{A_{(n)}^\vee }}) & &
		 \Gamma(\widehat{A}\times \widehat{A_{(n)}^\vee }, \cO_{\widehat{A}\times \widehat{A_{(n)}^\vee }}) \ar[ll]_-{([\overline{\frp}_\Sigma^n]\times \id)^*}^{\sim} \ar[rr]^{(\id\times [\ol{\frp}_\Sigma^n])^*}& &
		  \Gamma(\widehat{A}\times \widehat{A^\vee }, \cO_{\widehat{A}\times \widehat{A^\vee }}),
	}
\]
and
\[
	\langle\cdot,\cdot \rangle_{[\overline{\frp}_\Sigma^n]}\colon \cA_{(n)}[\overline{\frp}_\Sigma^n]\times \cA^\vee[\ol{\frp}_\Sigma^n]\rightarrow \mu_{p^n}
\]
is the Weil pairing.
\end{theorem}

%
Before giving the proof, let us make the following remark regarding the integrality of the above construction: A priori the right hand side of the equation in the above Theorem can only be defined rationally. Indeed, the dual of $[\overline{\frp}^n_\Sigma]$ is not \'etale, hence $\vartheta_{\Gamma,A_{(n)}}({f',x'+s'+\sws'})$ is not defined integrally for $s'\in \cA_{(n)}[\overline{\frp}^n_\Sigma]$. Nevertheless, $[\overline{\frp}^n_\Sigma]^\vee$ becomes \'etale over the generic fiber $\Cp$ of $\OCp$. On the other hand, since $x\in \cA[\frf]$ with $[\frf]^\vee$ \'etale, $(T_\sws\times T_\swt)^*\vartheta_\Gamma({f,x})$ is defined integrally, i.e.
	\[
		(T_\sws\times T_\swt)^*\vartheta_\Gamma({f,x})\in H^0(\widehat{\cA\times\cA^\vee},\sO_{\widehat{\cA\times\cA^\vee}}).
	\]
	In particular, the equation
	\[
		(T_\sws\times T_\swt)^*\vartheta_\Gamma({f,x})=\sum_{s'\in \cA_{(n)}[\overline{\frp}^n_\Sigma](\OCp)}\langle s',\swt \rangle_{[\overline{\frp}_\Sigma^n]}\cdot [\overline{\frp}_\Sigma^n]_\#\vartheta_{\Gamma,A_{(n)}}({f',x'+s'+\sws'}),
	\]
	shows a posteriori the integrality of the right hand side.

\begin{proof}[Proof of Theorem \ref{prop_translation_formula}]
By Lemma \ref{lem_rhox} \eqref{lem_rhox_3}, we have
\[
	(T_\sws\times\id)^*\vartheta_\Gamma(f,x)\stackrel{Def.}{=}  (T_{\sws}\times \id)^*\hat{\varrho}_{x}T^*_{x}(\EK_{\Gamma}(f)|_{\widehat{\cA}_{x}})=\hat{\varrho}_{x+\sws}T^*_{x+\sws}(\EK_{\Gamma}(f)|_{\widehat{\cA}_{x+\sws}})=\vartheta_\Gamma(f,x+\sws).
\]
This formula allows us to assume without loss of generality that $\sws$ is the trivial torsion section. 

Let us now briefly outline the strategy of the rest of the proof. We will define global translation operators on $H^{d-1}(\cU_\cD,\Gamma,\widehat{\sP})$ corresponding to $(\id\times T_\swt)^*$ on the formal group. The advantage of the global translation operators is that we are able to compare $\EK_{\Gamma}(f)$ to its image under the translation operator by a residue computation. The restriction of the global translation operator is compatible with the translation on the formal group. Since $\vartheta_\Gamma(f,x)$ is obtained by such a restriction from $\EK_{\Gamma}(f)$, the result will follow.

As $n$ is fixed, we will write $\cA':=\cA_{(n)}$ to simplify the notation in the proof. 
The translation $T_\swt\colon \cA^\vee\rightarrow \cA^\vee$ restricts to the translation of the formal group $T_\swt\colon \widehat{\cA}^\vee\rightarrow \widehat{\cA}^\vee$. 
The adjunction between $(\id\times T_\swt)^*$ and $(\id\times T_\swt)_*$ induces a map
\begin{equation}\label{eq_translation_1}
	\widehat{\sP}\rightarrow \pr_{\cA,*}(\id\times T_\swt)_*(\id\times T_\swt)^*\left( \sP|_{\cA\times\widehat{\cA}^\vee}\right).
\end{equation}
Let us denote the Poincar\'e bundle on $\cA'\times \cA'^\vee$ by $\sP'$. The universal property of the Poincar\'e bundle gives us an isomorphism
\[
	([\overline{\frp}^n_\Sigma]\times \id_{\cA^\vee})^* \sP \cong (\id_{\cA'}\times [\ol{\frp}^n_\Sigma]^\vee)^*\sP'.
\]
Using this isomorphism, we get
\begin{align}
	\notag &\pr_{\cA',*}(([\overline{\frp}^n_\Sigma]\times T_\swt)^* \sP|_{\cA'\times \widehat{\cA}^\vee} )\cong \pr_{\cA',*}((\id_{\cA'}\times [\ol{\frp}^n_\Sigma]^\vee)^*\sP'|_{\cA'\times \widehat{\cA}^\vee} )\cong\\
	\label{eq_translation_2} &\cong  \pr_{\cA',*}(([\overline{\frp}^n_\Sigma]\times \id_{\widehat{\cA}^\vee})^* \sP|_{\cA'\times \widehat{\cA}^\vee} )=[\overline{\frp}^n_\Sigma]^*\widehat{\sP}.
\end{align}
 Restricting the isogeny $[\overline{\frp}^n_\Sigma]\colon \cA'\rightarrow \cA$ to $\widehat{\cA}'_{x'}$ gives an isomorphism $[\overline{\frp}^n_\Sigma]\colon \widehat{\cA}'_{x'}\xrightarrow{\sim} \widehat{\cA}_x$. By composing \eqref{eq_translation_1} and \eqref{eq_translation_2}, we get
\begin{align*}
	\hat{U}_{\swt}^n\colon H^0(\widehat{\cA}_x,\widehat{\sP})\xrightarrow{\eqref{eq_translation_1}} &H^0(\widehat{\cA}_{x},\pr_{\cA,*}(\id\times T_\swt)^*\left( \sP|_{\cA\times\widehat{\cA}^\vee} \right))\\
	&\cong H^0(\widehat{\cA}'_{x'},\pr_{\cA,*}( [\overline{\frp}^n_\Sigma] \times T_\swt)^*\left( \sP|_{\cA'\times\widehat{\cA}^\vee} \right)) \\
	\xrightarrow{\eqref{eq_translation_2}} &H^0(\widehat{\cA}'_{x'},[\overline{\frp}^n_\Sigma] ^*\widehat{\sP}),
\end{align*}
where the unlabeled isomorphism is induced by $[\overline{\frp}^n_\Sigma] \colon \widehat{\cA}'_{x'}\cong \widehat{\cA}_x$. Unwinding of definitions shows that this isomorphism is compatible with translation on $\widehat{\cA}^\vee$, i.e. the following diagram commutes:
\[
	\xymatrix{
		H^0(\widehat{\cA}_x,\widehat{\sP})\ar[r]^{\hat{U}_{\swt}^n}\ar[d]^{\hat{\varrho}_x} & H^0(\widehat{\cA}'_{x'},[\overline{\frp}^n_\Sigma] ^*\widehat{\sP}) & H^0(\widehat{\cA}_x,\widehat{\sP})\ar[d]^{\hat{\varrho}_x}\ar[l]_{\cong}\\
		H^0(\widehat{\cA\times \cA^\vee}, \sO_{\widehat{\cA\times \cA^\vee}})\ar[rr]^{(\id\times T_\swt)^*} & & H^0(\widehat{\cA\times \cA^\vee}, \sO_{\widehat{\cA\times \cA^\vee}}).
	}
\]
Recall that $\cD'\subseteq \cA'$ denotes the pre-image of $\cD=\cA[\frc]\smallsetminus\{x(\OCp)\}\subseteq \cA$ under $[\overline{\frp}^n_\Sigma]\colon \cA'\rightarrow \cA$ and $\cU'_{\cD}:=\cA'\smallsetminus \cD'$ will denote its complement. The generic fibers are denoted by $D'$ and $U'_{D}$ respectively.  Note, that $\cD'$ is contained in the \'etale group scheme $\cA'[\frc\overline{\frp}^n_\Sigma]\isom \cA'[\frc]\oplus\cA'[\overline{\frp}^n_\Sigma]$.
For any (formal) scheme over $\OCp$, we will write $\eta$ for the inclusion of its generic fiber. In particular, we have the maps
\[
	\eta\colon \widehat{A}_x\to\widehat{\cA}_x,\quad \eta \colon D'\to \cD'.
\]
We claim that it suffices to prove the formula
\begin{equation}\label{eq_reduction1}
	\eta^* \hat{U}^n_\swt(\EK_{\Gamma}(f)|_{\widehat{\cA}_x})=[\overline{\frp}^n_\Sigma]_{\#}\EK_{\Gamma,{A'}}(\eta^*\hat{f})|_{\widehat{A'}_{x'}}
\end{equation}
with
\[
	\hat{f}\colon \cD'(\OCp) \rightarrow \OCp,\quad \hat{f}(c,s'):=f'(c,s')\cdot\langle s',\swt  \rangle_{[\overline{\frp}^n_\Sigma]}.
\]
Indeed, if we consider the image of \eqref{eq_reduction1} under the map
\[
	\xymatrix{
		H^0(\widehat{\cA}'_{x'},[\overline{\frp}^n_\Sigma] ^*\widehat{\sP}) & H^0(\widehat{\cA}_x,\widehat{\sP})\ar[l]_-{\cong} \ar[r]^-{\widehat{\rho}_x} & H^0(\widehat{\cA\times \cA^\vee}, \sO_{\widehat{\cA\times \cA^\vee}}),
	}
\]
then the left hand side maps to  $(\id \times T_\swt)^*\vartheta_\Gamma(f,x)$, while the right hand side gives \begin{equation*}
\sum_{s'\in \cA'[\overline{\frp}^n_\Sigma](\OCp)}\langle s',\swt \rangle_{[\overline{\frp}^n_\Sigma]} \cdot [\overline{\frp}_\Sigma^n]_{\#}\vartheta_{\Gamma,A'}({f',x'+s'}).
\end{equation*}
The reason why we have to pass to the generic fiber is that $\hat{f}$ is supported on the divisor $\cD'$ which is contained in the kernel of an \'etale isogeny, but the dual of this isogeny is not \'etale, so $\EK_{\Gamma,{A'}}(\eta^*\hat{f})$ can not be defined integrally. Of course, this isogeny become \'etale over the generic fiber. In order to prove equation \eqref{eq_reduction1}, we will extend the translation operators $\hat{U}_\swt^n$ to globally (on $\cU_\cD$) defined translation operators and prove the equality by a residue computation. 

These global translation operators are defined as follows:
\[
	U_\swt^n\colon H^{d-1}(\cU_\cD,\Gamma,\widehat{\sP})\xrightarrow{\eqref{eq_translation_2}\circ[\overline{\frp}^n_\Sigma]^*\circ \eqref{eq_translation_1}} H^{d-1}(\cU'_{\cD},\Gamma,[\overline{\frp}^n_\Sigma]^*\widehat{\sP}).
\]
By construction, $U_\swt^n$ is compatible with $\hat{U}_\swt^n$,
\[
	\xymatrix{
		H^{d-1}(\cU_\cD,\Gamma,\widehat{\sP})\ar[r]^-{U_\swt^n}\ar[d] & H^{d-1}(\cU'_{\cD},\Gamma,[\overline{\frp}^n_\Sigma]^*\widehat{\sP})\ar[d]\\
		H^{d-1}(\Gamma, H^0(\widehat{\cA}_x,\widehat{\sP}))\ar[r]^-{\hat{U}_\swt^n} & H^{d-1}(\Gamma, H^0(\widehat{\cA}'_{x'},[\overline{\frp}^n_\Sigma]^*\widehat{\sP}))
	}
\]
where the vertical maps come from restriction to $\widehat{A}_x$ respectively $\widehat{A}'_{x'}$. Thus, it remains to show
\[
	\eta^*U^n_\swt(\EK_{\Gamma}(f))=\EK_{\Gamma,{A'}}(\eta^*\hat{f}).
\]
The translation morphisms $U^n_\swt$ fit into a commutative diagram
\[
	\xymatrix{
		H^{d-1}(\cU_\cD,\Gamma,\widehat{\sP})\ar[r]^-{U^n_\swt}\ar[d]^{\res} & H^{d-1}(\cU'_\cD,\Gamma,[\overline{\frp}^n_\Sigma]^*\widehat{\sP}\ar[d]^{\res})\\
		H^{d}_{\cD}(\cA,\Gamma,\widehat{\sP})\ar[r] & H^{d}_{\cD'}(\cA',\Gamma,[\overline{\frp}^n_\Sigma]^*\widehat{\sP})\\
		H^{0}(\cD,\widehat{\sP}|_{\cD})^\Gamma\ar[r]^-{U^{res}_\swt}\ar[u]_{incl} & H^{0}(\cD',[\overline{\frp}^n_\Sigma]^*(\widehat{\sP}|_{\cD}))^\Gamma\ar[u]_{incl}
	}
\]
where all horizontal maps are induced by $\eqref{eq_translation_2}\circ[\overline{\frp}^n_\Sigma]^*\circ \eqref{eq_translation_1}$ and the vertical maps are the residue maps, respectively the inclusions defined in Section \ref{section-proof-of-thm} (the construction of this inclusion only uses that $\cD$ and $\cD'$ are \'etale). Since $\EK_{\Gamma,{A'}}(\eta^*\hat{f})$ is uniquely determined by its image under the residue map, it suffices to show
\[
	\eta^*U^{res}_\swt(i(f))=i'(\eta^*\hat{f}).
\]
where $i$ resp. $i'$ denotes the inclusions
\begin{align*}
	i\colon H^0(\cD,\sO_{\cD})&\rightarrow H^0(\cD,\widehat{\sP}|_{\cD})\\
	i'\colon H^0(D',\sO_{D'})&\rightarrow H^0(D',[\overline{\frp}^n_\Sigma]^*\widehat{\sP}_{\Cp}|_{D'}),
\end{align*}
induced by the splitting principle, see Corollary \ref{cor:coh-log-splitting}. The inclusions $i$ and $i'$ have canonical retractions $r$ and $r'$ induced by the canonical projections
\[
	r\colon H^0(\cD,\widehat{\sP}|_{\cD})\rightarrow H^0(\cD,\sO_{\cD})
\]
respectively
\[
	r'\colon H^0(\cD',[\overline{\frp}^n_\Sigma]^*(\widehat{\sP}|_{\cD}))\rightarrow H^0(\cD',\sO_{\cD'})
\]
and it suffices to show
\[
	r'(U^{res}_\swt(i(f)))=\hat{f}.
\]
The commutative diagram
\[
	\xymatrix{
		H^{0}(\cA[\frc],\widehat{\sP}|_{\cA[\frc]})^\Gamma\ar[rr]^-{(\cdot )|_{\cA'[\frc]}\circ U^{res}_\swt} & & H^{0}(\cA'[\frc],[\overline{\frp}^n_\Sigma]^*\widehat{\sP}|_{\cA'[\frc]})^\Gamma \ar[d]^{r'}\\
		H^{0}(\cA[\frc],\sO_{\cA[\frc]})^\Gamma\ar[rr]^-{[\overline{\frp}_\Sigma^n]^*}\ar[u]_{i} & & H^{0}(\cA'[\frc],\sO_{\cA'[\frc]})^\Gamma
	}
\]
shows the formula
\begin{equation}\label{eq_Ures_c}
	r'(U^{res}_\swt(i(f)))(c,0)=f([\overline{\frp}^n_\Sigma]c)=\hat{f}(c,0),
\end{equation}
and it remains to show the following claim:\\
\textit{Claim: For $s'\in \cA'[\overline{\frp}^n_\Sigma]$ we have $T_s^*r'(U_\swt^{res}(i(f)))=\langle s',\swt \rangle_{[\overline{\frp}^n_\Sigma]}r'(U_\swt^{res}(i(f)))$}.\\
\textit{Proof of the Claim:} Let us first recall the definition of  the Weil pairing: Let $\varphi\colon \cA'\rightarrow \cA$ be an isogeny and $\cL$ a line bundle defining a torsion section $[\cL]\in \ker\varphi^\vee$ in $\cA^\vee$. Since  $[\cL]$ is in the kernel of $\varphi^\vee$, there is an isomorphism $\alpha\colon \sO_{\cA'} \cong \varphi^*\cL$. For given $s'\in \ker \varphi$ let us consider the $\sO_{\cA'}$-linear isomorphism
\[
	\sO_{\cA'}\xrightarrow{\alpha} \varphi^*\cL=T_{s'}^*\varphi^*\cL\xrightarrow{T_{s'}^*\alpha^{-1}} \sO_{\cA'}.
\]
It's $\deg\varphi$-th power is the identity, so it is given by a $\deg\varphi$-th root of unity. This root of unity is $\langle s',[\cL]\rangle_{\varphi}$. If we apply this to $\cL=(\id\times \swt)^*\sP$ we get a commutative diagram
\begin{equation}\label{eq_comm_Oda}
	\xymatrix{
		([\overline{\frp}^n_\Sigma]\times \swt)^*(\sP|_{\cD\times \cA^\vee})\ar[d]\ar[r] & \sO_{\cD'}\ar[d]^{\langle s',\swt\rangle_{[\overline{\frp}^n_\Sigma]}}\\
		T_{s'}^*([\overline{\frp}^n_\Sigma]\times \swt)^*(\sP|_{\cD\times \cA^\vee})\ar[r] & \sO_{\cD'}.
	}
\end{equation}
Unwinding the definitions, we see that $r'(U^{res}_\swt(i(f)))$ is the image of $([\overline{\frp}^n_\Sigma]\times \swt)^*(i(f))$ under
\[
	H^0(\cD',([\overline{\frp}^n_\Sigma]\times \swt)^*(\sP|_{\cD\times \cA^\vee})) \rightarrow H^0(\cD',\sO_{\cD'})
\]
while $T_{s'}^*r'(U^{res}_\swt(i(f))) $ is the image of $T^*_{s'}([\overline{\frp}^n_\Sigma]\times \swt)^*(i(f))$ under
\[
	H^0(\cD',T_{s'}^*([\overline{\frp}^n_\Sigma]\times \swt)^*(\sP|_{\cD\times \cA^\vee})) \rightarrow H^0(\cD',\sO_{\cD'}).
\]
The Claim follows now from the commutativity of the diagram \eqref{eq_comm_Oda}.
\end{proof}

In the presence of a $\Gamma_{00}(p^\infty)$-structure on $(\cA,\cA^\vee)$, we have by Proposition \ref{prop:GammaStructures} and 
\eqref{eq:cartier-duals} for $n\geq 1$ isomorphisms
\begin{align}\label{eq:Gamma00-Cartier-duals}
	\sO_L(\Sigma)/p^n &\cong (A[\frp_\Sigma^n])^t \cong A_{(n)}^\vee[\overline{\frp}_\Sigma^n], \quad
	\sO_L(\overline{\Sigma})/p^n \cong (A^\vee[\frp_\Sigma^n])^t \cong A_{(n)}[\overline{\frp}_\Sigma^n].
\end{align}
This allows us to define the partial Fourier transform for functions defined on $\sO_L/p^n$ as follows:

\begin{definition}\label{def:Fourier-trafo} Let $\cA/\cR$ be an abelian scheme as in Notation \ref{notation_padicAbelianVariety} and $A$ the generic fibre over $\C_p$.
For a function 
\[
\rho\colon A_{(n)}^\vee[\frp^n_\Sigma]\times A_{(n)}[\ol{\frp}^n_\Sigma]  \rightarrow \ol{\Q}
\]
 let us define the \emph{partial Fourier transform} 
 \[
 P_A (\rho)\colon A_{(n)}[p^n]=A_{(n)}[\frp^n_\Sigma]\times A_{(n)}[\ol{\frp}^n_\Sigma] \rightarrow \ol{\Q}
 \]
 \nomenclature{$P_A (\rho)$}{\nomrefpage}
  by
\[
	P_A(\rho)(\tilde{\sws},\tilde{s}):=\frac{1}{p^{dn}}\sum_{t\in A_{(n)}^\vee[\frp_\Sigma^n]}\langle [\overline{\frp}_\Sigma^n](\tilde{\sws}),t \rangle_{[\frp^n_\Sigma]}^{-1} \rho(t,\tilde{s}),
\]
where $(\tilde{\sws},\tilde{s})\in A_{(n)}[\frp_\Sigma^n]\times A_{(n)}[\ol{\frp}_\Sigma^n]=A_{(n)}[p^n]$. If $(\cA,\cA^\vee)$ is equipped with a $\Gamma_{00}(p^\infty)$-structure, then the isomorphisms \eqref{eq:Gamma00-Cartier-duals} allow us to view the partial Fourier transform as a map
  \[
  		P_A\colon \Map(\sO_L/p^n, \ol{\Q}) \xrightarrow{\sim} \Map(A_{(n)}^\vee[\frp^n_\Sigma]\times A_{(n)}[\ol{\frp}^n_\Sigma],\ol{\Q})\xrightarrow{P_A}\Map(A_{(n)}[p^n],\ol{\Q}),
  \]
  where the first map is induced by the isomorphism \eqref{eq:Gamma00-Cartier-duals}.
\end{definition}

As an immediate Corollary of Theorem \ref{prop_translation_formula}, we deduce the following formula for integration of locally constant functions on the Tate module:
\begin{corollary}\label{cor_rho_integration}
Let $(\cA,\omega(\cA),\omega(\cA^\vee),x)$ be as in Notation \ref{notation_padicAbelianVariety} with a fixed $\Gamma_{00}(p^\infty)$-structure $(\theta_p,\theta_p^\vee)$. Let $\rho\colon \sO_L/p^n \sO_L\to \ol{\Q}$ be a function on $\sO_L/p^n \sO_L$ which we consider as a locally constant function $\rho \colon \sO_L\otimes_\ZZ \Zp\to \C_p$ via the embedding $\ol{\Q}\subset \C_p$. Recall the notation $A_{(n)}=A(\overline{\frp}_\Sigma^n)$ and denote by $\omega(A_{(n)})$ and $\omega(A_{(n)}^\vee)$ the unique basis of differential forms such that $\omega(A_{(n)})=[\overline{\frp}_\Sigma^n]^*(\omega(A))$ and $\omega(A^\vee)=([\overline{\frp}_\Sigma^n]^\vee)^*(\omega(A_{(n)}^\vee))$, then
	\begin{multline*}
		\frac{1}{\Omega_p^{\alpha+\underline{1}}\Omega^{\vee\beta}_p}\int_{\sO_L\otimes\Zp} \rmt^\alpha \rms^\beta \rho(\rmt,\rms) d\mu_{Eis}(f,x)\\
		=\sum_{\tilde{s}\in A_{(n)}[p^n]}P_{A}(\rho)(\tilde{s})\Eis_{\Gamma,A_{(n)}}^{\beta,\alpha}(f',x'+\tilde{s})(\omega(A_{(n)})^{[\alpha+\underline{1}]},\omega(A_{(n)}^\vee)^{[\beta]}),
	\end{multline*}
	where $f'$ and $x'$ denote lifts of the corresponding objects on $\cA$ along the isogeny $[\ol{\frp}_\Sigma^n]\colon\cA_{(n)}\to \cA$ as in Theorem \ref{prop_translation_formula}.
\end{corollary}
\begin{proof} Let  $\sws\in \cA [\frp_\Sigma^n]$ and $\swt\in \cA^\vee [\ol{\frp}_\Sigma^n]$.
The isomorphism \eqref{eq:Gamma00-Cartier-duals} allows us to view $\langle \sws, \cdot\rangle_{[\frp^n_\Sigma]}\colon A^\vee_{(n)}[\frp^n_\Sigma]\to \mu_{p^n}$ and $\langle  \cdot,\swt\rangle_{[\ol{\frp}^n_\Sigma]}\colon A_{(n)}[\ol{\frp}^n_\Sigma]\to \mu_{p^n}$ as functions $\sO_L/p^n\sO_L\to \mu_{p^n}\subseteq \Cp$. It suffices to prove the Corollary for $\rho=\langle \sws, \cdot\rangle_{[\frp^n_\Sigma]}\langle  \cdot,\swt\rangle_{[\ol{\frp}^n_\Sigma]}$. Note that the  measure $\langle \sws, \cdot\rangle_{[\frp^n_\Sigma]}\langle  \cdot,\swt\rangle_{[\ol{\frp}^n_\Sigma]} \mu_{Eis}(f,x)$ correspond under the Mahler isomorphism
	\[
		H^0(\widehat{\cA\times\cA^\vee},\sO_{\widehat{\cA\times\cA^\vee}})\xrightarrow{\sim} \Meas(\sO_L\otimes_\ZZ \Zp,\OCp),\quad g\mapsto \mu_g
	\]
	to the function $(T_\sws\times T_\swt)^*\vartheta_\Gamma(f,x)$. Let us denote by $\sws'\in A_{(n)}[\frp^n_\Sigma]$ the unique lift of $\sws \in A[\frp^n_\Sigma]$ along $[\ol{\frp}^n_\Sigma]$.  Using Theorem \ref{prop_translation_formula}, we compute:
	\begin{align*}
		\int_{\sO_L\otimes\Zp} &\rmt^\alpha \rms^\beta \rho(\rmt,\rms) d\mu_{Eis}(f,x) =\partial(A)^\alpha\partial(A^\vee )^\beta (T_\sws\times T_\swt)^*\vartheta_\Gamma(f,x)\\
		&=\sum_{\tilde{s}\in \cA_{(n)}[\overline{\frp}^n_\Sigma]} \langle \tilde{s},\swt \rangle_{[\overline{\frp}_\Sigma^n]}\cdot \partial(A_{(n)})^\alpha\partial(A^\vee_{(n)})^\beta \vartheta_{\Gamma,A_{(n)}}({f',x'+\tilde{s}+\sws'})\\
		&=\sum_{\tilde{s}\in \cA_{(n)}[\overline{\frp}^n_\Sigma]} \langle \tilde{s},\swt \rangle_{[\overline{\frp}_\Sigma^n]} \Eis_{\Gamma,A_{(n)}}^{\beta,\alpha}(f',x'+\tilde{s}+\sws')(\omega(A_{(n)})^{[\alpha+\underline{1}]},\omega(A_{(n)}^\vee)^{[\beta]})
	\end{align*}
For $(\tilde{\sws},\tilde{s})\in A_{(n)}[\frp^n_\Sigma]\times A_{(n)}[\ol{\frp}^n_\Sigma]=A_{(n)}[p^n]$ we have the formula
\begin{align*}
	P_{A}(\langle \sws, \cdot\rangle_{[\frp^n_\Sigma]}\langle  \cdot,\swt\rangle_{[\ol{\frp}^n_\Sigma]})(\tilde{\sws},\tilde{s})&= \frac{1}{p^{dn}}\sum_{t\in A_{(n)}^\vee[\frp_\Sigma^n]}\langle [\overline{\frp}_\Sigma^n](\tilde{\sws}),t \rangle_{[\frp^n_\Sigma]}^{-1} \cdot \langle \sws, t \rangle_{[\frp^n_\Sigma]}\langle  \tilde{s},\swt\rangle_{[\ol{\frp}^n_\Sigma]} \\
	&=\left(\frac{1}{p^{dn}}\sum_{t\in A_{(n)}^\vee[\frp_\Sigma^n]}\langle \sws- [\overline{\frp}_\Sigma^n](\tilde{\sws}),t \rangle_{[\frp^n_\Sigma]}\right) \cdot \langle  \tilde{s},\swt\rangle_{[\ol{\frp}^n_\Sigma]}\\
	&=\begin{cases}
			0 &  \tilde{\sws}\neq \sws'\\
			\langle  \tilde{s},\swt\rangle_{[\ol{\frp}^n_\Sigma]}  & \tilde{\sws}= \sws',
		\end{cases}
\end{align*}
and the Corollary follows.
\end{proof}

In our main application, the $\Gamma_{00}(p^\infty)$-structure on $(\cA,\cA^\vee)$ will be induced by a complex uniformization. In this case, we can make the partial Fourier transform more explicit. More precisely, let $(\cA/\cR,\omega(\cA),\omega(\cA^\vee),x)$ be an abelian variety as in Notation \ref{notation_padicAbelianVariety} and suppose it has an $\fra$-structure $\xi$ for a fractional ideal $\fra$ which is prime to $p$. By Proposition \ref{prop:GammaStructures} and Lemma \ref{lem:GammaArith}, the $\fra$-structure induces a canonical $\Gamma_{00}(p^\infty)$-structure $(\theta_p,\theta_p^\vee)$ on $(\cA,\cA^\vee)$. On the other hand, the $\fra$-structure $\xi$ and the basis $\omega(\cA)$ induce isomorphisms
\[
	\cA(\CC)\cong \CC^\Sigma/\fra \Omega, \quad\cA_{(n)}(\CC)\cong \CC^\Sigma/ \ol{\frp}^n_\Sigma \fra \Omega.
\]
In particular, we get
\begin{equation}\label{eq_Gamma_str}
	\cA_{(n)}[p^n](\ol{\QQ}) \cong {\frp}_\Sigma^{-n}\fra \Omega/\overline{\frp}_\Sigma^{n}\fra \Omega\cong {\frp}_\Sigma^{-n}\fra /\overline{\frp}_\Sigma^{n}\fra .
\end{equation}
This identification allows us to view the partial Fourier transform $P_A$ (see Definition \ref{def:Fourier-trafo}) as a map
\[
	P_{A}\colon \mathrm{Map}(\sO_L/p^n\sO_L,\overline{\QQ})\rightarrow \mathrm{Map}({\frp}_\Sigma^{-n}\fra /\overline{\frp}_\Sigma^{n}\fra ,\overline{\QQ}).
\]
By the construction of the $\Gamma_{00}(p^\infty)$-structure, we have the following commutative diagram
\begin{equation}\label{eq:Comp-P-PA}
	\xymatrix{
		\Map(\sO_L/p^n\sO_L,\ol{\QQ}) \ar[r]^-{P} \ar[rd]_-{{P_A}} & 	\Map({\frp}_\Sigma^{-n} /\overline{\frp}_\Sigma^{n} ,\overline{\QQ})\ar[d]^-{\cong} \\
		&\Map({\frp}_\Sigma^{-n}\fra /\overline{\frp}_\Sigma^{n}\fra ,\overline{\QQ}),
	}
\end{equation}
where the horizontal map is induced by the canonical isomorphism ${\frp}_\Sigma^{-n}\fra /\overline{\frp}_\Sigma^{n}\fra\cong {\frp}_\Sigma^{-n} /\overline{\frp}_\Sigma^{n}$ and $P$ is defined by
		\begin{equation}\label{eq:canonicalPFT}
			(P\rho)(y,z):=\frac{1}{p^{dn}}\sum_{x\in \sO_L/\frp^n_\Sigma} \exp\left( -2\pi i \cdot Tr_{L/\QQ}(x\cdot y) \right) \rho(x,z),
		\end{equation}
for $\rho\in \Map( \sO_L/\frp^n_\Sigma\times \sO_L/\ol{\frp}^n_\Sigma,\ol{\QQ})=\Map( \sO_L/p^n \sO_L,\ol{\QQ})$ and $(y,z)\in \frp^{-n}_\Sigma/\sO_L\times \sO_L/\ol{\frp}^n_\Sigma= \frp^{-n}_\Sigma/\ol{\frp}^n_\Sigma$.
 Note that we have a canonical projection
\begin{equation*}
	\frp^{-n}_\Sigma\sO_L\otimes \Zp\twoheadrightarrow \frp^{-n}_\Sigma/\ol{\frp}^{n}_\Sigma.
\end{equation*}
Sometimes it will be convenient to view $P\rho$ as a function on $\frp^{-n}_\Sigma\sO_L\otimes \Zp$. In particular, this allows us to evaluate $P\rho$ on any element of $\frp^{-n}_\Sigma \fra$ for a fractional ideal $\fra\subseteq \sO_L$ co-prime to $\frp_\Sigma$.

\subsection{$p$-adic Hecke characters and the local factor}\label{subsec_local_factor}
In the previous section, the integral of the $p$-adic Eisenstein measure over a locally constant function has been expressed in terms of its partial Fourier transform. It is this partial Fourier transform which will be responsible for the appearance of certain local factors associated to an algebraic Hecke character in our interpolation formula. In this section, we will introduce these local factors and discuss their basic properties.

Let $\frf$ be an integral ideal which is prime to $p$. Let us recall that we denote by $\cI(\frf)$ the group of all prime to $\frf$ fractional ideals and by $\cP_{\frf}$ all principal fractional ideals congruent to $1$ modulo $\frf$. Let $\chi\colon \cI(\frf)\rightarrow \overline{\QQ}^\times$ be an algebraic Hecke character of conductor dividing $\frf$ and of infinity type $\mu=\beta-\alpha\in I_L$ with $\alpha\in I^+_\Sigma$ and $\beta\in I^+_{\overline{\Sigma}}$, i.e.
\begin{equation}\label{eq_Hecke_infinity}
	\chi((\lambda))=\prod_{\sigma\in J_L}\sigma(\lambda)^{\mu(\lambda)},\quad \lambda\in \cP_\frf.
\end{equation}
The fixed embedding $\overline{\QQ}\rightarrow \Cp$ allows us to view $\chi$ as a $\Cp$-valued character. We deduce from \eqref{eq_Hecke_infinity} that $\chi(\cI(p\frf))\subseteq \OCp$ and $\chi(\cP_{p^n\frf})\subseteq 1+p^n\OCp$. Passing to the inverse limit allows us to view $\chi$ as a character
\[
	\chi \colon \varprojlim_n \cI(p\frf)/\cP_{p^n\frf}\rightarrow \OCp^\times.
\]
By class field theory, we may view $\chi$ as a character of the Galois group
\[
	\chi \colon \Gal(L(p^\infty\frf)/L)\rightarrow \OCp^\times.
\]
The infinity type of $\chi$ is induced from a CM subfield $K\subseteq L$, i.e., $\alpha=N_{L/K}^*\alpha_0$, $\beta=N_{L/K}^*\beta_0$ for certain $\alpha_0\in I_{\Sigma_K}$ and $\beta_0\in I_{\overline{\Sigma}_K}$.  Let us denote by $\chi_{\fin}$ the unique locally constant character 
$\chi_{\fin}\colon (\sO_L\otimes\Zp)^\times \rightarrow \overline{\QQ}^\times$ which satisfies
	\[
		\chi((\lambda))=\frac{\chi_{\fin}(\lambda) \overline{N_{L/K}(\lambda)}^{\beta_0}}{N_{L/K}(\lambda)^{\alpha_0}},
	\]
	for $\lambda$ co prime to $p\frf$. Similar as in \cite[\S 5.2]{Katz-CM} we will now associate a local term $\Local(\chi;\Sigma_p)\in \overline{\QQ}$ to the Hecke character $\chi$ and the $p$-ordinary CM type $\Sigma$.  Let us write 
	\[
		\tilde{F}\colon (\sO_L\otimes\Zp)^\times=(\sO_L(\overline{\Sigma})\oplus \sO_L(\Sigma))^\times\rightarrow \overline{\QQ}^\times
	\]
	 for the locally constant function given by $\tilde{F}(x,y):=\chi_{\fin}(x^{-1},y)$.	 Using $(\sO_L\otimes\Zp)^\times\cong \prod_{\frp\mid p} \sO_{L,\frp}^\times$, we can write $\tilde{F}$ as a product $\tilde{F}=\prod_{\frp\mid p} \tilde{F}_\frp$. We extend $\tilde{F}$ to a function on $\sO_L\otimes \Zp$ as follows. Define $F_\frp \colon \sO_{L,\frp}\to \ol{\QQ}$ by
	 \[
	 	F_\frp:=
	 	\begin{cases}
	 		1 & \text{if } \frp\in \Sigma_p \text{ and } \tilde{F}_\frp\equiv 1,\\
	 		\tilde{F}_\frp \text{ extended by zero} & \text{else}.
	 	\end{cases}
	 \]
	The locally constant function 
	\[
		F\colon \sO_L\otimes \Zp =\prod_{\frp|p} \sO_{L,\frp}\to \ol{\QQ}
	\]
	is now defined as $F:=\prod_{\frp|p}F_\frp$. For $\frp\in\Sigma_p$ let us write $a_\frp$ for the exact power of $\frp$ in $\mathrm{Cond}(\chi)$. Let us choose a decomposition of the $\Sigma_p$-part of the conductor
	\[
		\prod_{\frp\in\Sigma_p}\frp^{a_\frp}=(c)\frb,
	\]
	with $\frb$ a prime-to-$p$ fractional ideal and $c\in \cP_\frf$. Note that $c^{-1}\in \frp^{-n}_\Sigma$ for any $n\geq \max \{a_\frp\mid \frp\in \Sigma_p\}$.
	\begin{definition}
		The local factor of $\chi$ and $\Sigma$ is defined as
		\[
			\Local(\chi,\Sigma):=\frac{(N_{L/K}(c))^{\alpha_0} (PF)(c^{-1}) }{\overline{N_{L/K}(c)}^{\beta_0}\chi(\frb)}.
		\]
		\nomenclature{$\Local(\chi,\Sigma)$}{\nomrefpage}
	\end{definition}
Let us note that the local factor in non-zero and does not depend on the decomposition $\frb(c)$ of the conductor, see \cite[\S 5.2]{Katz-CM}.

\subsection{$p$-adic interpolation of Hecke $L$-values}
Let $\Sigma$ be a CM type of $L$ and $p$ a prime such that $\Sigma$ is $p$-ordinary. In this section we construct a $p$-adic measure on $\Gal(L(p^\infty\frf)/L)$ interpolating all critical Hecke $L$-values for Hecke characters of CM type $\Sigma$ and conductor dividing $p^\infty\frf$ where $\frf$ is a prime-to-$p$ fractional ideal of $L$.\par 
Let us fix an abelian variety $(\cA/\cR,\Sigma,\omega(\cA),\omega(\cA^\vee))$ as in Notation \ref{notation_padicAbelianVariety} together with a $\sO_L$-structure $\xi\colon\sO_L\cong H_1(\cA(\CC),\ZZ)$. By Proposition \ref{prop:GammaStructures} and Lemma \ref{lem:GammaArith} we get an induced $\Gamma_{00}(p^\infty)$-level structure $(\theta_p,\theta^\vee_p)$ and we denote the corresponding $p$-adic periods of $\cA$ by $\Omega_p$ and $\Omega^\vee_p$. The associated complex periods $\Omega$ and $\Omega^\vee$ are as in Definition \ref{def:periods}.\par
 The following Theorem generalizes results of Katz in case of a CM field \cite{Katz-CM}.
\begin{theorem}\label{thm_p-adic-interpolation} Let $\Sigma$ be a $p$-ordinary CM type of a totally imaginary field $L$, see Section \ref{sec:p-adic-geometric-setup}. For every fractional ideal $\frf$ co-prime to $p$ there exists a $p$-adic measure $\mu_{\frf}$ on $\Gal(L(p^\infty\frf)/L)$ with the following interpolation property: For every critical algebraic Hecke character $\chi$ with attached CM type $\Sigma$ and infinity type $\mu=\beta-\alpha$ (see Notation \ref{not:I-Sigma}) and conductor dividing $p^\infty\frf$, we have:
	\begin{multline*}
		\frac{1}{\Omega_p^{\alpha}\Omega^{\vee\beta}_p}\int_{\Gal(L(p^\infty\frf)/L)} \chi(g)d\mu_{\frf}(g)= \\
		\\ \frac{(\alpha-\ul{1})!(2\pi i)^{|\beta|}}{\Omega^{\alpha}\Omega^{\vee\beta}}
\Local(\chi,\Sigma) \prod_{\frp\in\Sigma_p}\left(1-\frac{\chi(\frp^{-1})}{N\frp}\right) \prod_{\ol{\frp}\in\ol{\Sigma}_p}\left(1-\chi(\ol{\frp})\right) L_{\frf}(\chi,0)
	\end{multline*}
\end{theorem}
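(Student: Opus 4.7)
The plan is to assemble the measure $\mu_\frf$ out of the Eisenstein measures $\mu_{Eis}(f_{[\frc]},x)$ of Theorem~\ref{thm_Eisenstein_measure}, one for each ideal class, using class field theory to patch them into a measure on $\Gal(L(p^\infty\frf)/L)$, and then to verify the interpolation formula by combining the integration formula of Corollary~\ref{cor_rho_integration} with the explicit complex computation of Eisenstein--Kronecker classes (Corollary~\ref{cor:EK-in-terms-of-period-lattice}) and the identification of partial sums $E^{\beta,\alpha}$ with partial $L$-values \eqref{eq:L-in-terms-of-E}.

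\textbf{Step 1 (Construction of $\mu_\frf$).} Fix an auxiliary integral ideal $\frc$ coprime to $p\frf$ (to force the vanishing condition $\sum f=0$; its contribution will be inverted at the end), and pick representatives $\{\frb_i\}$ of $\cI(p\frf)/\cP_\frf$ coprime to $p\frf\frc$. For each $i$ form the abelian variety $\cA_i:=\cA(\frf\frb_i^{-1})$, which by Lemma~\ref{lem_Gamma00_isogeny} and Proposition~\ref{prop:period-properties} inherits a $\Gamma_{00}(p^\infty)$-structure and has the \emph{same} periods $\Omega,\Omega_p,\Omega^\vee,\Omega^\vee_p$ as $\cA$. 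Take as torsion point $x_i:=1\in \frb_i^{-1}\Omega/\frf\frb_i^{-1}\Omega=\cA_i[\frf]$ and form $\mu_{Eis,i}:=\mu_{Eis}(f_{[\frc]},x_i)$ on $\sO_L\otimes\Zp$. Class field theory provides a decomposition
\[
\Gal(L(p^\infty\frf)/L)=\bigsqcup_i [\frb_i]\cdot (\sO_L\otimes\Zp)^\times/\overline{\sO_{\frf}^\times},
\]
so by pulling back the $\mu_{Eis,i}$ along the natural map $(\sO_L\otimes\Zp)^\times\subset\sO_L\otimes\Zp$ and weighting the $i$-th piece by $\norm\frb_i^{-1}$, then dividing by the auxiliary Euler factor $(1-\chi(\frc)N\frc)$ (independent of $\chi$ after the dust settles, as in \cite{Katz-CM}), one obtains $\mu_\frf$.

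\textbf{Step 2 (Unravelling the integral).} Given a critical Hecke character $\chi$ of infinity type $\beta-\alpha$, write $\chi=\chi_{fin}\cdot\chi_\infty$ as in \S\ref{subsec_local_factor}. Integrating $\chi$ against $\mu_\frf$ decomposes into a finite sum over $i$ with weights $\chi(\frb_i)^{-1}$, of the local integrals $\int_{(\sO_L\otimes\Zp)^\times}\rmt^\alpha\rms^\beta\chi_{fin}\,d\mu_{Eis,i}$. Now apply Corollary~\ref{cor_rho_integration} at level $p^m$ large enough to capture the conductor of $\chi_{fin}$: this expresses the integral as a finite sum of values
\[
(\widetilde{P}\chi_{fin})(s')\cdot \Eis^{\beta,\alpha}_\Gamma(f_{[\frc]},x_i+s')(\omega(\cA_i)^{[\alpha]},\omega(\cA_i^\vee)^{[\beta]})
\]
over $s'\in\cA_i[p^m]$, where $\widetilde{P}$ is the partial Fourier transform of Definition~\ref{def:Fourier-trafo}.

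\textbf{Step 3 (From Eisenstein--Kronecker classes to $L$-values).} Substitute the explicit formula of Corollary~\ref{cor:EK-in-terms-of-period-lattice} and Proposition~\ref{prop:Omega-dual} to rewrite each term as a ratio $\Omega^\alpha\Omega^{\vee\beta}/\bigl((\alpha-\ul 1)!(2\pi i)^{|\beta|}\bigr)$ times a finite linear combination of $E^{\beta,\alpha}$-sums at shifted lattice points. Choosing the Fourier decomposition of $\chi_{fin}$ to match the CM-type splitting, the sum over $s'\in\cA_i[\frp_\Sigma^m]$ reconstructs the divisor of $p$-power torsion in $\Sigma_p$, while summation over $s'\in\cA_i[\ol{\frp}_\Sigma^m]$ contributes the Fourier coefficients appearing in the definition of $\widetilde{P}$. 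Using the distribution relation of Proposition~\ref{prop:distribution-relation} and the identification \eqref{eq:L-in-terms-of-E}, summation over the classes $[\frb_i]$ with weight $\chi(\frb_i)^{-1}$ assembles these partial sums into $L_\frf(\chi,0)$, multiplied by the $(\norm\frc\chi(\frc)-1)$-factor we will divide out.

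\textbf{Step 4 (Euler factors and the local term).} The final bookkeeping is where the product $\prod_{\frp\in\Sigma_p}(1-\chi(\frp^{-1})/N\frp)\prod_{\ol\frp\in\ol\Sigma_p}(1-\chi(\ol\frp))$ and the term $\Local(\chi,\Sigma)$ appear. The first factor arises because integrating over $(\sO_L\otimes\Zp)^\times$ rather than $\sO_L\otimes\Zp$ removes the characters whose $\Sigma_p$-part is non-unit, cutting out exactly the Euler factors at $\Sigma_p$; the second factor arises symmetrically from the Fourier condition in the definition of $\widetilde{P}$ which forces the $\ol\Sigma_p$-part to lie on the maximal ideal. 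The term $\Local(\chi,\Sigma)$ is forced by the decomposition $(c)\frb$ of the $p$-part of the conductor used to express $\widetilde{P}\chi_{fin}$ in terms of Gauss-sum-like data, exactly as in \cite[\S 5.2]{Katz-CM}; the index $[\sO_L^\times:\Gamma]$ absorbs the passage from the chosen torsion-free group $\Gamma$ to the full unit group via Corollary~\ref{cor:change-of-group}.

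\textbf{Main obstacle.} The conceptual input (Theorem~\ref{thm_Eisenstein_measure} plus the translation identity Theorem~\ref{prop_translation_formula}) is in hand; the real difficulty is the accounting in Step~4, namely verifying that the two kinds of Euler factors at $p$ and the local factor $\Local(\chi,\Sigma)$ emerge with the correct normalizations from the partial Fourier transform $\widetilde{P}\chi_{fin}$. One must show that the answer is independent of the choice of $\frc$, of the representatives $\frb_i$, and of the level $m$ used to trivialize $\chi_{fin}$; independence of $\frc$ follows from the standard trick of comparing with a second auxiliary $\frc'$ and using that $(\norm\frc\chi(\frc)-1)$ and $(\norm\frc'\chi(\frc')-1)$ are not zero divisors in the measure algebra; independence of the other choices reduces, after the computation, to the compatibility of Oda's pairing with $[\frp^n_\Sigma]$ proved during the translation formula.
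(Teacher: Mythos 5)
Your proposal follows essentially the same route as the paper: patch the Eisenstein measures $\mu_{Eis}(f_{[\frc]},x_i)$ over ideal-class representatives via class field theory, evaluate against $\chi$ using Corollary \ref{cor_rho_integration} and the partial Fourier transform, feed in the explicit formula of Corollary \ref{cor:EK-in-terms-of-period-lattice}, and finally divide out the auxiliary regularization by a de Shalit--type argument. The identification of where the two kinds of Euler factors come from is also correct in spirit: the $\Sigma_p$-factors $(1-\chi(\frp^{-1})/N\frp)$ arise from the partial Fourier transform of $\chi_{fin}$ at the primes not dividing the conductor (the function $\Chr_\frp$ in the paper), and the $\ol{\Sigma}_p$-factors simply record that the partial sums produce $L_{\ol{\frp}_\Sigma\frf}(\chi,0)$ rather than $L_\frf(\chi,0)$.

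There is, however, one genuine gap: your construction breaks down for $\frf=\sO_L$, which the theorem explicitly covers. In that case your torsion point $x_i=1\in\frb_i^{-1}\Omega/\frf\frb_i^{-1}\Omega$ is the identity section, which lies in the support of $f_{[\frc]}=\norm\frc\,1_{e}-1_{\ker[\frc]}$, so the class $\EK_\Gamma(f_{[\frc]})$ cannot be specialized at $x_i$ (the torsion point must lie in $\cU_\cD$). The paper repairs this with a second auxiliary ideal $\frc'$, replacing $f_{[\frc]}$ by its pullback $\widetilde{f}_{[\frc]}$ supported on $(\cA[\frc']\setminus\{e\})\times\cA[\frc]$, which introduces an extra regularization factor $(1-\chi(\frc'))$; removing it requires an additional integrality input, namely Theorem \ref{thm:special-values} applied with $\frf=\ol{\frp}_\Sigma$, before the gcd argument can be run. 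Two smaller imprecisions: your regularization factor $(1-\chi(\frc)N\frc)$ differs from the paper's $(N\frc-\chi(\frc^{-1}))$ by the unit $-\chi(\frc^{-1})$, so as stated it is not the evaluation of a single group-ring element $N\frc-\frc^{-1}$ and the ``gcd equals $1$'' argument must be phrased for the latter; and you omit the twist $\rho\mapsto(q\circ\rho)(\rmt^{-1},\rms)\rmt^{-1}$ in the definition of $\mu(\fra_i)$, which is what makes the $\Sigma_p$-coordinate of $\chi_{fin}$ appear with the correct sign and produces the shift from $\alpha$ to $\alpha-\ul{1}$ in the final formula.
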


Note that in the case of $L$ an extension of an imaginary quadratic field, such an interpolation was obtained by Colmez-Schneps \cite{Colmez-Schneps} for the $L$-values of Hecke characters treated in \cite{Colmez-algebraic}.
\begin{proof} 
 Using 
\[
	\prod_{\ol{\frp}\in\ol{\Sigma}_p}\left(1-\chi(\ol{\frp})\right)L_{\frf}(\chi,0)=L_{\ol{\frp}_\Sigma\frf}(\chi,0)
\]
it will often be more convenient to write $L_{\ol{\frp}_\Sigma\frf}(\chi,0)$ instead of $\prod_{\ol{\frp}\in\ol{\Sigma}_p}\left(1-\chi(\ol{\frp})\right)L_{\frf}(\chi,0)$. Let us first construct a regularized measure $\mu_{\frf,\frc}$ for $\frf$ non-trivial.\\
\textit{\textbf{Step 1:} For $\frf\neq\sO_L$ and for every auxiliary fractional ideal $\frc$ co-prime to $p\frf$ there exists a $p$-adic measure $\mu_{\frf,\frc}$ with the interpolation property:
	\begin{multline*}
		\frac{1}{\Omega_p^{\alpha}\Omega^{\vee\beta}_p}\int_{\Gal(L(p^\infty\frf)/L)} \chi(g)d\mu_{\frf,\frc}(g)= \\
		\frac{(\alpha-\ul{1})!(2\pi i)^{|\beta|}}{\Omega^{\alpha}\Omega^{\vee\beta}}\Local(\chi,\Sigma)(N\frc -\chi(\frc^{-1})) \prod_{\frp\in\Sigma_p}\left(1-\frac{\chi(\frp^{-1})}{N\frp}\right)L_{\frf\ol{\frp}_\Sigma}(\chi,0)
	\end{multline*}
	}
	By class field theory, we have a short exact sequence
	\[
		0\rightarrow G \rightarrow \Gal(L(p^\infty\frf)/L)\rightarrow \Gal(L(\frf)/L)\rightarrow 0.
	\]
	with $G=(\sO_L\otimes \Zp)^\times/E(\sO_L^\times)$ where $E(\Gamma)$ denotes the closure of a subgroup $\Gamma\subseteq \sO_L^\times$ in $(\sO_L\otimes\Zp)^\times$. By abuse of notation, let us write $\fra\in \Gal(L(p^\infty\frf)/L)$  for the image of a fractional ideal under the Artin map. Thus, if we choose prime-to-$p$ fractional ideals $\fra_i$ which are representatives of $\Gal(L(\frf)/L)$ we get an explicit co-set decomposition
	\[
		\Gal(L(p^\infty\frf)/L)=\bigcup_{i=1}^{h_\frf} \fra_i G.
	\]
	By means of this decomposition it is equivalent to give $h_\frf$ distinct measures $\mu(\fra_i)$ on $G\cong \fra_iG$:
	\begin{equation*}
		\int_{\Gal(L(p^\infty\frf)/L)} \rho d\mu_{\frf,\frc} =\sum_{i=1}^{h_\frf}\int_G \rho (\fra_i g) d\mu(\fra_i)(g).
	\end{equation*}
	Let us now define the measures $\mu(\fra_i)$: The Serre construction $\cA_i:=\fra_i^{-1}\frf\otimes_{\sO_L}\cA$ gives for every $1\leq i\leq h_\frf$ a new tuple
	\[
		(\cA_i,\omega(\cA_i),\omega(\cA_i))
	\]
	with
	\[
		\omega(\cA_i)\mapsto \omega(\cA),\quad \omega(\cA_i^\vee)\mapsto \omega(\cA^\vee)
	\]
	under the canonical isomorphisms $\omega_{\cA_i}\cong \omega_\cA$ and $\omega_{\cA_i^\vee}\cong \omega_{\cA^\vee}$. Furthermore, we get on $\cA_i$ by Lemma \ref{lem_Gamma00_isogeny} a $\Gamma_{00}(p^\infty)$-level structure $\theta_{i,p},\theta_{i,p}^\vee$. The complex uniformization induced by $\omega(\cA_i)$ is given by $\theta_{i}\colon \cA_i(\CC)\cong \CC^\Sigma/\Omega\fra_i^{-1}\frf$. For each fractional ideal $\fra_i$ let $x_i\in \cA_i(\overline{\QQ})$ be a torsion section corresponding to $\Omega\in  \CC^\Sigma$ under $\theta_{i}$. Let us write $f_{[\frc]}$ for the function introduced in \eqref{eq:special-function}. The restriction of the $p$-adic Eisenstein measure associated with 
	\[
		(\cA_i,\omega(\cA_i),\omega(\cA_i^\vee),\Sigma,x_i)
	\]
	for the group $\Gamma:=\sO_\frf^\times$ gives us a $p$-adic measure
	\[
		\mu_{Eis}(f_{[\frc]},x_i)|_{(\sO_L\otimes\Zp)^\times}\in \Meas((\sO_L\otimes\Zp)^\times,\OCp).
	\]
	Let us write
	\[
		q\colon (\sO_L\otimes\Zp)^\times\rightarrow (\sO_L\otimes\Zp)^\times/E(\sO_L^\times)
	\]
	for the projection. The desired measure $\mu(\fra_i)$ is obtained as the push-forward along $q$, i.e.,
	\begin{equation}\label{eq_measure_ai}
		\int_{(\sO_L\otimes \Zp)^\times/E(\sO_L^\times)} \rho d\mu(\fra_i):=\int_{(\sO_L\otimes \Zp)^\times} (\rho\circ q)(\rmt^{-1},\rms)\rmt^{-1}d\mu_{Eis}(f_{[\frc]},x_i)(\rmt,\rms).
	\end{equation}
	Finally, we define the measure $\mu_{\frf,\frc}$ using the formula
	\begin{equation}\label{eq_measure_decomposition}
		\int_{\Gal(L(p^\infty\frf)/L)} \rho d\mu_{\frf,\frc} =\sum_{i=1}^{h_\frf}\int_G \rho(\fra_i g) d\mu(\fra_i)(g).
	\end{equation}
It remains to check that this measure satisfies the desired interpolation property. From here on, we can follow Katz' argument quite closely. Let $\chi$ be a Hecke character of critical infinity type $\beta-\alpha$. Without loss of generality, we may assume that $\frf$ is the prime-to-$p$ part of the conductor of $\chi$. As in Section \ref{subsec_local_factor} let us denote by $\chi_{\fin}$ the unique locally constant character 
	\[
		\chi_{\fin}\colon (\sO_L\otimes\Zp)^\times \rightarrow \overline{\QQ}^\times
	\]
	associated to $\chi$ and write 
	\[
		\tilde{F}\colon (\sO_L\otimes\Zp)^\times=(\sO_L(\overline{\Sigma})\oplus \sO_L(\Sigma))^\times\rightarrow \overline{\QQ}^\times
	\]
	 for the locally constant function given by $\tilde{F}(x,y):=\chi_{\fin}(x^{-1},y)$.
	The extension of $\tilde{F}$  by zero to $\sO_L\otimes \Zp$ will also be denoted by $\tilde{F}$. Choose $n$ sufficiently large such that the locally constant function $\tilde{F}$ is constant modulo $p^n$. By Corollary \ref{cor_rho_integration} we get
	\begin{align}
		\label{eq_integral}&\frac{1}{\Omega_p^{\alpha}\Omega^{\vee\beta}_p}\int_{\Gal(L(p^\infty\frf)/L)}\chi(g)d\mu_{\frf,\frc}(g)\\
		\notag &=\sum_{i=1}^{h_\frf} \chi(\fra_i)\sum_{\tilde{s}\in A_{i,(n)}[p^n]}(P_i\tilde{F})(\tilde{s}) \Eis_{\Gamma,A_{i,(n)}}^{\beta,\alpha-\underline{1}}(f_{[\frc]}',x_i'+\tilde{s})(\omega(\cA_{i,(n)})^{[\alpha]},\omega(\cA^\vee_{i,(n)})^{[\beta]}),
	\end{align}
	where we write $P_i$ for $P_{A_{i}}$ defined in \ref{def:Fourier-trafo}. Under the complex uniformization $\cA_{i,(n)}(\CC)\cong \CC^\Sigma/\ol{\frp}^n_\Sigma \frf\fra_i^{-1}\Omega$, we get
	\[
		\cA_{i,(n)}[p^n](\ol{\QQ})\cong \frp^{-n}_\Sigma \frf\fra_i^{-1}\Omega/\ol{\frp}^n_\Sigma \frf\fra_i^{-1}\Omega\cong \frp^{-n}_\Sigma \frf\fra_i^{-1}/\ol{\frp}^n_\Sigma \frf\fra_i^{-1} 
	\]
	and the point $x_i'\in \cA_{i,(n)}(\ol{\QQ})$ corresponds to $\Omega+\ol{\frp}^n_\Sigma \frf\fra_i^{-1}\Omega\in \CC^\Sigma/\ol{\frp}^n_\Sigma \frf\fra_i^{-1}\Omega$. Using Theorem \ref{thm:computation-of-Eis} and observing the relation between $P_i$ and $P$ from \eqref{eq:Comp-P-PA}, we obtain
	\begin{align*}
		&\left( \frac{(\alpha-\ul{1})!(2\pi i)^{|\beta|}}{\Omega^{\alpha}\Omega^{\vee\beta}} \right)^{-1}\sum_{\tilde{s}\in A_{i,(n)}[p^n]}(P_i\tilde{F})(\tilde{s}) \Eis_{\Gamma,A_{i,(n)}}^{\beta,\alpha-\underline{1}}(f_{[\frc]}',x_i'+\tilde{s})(\omega(\cA_{i,(n)})^{[\alpha]},\omega(\cA^\vee_{i,(n)})^{[\beta]})=\\
		&=\left(\left.N\frc \sum_{\lambda\in \Gamma\backslash (1+\frp^{-n}_\Sigma\frf\fra^{-1}_i)}\frac{ (P\tilde{F})(\lambda ) \overline{\lambda}^\beta}{\lambda^{\alpha}N(\lambda)^s}-\sum_{\lambda\in \Gamma\backslash (1+\frc^{-1}\frp^{-n}_\Sigma\frf\fra^{-1}_i)}\frac{(P\tilde{F})(\lambda) \overline{\lambda}^\beta}{\lambda^{\alpha}N(\lambda)^s}\right)\right|_{s=0}
	\end{align*}
		As is Section \ref{subsec_local_factor}, we will denote multiplicity of $\frp\in \Sigma_p$ in $\mathrm{Cond}(\chi)$ by $a_\frp$ and we choose a decomposition
		\[
			\prod_{\frp\in \Sigma_p} \frp^{a_\frp}=c\cdot \frb
		\]
	with $c\in 1+\frf$ and $\frb$ co-prime to $p$. Let us first assume that $a_\frp\geq 1$ for all $\frp\in\Sigma_p$.
	The partial Fourier transform $P\tilde{F}$ on $(1+\frp^{-n}_\Sigma\frf\fra^{-1}_i)\cap \frp^{-n}_\Sigma\otimes \Zp$ is supported in 
	\[
		1+\prod_{\frp\in \Sigma_p} \frp^{-a_\frp}\frf\fra^{-1}_i=c^{-1}(c+\frf\fra^{-1}_i\frb^{-1})=c^{-1}(1+\frf\fra^{-1}_i\frb^{-1}).
	\]
	Since $a_\frp\geq 1$, the function $\tilde{F}$ coincides with the extension $F$ defined in Section \ref{subsec_local_factor}. For $\lambda\in 1+\frf\fra^{-1}_i\frb^{-1}$ a direct computation shows
	\[
		(P\tilde{F})(c^{-1}\lambda)=(PF)(c^{-1})\cdot \chi_{\fin}(\lambda)
	\]
	and together with Theorem \ref{thm:computation-of-Eis} we obtain the formula:
	\begin{align*}
		&\left( \frac{(\alpha-\ul{1})!(2\pi i)^{|\beta|}}{\Omega^{\alpha}\Omega^{\vee\beta}} \right)^{-1}\sum_{\tilde{s}\in A_{i,(n)}[p^n]}(P_i\tilde{F})(\tilde{s}) \Eis_{\Gamma,A_{i,(n)}}^{\beta,\alpha-\underline{1}}(f_{[\frc]}',x_i'+\tilde{s})(\omega(\cA_{i,(n)})^{[\alpha]},\omega(\cA^\vee_{i,(n)})^{[\beta]})=\\
		&=\frac{N_{L/K}(c)^{\alpha_0} (PF)(c^{-1}) }{\overline{N_{L/K}(c)}^{\beta_0}}\left(\left.N\frc \sum_{\lambda\in \Gamma\backslash (1+\frf\fra^{-1}_i\frb^{-1})}\frac{\chi_{\fin}(\lambda) \overline{\lambda}^\beta}{\lambda^{\alpha}N(\lambda)^s}-\sum_{\lambda\in \Gamma\backslash (1+\frc^{-1}\frf\fra^{-1}_i\frb^{-1})}\frac{\chi_{\fin}(\lambda) \overline{\lambda}^\beta}{\lambda^{\alpha}N(\lambda)^s}\right)\right|_{s=0}=\\
		&=\frac{N_{L/K}(c)^{\alpha_0} (PF)(c^{-1}) }{\overline{N_{L/K}(c)}^{\beta_0}}\chi(\fra_i^{-1}\frb^{-1})\left(N\frc L_{p\frf}(\chi,0,[\fra_i\frb]) -\chi(\frc^{-1})L_{p\frf}(\chi,0,[\frc\fra_i\frb])\right)=\\
		&=\Local(\chi,\Sigma)\chi(\fra_i^{-1})\left(N\frc L_{p\frf}(\chi,0,[\fra_i]) -\chi(\frc^{-1})L_{p\frf}(\chi,0,[\frc\fra_i])\right).
	\end{align*}
	Summing over $\fra_i$ gives
	\begin{align*}
		&\frac{1}{\Omega_p^{\alpha}\Omega^{\vee\beta}_p}\int_{\Gal(L(p^\infty\frf)/L)}\chi(g)d\mu_{\frf,\frc}(g)=\\
		=&\frac{(\alpha-\ul{1})!(2\pi i)^{|\beta|}}{\Omega^{\alpha}\Omega^{\vee\beta}} \Local(\chi,\Sigma) (N\frc -\chi(\frc^{-1}))L_{p\frf}(\chi,0).
	\end{align*}
	Since $\chi(\frp)=0$ for any $\frp\in \Sigma_p$, we have $L_{p\frf}(\chi,0)=L_{\ol{\frp}_\Sigma\frf}(\chi,0) $ the claim follows in the case $a_\frp\geq 1$.\par 
	Let us now consider the case of general $a_\frp$. We follow essentially the computations in \cite{Katz-CM}. Let us define $I:=\{\frp\in \Sigma_p\mid a_\frp=0\}$ and write $\frp_I:=\prod_{\frp\in I}\frp$. In this case, the partial Fourier transform $P\tilde{F}$ is supported in
	\[
		1+\frp_I^{-1}\left(\prod_{a_\frp\geq 1} \frp^{-a_\frp}\right)\frf\fra^{-1}_i=c^{-1}(c+\frf\frp_I^{-1}\fra^{-1}_i\frb^{-1})=c^{-1}(1+\frf\frp_I^{-1}\fra^{-1}_i\frb^{-1}).
	\]
	Following Katz' notation in \cite[(5.5)]{Katz-CM}, we write $\chi_1$ for the Hecke character $\chi$ seen as a character on fractional ideals prime to $p\cdot \frf$, and $\chi_2$ for the Hecke character $\chi$ seen as a character on fractional ideals prime to $\prod_{a_\frp\geq 1}\frp \cdot \ol{\frp}_\Sigma \cdot \frf$.
	A straightforward computation shows for $\lambda\in 1+\frf\frp_I^{-1}\fra^{-1}_i\frb^{-1}$ the formula
	\begin{equation}\label{eq_PF_case2}
		(P\tilde{F})(c^{-1}\lambda)=(PF)(c^{-1})\chi_{2,\fin}(\lambda)\Chr(\lambda)
	\end{equation}
	where $F$ is the extension appearing in Section \ref{subsec_local_factor} and
	\[
		\Chr(\lambda)=\prod_{a_\frp=0}\Chr_{\frp}(\lambda),\quad \Chr_{\frp}(\lambda):=
		\begin{cases} 
			1-\frac{1}{N\frp} & \ord_\frp(\lambda)=0\\ 
			-\frac{1}{N\frp} & \ord_\frp(\lambda)=-1\\ 
			0 & \text{else}.
		\end{cases}
	\]
Now essentially the same computation as above allows us to compute
	\begin{multline*}
		\frac{1}{\Omega_p^{\alpha}\Omega^{\vee\beta}_p}\int_{\Gal(L(p^\infty\frf)/L)} \chi(g)d\mu_{\frf,\frc}(g)= \\
		\frac{(\alpha-\ul{1})!(2\pi i)^{|\beta|}}{\Omega^{\alpha}\Omega^{\vee\beta}} \Local(\chi,\Sigma) (N\frc -\chi(\frc^{-1})) \prod_{\frp\in\Sigma_p}\left(1-\frac{\chi(\frp^{-1})}{N\frp}\right) L_{\ol{\frp}_\Sigma\frf}(\chi,0).
	\end{multline*}
	In a next step, let us complete the proof in the case that $\frf$ is non-trivial:\\
	\textit{\textbf{Step 2:}
	For $\frf\neq\sO_L$ there exists a $p$-adic measure $\mu_{\frf}$ with the interpolation property:
	\begin{align*}
		&\frac{1}{\Omega_p^{\alpha}\Omega^{\vee\beta}_p}\int_{\Gal(L(p^\infty\frf)/L)} \chi(g)d\mu_{\frf}(g)= \\
		=& \frac{(\alpha-\ul{1})!(2\pi i)^{|\beta|}}{\Omega^{\alpha}\Omega^{\vee\beta}} \Local(\chi,\Sigma) \prod_{\frp\in\Sigma_p}\left(1-\frac{\chi(\frp^{-1})}{N\frp}\right)L_{\frf\ol{\frp}_\Sigma}(\chi,0).
	\end{align*}
	}
	Getting rid of the auxiliary ideal $\frc$ is a standard argument. We follows de Shalit's book \cite{deShalit} closely. Let us first observe, that for a given ideal $\frc$ the map $\chi\mapsto N\frc -\chi(\frc^{-1})$ defines a $p$-adic measure on $\Gal(L(p^\infty\frf)/L)$. More precisely, we may view $\frc\in \Gal(L(p^\infty\frf)/L)$ as a Galois automorphism via the Artin map. The $p$-adic measure $\chi\mapsto \frc -\chi(\frc^{-1})$ corresponds to $\delta_\frc:=N\frc-\frc^{-1}\in \Zp \llbracket \Gal(L(p^\infty\frf)/L) \rrbracket$ under $\Zp \llbracket \Gal(L(p^\infty\frf)/L) \rrbracket\cong \Meas(\Gal(L(p^\infty\frf)/L),\Zp) $. For two ideals $\frc,\frb\subseteq\sO_L$ with $(\frb\frc,p\frf)=1$ the measures
	\[
		\delta_\frc \mu_{\frf,\frb}=\delta_\frb\mu_{\frf,\frc}
	\]
	coincide, since they coincide on all Hecke characters and the Hecke characters are dense in the continuous functions on $\Gal(L(p^\infty\frf)/L)$. The quotient $\mu_{\frf,\frc}/\delta_\frc$ is a priori only a pseudo-measure and the above equation shows that it is independent of $\frc$ and one can proceed as in \cite[Proof of 4.12]{deShalit}. The main point is, roughly speaking, that the greatest common divisor of the $\delta_\frc$ is $1$.\par 
	It remains to construct the $p$-adic measure in the case $\frf=\sO_L$. We proceed in two steps as in the construction for non-trivial conductor. The only difference is that the torsion section $x=\Omega+\Omega\fra_i^{-1}\frf$ corresponds to zero if $\frf=\sO_L$. So the choice $f=f_{[\frc]}$ does not allow to specialize along $x=0$. This problem can be addressed by adapting the function $f_{[\frc]}$ with an additional auxiliary ideal $\frc'$ as in the proof of Theorem \ref{thm:special-values}:\par
	\textit{\textbf{Step 3:} For auxiliary ideals $\frc$ and $\frc'$ co-prime to each other and co-prime to $p$ there exists a $p$-adic measure $\mu_{\sO_L,\frc,\frc'}$ with the interpolation property:
	\begin{multline*}
		\frac{1}{\Omega_p^{\alpha}\Omega^{\vee\beta}_p}\int_{\Gal(L(p^\infty)/L)} \chi(g)d\mu_{\sO_L,\frc,\frc'}(g)= \\
		\frac{(\alpha-\ul{1})!(2\pi i)^{|\beta|}}{\Omega^{\alpha}\Omega^{\vee\beta}} \Local(\chi,\Sigma) (1-\chi(\frc'))(N\frc -\chi(\frc^{-1})) \prod_{\frp\in\Sigma_p}\left(1-\frac{\chi(\frp^{-1})}{N\frp}\right)L_{\ol{\frp}}(\chi,0).
	\end{multline*}}
Let us choose prime-to-$p$ fractional ideals $\fra_i$ which are representatives of $\Gal(L(1)/L)$:
	\[
		\Gal(L(p^\infty\frf)/L)=\bigcup_{i=1}^{h} \fra_i G.
	\]
	By means of this decomposition it is equivalent to give $h$ distinct measures $\mu(\fra_i)$ on $G\cong \fra_iG$:
	\begin{equation*}
		\int_{\Gal(L(p^\infty)/L)} \rho d\mu_{\sO_L,\frc,\frc'} =\sum_{i=1}^{h}\int_G f(\fra_i g) d\mu_{\sO_L,\frc,\frc'} (\fra_i)(g).
	\end{equation*}
	Let us now define the measures $\mu_{\sO_L,\frc,\frc'} (\fra_i)$: As above, the Serre construction $\cA_i:=\fra_i^{-1}\otimes_{\sO_L}\cA$ gives for every $1\leq i\leq h$ a new tuple
	\[
		(\cA_i,\omega(\cA_i),\omega(\cA_i))
	\]
	with
	\[
		\omega(\cA_i)\mapsto \omega(\cA),\quad \omega(\cA_i^\vee)\mapsto \omega(\cA^\vee)
	\]
	under the canonical isomorphisms $\omega_{\cA_i}\cong \omega_\cA$ and $\omega_{\cA_i^\vee}\cong \omega_{\cA^\vee}$. In this case, we choose the function $\widetilde{f}_{[\frc]}$ on $(\cA[\frc']\smallsetminus\{e\})\times \cA[\frc]$ as the pullback of $f_{[\frc]}$ along the projection to the second factor. With this choice, the torsion section $x_i=e$ is a valid choice for the construction of the Eisenstein measure. Define
	\[
		\mu'_{\sO_L,\frc,\frc'}(\fra_i)\in\Meas((\sO_L\otimes\Zp)^\times,\OCp)
	\]
	as the $p$-adic Eisenstein measure $\mu_{Eis}(\widetilde{f}_{[\frc]},e)|_{(\sO_L\otimes\Zp)^\times}$ associated with 
	\[
		(\cA_i,\omega(\cA_i),\omega(\cA_i^\vee),\Sigma,x_i).
	\]
 The desired measure $\mu_{\sO_L,\frc,\frc'}(\fra_i)$ is obtained as the push-forward along the quotient map $q\colon (\sO_L\otimes\Zp)^\times \to (\sO_L\otimes\Zp)^\times/E(\cO_L^\times)$, i.e.,
	\begin{equation*}
		\int_{(\sO_L\otimes \Zp)^\times/E(\sO_L^\times)} \rho d\mu_{\sO_L,\frc,\frc'}(\fra_i):=\int_{(\sO_L\otimes \Zp)^\times} (q\circ\rho)(\rmt^{-1},\rms)\rmt^{-1}d\mu'_{\sO_L,\frc,\frc'}(\fra_i)(\rmt,\rms).
	\end{equation*}
	We define the measure $\mu_{\sO_L,\frc,\frc'}$ using the formula
	\begin{equation*}
		\int_{\Gal(L(p^\infty)/L)} \rho d\mu_{\sO_L,\frc,\frc'} =\sum_{i=1}^{h}\int_G f(\fra_i g) d\mu_{\sO_L,\frc,\frc'}(\fra_i)(g).
	\end{equation*}	
	From here on, the proof of Step 3 follows exactly the same lines as in Step 1.\par 
	Finally, we have to get rid of the regularization in the case $\frf=\sO_L$.\par 
	\textit{\textbf{Step 4:} In the case $\frf=\sO_L$ there exists a $p$-adic measure $\mu_{\sO_L}$ with the interpolation property:
	\begin{multline*}
		\frac{1}{\Omega_p^{\alpha}\Omega^{\vee\beta}_p}\int_{\Gal(L(p^\infty)/L)} \chi(g)d\mu_{\sO_L}(g)= \\
		\frac{(\alpha-\ul{1})!(2\pi i)^{|\beta|}}{\Omega^{\alpha}\Omega^{\vee\beta}} \Local(\chi,\Sigma) \prod_{\frp\in\Sigma_p}\left(1-\frac{\chi(\frp^{-1})}{N\frp}\right) L_{\ol{\frp}_\Sigma}(\chi,0).
	\end{multline*}}
	Let us first get rid of $\frc'$: Let us define $\delta'_{\frc'}:=1-\frc'$ seen as a $p$-adic measure on $\Gal(L(p^\infty)/L)$. We claim that the pseudo-measure $\mu_{\frc,\frc'}/\delta'_{\frc'}$ is an actual measure. The value of the pseudo-measure $\mu_{\frc,\frc'}/\delta'_{\frc'}$ on $\chi$ is given by the formula
	\[
		\frac{(\alpha-\ul{1})!(2\pi i)^{|\beta|}}{\Omega^{\alpha}\Omega^{\vee\beta}}\Local(\chi,\Sigma) (N\frc -\chi(\frc^{-1}))\prod_{\frp\in\Sigma_p}\left(1-\frac{\chi(\frp^{-1})}{N\frp}\right) L_{\ol{\frp}_\Sigma}(\chi,0).
	\]
	Applying Theorem \ref{thm:special-values} with $\frf=\ol{\frp}_\Sigma$ shows that $(\mu_{\frc,\frc'}/\delta'_{\frc'})(\chi)\in\OCp$ is integral at $p$. Since every $\OCp$-valued continuous functions on $\OCp$ can be approximated by a linear combination of Hecke characters, we conclude that $\mu_{\sO_L,\frc}:=\mu_{\frc,\frc'}/\delta'_{\frc'}$ is a $p$-adic measure. Finally, the same argument as in Step 2 shows the existence of the measure $\mu_{\sO_L}$ with the interpolation property:
	\begin{multline*}
		\frac{1}{\Omega_p^{\alpha}\Omega^{\vee\beta}_p}\int_{\Gal(L(p^\infty)/L)} \chi(g)d\mu_{\sO_L}(g)= \\
		\frac{(\alpha-\ul{1})!(2\pi i)^{|\beta|}}{\Omega^{\alpha}\Omega^{\vee\beta}}\Local(\chi,\Sigma) \prod_{\frp\in\Sigma_p}\left(1-\frac{\chi(\frp^{-1})}{N\frp}\right) L_{\ol{\frp}_\Sigma}(\chi,0).\qedhere
	\end{multline*}
\end{proof}
\appendix
\section{Equivariant cohomology}\label{sec:appendix_equivariant}
The theory of equivariant cohomology is treated by Grothendieck 
\cite{Grothendieck} chapter V. For convenience of the reader we recall the definitions we need in this appendix and include also  a device of Borel computing equivariant cohomology. 
\subsection{Equivariant  cohomology}\label{app1}
Consider pairs $(X,\sF)$ consisting of a ringed space $(X,\sO_X)$ and an $\sO_X$-module $\sF$ on it. A morphism $(f,f^{\#}):(X,\sF)\to (Y,\sG)$ consists of a morphism of ringed spaces
$f:X\to Y$ and an $\sO_Y$-module homomorphism $f^{\#}:\sG\to f_*\sF$ (or equivalently an $\sO_X$-module homomorphism $f^{*}\sG\to \sF$). 
\begin{definition}
Let $\Gamma$ be a group acting on $X$ (from the left). Then $\Gamma$ acts also on $(X,\sO_X)$. A $\Gamma$-equivariant $\sO_X$-module $\sF$ (a $\Gamma$-$\sO_X$-module in the terminology of \cite{Grothendieck}) on $X$ is a pair $(X,\sF)$ as above with a group homomorphism 
\begin{equation*}
\Gamma\to \Aut(X,\sF)
\end{equation*} 
such that the induced action on $X$ coincides with the given one. The category of $\Gamma$-equivariant $\sO_X$-modules is abelian and has enough injectives \cite{Grothendieck} Prop. 5.1.1. 
\end{definition}

Let $\pi:X\to S$ be morphism of ringed spaces and $\Gamma$ be a group acting on $X/S$, i.e., each $\gamma\in G$ gives an automorphism $\gamma:X\to X$ over $S$.  Then higher direct images $R^{i}\pi_*\sF$ of a $\Gamma$-equivariant sheaf $\sF$ 
inherit a $\Gamma$-action. Note that 
\begin{equation*}
\Hom_{\Gamma,\sO_X}(\sF,\sG)\isom \Hom_{\sO_X}(\sF,\sG)^{\Gamma}
\end{equation*}
so that in particular $\Hom_{\Gamma,\sO_X}(\sO_X,\sG)\isom \Gamma(X,\sF)^{\Gamma}$.
\begin{definition} Let $\sF$ and $\sG$  be $\Gamma$-equivariant $\sO_X$-modules on $X$. 
The derived functors of $\sF\mapsto H^{0}(X,\sF)^{\Gamma}$ are the \emph{equivariant cohomology groups} which we denote by
\nomenclature{$H^{i}(X,\Gamma;\sF)$}{\nomrefpage}
\begin{equation*}
H^{i}(X,\Gamma;\sF)
\end{equation*}
and the derived functors of $\sG\mapsto \Hom_{\Gamma,\sO_X}(\sF,\sG)$ are denoted by \nomenclature{$\Ext^{i}_{\Gamma,\sO_X}(\sF,\sG)$}{\nomrefpage}
\begin{equation*}
\Ext^{i}_{\Gamma,\sO_X}(\sF,\sG)
\end{equation*}
and called the equivariant Ext-groups.
\end{definition}
Immediate from this definition are the spectral sequences
\begin{align}\begin{split}\label{eq:coh-equiv-spectral-seq}
E_2^{p,q}=H^{p}(\Gamma,H^{q}(X,\sF))&\Rightarrow H^{p+q}(X,\Gamma;\sF)\\
E_2^{p,q}=H^{p}(\Gamma,\Ext^{q}_{\sO_X}(\sF,\sG))&\Rightarrow \Ext^{p+q}_{\Gamma,\sO_X}(\sF,\sG).
\end{split}
\end{align}
Let $i:D\subset X$ be a closed immersion of ringed spaces which is stable under the $\Gamma$-action and $j:U:=X\setminus D\hookrightarrow X$ the complement. Then one has an exact sequence of $\Gamma$-equivariant $\sO_X$-modules
\begin{equation*}
0\to j_!\sO_U\to \sO_X\to i_*i^{-1}\sO_X\to 0.
\end{equation*}
Then taking $\Hom_{\sO_X}(-,\sF)$ gives 
\begin{equation*}
0\to \Hom_{\sO_X}(i_*i^{-1}\sO_X,\sF)\to \Gamma(X,\sF)\to \Gamma(U,\sF)
\end{equation*}
which shows that $\Hom_{\sO_X}(i_*i^{-1}\sO_X,\sF)$ is isomorphic to $\Gamma_D(X,\sF)$, the sections with support in $D$. Note that $\Gamma_D(X,\sF)$ is a $\Gamma$-module.
\begin{definition}\label{def:coh-with-support}
The equivariant cohomology with support in $D$ are the derived functors of $\sF\mapsto \Gamma_D(X,\sF)^{\Gamma}$ or equivalently of $\sF\mapsto \Hom_{\Gamma,\sO_X}(i_*i^{-1}\sO_X,\sF)$ and are denoted by 
\begin{equation*}
H^{i}_D(X,\Gamma;\sF).
\end{equation*}
\end{definition}
By construction one has a long exact cohomology sequence
\begin{equation*}
\cdots\to H^{i}_D(X,\Gamma;\sF)\to H^{i}(X,\Gamma;\sF)\to H^{i}(U,\Gamma;\sF)\to H^{i+1}_D(X,\Gamma;\sF)\to \cdots
\end{equation*}
and a spectral sequence
\begin{equation*}
E_2^{p,q}=H^{p}(\Gamma,H^{q}_D(X,\sF))\Rightarrow H^{p+q}_D(X,\Gamma;\sF).
\end{equation*}

\subsection{Borel-Equivariant sheaf cohomology} \label{subsec_equivariantmodel} We explain how equivariant cohomology can be computed for smooth manifolds using classifying spaces. We will only concentrate on the case of finitely generated free abelian group $\Gamma$. We use
\[
 E\Gamma=\Gamma\otimes_\ZZ \RR\rightarrow B=(\Gamma\otimes_\ZZ\RR)/\Gamma
\]
as a model for the universal $\Gamma$-bundle $E\Gamma$ over the classifying space $B$. Let $X$ be a smooth manifold with a  $\Gamma$-action and let  $\sF$ be a $\Gamma$-equivariant sheaf of abelian groups on $X$. For any $\Gamma$-equivariant map $f\colon X\to Y$ between smooth manifolds, we define the sheaf $f^\Gamma_*\sF$ as the sheaf of $\Gamma$-invariant sections of $f_*\sF$, i.e. for any open subsets $U\subseteq Y$ we have
\[
	\Gamma(U, f^\Gamma_*\sF):=\Gamma(U,f_*\sF)^\Gamma.
\]
We consider the diagram
\begin{equation*}
E\Gamma\times_\Gamma X\xleftarrow{q}E\Gamma\times X\xrightarrow{p}X
\end{equation*}
where $E\Gamma\times_\Gamma X$ is the orbit space of the diagonal action of $\Gamma$ on $E\Gamma\times X$.
The action of $\Gamma$ is free and properly discontinuously on $E\times X$ so that  by \cite[\S 5.1]{Grothendieck} the functors $q_*^{\Gamma} $ and $q^{-1}$ give an equivalence of categories between $\Gamma$-equivariant sheaves on $E\times X$ and sheaves on $E\times_\Gamma X$. In particular, $q_*^{\Gamma}$ is exact and transforms injective sheaves to injectives.
Consider $\widetilde{\sF}:=q_*^{\Gamma}p^{-1}\sF$ and define 
\[
	H^i_\Gamma(X,\sF):=H^i(E\Gamma\times_\Gamma X,\widetilde{\sF}).
\]
Let us quickly recall the proof that 
\begin{equation}\label{eq:coh-equiv}
	H^i_\Gamma(X,\sF)\cong H^i(X,\Gamma;\sF).
\end{equation}
First note that $\Gamma(E\Gamma\times_\Gamma X,\widetilde{\sF})\isom \Gamma(X,\sF)^{\Gamma}$. Further $p:E\Gamma\times X\to X$ is acyclic because $E\Gamma$ is contractible. Hence, 
$\sF\isom p_*p^{-1}\sF$. 

To show that the derived functors coincide, it remains to see that $\widetilde{\sI}$ is flabby on $E\Gamma\times_\Gamma X$ for an injective $\Gamma$-equivariant sheaf $\sI$ on $X$.  For each $\Gamma$-invariant open $j:V\subset X$ one has 
\begin{equation*}
\Gamma(V,\widetilde{\sI})^{\Gamma}\isom \Hom_{\Gamma}(j_!\Z,\sI)
\end{equation*}
so that 
\begin{equation*}
\Gamma(X,\sI)^{\Gamma}\to \Gamma(V,\sI)^{\Gamma}
\end{equation*}
is surjective. 
Let $U\subset E\Gamma\times_\Gamma X$ be open. Then
\begin{equation*}
\Gamma(U,\widetilde{\sI})\isom \Gamma(q^{-1}(U),p^{-1}\sI)^{\Gamma}\isom \Gamma(p(q^{-1}(U)),\sI)^{\Gamma}
\end{equation*}
because $p$ is an open map (view $\sI$ as \'etale space over $X$). As $p(q^{-1}(U))\subset X$ is  $\Gamma$-invariant open this shows that 
\begin{equation*}
\Gamma(E\Gamma\times_\Gamma X,\widetilde{\sI})\to \Gamma(U,\widetilde{\sI})
\end{equation*}
is surjective. 

\section{Derived limits}
For the convenience of the reader we collect some results in the stack project about derived limits. We warn the reader that derived limits are not derived functors of limits. 
\begin{definition}
Let $(K_n,f_n)$, $n\in \N$ be an inverse system of objects in a triangulated category $\sD$, then $K\in \sD$ is a \emph{derived limit} of $(K_n,f_n)$ if the product $\prod_n K_n$ exists and there is a distinguished triangle
\begin{equation*}
K\to \prod_nK_n\xrightarrow{k_n\mapsto k_n-f_{n+1}(k_{n+1})}\prod_nK_n\to K[1].
\end{equation*}
One writes $K=R\prolim K_n$.\nomenclature{$R\prolim$}{\nomrefpage}
\end{definition}
The derived limit, if it exists, is only  unique up to non-unique isomorphism.
\begin{lemma}\label{lemma:derived-limit-exact-sequence}
If $R\prolim K_n$ is a derived limit in $\sD$, then one has for each object $L$ in $\sD$ an exact sequence
\begin{equation*}
0\to R^{1}\prolim_n \Hom_\sD(L,K_n[-1])\to\Hom_\sD(L,R\prolim K_n)\to \prolim_n \Hom_\sD(L,K_n)\to 0. 
\end{equation*}
\end{lemma}
\begin{proof}
This is \cite[\href{https://stacks.math.columbia.edu/tag/0919}{Tag 0919}]{stacks-project}.
\end{proof}
\begin{lemma}\label{lemma:derived-limits}
Let $X$ be a scheme with an action of a (discrete) group $\Gamma$  and  $\sD(\sO_X,\Gamma)$ the derived category of  $\Gamma$-equivariant $\sO_X$-modules as in Appendix \ref{sec:appendix_equivariant}. Then $\sD(\sO_X,\Gamma)$ have derived limits and 
\begin{enumerate}
\item If $f:X\to Y$ is a morphism of schemes, $Rf_*$ commutes with derived limits.
\item If $(\sF_n)$ is an inverse system of quasi-coherent $\sO_X$-modules with surjective transition maps, then $R\prolim\sF_n=\prolim_n\sF_n$.
\item If $(\sF_n)$ is an inverse system of quasi-coherent $\Gamma$-equivariant $\sO_X$-modules one has for each $p\ge 0$ and $\sG\in \sD(\sO_X,\Gamma)$
\begin{equation*}
0\to R^{1}\prolim_n\Ext^{p-1}_{\Gamma,\sO_X}(\sG,\sF_n)\to\Ext^{p}_{\Gamma,\sO_X}(\sG,R\prolim \sF_n)\to \prolim_n\Ext^{p}_{\Gamma,\sO_X}(\sG,\sF_n)\to 0. 
\end{equation*}
\end{enumerate}
\end{lemma}
\begin{proof} By Grothendieck \cite{Grothendieck} Prop. 5.1.1 the category $\sD(\sO_X,\Gamma)$ has products and enough injectives, it follows from Lemma 13.34.3 in \cite[\href{https://stacks.math.columbia.edu/tag/0BK7}{Tag 0BK7}]{stacks-project} that the $R\prolim$ exists in $\sD(\sO_X,\Gamma)$. 
(1) is  \cite[\href{https://stacks.math.columbia.edu/tag/0BKP}{Tag 0BKP}]{stacks-project} and (2) is a special case of   \cite[\href{https://stacks.math.columbia.edu/tag/0BKS}{Tag 0BKS}]{stacks-project}. (3) follows from Lemma \ref{lemma:derived-limit-exact-sequence}.
\end{proof}
\begin{corollary}\label{cor:limit-sequences}
With the notation in Definition \ref{def:coh-with-support} one has exact sequences
\begin{equation*}
0\to R^{1}\prolim_nH^{p-1}_D(X,\Gamma;\sF_n)\to H^{p}_D(X,\Gamma;R\prolim \sF_n)
\to \prolim_nH^{p}_D(X,\Gamma;\sF_n)\to 0
\end{equation*}
and similarly for $H^{p}(X,\Gamma;\sF_n)$ and $H^{p}(U,\Gamma;\sF_n)$.
\end{corollary}
\begin{proof}
This follows from Lemma \ref{lemma:derived-limits} and the description in terms of Ext-groups, see Definition \ref{def:coh-with-support}. 
\end{proof}

 \printnomenclature

\renewcommand{\MR}[1]{\relax\ifhmode\unskip\space\fi MR \MRhref{#1}{#1}}
\renewcommand{\MRhref}[2]{%
  \href{http://www.ams.org/mathscinet-getitem?mr=#1}{#2}.
}
\providecommand{\bysame}{\leavevmode\hbox to3em{\hrulefill}\thinspace}

\bibliographystyle{amsalpha}
\bibliography{poincare-eisenstein}
\end{document}